\documentclass[reqno,10pt]{amsart}
\usepackage{xcolor}
\usepackage[top=2.0cm,bottom=2.0cm,left=3cm,right=3cm]{geometry}
\usepackage{amsthm,amsmath,amssymb,dsfont}
\usepackage{mathrsfs,amsfonts,functan,extarrows,mathtools}

\usepackage[colorlinks,
linkcolor=red,
citecolor=blue
]{hyperref}
\usepackage{xcolor}
\usepackage{indentfirst, latexsym, amssymb, enumerate,amsmath,graphicx}
\usepackage{float}
\usepackage{relsize}
\usepackage{marginnote}
\usepackage{stmaryrd}
\usepackage{esint}
\usepackage{graphicx}
\usepackage{bm}
\usepackage{caption}
\usepackage{subfigure}
\usepackage{cite}
\usepackage{color}
\usepackage{graphicx}
\usepackage{subfigure}

\usepackage{paralist}
\usepackage{indentfirst}
\allowdisplaybreaks 

\theoremstyle{plain}
\newtheorem{theorem}{Theorem}[section]

\newtheorem{lemma}{Lemma}[section]
\newtheorem{proposition}{Proposition}[section]
\newtheorem{remark}{Remark}[section]

\numberwithin{equation}{section}

\usepackage{appendix}
\usepackage{xcolor}

\begin{document}

\title[Time-periodic solutions of the
Boltzmann equation ]
{Time-periodic solutions of the Boltzmann equation with soft potentials in $\mathbb{R}^3$}


\author[Y.J.  Lei]{Yuanjie Lei}  
\address[YJL]{School of Mathematics and Statistics,  Huazhong University of Science and Technology,  and the Hubei Key Laboratory of Engineering Modeling and Scientific Computing,  Wuhan  430074, P. R. China}
 \email{leiyuanjie@hust.edu.cn}
 
 \author[S.Q. Liu]{Shuangqian Liu}
\address[SQL]{School of Mathematics and Statistics, Central China Normal University, Wuhan 430079, China; Hubei Key Laboratory of Mathematical Sciences, Central China Normal University, Wuhan 430079, China}
\email{sqliu@ccnu.edu.cn}

\author[J.Y.  Zhang]{Jiaying Zhang}   
\address[JYZ]{School of Mathematics and Statistics,  Huazhong University of Science and Technology,  and the Hubei Key Laboratory of Engineering Modeling and Scientific Computing,  Wuhan  430074, P. R. China}
\email{zjiaying@hust.edu.cn}

\author[H.J. Zhao]{Huijiang Zhao}
\address[HJZ]{School of Mathematics and Statistics, Wuhan University, Wuhan 430072, China; Computational Science Hubei Key Laboratory, Wuhan University, Wuhan 430072, China}
\email{hhjjzhao@whu.edu.cn}

\begin{abstract}
We establish the existence of time-periodic solutions to the Boltzmann equation for the full range of soft potentials and prove their global stability. The proof relies on refined energy estimates combining Besov and Sobolev regularity with velocity-weighted energy methods. The analysis is complicated by the lack of a spectral gap in the soft potential regime together with the absence of time integrability of the periodic forcing over $\mathbb{R}^+$.
 This provides a framework for the study of time-periodic behavior for the Boltzmann equation.
\end{abstract}

\keywords{Boltzmann equations, time-periodic solution, asymptotic stability, global well-posedness, time-decay estimate}

\subjclass[2020]{35A01, 35B10, 35B35, 76P05}

\maketitle

\setcounter{equation}{0}
 \indent \allowdisplaybreaks

\tableofcontents

\section{Introduction}\label{Sec:Introduction}

\subsection{The problem}
We consider the Boltzmann equation with an inhomogeneous forcing term:
\begin{equation}\label{BQ-1}
	\partial_t F+v\cdot\nabla_xF=\mathcal{Q}(F,F)+\mathcal{S},\ (t,x,v)\in\mathbb{R}^+\times\mathbb{R}^3\times\mathbb{R}^3.
\end{equation}
Here, $F=F(t,x,v)$ is the distribution function of the particles at time $t\geq 0$, located at $x = (x_1, x_2, x_3)\in \mathbb{R}^3
$ with the velocity $v = (v_1, v_2, v_3)\in \mathbb{R}^3$, the term $\mathcal{S}=\mathcal{S}(t,x,v)$ is a given external source, assumed to be time-periodic with period $T>0$, namely
\begin{equation}\label{H-condition-T}
	\mathcal{S}(t+T,x,v)=\mathcal{S}(t,x,v)\notag
\end{equation}
for all $(t,x,v)\in\mathbb{R}^+\times\mathbb{R}^3\times\mathbb{R}^3$.

The function $\mathcal{Q}(F,F)(v)$ is defined as follows:
\begin{equation*}
		\mathcal{Q}(F,F)(v)
		=\int_{\mathbb{R}^3\times \mathbb{S}^2}|u-v|^{\gamma}{\bf b}\left(\frac{\omega\cdot(v-u)}{|u-v|}\right)\left\{F(v')F(u')-F(v)F(u)\right\}d\omega du.
\end{equation*}
Here, $\omega\in \mathbb{S}^2$, and ${\bf b}$, the angular part of the collision kernel, satisfies the Grad angular cutoff assumption (cf. \cite{Grad-1958,Villani 2002}):
\begin{equation}\label{cutoff-assump}
	0\leq{\bf b}(\cos \theta)\leq C|\cos\theta|, \cos\theta =\omega\cdot\frac{v-u}{|v-u|}\notag
\end{equation}
for some positive constant $C>0$.

Additionally, we have the following velocity transformations after a collision of particles with velocities $v$ and $u$ before the collision:
\begin{equation*}
	v'=v-[(v-u)\cdot\omega]\omega,\quad u'=u+[(v-u)\cdot\omega]\omega.
\end{equation*}

The exponent $\gamma\in(-3,1]$ in the kinetic part of the collision kernel is determined by the intermolecular interaction potential. It is typically classified into two regimes: the soft potential case $-3<\gamma<0$,  and the hard potential case for $0\leq\gamma\leq 1$. The results in this paper cover the full range of soft potentials, namely $-3<\gamma<0$.

The time-periodic solution to the Boltzmann equation \eqref{BQ-1} refers to a solution that repeats itself over a fixed time interval. In 2006, Ukai \cite{Ukai-2006-DCDS} was the first to study time-periodic solutions of the Boltzmann equation \eqref{BQ-1}. Later, Ukai-Yang \cite{Ukai-Yang-2006-AA} extended this analysis to a broader solution space. It is worth noting that both works require the spatial dimension to be at least five when general inhomogeneous external forces are considered. In the three-dimensional case, an additional compatibility condition is imposed,
	$\int_{\mathbb{R}^3_x}{\bf P}[\mu^{-\frac12}\mathcal{S}]dx=0,$
where ${\bf P}$ is defined in \eqref{def-P}. In addition, Duan-Liu \cite{Duan-Liu-2015-Acta.Sci} addressed the Vlasov-Poisson-Fokker-Planck system in $\mathbb{R}^3_x$, demonstrating that the corresponding linear operator exhibits exponential decay.

Recently, Deguchi studied the stability of periodic and stationary solutions to fluid equations in the function space $\dot{B}^{\frac12}_{2,\infty}\cap \dot H^{N}$, see \cite{Deguchi-2025,Deguchi-2024-MathAnn}. In a closely related functional framework,
and more recently, at the kinetic level, Duan–Lei–Ni \cite{DLN-2026} constructed time-periodic solutions to the Boltzmann equation with external forcing in $\mathbb{R}^3$
by means of spectral and semigroup methods. The obtained solutions belong to $\dot{B}^{\frac12}_{2,\infty}(L^2_v)\cap \dot H^{N}_x(L^2_v)$.
In addition, Duan–Ukai–Yang–Zhao \cite{DUYZ-CMP-2008} and Duan–Ni \cite{DN-2026} studied time-periodic solutions $\mathbb{R}^5_x$ and $\mathbb{R}^3_x$, respectively, for the Boltzmann equation with inhomogeneous external forces.  
However, all existing results on time-periodic solutions for the Boltzmann equation are restricted to the hard potential case. A key reason is that their proofs rely on the spectral gap property of the linearized Boltzmann operator, which holds in the hard potential regime. In contrast, in the soft potential case, the linearized operator has no spectral gap, leading to significantly more delicate analytic difficulties.
This naturally raises the question of how to construct time-periodic solutions to the Boltzmann equation in the soft potential regime, which is the focus of the present work.

\subsection{The Perturbation}
We begin by introducing the global Maxwellian distribution, denoted as $\mu(v)$, defined as:
\[\mu=\mu(v)=(2\pi)^{-\frac32}e^{-\frac{|v|^2}2}.\]
We set the perturbation $f=f(t,x,v)$ as
\[F=\mu+\mu^{\frac12}f.\]
With this perturbation, equation \eqref{BQ-1} can be recast into the following form:
\begin{equation}\label{f-perturbation}
	\partial_t f +v\cdot \nabla_x f+\mathcal{L} f=\Gamma(f,f)+S,
\end{equation}
where
\[S=S(t,x,v)=\mu^{-\frac12}\mathcal{S}(t,x,v)\]

In this context, the linearized Boltzmann collision operator $\mathcal{L} $ and the nonlinear collision term ${\Gamma}(f,f)$ are defined as follows:
\begin{equation*}
	\mathcal{L} f = -{\bf \mu}^{-1/2}
	\left\{{\mathcal{Q}\left({\bf \mu},{\bf \mu}^{\frac12}f\right)+ \mathcal{Q}\left({\bf \mu}^{\frac12}f,{\bf \mu}\right)}\right\}
\end{equation*}
and
\begin{equation*}
	{\Gamma}(f,f) ={\bf \mu}^{-\frac12}\mathcal{Q}\left({\bf \mu}^{\frac12}f,{\bf \mu}^{\frac 12}f\right),
\end{equation*}
respectively.

The operator $\mathcal{L} $ can be decomposed as (cf. \cite{CIP-1994, Strain-Guo-2008-ARMA}):
\begin{align*} 
\mathcal{L} =\nu(v) - \mathcal{K},
\end{align*}
where $\nu(v)$ is a non-negative measurable function of $v$, which is given by
\begin{align}\label{def-nu}
\nu(v)=\int_{\mathbb{R}^3}\int_{\mathbb{S}^2}|v - u|^\gamma {\bf b}(\cos \theta)\mu(u){\rm d}\omega{\rm d}v_{*},\notag
\end{align}
and satisfies
\begin{align*}
\nu(v)\sim (1 + |v|)^{\gamma},\quad -3<\gamma<0.
\end{align*}
Here, the operator $\mathcal{K}$ is a self-adjoint compact operator on $L^2(\mathbb R^3_{v})$ with a real symmetric integral kernel $\mathcal{K}(v,v_{*})$. Moreover, 
\begin{align}
\mathcal{K}g(v):=&\int_{\mathbb R^3}\!\int_{\mathbb S^2} |v-u|^\gamma {\bf b}(\cos \theta)\sqrt{\mu(u)}\sqrt{\mu(v)}g(u){\rm d}\omega {\rm d}u\nonumber\\
&+\int_{\mathbb R^3}\!\int_{\mathbb S^2} |v-u|^\gamma {\bf b}(\cos \theta)\sqrt{\mu(u)}\Big\{\sqrt{\mu(u^\prime)}g(v^\prime)+\sqrt{\mu(v^\prime)}g(u^\prime)\Big\}{\rm d}\omega {\rm d}u.\nonumber
\end{align}

It is a well-known fact \cite{Guo-IUMJ-2004} that it is non-negative, and the null space $\mathcal{N}$ of ${\mathcal{L} }$ can be precisely defined as:
\begin{equation*}
	{\mathcal{ N}}={\textrm{Span}}\left\{{\bf \mu}^{\frac12}, ~ v_i{\bf \mu}^{\frac12} (1\leq i\leq3),\left(| v|^2-3\right){\bf \mu}^{\frac12}\right\}.
\end{equation*}

To provide a more comprehensive perspective, we define ${\bf P}$ as the orthogonal projection operator from $L^2({\mathbb{R}}^3_ v)$ to $\mathcal{N}$. Consequently, for any given function $f(t, x, v )\in L^2({\mathbb{R}}^3_ v)$, we can express it as follows:
\begin{eqnarray}
	{\bf P}f &=&{a^f(t, x){\bf \mu}^{\frac12}+b^f(t, x)\cdot v{\bf \mu}^{\frac12}+c^f(t, x)\left(| v|^2-3\right)}{\bf \mu}^{\frac12}\label{def-P},\\
	a^f&=&{\displaystyle\int_{{{\mathbb{R}}}^3_v}}{\bf \mu}^{\frac12}fd v={\displaystyle\int_{{{\mathbb{R}}}^3_v}}{\bf \mu}^{\frac12}{\bf P}f d v,\nonumber\\
	b^f_i&=&\displaystyle\int_{{\mathbb{R}}^3_v} v_ i {\bf \mu}^{\frac 12}fd v
	=\int_{{\mathbb{R}}^3_v} v_ i {\bf \mu}^{\frac12}{\bf P}fd v,\quad i =1,2,3,\nonumber\\
	c^f&=&\frac{1}{6}\int_{{\mathbb{R}}^3_v} \left(| v|^2-3\right){\bf \mu}^{\frac12}f d v
	=\frac{1}{6}{\displaystyle\int_{{{\mathbb{R}}}^3_v}}\left(| v|^2-3\right){\bf \mu}^{\frac12}{\bf P}f d v.\nonumber
\end{eqnarray}

Hence, we arrive at a macro-micro decomposition concerning the given global Maxwellian $\mu$:
\begin{equation}\label{macro-micro}
	f(t,x, v)={\bf P}f(t,x, v)+{\bf P^{\bot}}f(t, x, v).\notag
\end{equation}
In this context, ${\bf I}$ represents the identity operator, while ${\bf P}f$ and ${\bf P}^\bot f$ are referred to as the macroscopic and microscopic components of $f(t,x,v)$, respectively. For a corresponding macro-microscopic decomposition applied to the Boltzmann equation, please see \cite{Guo-IUMJ-2004, Liu-Yang-Yu-PD-2004}. Next, we introduce the splitting of the velocity space $\mathbb{R}^3_v$, i.e.,
\begin{align}V_0\oplus V_1\oplus V_2\oplus\cdots\oplus  V_j\oplus V_{j+1}\oplus\cdots=\mathbb{R}^3_v,\label{vel-dec}
\end{align}
where 
\[V_{j}=\left\{v\in\mathbb{R}^3 \mid 2^{j}\leq |v| < 2^{j+1},\ j\in\mathbb{Z}^+\right\},\]
and 
\[V_0=\left\{v\in\mathbb{R}^3 \mid |v|< 2\right\}.\]

\subsection{Main results}
Our first result pertains to the existence of a time-periodic solution for \eqref{BQ-1}. 

\begin{theorem}\label{theorem-1}
Assume $-3<\gamma<0$ and $S(t+T,x,v)=S(t,x,v)$ for $0<T<\infty$, take $N\ge5$ and $\ell\geq N-\frac\gamma 2$, suppose that for any $\varepsilon>0$
\begin{eqnarray}
\|S\|^2_{\mathcal{Z}}&\equiv&\sup_t\|S(t,x,v)\|^2_{\widetilde{L}_v^2(\dot{B}^{-\frac32}_{2,\infty}\cap\dot{B}^\frac12_{2,\infty})}+\sup_t\|S\|_{L_v^2(\dot{H}^1\cap\dot{H}^N)}^2\nonumber\\
&&+\sum_{|\alpha|+|\beta|\leq N}\sup_t\|\langle v\rangle^{\ell-(1+\frac{\varepsilon}{2})\gamma+\gamma|\beta|}\,\partial_x^\alpha\partial_v^\beta S\|^2\nonumber\\
&&+\sum_{j\in\mathbb{Z}^+}\sum_{|\alpha|+|\beta|\le N}\sup_t\|\nu^{-1}(v)\langle v\rangle^{\ell+\gamma|\beta|}\partial_x^\alpha\partial_v^\beta S\|^2_{L_v^2(V_j)L_x^2(\mathbb{R}^3)}\leq \epsilon_0,   \notag
\end{eqnarray} 
where $\epsilon_0>0$ is suitably small.
Then the equation \eqref{f-perturbation} in  $\mathbb{R}^3_x$ admits a unique time-periodic solution $f^{per}(t,x,v)$ with the same period $T$. This solution satisfies the following bounds:
\begin{eqnarray}
    &&\sup_t\|f^{per}(t,x,v)\|^2_{\widetilde{L}_v^2(\dot{B}^{\frac12}_{2,\infty}\cap\dot{B}^{\frac32}_{2,\infty})}+\sup_t\|f^{per}(t,x,v)\|^2_{L_v^2(\dot{H}^1_x\cap\dot{H}_x^N)}\nonumber\\
   && +\sup_t\sum_{|\alpha|+|\beta|\leq N}\|\langle v\rangle^{\ell+\gamma|\beta|}\partial^\alpha_x\partial^\beta_v{\bf P^{\bot}}f^{per}(t,x,v)\|^2
		\lesssim \|S\|^2_{\mathcal{Z}}.\notag
\end{eqnarray}
\end{theorem}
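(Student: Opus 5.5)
We construct $f^{per}$ as the limit of the solution map sampled at integer multiples of the period, i.e. via a Poincaré-map/contraction argument built on uniform-in-time a priori estimates. Concretely, we solve the Cauchy problem for \eqref{f-perturbation} with zero initial data, $f(0)=0$, and let $\Phi(g)=f(T)$ denote the time-$T$ map starting from $g$. Because $S$ is $T$-periodic, a fixed point of $\Phi$ in a small ball of the space $X$ defined by the left-hand side of the bound in Theorem~\ref{theorem-1} produces a periodic solution.

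The first step is a global-in-time, uniform bound. For a solution $f$ with small data we prove
\begin{align*}
\sup_{t\ge0}\mathcal{E}(t)\lesssim \mathcal{E}(0)+\|S\|^2_{\mathcal{Z}},
\end{align*}
where $\mathcal{E}$ denotes the $X$-energy. This bound must hold even though $S$ is not integrable in time on $\mathbb{R}^+$, so we cannot simply integrate the source. Instead we close a Lyapunov inequality of the form
\begin{align*}
\frac{d}{dt}\mathcal{E}+\lambda\mathcal{D}\lesssim \|S\|_{\mathcal{Z}}\mathcal{D}^{1/2}+\mathcal{E}^{1/2}\mathcal{D}.
\end{align*}

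We build this inequality from two components.
\begin{itemize}
\item The macroscopic dissipation of $(a,b,c)$ comes from the usual fluid-type moment equations with interactive functionals. It only controls $\nabla_x(a,b,c)$, so the low frequencies are handled in $\dot B^{1/2}_{2,\infty}$ through Littlewood--Paley energy estimates. The source there is placed in $\dot B^{-3/2}_{2,\infty}$, consistent with the heat-like scaling $|\xi|^2$ at low frequency.
\item The microscopic part is estimated with the weights $\langle v\rangle^{\ell+\gamma|\beta|}$. The resulting dissipation is only $\|\nu^{1/2}\cdot\|$, since there is no spectral gap.
\end{itemize}
We handle the source terms in the weighted estimates by the velocity splitting \eqref{vel-dec}. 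On each shell $V_j$ we pair $S$ against the weighted dissipation. This produces the $\nu^{-1}$ weighted norm, and the extra factor $\langle v\rangle^{-(1+\varepsilon/2)\gamma}$ makes the sum over $j$ converge. This is exactly the reason for the two weighted source norms in $\mathcal{Z}$.

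The main obstacle is that, in the absence of a spectral gap, $\mathcal{D}$ does not control $\mathcal{E}$. The inequality above therefore cannot be closed to get $\mathcal{E}'+\lambda\mathcal{E}\lesssim\|S\|^2$ directly. Our first attempt is the interpolation trick for soft potentials. We use
\begin{align*}
\mathcal{E}\lesssim \mathcal{D}^{\theta}\,\mathcal{E}_{\text{high weight}}^{1-\theta},
\end{align*}
where on $V_j$ the weight loss $2^{j\gamma}$ is traded against the higher-weight energy, which stays bounded because of the additional weight margin $\ell\ge N-\gamma/2$ and the stronger weight on $S$. The low-frequency macroscopic part is treated in the same way, interpolating $\dot B^{1/2}_{2,\infty}$ between the dissipation $\dot B^{3/2}_{2,\infty}$ and a negative Besov norm that the $\dot B^{-3/2}$ source keeps bounded. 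This yields $\mathcal{E}'+\lambda\mathcal{E}^{1+1/\theta'}\lesssim\|S\|^2_{\mathcal{Z}}$, and an ODE comparison then gives $\sup_t\mathcal{E}\lesssim\|S\|^2_{\mathcal{Z}}$ provided $\mathcal{E}(0)$ is small.

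The second step is contraction, together with uniqueness. Let $f_1,f_2$ be two solutions with the same $S$. Their difference $w$ satisfies the linearized equation plus terms of the form $\Gamma(f_i,w)$, with no source. Repeating the energy estimate for $w$ in a slightly weaker weighted norm gives polynomial decay $\|w(t)\|\lesssim(1+t)^{-\kappa}$, using the bounded higher-weight norms of $f_i$ from the first step. For sufficiently large $m$ this makes $\Phi^m$ a contraction on the ball. The fixed point of $\Phi^m$ is then $mT$-periodic, and since $\Phi$ commutes with $\Phi^m$, uniqueness of the fixed point shows it is in fact $T$-periodic. Finally, because the ball is closed in the weak norm and the $X$-bounds are lower semicontinuous, the limit $f^{per}$ inherits the bound $\lesssim\|S\|^2_{\mathcal{Z}}$. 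The same decay estimate shows that any periodic solution in the ball coincides with $f^{per}$, which gives uniqueness.
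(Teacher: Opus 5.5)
Your first step does not close as proposed. The interpolation $\mathcal{E}\lesssim\mathcal{D}^{\theta}\mathcal{E}_{\mathrm{hw}}^{1-\theta}$ is the Strain--Guo device for decay problems without forcing. There the high-weight energy $\mathcal{E}_{\mathrm{hw}}$ is bounded by the data because the weighted energy functional does not grow. Here $\mathcal{E}_{\mathrm{hw}}$ satisfies the same inequality, with the same non-coercive dissipation and the same non-integrable source. So saying it ``stays bounded'' is exactly the claim you are proving, one weight level up, and the regress never terminates. The low-frequency version fails for a different reason. A source bounded in $\dot B^{-3/2}_{2,\infty}$ keeps the solution bounded in $\dot B^{1/2}_{2,\infty}$ (a heat-type gain of exactly two derivatives, $\hat f\approx\hat S/|\xi|^2$ as $\xi\to0$), but in no lower Besov norm, so there is nothing to interpolate against. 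Even granting such bounds, $\mathcal{E}'+\lambda\mathcal{E}^{p}\lesssim\|S\|_{\mathcal Z}^2$ with fixed $\lambda$ and $p>1$ gives only $\sup_t\mathcal{E}\lesssim\|S\|_{\mathcal Z}^{2/p}$, not $\|S\|_{\mathcal Z}^{2}$.

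The idea you are missing is to give up on a Lyapunov closure and use integrating factors pointwise in the variable where the dissipation degenerates. At low frequency one solves the ODE for $\hat f_L(t,\xi)$, uses $\int_0^t e^{-|\xi|^2(t-\tau)}|\xi|^2\,d\tau\le 1$, and only afterwards takes $\sup_j$ with $|\xi|\sim 2^j$. In the weighted microscopic estimates one writes $\mathcal{L}=\nu-\mathcal{K}$ and uses $\int_0^t e^{-\nu(v)(t-\tau)}\nu(v)\,d\tau\le1$. Then $\sup_t$ of the weighted energy is controlled by $\sup_t$ of source norms carrying (slightly more than) $\nu^{-1}$.
\begin{itemize}
\item This is the origin of $\nu^{-1}\langle v\rangle^{\ell+\gamma|\beta|}S$ on $V_j$.
\item It is also the origin of $\langle v\rangle^{\ell-(1+\varepsilon/2)\gamma}S$, via $e^{-\nu s}\nu^{1+\varepsilon}\lesssim(1+s)^{-1-\varepsilon}$; this is needed for integrability in time, not for summing over $j$.
\item The shells $V_j$ are needed to pull the $\tau$-integral out of the $\mathcal{K}$ and $\Gamma$ terms, since $\nu\sim 2^{j\gamma}$ on $V_j$.
\end{itemize}
The loss term $\eta\|\langle v\rangle^{-\gamma/2}\mathbf{P}^{\bot}f\|^2$ from the unweighted and Besov estimates is then absorbed by this weighted $\sup_t$ bound.

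Your contraction step also fails as stated. The decay you can prove for a difference $w$ is $\|w(t)\|_{\mathcal Y}\lesssim(1+t)^{-1/4}\|w(0)\|_{\mathcal Y_0}$. The data norm $\mathcal{Y}_0$ contains $\dot B^{1/2}_{2,\infty}$ and an extra weight $\langle v\rangle^{-\gamma/4}$, while $\mathcal Y$ contains $\dot B^{1}_{2,\infty}$. The critical $\dot B^{1/2}_{2,\infty}$ component does not decay at all, since the frequencies $2^j\lesssim t^{-1/2}$ are essentially untouched. So $\Phi^m$ is not a contraction for any single metric.

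The paper avoids needing one through Serrin's argument. With $f^*$ the solution from zero data, $f^*(nT)-f^*(mT)$ is the value at time $mT$ of the difference of the solutions issued from $0$ and from $f^*((n-m)T)$. The $\mathcal Y_0$-size of that initial difference is bounded uniformly by the first step, run with a slightly larger weight. Hence $\{f^*(nT)\}$ is Cauchy in $\mathcal Y$. Its limit is then used as initial data, and uniqueness for the Cauchy problem gives $T$-periodicity. Your uniqueness argument for periodic solutions is fine once the decay estimate is in hand.
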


Our second result concerns the stability of the time-periodic solution established in Theorem \ref{theorem-1}. To this end, we introduce a perturbation $g$ around the time-periodic solution $f^{per}$, defined by
\begin{eqnarray}\label{def-f-bar}
	g(t)=f(t)-f^{per}(t),\notag
\end{eqnarray}
which satisfies the evolution equation:
\begin{eqnarray}\label{f-bar-equation}
\partial_t g(t) +v\cdot \nabla_x g(t)+\mathcal{L} g(t)
	=\Gamma(g(t), f^{per}(t))+\Gamma(f^{per}(t),g(t)).
\end{eqnarray}
We then consider the corresponding initial value problem with initial data
\begin{eqnarray}\label{f-bar-equation-intial}
	g(t)|_{t=0}=g(0)=f(0)-f^{per}(0).
\end{eqnarray}
\begin{theorem}\label{theorem-2}
Under the same assumptions as in Theorem \ref{theorem-1}, with $f^{per}(t)$ being the time-periodic solution obtained in Theorem \ref{theorem-1}, provided that the initial data 

\[\|g_{0}\|_{X}\equiv\|g(0,x,v)\|_{\widetilde{L}_v^2(\dot{B}^{\frac12}_{2,\infty}\cap\dot{B}^{\frac32}_{2,\infty})}+\|g(0,x,v)\|_{L_v^2(\dot{H}^1_x\cap\dot{H}^N_x)}+\sum_{|\alpha|+|\beta|\leq N}\|\langle v\rangle^{\ell+\gamma|\beta|}\partial^\alpha_x\partial^\beta_v{\bf P^{\bot}}g(0,x,v)\|\] is sufficiently small, there exists a unique solution $g(t)$ to \eqref{f-bar-equation}-\eqref{f-bar-equation-intial} satisfying

\begin{eqnarray}
	\sup_t\|g(t)\|_{X}&\equiv&\sup_t\|g(t)\|_{\widetilde{L}_v^2(\dot{B}^{\frac12}_{2,\infty}\cap\dot{B}^{\frac32}_{2,\infty})}+\sup_t\|g(t)\|_{L_v^2(\dot{H}^1_x\cap\dot{H}^N_x)}\nonumber\\
    &&\quad+\sup_t\sum_{|\alpha|+|\beta|\leq N}\|\langle v\rangle^{\ell+\gamma|\beta|}\partial^\alpha_x\partial^\beta_v{\bf P^{\bot}}g(t)\|\nonumber\\
    &\lesssim& \|g_{0}\|_{X}.\notag
\end{eqnarray}
Furthermore, we also obtain the time decay rates for $g(t,x,v)$ in the following:
\begin{eqnarray}\label{decay-g}
&&\sup_t\|g(t)\|_{\widetilde{L}_v^2(\dot{B}^{1}_{2,\infty})}+\sup_t\|g(t)\|_{L_v^2(\dot{H}_x^2\cap\dot{H}_x^{N-2})}\nonumber\\
&&+\sup_t\sum_{1\leq|\alpha|\leq N-2}\|\langle v\rangle^{\ell}\partial^\alpha_x{\bf P^{\bot}}g(t)\|\lesssim \|g_{0}\|_X(1+t)^{-\frac14}. \notag 
\end{eqnarray}

\end{theorem}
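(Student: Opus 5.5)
The proof follows the standard continuity argument built around a closed a priori estimate for \eqref{f-bar-equation}. Local existence in the space $X$ is routine by iteration, since the equation is linear in $g$ with coefficients $f^{per}$ small in the norm of Theorem \ref{theorem-1}. Uniqueness comes from an $L^2$ energy estimate on the difference of two solutions. So the heart of the matter is the a priori bound
\[
\mathcal{E}(t)+\int_0^t\mathcal{D}(s)\,ds\lesssim \mathcal{E}(0)
\quad\text{under}\quad \sup_{s\le t}\mathcal{E}(s)\le\delta ,
\]
where:
\begin{itemize}
\item $\mathcal{E}(t)\sim\|g(t)\|_X^2$ is the energy functional;
\item $\mathcal{D}$ is the associated dissipation: the $\nu$-weighted norms of $\langle v\rangle^{\ell+\gamma|\beta|}\partial^\alpha_x\partial^\beta_v{\bf P}^\bot g$, plus the macroscopic gradient terms $\|\nabla_x(a,b,c)\|$ at the Besov levels $\dot B^{1/2},\dot B^{3/2}$ and the Sobolev levels $\dot H^1\cap\dot H^{N-1}$.
\end{itemize}

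To build $\mathcal{E}$ and $\mathcal{D}$, I reuse the linear machinery that underlies Theorem \ref{theorem-1}, namely the same estimates applied to the linearized operator $\partial_t+v\cdot\nabla_x+\mathcal{L}$. The steps, in order:
\begin{enumerate}
\item Apply Littlewood--Paley blocks $\dot\Delta_q$ and derivatives $\partial^\alpha_x$ to the equation, and take the $L^2$ inner product with $g$. This gives the microscopic dissipation through the coercivity of $\mathcal{L}$ on ${\bf P}^\bot$.
\item Recover the macroscopic dissipation from the fluid-type system satisfied by $(a,b,c)$ by means of the usual interactive functional.
\item For the weighted microscopic part, apply $\partial^\alpha_x\partial^\beta_v$ to the ${\bf P}^\bot$ equation and use the weight $\langle v\rangle^{\ell+\gamma|\beta|}$. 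The $\gamma|\beta|$ shift absorbs the commutator $[\partial_v,v\cdot\nabla_x]$.
\item Handle the compact part $\mathcal{K}$ by splitting into large and small velocities, which is where the decomposition \eqref{vel-dec} enters.
\end{enumerate}

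The forcing terms $\Gamma(g,f^{per})+\Gamma(f^{per},g)$ are bilinear, with one factor equal to $f^{per}$, which is small uniformly in $t$ but not integrable in time. The structure that saves us is that every such term must carry a dissipative factor of $g$, in the form
\[
|(\partial^\alpha\Gamma(f^{per},g),\partial^\alpha g)|\lesssim \sqrt{\epsilon_0}\,\mathcal{D}(t),
\]
so that it can be absorbed into the left-hand side. For this I need two ingredients. The first is the standard trilinear estimate for soft potentials, in which the factor $\nu^{1/2}$ goes onto the two $g$'s and a velocity weight is spent on $f^{per}$; the weight $\ell-(1+\frac\varepsilon2)\gamma$ in $\|S\|_{\mathcal Z}$ was presumably designed to give exactly this extra room. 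The second is, for the macroscopic components of $g$, Sobolev/Besov product estimates in $x$ that put $f^{per}$ in $L^\infty_x$ or $L^3_x$ and $g$ in a dissipated norm.

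The main obstacle is the low-frequency macroscopic part. In $\mathbb{R}^3$ the dissipation controls only $\nabla_x(a,b,c)$, not $(a,b,c)$ itself. The way around it is to estimate the nonlinear term in $\dot B^{-1/2}$ and $\dot B^{1/2}$ via $\|f^{per}\|_{\dot B^{3/2}\cap \dot B^{1/2}}\|\nabla_x g\|$ type bounds, so that $g$ appears only through its gradient. I expect this to work because the norm of Theorem \ref{theorem-1} controls $f^{per}$ in $\dot B^{1/2}\cap\dot B^{3/2}$, which is the right scaling for $L^3_x$ and $L^\infty_x$ with $L^2_v$ values, the latter holding up to the usual endpoint adjustment.

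Once this estimate closes, the global bound $\sup_t\|g\|_X\lesssim\|g_0\|_X$ follows by continuity. For the decay statement, the lack of a spectral gap means $\mathcal{D}$ does not control $\mathcal{E}$. I would interpolate in two directions:
\begin{itemize}
\item In frequency, the higher-order energy $\mathcal{E}_1$ (one derivative higher: $\dot B^1$, $\dot H^2\cap\dot H^{N-2}$, weight $\langle v\rangle^\ell$) is bounded by $\mathcal{D}^{\theta}\,\mathcal{E}^{1-\theta}$, using $\dot B^{1}\subset[\dot B^{1/2},\dot B^{3/2}]$ together with the uniform bound on $\dot B^{1/2}$.
\item In velocity, the missing $\nu$ factor is recovered from the surplus weight: the derivative count is reduced by two and the $\gamma|\beta|$ loss is dropped.
\end{itemize}
This yields a differential inequality $\frac{d}{dt}\mathcal{E}_1+c\,\mathcal{E}_1^{1+2}\|g_0\|_X^{-4}\le0$, which gives $\mathcal{E}_1\lesssim\|g_0\|_X^2(1+t)^{-1/2}$, that is, the rate $(1+t)^{-1/4}$. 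The interpolation exponents have to be arranged so that precisely this power appears.
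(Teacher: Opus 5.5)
Your central estimate $\mathcal{E}(t)+\int_0^t\mathcal{D}(s)\,ds\lesssim\mathcal{E}(0)$ cannot hold in this space, even for the linear equation ($f^{per}=0$).

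\begin{itemize}
\item The norm $\|g_0\|_X$ controls ${\bf P}g_0$ only in $\dot B^{1/2}_{2,\infty}\cap\dot B^{3/2}_{2,\infty}\cap\dot H^1\cap\dot H^N$. So you may take ${\bf P}g_0=\langle x\rangle^{-1}\mu^{1/2}\notin L^2_{x,v}$; this $1/|x|$ tail is exactly why the paper works in Besov spaces.
\item Each dyadic block obeys the exact identity $\|\dot\Delta_j g(t)\|^2+2\int_0^t(\mathcal{L}\dot\Delta_j g,\dot\Delta_j g)\,ds=\|\dot\Delta_j g_0\|^2$. Every block tends to zero and $(\mathcal{L}h,h)\lesssim\|{\bf P}^\bot h\|_\nu^2$, so summing over $j$ gives $\int_0^\infty\|{\bf P}^\bot g\|_\nu^2\,ds\gtrsim\|g_0\|_{L^2_{x,v}}^2=\infty$. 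Heuristically, ${\bf P}^\bot g$ is slaved to $\nabla_x(a,b,c)$ through $v\cdot\nabla_x{\bf P}g$, and $\|\nabla_x(a,b,c)(t)\|^2\sim t^{-1/2}$ is not integrable.
\item Your $\mathcal{D}$ contains $\|\langle v\rangle^{\ell}{\bf P}^\bot g\|_\nu^2\ge\|{\bf P}^\bot g\|_\nu^2$, so the inequality is false.
\item For the same reason, the absorption $|(\Gamma(f^{per},g),g)|\lesssim\sqrt{\epsilon_0}\,\mathcal{D}$ fails at $\alpha=0$. The term $(\Gamma(f^{per},{\bf P}g),{\bf P}^\bot g)$ contains $(a,b,c)$ undifferentiated, which you can reach only through $\|\nabla_x(a,b,c)\|$ (via $L^6$, Hardy or Lorentz bounds). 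That is an energy, not a time-integrable quantity, and $f^{per}$ supplies no decay.
\end{itemize}
This is why the paper never integrates a dissipation in time. All its bounds are of $\sup_t$ type, obtained from Duhamel formulas with the factors $e^{-|\xi|^2(t-\tau)}$, $e^{-\lambda(t-\tau)}$ and $e^{-\nu(v)(t-\tau)}$. For the last one it uses $\int_0^t e^{-\nu(v)(t-\tau)}\nu(v)\,d\tau\le1$, together with the annuli $V_j$ to handle $\mathcal{K}$ and $\Gamma$ under this factor. Moreover, for Theorem~\ref{theorem-2} the paper does not linearize at all. Since $f^{per}+g$ solves the forced problem \eqref{f-perturbation}, Proposition~\ref{existence} gives the global solution and the uniform bound. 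Proposition~\ref{time decay}, applied to the two solutions $f^{per}+g$ and $f^{per}$, gives the decay.

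Your decay step has two further gaps.

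\emph{No surplus velocity weight.} The exponent forced by your $\mathcal{E}_1^{3}$ is $\theta=1/3$, and $\mathcal{D}_1$ contains $\|\langle v\rangle^{\ell+\gamma/2}\partial^\alpha_x{\bf P}^\bot g\|^2$. H\"older in $v$ then needs a uniform bound on $\|\langle v\rangle^{\ell-\gamma/4}\partial^\alpha_x{\bf P}^\bot g\|$. But $X$ carries exactly the weight $\langle v\rangle^{\ell}$ on $\partial^\alpha_x{\bf P}^\bot g$. The shift $\gamma|\beta|$ only weakens the weights with $\beta\neq0$, so dropping it gains nothing. In fact no rate in terms of $\|g_0\|_X$ alone is possible for $\|\langle v\rangle^{\ell}\partial^\alpha_x{\bf P}^\bot g\|$. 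Data concentrated near $|v|\sim R$ are damped only at rate $\nu\sim R^{\gamma}$, so this norm stays comparable to its initial value up to times $t\sim R^{-\gamma}$. The paper's Proposition~\ref{time decay} puts $\|\langle v\rangle^{\ell-\gamma/4}\partial^\alpha_x{\bf P}^\bot g_0\|$ into $\mathcal{Y}_0$, which comes from $e^{-\nu t}\lesssim\nu^{-1/2}(1+t)^{-1/2}$. It propagates this extra weight by running Proposition~\ref{existence} with a larger $\ell$.

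\emph{The Lyapunov inequality does not close.} Even with the extra weight, $\frac{d}{dt}\mathcal{E}_1+c\,\mathcal{D}_1\le0$ fails. A term like $(\Gamma(\nabla_xf^{per},{\bf P}g),\nabla_x{\bf P}^\bot g)$ pairs undifferentiated $(a,b,c)$ with the non-decaying $f^{per}$. It can only be bounded by something like $\epsilon\,\mathcal{E}_1^{1/4}\mathcal{D}_1^{3/4}$, because a low-frequency profile has small $\dot H^2$ norm but $O(1)$ sup norm. After Young's inequality this leaves $C\epsilon^4\mathcal{E}_1$ on the right, which kills the decay of your ODE. The paper instead runs a bootstrap on $\sup_t(1+t)^{1/2}\|g\|_{\mathcal{Y}}^2$ through Duhamel. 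Nonlinear terms are measured only at energy level, e.g. $\|\langle v\rangle^{-\gamma/2}\Gamma(g,f_1)\|_{\dot B^0_{2,\infty}L^2_v}\lesssim\|f_1\|_{\dot B^{1/2}_{2,\infty}H^2_v}\|g\|_{\dot B^{1}_{2,\infty}L^2_v}$. The convolution bound $\int_0^te^{-2^{2j}(t-\tau)}2^{2j}(1+\tau)^{-1/2}\,d\tau\lesssim(1+t)^{-1/2}$ and Lemma~\ref{lemma2.5} then return the same rate with the small factor $\mathcal{M}_0^2$.
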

\begin{remark}
\begin{itemize} Two remarks are listed in the following:
	
	
\item [1)]Here we obtain a time decay rate $(1+t)^{-\frac14}$ for stability. If we impose the smallness
on \[\|g(0,x,v)\|_{\dot{B}^{s}_{2,\infty}L^2_v\bigcap\dot{H}^N_xL^2_v}, \ \ \ -\frac3{2}\leq s<\frac12,\]
instead of \[\|g(0,x,v)\|_{\dot{B}^{\frac12}_{2,\infty}L^2_v\bigcap\dot{H}^N_xL^2_v},\] we can achieve the better decay rate $(1+t)^{-\frac{1+s}{2}}$. To remain consistent with the function space of the time-periodic solution, we do not pursue a better decay.
\item [2)]Although we have only dealt with the case of soft potentials, our method is also applicable to the construction of time-periodic solutions for kinetic equations in the case of hard potentials.
\end{itemize}

\end{remark}
\subsection{Strategy of the proof}
We apply Serrin’s method \cite{Serrin-1959-ARMA} to prove the existence of time-periodic solutions to \eqref{f-perturbation}. The key step is to establish the global well-posedness of the Cauchy problem associated with \eqref{f-perturbation} and to construct a Cauchy sequence converging to the desired time-periodic solution.

As in the case of the compressible Navier–Stokes equations, the dissipative effect of the macroscopic component leads, in the three-dimensional whole space, to spatial decay of order $\frac{1}{|x|}$ as $|x|\rightarrow \infty $, indicating that the time-periodic solution does not belong to $L^2_x(\mathbb{R}^3)$.
To capture this slow decay, we work in the space $\dot{B}^{\frac12}_{2,\infty}$ introduced in \cite{Deguchi-2025}, which is suitable for handling such borderline spatial behavior. In addition, Tusda in  \cite{Tusda-2016-ARMA} deal with time-periodic solutions of the isentropic Navier-Stokes system. 

However, for the Boltzmann equation with soft potentials, the absence of a spectral gap prevents the direct use of spectral decomposition and semigroup techniques as in \cite{Deguchi-2025,DLN-2026} to obtain estimates in $\dot{B}^{\frac12}_{2,\infty}$. We therefore adopt the following approach:

\noindent$\bullet$ \textit{Fourier heat kernel inequality with velocity weight compensation.}

    We perform a Fourier transform in the spatial variable and then carry out energy estimates to obtain the estimate as
    \[\frac{{\rm d}}{{\rm d}t}\|\hat{f}\|_{L_v^2}^2+|\xi|^2\{\|\mathbf{P} \hat{f}\|^2+\|\mathbf{P}^{\bot}\hat{f}\,\langle v\rangle^{\frac{\gamma}{2}}\|_{L_v^2}^2\}\lesssim\cdots+\int_{\mathbb{R}_v^3}\hat{S}\hat{f}\,\mathrm{d}v\]
in the low frequency regime. In the case of soft potential, the dissipative part of the microscopic component suffers a loss of $\langle v\rangle^{\frac\gamma 2}$
  compared to the energy level. To compensate for this loss, we exploit a weighted interpolation of the form
  \[\|\mathbf{P}^{\bot}\hat{f}\|_{L_v^2}\le \|\mathbf{P}^{\bot}\hat{f}\,\langle v\rangle^{\frac{\gamma}{2}}\|_{L_v^2}^2+\eta\|\mathbf{P}^{\bot}f\,\langle v\rangle^{-\frac{\gamma}{2}}\|_{L_v^2}^2.\]
Consequently, we obtain
  \begin{align*}
    \frac{\rm d}{{\rm d}t}\|\hat{f}\|_{L_v^2}^2+|\xi|^2\|\hat{f}\|_{L_v^2}^2\le&\eta\,|\xi|^2\|\mathbf{P}^{\bot}f\langle v\rangle^{-\frac{\gamma}{2}}\|_{L_v^2}^2+\cdots,
\end{align*}
which further yields that
\begin{align}
\|\hat{f}\|_{L_v^2}^2\le\,&e^{-t|\xi|^2}\|\hat{f}(0,\xi,v)\|_{L_v^2}^2+\eta\int_0^t e^{-|\xi|^2(t-\tau)}|\xi|^2\|\mathbf{P}^{\bot}f\,\langle v\rangle^{-\frac{\gamma}{2}}\|_{L_v^2}^2\,\mathrm{d}\tau
\notag\\&+\int_0^t e^{-|\xi|^2(t-\tau)}\left|\hat{\Gamma}(f,f)\langle v\rangle^{-\frac{\gamma}{2}}\right|_{L_v^2}^2\,\mathrm{d}\tau
+\cdots.\label{low-heat}
\end{align} 

\noindent$\bullet$ \textit{Besov regularity for time integrability and decay.}

To handle the time convolution term 
$$\int_0^t e^{-|\xi|^2(t-\tau)}\left|\hat{\Gamma}(f,f)\langle v\rangle^{-\frac{\gamma}{2}}\right|_{L_v^2}^2\,\mathrm{d}\tau$$
in \eqref{low-heat}, we introduce the Besov space $\dot{B}^{\frac12}_{2,\infty}$, which allows us to recover the following bound:
 \begin{align*}
\sup_{j\in\mathbb{Z}}&\int_0^t \int_{\mathbb{R}_\xi^3}e^{-|\xi|^2(t-\tau)}\left|\hat{\Gamma}(f,f)\langle v\rangle^{-\frac{\gamma}{2}}\right|_{L_v^2}^2\varphi_j^22^j\,\mathrm{d}\xi\mathrm{d}\tau\\ &\lesssim
\sup_{j\in\mathbb{Z}}\int_0^t \int_{\mathbb{R}_\xi^3}e^{-|\xi|^2(t-\tau)}|\xi|^2\left|\hat{\Gamma}(f,f)\langle v\rangle^{-\frac{\gamma}{2}}\right|_{L_v^2}^2\varphi_j^22^{-j}\,\mathrm{d}\xi\mathrm{d}\tau\\
&\lesssim\sup_{j\in\mathbb{Z}}\int_0^t e^{-2^{2j}(t-\tau)}2^{2j}\,\mathrm{d}\tau\sup_t\sup_{j\in\mathbb{Z}}\int_{\mathbb{R}_\xi^3}\left|\left|f\right|_{H_v^2}\left|f\right|_{L_v^2}\right|^2\varphi_j^2\,2^{-j}\,\mathrm{d}\xi\\
     &\lesssim\sup_t\left|\left|f\right|_{H_v^2}\left|f\right|_{L_v^2}\right|_{\dot{B}_{2,\infty}^{-\frac{1}{2}}}^2
\lesssim\sup_t\|f\|_{\widetilde{H}_v^2(\dot{B}_{2,\infty}^{\frac{1}{2}})}^2\|f\|_{\widetilde{L}_v^2(\dot{B}_{2,\infty}^{\frac{1}{2}})}^2.
 \end{align*}   
Here $\widetilde{H}_v^2(\dot{B}_{2,\infty}^{\frac{1}{2}})$ denotes the corresponding Chemin-Lerner type norm, defined analogously to \eqref{cl-norm}. The choice of the Besov index $\frac 12$ is also dictated by this nonlinear estimate.
Moreover, the validity of the estimate \eqref{low-heat} requires controlling  the additional term
\[\eta\int_0^t e^{-|\xi|^2(t-\tau)}|\xi|^2\|\mathbf{P}^{\bot}f\,\langle v\rangle^{-\frac{\gamma}{2}}\|_{L_v^2}^2\,\mathrm{d}\tau,\] together with the corresponding weighted estimates for higher-order derivatives.

Once the global existence of the Cauchy problem for \eqref{f-perturbation} is established, the construction of time-periodic solutions further requires a Cauchy sequence in a larger functional space. To this end, we derive time decay estimates for the difference of two solutions to \eqref{f-perturbation}. Using Fourier-transform-based energy estimates, we obtain in the low-frequency regime the following bound:
 \begin{align}\label{decay-low-eqn}
\|g\|_{\widetilde{L}_v^2(\dot{B}_{2,\infty}^1)}^2&\lesssim \sup\limits_{j}\int_{\mathbb{R}_{\xi}^3} e^{-\lambda t|\xi|^2}|\xi|^2|\varphi_j\hat{g}_0|_2^2\,\mathrm{d}\xi+\eta\sup\limits_{j}\int_0^t \int_{\mathbb{R}_\xi^3}e^{-\lambda|\xi|^2(t-\tau)}|\xi|^2\left||\xi|\mathbf{P}^{\bot}\hat{g}\,\langle v\rangle^{-\frac{\gamma}{2}}2^j\right|_2^2\varphi_j^2\,\mathrm{d}\xi\mathrm{d}\tau\notag\\
    &+\sup\limits_{j}\int_0^t \int_{\mathbb{R}_\xi^3}e^{-\lambda|\xi|^2(t-\tau)}|\xi|^2\left|\hat{\Gamma}(g,f_1)\langle v\rangle^{-\frac{\gamma}{2}}\right|_2^2\varphi_j^2\,\mathrm{d}\xi\mathrm{d}\tau+\cdots.
\end{align}
To handle the nonlinear term 
\[\int_0^t \int_{\mathbb{R}_\xi^3}e^{-\lambda|\xi|^2(t-\tau)}|\xi|^2\left|\hat{\Gamma}(g,f_1)\langle v\rangle^{-\frac{\gamma}{2}}\right|_2^2\varphi_j^2\,\mathrm{d}\xi\mathrm{d}\tau,\]
we use the decay estimate
\begin{align}
    &\hspace{3mm}\int_0^te^{-2^{2j}(t-\tau)}2^{2j}(1+\tau)^{-\frac{1}{2}}\,\mathrm{d}\tau
    \lesssim(1+t)^{-\frac{1}{2}}\,,\notag
\end{align}
which follows from Lemma \ref{lem:max_estimate}.

Moreover, to control the second term on the right-hand side of \eqref{decay-low-eqn}, we establish decay estimates for the weighted energy. This is achieved by decomposing the linear operator $\mathcal{L}$ into \(\nu\) and \(K\), and exploiting the damping effect of \(\nu\). Together with Lemma \ref{lemma2.5}, and by allowing a controlled loss in velocity weights, we obtain the desired decay and thereby construct the Cauchy sequence required for the time-periodic solution.
It should be noted that, in this step, the mechanism for obtaining time decay differs from the spectral and semigroup-based approaches used in \cite{Deguchi-2025,DLN-2026,DUYZ-CMP-2008,Ukai-2006-DCDS,Ukai-Yang-2006-AA}.


\noindent$\bullet$ \textit{Annular decomposition in velocity space and collision-frequency compensation for time integrability.}

In the soft potential case, the weaker dissipation of the linearized operator introduces significant difficulties in the analysis. To address this issue, we decompose the linear operator as $\mathcal{L}=\nu(v)-\mathcal{K}$, following the classical framework in \cite{CIP-1994,Strain-Guo-2008-ARMA}. Exploiting the collision frequency $\nu(v)$, we obtain the weighted estimate such as
\[\left\|w_{\ell,0}\partial_x^{\alpha}f\right\|^2+\frac{1}{2}\int_0^t\int_{\mathbb{R}_{x,v}^6}e^{-\nu(v)(t-\tau)}\nu(v)\left(\partial_x^{\alpha}fw_{\ell,0}\right)^2 \, \mathrm{d}v\mathrm{d}x\mathrm{d}\tau\lesssim\cdots,\]
so that this integrating factor
 \(\int_0^te^{-\nu(v)(t-\tau)}\)
 can be used to handle the time-periodic term \(S\) which is non-integrable in time. 
 However, this decomposition requires re-estimating the operators \(\mathcal{K}\) and \(\Gamma\) under the weight \(e^{-\frac{\nu(v)}{2}(t-\tau)}\langle v\rangle^{\ell}\). The presence of the factor $e^{-\frac{\nu(v)}{2}(t-\tau)}$ prevents the direct application of the weighted exponential framework in \cite{Strain-Guo-2008-ARMA}, since the integral operator does not fully neutralize this decay for large $t-\tau$. To overcome this difficulty, we introduce an annular decomposition of the velocity space, separating the integration region so that the temporal decay can be exploited effectively. More precisely,
\begin{eqnarray*}
&& \int_0^t\int_{\mathbb{R}^6_{x,v}}e^{-\nu(v)(t-\tau)}\nu(v)\nu^{-1}(v)w^2_{\ell,0}\mathcal{K}\partial_x^\alpha f\partial_x^\alpha f\,{\rm d}v{\rm d}x{\rm d}\tau\nonumber\\
  &\lesssim & \sum_{j\in\mathbb{Z}^+}\int_0^t\int_{V_j\times\mathbb{R}_x^3}e^{-\nu(v)(t-\tau)}\nu(v)\nu^{-1}(v)w^2_{\ell,0}\mathcal{K}\partial_x^\alpha f\partial_x^\alpha f\,{\rm d}v{\rm d}x{\rm d}\tau+\cdots\nonumber\\
  &\lesssim&\sum_{j\in\mathbb{Z}^+}\int_0^t\int_{V_j\times\mathbb{R}_x^3}e^{-\tilde{c}_j(t-\tau)}c_j\nu^{-1}(v)w^2_{\ell,0}\mathcal{K}\partial_x^\alpha f\partial_x^\alpha f\,{\rm d}v{\rm d}x{\rm d}\tau+\cdots\\
    &\lesssim&\sum_{j\in\mathbb{Z}^+}\sup_t\int_{V_j\times\mathbb{R}_x^3}\nu^{-1}(v)w_{\ell,0}^2\mathcal{K}\partial_x^\alpha f\partial_x^\alpha f\,{\rm d}v{\rm d}x\sup_{j\in\mathbb{Z}^+}\int_0^t e^{-\tilde{c}_j(t-\tau)}c_j\,{\rm d}\tau+\cdots\\
    &\lesssim&\sum_{j\in\mathbb{Z}^+}\sup_t\int_{V_j\times\mathbb{R}_x^3}\nu^{-1}(v)w^2_{\ell,0}\mathcal{K}\partial_x^\alpha f\partial_x^\alpha f\,{\rm d}v{\rm d}x+\cdots
\end{eqnarray*}
 where $\tilde{c}_j\sim (1+2^{j+1})^{\gamma}$ and $c_j\sim (1+2^{j})^{\gamma}$, and the time integrability of $e^{-\tilde{c}_j(t-\tau)}c_j$ is used. Furthermore, combining this decomposition with the approach in \cite{Strain-Guo-2008-ARMA}, we obtain Lemmas \ref{lemma5.1} and \ref{lemma5.2} on each velocity region $V_j$. This leads to the necessity of controlling
 \[\sum_{j\in \mathbb{Z}^+}\sup_{t}\|w_{\ell,\beta}\partial^\alpha_x\partial_v^{\beta} {\bf P}^\bot  f\|_{L^2(V_j)L_x^2(\mathbb{R}^3)}^2\]
as carried out in Section \ref{Sect-weight} and Section \ref{Section-V-j}.



\subsection{Notation}
Before proceeding, we first introduce some basic notations used throughout the paper:
\begin{itemize}
	\item $C$ denotes some positive constant (generally large) and $\lambda$, $\epsilon$, $\kappa$, and $\delta$ stand for some positive constant (generally small). Note that all these constants may take different values in different places;
	\item $A\lesssim B$ means that there is a generic constant $C> 0$ such that $A \leq CB$. $A \sim B$ means $A\lesssim B$ and $B\lesssim A$. $A\gtrsim B$ can be defined similarly;
	\item The multi-indices $ \alpha= [\alpha_1,\alpha_2, \alpha_3]$ and $\beta = [\beta_1, \beta_2, \beta_3]$ will be used to record spatial and velocity derivatives, respectively. And $\partial^{\alpha}_x\partial^{\beta}_v=\partial^{\alpha_1}_{x_1}\partial^{\alpha_2}_{x_2}\partial^{\alpha_3}_{x_3} \partial^{\beta_1}_{ v_1}\partial^{\beta_2}_{ v_2}\partial^{\beta_3}_{ v_3}$. Similarly, the notation $\partial^{\alpha}$ will be used when $\beta=0$ and likewise for $\partial_{\beta}$. The length of $\alpha$ is denoted by $|\alpha|=\alpha_1 +\alpha_2 +\alpha_3$. $\alpha'\leq  \alpha$ means that no component of $\alpha'$ is greater than the corresponding component of $\alpha$, and $\alpha'<\alpha$ means that $\alpha'\leq  \alpha$ and $|\alpha'|<|\alpha|$. And it is convenient to write
	$$
	\left|\nabla_x^kf\right|\equiv\sqrt{\sum\limits_{|\alpha|=k}\left|\partial^{\alpha}
		f\right|^2};
	$$
     \item  $\mathbf \chi_{D}$ stands for the characteristic function of a set $D$;
	\item $\langle\cdot,\cdot\rangle$ is used to denote the ${L^2_{ v}}$ inner product in ${\mathbb{ R}}^3_{ v}$, with the ${L^2}$ norm $|\cdot|_{L_v^2}$, while $(\cdot, \cdot)$ denotes the ${L^2}$ inner product either in ${\mathbb{ R}}^3_{x}\times{\mathbb{ R}}^3_{ v }$ or in ${\mathbb{ R}}^3_{x}$ with the ${L^2}$ norm $\|\cdot\|$;
	\item For $p\geq 1, q\geq 1$, we also define the mixed velocity-space Lebesgue space $L_x^pL_v^q=L^p(\mathbb{R}_x^3;L^q(\mathbb{R}_v^3))$ with the norm
	$$
	\|f\|_{L^p_xL^q_v}=\left(\int_{\mathbb{R}^3_x}
	\left(\int_{\mathbb{R}^3_v}|f(x,v)|^qdv\right)^{\frac pq} dx \right)^{\frac 1p}
	$$
	for $f=f(x,v)\in L^p_xL^q_v$. For $p\geq 1$, $q\geq 1$, $\ell\in \mathbb{Z}^+$, $L_x^pL_v^q$, $L_v^pH_x^\ell$, etc. can be defined similarly;
   
    \item 
We will employ the notation $L_{v}^2(H^s)$ to represent the space $L^2(\mathbb R^3_v;H^s_x)$ with the norm defined as follows:
\begin{align*}
\|g\|_{L_v^2(H^s)}^2 = \int_{\mathbb R^3_v} \|g\|_{H^s}^2{\rm d}v.
\end{align*}
Similarly, we adopt the notations of $L_v^2(\dot H^s )$, $L_v^2( L^p )$ and $L ^p( L^2_v)$, etc;

\item  We introduce the following velocity-weighted norms that concerns the collision frequency:
\begin{align*}
\|g\|_{L_{x,v}^2}^2=\|g\|^2,\quad|g|_{\nu}^2:=|\sqrt{\nu}g|_{L_v^2}^2,\quad \|g\|_{\nu}^2=\|\sqrt{\nu}g\|^2.
\end{align*}
\item For a Banach space $X$ and an interval $I$ of $\mathbb{R}$, we define $\mathcal{C}(I;X)$ as the set of continuous functions on $I$ that take values in $X$, and $L^q(I;X)$ represents the set of measurable functions on $I$ that take values in $X$. When $I = (0,T)$, we denote its norm as   $\|\cdot\|_{L^q(0,T;X)}$.
\item Let $\mathscr{S}$ be the set of all Schwartz functions on $\mathbb{R}^3_x$, and let $\mathscr{S}'$ be the set of all tempered distributions on $\mathbb{R}^3_x$. For any $g(t,x,v) \in \mathscr{S}(\mathbb{R}^3_x)$, its Fourier transform is defined by
\begin{align*}
\hat{g}(t,\xi,v) = \mathcal{F}g(t,\xi,v) = \int_{\mathbb{R}^3_x} e^{-ix\cdot\xi}g(t,x,v) \mathrm{d}x, \quad x\cdot\xi = \sum_{j = 1}^3 x_j\xi_j,
\end{align*}
for all $\xi \in \mathbb{R}^3$, where $i = \sqrt{-1} \in \mathbb{C}$ denotes the imaginary unit, and $\mathcal{F}^{-1}g(t,\xi,v)$ denotes the inverse Fourier transform.
\item 
Let $\chi(\xi)$ be a smooth, radial, non-increasing function supported in the ball $B(0,\frac{4}{3})$ and satisfying $\chi(\xi)\equiv 1$ in the ball $B(0,\frac{3}{4})$.
Define $\varphi(\xi):=\chi(\frac{\xi}{2})-\chi(\xi)$. Then, $\varphi$ satisfies the following properties:
\begin{align*}
\sum_{j \in \mathbb{Z}} \varphi(2^{-j}\xi) = 1 \quad \text{for all } \xi \neq 0, \quad \text{and} \quad \mathrm{supp}\,\varphi \subset \left\{ \xi \in \mathbb{R}^3 \mid \frac{3}{4} \leq |\xi| \leq \frac{8}{3} \right\}.
\end{align*}
For any $j\in \mathbb{Z}$, we define the homogeneous dyadic blocks $\dot{\Delta}_j$ as follows:
\begin{align*}
\dot{\Delta}_j g := \mathcal{F}^{-1}\left( \varphi(2^{-j}\cdot)\mathcal{F}(g) \right) = 2^{3j} h(2^j\cdot) \ast g, \quad \text{where} \quad h := \mathcal{F}^{-1}\varphi.
\end{align*}
\item We define the homogeneous Besov space as in \cite{BCD-Book-2011,Dr-IM-2000}. Let $s\in \mathbb{R}$, $1\leq p,q\leq \infty$, and $g\in \mathscr{S}'(\mathbb{R}^3)$. Then, the homogeneous Besov space $\dot{B}^s_{p,q}(\mathbb{R}^3)$ is defined with the norm $\|\cdot\|_{\dot{B}^s_{p,q}}$ as follows:
\begin{align*}
  \|g\|_{\dot{B}_{p,q}^s}:= \bigg(\sum_{j\in\mathbb Z} 2^{jqs}\|\dot\Delta_j g\|_{L^p}^q  \bigg)^\frac{1}{q}. 
\end{align*}
In particular, when $r = \infty$, the norm is characterized by
\begin{align*}
\|g\|_{\dot{B}^s_{p,\infty}}:=\sup_{j\in\mathbb{Z}}2^{js}\|\dot{\Delta}_j g\|_{L^p}.   
\end{align*}
Here, we adopt the notation $L_v^\varrho(\dot{B}_{p,q}^s)$ to represent the space $L^\varrho(\mathbb{R}_v^3;\dot{B}_{p,q}^s(\mathbb{R}^3_x))$ equipped with the norm
\begin{align*}
\|g\|_{L_v^\varrho(\dot{B}_{p,q}^s)} = \left(\int_{\mathbb{R}^3} \|g\|_{\dot{B}_{p,q}^s}^\varrho\mathrm{d}v \right)^{\frac{1}{\varrho}}.
\end{align*}
Additionally, we know that $\dot{H}^s = \dot{B}_{2,2}^s$.
\item We introduce the Chemin–Lerner-type mixed space-velocity Besov spaces, as in \cite{dlx-16}. Let $s\in\mathbb{R}$ and $1\le\varrho, p,q\le\infty$, then the homogeneous Chemin-Lerner space $\widetilde{L}_v^\varrho(\dot{B}_{p,q}^s)$ is define with the norm $\|\cdot\|_{\widetilde{L}_v^\varrho(\dot{B}_{p,q}^s)}$ as follows:
\begin{align}
\|g\|_{\widetilde{L}_v^\varrho(\dot{B}_{p,q}^s)}:=\left(\sum_{j\in\mathbb{Z}}2^{jqs}\|\dot{\Delta}_jg\|_{L_v^\varrho L_x^p}^q\right)^\frac{1}{q}.\label{cl-norm}
\end{align}
In particular, when $q=\infty$, the norm is characterized by
\[\|g\|_{\widetilde{L}_v^\varrho(\dot{B}_{p,\infty}^s)}:=\sup_{j\in\mathbb{Z}}2^{js}\|\dot{\Delta}g\|_{L_v^\varrho L_x^p}\,.\]
It's obvious that $\|g\|_{\widetilde{L}_v^\varrho(\dot{B}_{2,\infty}^s)}\le\|g\|_{L_v^\varrho(\dot{B}_{2,\infty}^s)}$.


\end{itemize}
\subsection{Outline of the paper} The structure of the remaining sections in this paper is as follows:
\begin{itemize}

	\item In Section \ref{Sect-Pre}, We will present some useful inequalities that play an important role in the subsequent proofs.
	\item In Section \ref{Sect-Besov}–\ref{Section-V-j}, we focus on closing the a priori estimate and establishing the global existence for the Cauchy problem of \eqref{f-perturbation}.
	\item In Section \ref{Sect-time-devay}, we will prove time decay rates for the two solutions of \eqref{f-perturbation}, which plays an essential role in the Cauchy sequence argument.
    \item In Section \ref{Sect-time-periodic}, we will construct the time-periodic solution for \eqref{f-perturbation} and prove Theorem \ref{theorem-1}. Moreover, we will establish the global stability of the time-periodic solution for \eqref{f-perturbation} and prove Theorem \ref{theorem-2}.
\end{itemize}

\section{Preliminaries}\label{Sect-Pre}
In this section, we will present some basic properties of homogeneous Besov spaces, Sobolev spaces, and $L^p$ spaces, as well as the interpolation inequalities between them. 
\subsection{Basic properties of homogeneous Besov space}

\begin{proposition}{\rm(\!\!\cite[Chapter 2]{BCD-Book-2011})}\label{prop2.1}
The following properties hold:
\begin{itemize}
\item{} For $s\in\mathbb{R}$, $1\leq p_{1}\leq p_{2}\leq \infty$ and $1\leq q_{1}\leq q_{2}\leq \infty$, it holds that
\begin{equation}\label{embedding-1}
\begin{aligned}
\dot{B}^{s}_{p_{1},q_{1}}\hookrightarrow \dot{B}^{s-3 (\frac{1}{p_1} -\frac{1}{p_2})}_{p_{2},q_{2}}.
\end{aligned}
\end{equation}
\item{}For $1\leq p\le q\leq \infty$, it holds that
\begin{align}\label{embedding-2}
    \dot{B}^{3(\frac{1}{p}-\frac{1}{q})}_{p,1}\hookrightarrow L^q.
\end{align}
\item{}  For $1\leq p,q\leq \infty$, $s_1<s_2$, and $\theta\in (0,1)$, it holds that
\begin{align}\label{interpolation}
\|u\|_{\dot B_{p,q}^{\theta s_1+(1-\theta)s_2}}\lesssim\|u\|_{\dot B_{p,1}^{\theta s_1+(1-\theta)s_2}}\lesssim \frac{1}{\theta(1-\theta)(s_2-s_1)}\|u\|_{\dot B_{p,\infty}^{s_1}}^{\theta} \|u\|_{\dot B_{p,\infty}^{s_2}}^{1-\theta}  .  
\end{align}
\item{} For $1\leq p\leq q\leq\infty$, it holds that
\begin{equation}\nonumber
\begin{aligned}
  \dot{B}^{0}_{p,1}\hookrightarrow L^{p}\hookrightarrow \dot{B}^{0}_{p,\infty}\hookrightarrow \dot B_{q,\infty}^{\varsigma},\quad \varsigma=-3\Big(\frac{1}{p}-\frac{1}{q}\Big).
\end{aligned}
\end{equation}


    
\end{itemize}
\begin{itemize}
\item{} Let $s>0$, $1\leq  q\leq \infty$.  Then $\dot B^{s}_{2,q}\cap L^\infty$ is an algebra and
 \begin{align} \label{111}
\|gh\|_{\dot B_{2,q}^s}\lesssim \|g\|_{L^\infty}\|h\|_{\dot B_{2,q}^s}+\|h\|_{L^\infty}\|g\|_{\dot B_{2,q}^s},    
 \end{align}
which in particular leads to
\begin{align*}
\|gh\|_{\dot B_{2,1}^{\frac{3}{2}}}\lesssim \|g\|_{\dot B_{2,1}^{\frac{3}{2}}}\|h\|_{\dot B_{2,1}^{\frac{3}{2 }}} .    
\end{align*}

\item{} Let the real numbers $s_1$, $s_2$  and $q$ fulfill
\begin{align*}
 1\leq  q,q_1,q_2\leq\infty, \quad s_1<\frac{3}{2},\quad s_2<\frac{3}{2},\quad s_1+s_2>0,\quad \frac{1}{q}=\frac{1}{q_1}+\frac{1}{q_2}.
\end{align*}
  Then, it holds  that
 \begin{align} \label{newinterpolation}
\|gh\|_{\dot B_{2,q}^{s_1+s_2-\frac{3}{2}}}\lesssim \|g\|_{ \dot B_{2,q_1}^{s_1 }}\|h\|_{\dot B_{2,q_2}^{s_2}} .    
 \end{align}

\item{} Let  the real numbers $s_1$ and $s_2$ fulfill
\begin{align*}
 s_1\leq \frac{3}{2},\quad s_2< \frac{3}{2},\quad s_1+s_2\geq 0.   
\end{align*}
Then, we have
 \begin{align} \label{inter}
\|gh\|_{\dot B_{2,\infty}^{s_1+s_2-\frac{3}{2}}}\lesssim \|g\|_{ \dot B_{2,1}^{s_1 }}\|h\|_{\dot B_{2,\infty}^{s_2}}.    \notag
 \end{align}
    \end{itemize}
\end{proposition}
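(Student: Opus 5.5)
The statement is Proposition \ref{prop2.1}, a collection of standard facts on homogeneous Besov spaces quoted from \cite[Chapter 2]{BCD-Book-2011}. My plan is to verify each item directly from the dyadic definition, using three ingredients: Bernstein's inequality, the almost-orthogonality of the blocks ($\dot\Delta_j\dot\Delta_k=0$ for $|j-k|\ge2$), and Bony's paraproduct decomposition.

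\textbf{Embeddings.} For the first embedding \eqref{embedding-1}, Bernstein gives
\[
\|\dot\Delta_j u\|_{L^{p_2}}\lesssim 2^{3j(\frac1{p_1}-\frac1{p_2})}\|\dot\Delta_j u\|_{L^{p_1}}.
\]
Multiplying by $2^{j(s-3(\frac1{p_1}-\frac1{p_2}))}$ and using $\ell^{q_1}\subset\ell^{q_2}$ yields the claim. For \eqref{embedding-2}, write $u=\sum_j\dot\Delta_j u$, which converges in $\mathscr S'$ modulo polynomials; the $\dot B_{p,1}$ hypothesis makes it converge absolutely in $L^q$. Then Bernstein and the triangle inequality bound $\|u\|_{L^q}$ by the $\dot B^{3(1/p-1/q)}_{p,1}$ norm.

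The chain $\dot B^0_{p,1}\hookrightarrow L^p\hookrightarrow\dot B^0_{p,\infty}$ follows from two facts. The first is the triangle inequality. The second is that $\dot\Delta_j$ is convolution with $2^{3j}h(2^j\cdot)$, whose $L^1$ norm is independent of $j$, so $\|\dot\Delta_j u\|_{L^p}\le\|h\|_{L^1}\|u\|_{L^p}$. The last arrow is \eqref{embedding-1} with $q_1=q_2=\infty$.

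\textbf{Interpolation.} For \eqref{interpolation}, the first inequality is $\ell^1\subset\ell^q$. For the second, set $s=\theta s_1+(1-\theta)s_2$, $A=\|u\|_{\dot B^{s_1}_{p,\infty}}$ and $B=\|u\|_{\dot B^{s_2}_{p,\infty}}$. Split the sum at a level $N$:
\[
\sum_j 2^{js}\|\dot\Delta_j u\|_{L^p}\le \sum_{j\le N}2^{j(s-s_1)}A+\sum_{j>N}2^{j(s-s_2)}B .
\]
The two geometric sums are bounded by
\[
\frac{2^{N(1-\theta)(s_2-s_1)}}{1-2^{-(1-\theta)(s_2-s_1)}}\,A+\frac{2^{-N\theta(s_2-s_1)}}{1-2^{-\theta(s_2-s_1)}}\,B .
\]
Choose $N$ so that $2^{N(s_2-s_1)}\approx B/A$. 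Then the bound becomes $A^\theta B^{1-\theta}$ times a constant of order $\frac1{\theta(1-\theta)(s_2-s_1)}$. This is the only place where explicit constant tracking is needed.

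\textbf{Product estimates.} For \eqref{111}, \eqref{newinterpolation} and the final product law, decompose
\[
gh=T_gh+T_hg+R(g,h),\qquad T_gh=\sum_j \dot S_{j-1}g\,\dot\Delta_j h,\quad R=\sum_{|j-k|\le1}\dot\Delta_j g\,\dot\Delta_k h .
\]
\begin{itemize}
\item For \eqref{111}: since $\|\dot S_{j-1}g\|_{L^\infty}\lesssim\|g\|_{L^\infty}$ and each paraproduct term is spectrally localized in an annulus of size $2^j$, we get $\|T_gh\|_{\dot B^s_{2,q}}\lesssim\|g\|_{L^\infty}\|h\|_{\dot B^s_{2,q}}$, and symmetrically for $T_hg$. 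The remainder terms are localized only in balls of size $2^j$; summing $\dot\Delta_{j'}R$ over $j\ge j'-3$ uses $s>0$. The particular case $s=\frac32$ then follows from \eqref{embedding-2} with $\dot B^{3/2}_{2,1}\hookrightarrow L^\infty$.
\item For \eqref{newinterpolation}: bound $\dot S_{j-1}g$ in $L^\infty$ by $\sum_{k<j}2^{3k/2}\|\dot\Delta_k g\|_{L^2}$. This is $\lesssim 2^{j(\frac32-s_1)}$ times an $\ell^{q_1}$ sequence, and the geometric summation uses $s_1<\frac32$; symmetrically $s_2<\frac32$ handles $T_hg$. The remainder is estimated in $L^1$ by Cauchy–Schwarz, then moved to $L^2$ by Bernstein (costing $2^{3j'/2}$), and summed using $s_1+s_2>0$. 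Hölder in $\ell^{q_1}\times\ell^{q_2}$ produces the $\ell^q$ norm.
\item The last inequality is the endpoint version of the same computation. The $\dot B_{2,1}$ norm on $g$ permits $s_1=\frac32$, since $\dot S_{j-1}g$ is then bounded in $L^\infty$ without losing the geometric sum. The endpoint $s_1+s_2=0$ is allowed because the target index is $\infty$: the remainder sum $\sum_j\|\dot\Delta_j g\|\,\|\dot\Delta_j h\|$ is controlled by $\ell^1\times\ell^\infty$.
\end{itemize}

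The main obstacle is the endpoint bookkeeping in these last two product estimates, specifically the remainder $R$ at $s_1+s_2=0$ and the paraproduct at $s_1=\frac32$. All other items are routine. In practice, since these are textbook results, I would cite \cite[Thm.~2.47, Cor.~2.54, Prop.~2.22]{BCD-Book-2011} rather than reproduce the computations.
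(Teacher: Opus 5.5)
Your proposal is correct and is the standard Littlewood--Paley/Bony-paraproduct argument of \cite[Chapter 2]{BCD-Book-2011}, which is all the paper relies on: it gives no proof of Proposition~\ref{prop2.1} and only cites the book. The one step to tighten is the interpolation constant: start the tail at $j\ge N+1$ and take $N=\lfloor \log_2(B/A)/(s_2-s_1)\rfloor$, which gives the factor $\bigl(1-2^{-(1-\theta)(s_2-s_1)}\bigr)^{-1}+\bigl(1-2^{-\theta(s_2-s_1)}\bigr)^{-1}\lesssim 1+\frac{1}{\theta(1-\theta)(s_2-s_1)}$; the additive $1$ cannot be dropped (for $\widehat u$ supported in $\{\frac43<|\xi|<\frac32\}$ one has $\dot\Delta_0u=u$ and $\dot\Delta_ju=0$ for $j\neq0$, so every Besov norm of $u$ equals $\|u\|_{L^p}$), hence the displayed constant $\frac{1}{\theta(1-\theta)(s_2-s_1)}$ is only right when $\theta(1-\theta)(s_2-s_1)\lesssim1$, which covers every use in the paper.
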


\subsection{Some interpolation inequalities between Besov space and other space}
\begin{lemma}\label{interpolation-besov-other}
    For all Schawartz functions $f$ on $\mathbb{R}^3$and any real numbers \(s\), the following inequalities hold:
    \begin{align}
       & \|f\|_{\dot{B}^{s+1}_{2,\infty}} \;\lesssim\;  \;\|f\|^{\frac12}_{\dot{H}^{s+2}}\;\|f\|^{\frac12}_{\dot{B}^{s}_{2,\infty}},\nonumber\\
       &\|\nabla f\|_{L^2} \;\lesssim\; \;\|f\|_{\dot{B}_{2,\infty}^{1/2}}^{2/3} \;\|f\|_{\dot{B}_{2,2}^{2}}^{1/3},\nonumber\\
       &\|f\|_{L^4} \lesssim \|f\|_{\dot{B}^{1/2}_{2,\infty}}^{1/2} \|\nabla f\|_{L^2}^{1/2},\nonumber\\
       &\|f\|_{L^\infty} \lesssim \|f\|_{\dot{B}_{2,\infty}^{1}}^{\frac12} \|\nabla_x^2 f\|_{L^2}^{\frac12}.\nonumber
    \end{align}
\end{lemma}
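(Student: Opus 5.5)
All four inequalities come from one mechanism. Work on the Fourier side with the homogeneous Littlewood–Paley blocks $\dot\Delta_j$ and split the frequency sum at a threshold $2^{J}$. The high frequencies are controlled by the higher-regularity norm and the low frequencies by the Besov norm with $\infty$ summability index. Optimizing in $J$ then gives the geometric-mean interpolation exponents. Throughout we use $\|\dot\Delta_j f\|_{L^2}\sim\|\varphi(2^{-j}\cdot)\hat f\|_{L^2}$ and $\|f\|^2_{\dot H^{\sigma}}\sim\sum_j 2^{2j\sigma}\|\dot\Delta_j f\|_{L^2}^2$.

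\textbf{First inequality.} For each fixed $j$ there are two bounds:
\[
2^{j(s+1)}\|\dot\Delta_j f\|_{L^2}\le 2^{j}\,\|f\|_{\dot B^{s}_{2,\infty}},
\qquad
2^{j(s+1)}\|\dot\Delta_j f\|_{L^2}\lesssim 2^{-j}\,\|f\|_{\dot H^{s+2}}.
\]
The second holds because $2^{j(s+2)}\|\dot\Delta_j f\|_{L^2}\lesssim\|f\|_{\dot H^{s+2}}$. Hence
\[
2^{j(s+1)}\|\dot\Delta_j f\|\lesssim \min\bigl(2^{j}A,\;2^{-j}B\bigr)\le \sqrt{AB},
\]
with $A=\|f\|_{\dot B^{s}_{2,\infty}}$ and $B=\|f\|_{\dot H^{s+2}}$. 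Taking the supremum over $j$ gives the claim. Since the target norm is itself an $\ell^\infty$ norm, no summation is needed.

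\textbf{Second inequality.} Write
\[
\|\nabla f\|_{L^2}^2\sim\sum_j 2^{2j}\|\dot\Delta_j f\|^2 .
\]
For $j\le J$, bound each term by $2^{j}\|f\|^2_{\dot B^{1/2}_{2,\infty}}$; summing this geometric series gives $\lesssim 2^{J}A^2$. For $j>J$, bound each term by $2^{-2j}\,2^{4j}\|\dot\Delta_j f\|^2$; the sum is at most $2^{-2J}B^2$ with $B=\|f\|_{\dot B^2_{2,2}}$. Choosing $2^{3J}\sim B^2/A^2$ gives
\[
\|\nabla f\|^2\lesssim A^{4/3}B^{2/3},
\]
which is the claim. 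Here $J$ ranges over $\mathbb{Z}$, so the optimization is exact up to constants.

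\textbf{Third and fourth inequalities.} These are $L^p$ statements. For $L^\infty$:
\[
\|f\|_{L^\infty}\le\sum_j\|\dot\Delta_j f\|_{L^\infty}\lesssim\sum_j 2^{3j/2}\|\dot\Delta_j f\|_{L^2}
\]
by Bernstein's inequality, which is the block-level form of \eqref{embedding-2} with $p=2$, $q=\infty$. For $j\le J$, bound the terms by $2^{j/2}\|f\|_{\dot B^{1}_{2,\infty}}$, with sum $\lesssim 2^{J/2}A$. For $j>J$, bound them by $2^{-j/2}\,2^{2j}\|\dot\Delta_j f\|\le 2^{-j/2}\|\nabla^2 f\|_{L^2}$, with sum $\lesssim 2^{-J/2}B$. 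Optimizing in $J$ gives $\sqrt{AB}$.

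The $L^4$ inequality is the one I expect to be the main obstacle. A naive $\ell^1$ sum of $\|\dot\Delta_j f\|_{L^4}\lesssim 2^{3j/4}\|\dot\Delta_j f\|_{L^2}$ works equally well: low frequencies give $2^{J/4}\|f\|_{\dot B^{1/2}_{2,\infty}}$, and high frequencies give $2^{-J/4}\|\nabla f\|_{L^2}$ after Cauchy–Schwarz in $j$. Both sums are geometric, so optimizing in $J$ yields $\|f\|_{L^4}\lesssim A^{1/2}B^{1/2}$ directly, without any real-interpolation subtlety. The point to check is that $f=\sum_j\dot\Delta_j f$ converges in $L^4$ (respectively $L^\infty$), so that there is no polynomial ambiguity. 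This holds because the block bounds are summable in both directions, and $f$ is Schwartz with $\hat f$ integrable near the origin.
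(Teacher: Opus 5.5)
Your proof is correct and is the paper's argument with the cited tools unfolded. The paper writes out only the $L^\infty$ case, chaining $\dot B^{3/2}_{2,1}\hookrightarrow L^\infty$ from \eqref{embedding-2}, the interpolation inequality \eqref{interpolation} between $\dot B^{1}_{2,\infty}$ and $\dot B^{2}_{2,\infty}$, and $\dot H^{2}\hookrightarrow \dot B^{2}_{2,\infty}$; your split at $2^J$ with optimization in $J$ is exactly how \eqref{interpolation} is proved, applied directly to the target quantity rather than through an intermediate $\dot B^{\sigma}_{2,1}$ space. The paper gets the other three inequalities from the same chain (through $\dot B^{s+1}_{2,1}$, $\dot B^{1}_{2,1}$ and $\dot B^{3/4}_{2,1}\hookrightarrow L^4$), which you handle explicitly, including the convergence $f=\sum_j\dot\Delta_j f$ that the cited embeddings use implicitly.
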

\begin{proof}
    For brevity, we just prove the last inequality as an example. And the rest inequalities can be obtained by a similar way. It can be deduced from \eqref{embedding-2} and \eqref{embedding-1} that $\dot{B}_{2,1}^\frac{3}{2}\hookrightarrow L^\infty$ and $\dot{B}_{2,2}^s\hookrightarrow\dot{B}_{2,\infty}^s$, which yield
    \[\|f\|_{L^\infty}\lesssim \|f\|_{\dot{B}_{2,1}^\frac{3}{2}},\;\|f\|_{\dot{B}_{2,\infty}^s}\lesssim\|f\|_{\dot{H}^s}.\]
    Moreover, \eqref{interpolation} implies that
    \[\|f\|_{\dot{B}_{2,1}^\frac{3}{2}}\lesssim\|f\|^\frac{1}{2}_{\dot{B}_{2,\infty}^1}\|f\|^\frac{1}{2}_{\dot{B}_{2,\infty}^2}\lesssim\|f\|^\frac{1}{2}_{\dot{B}_{2,\infty}^1}\|f\|_{\dot{H}^2}^\frac{1}{2}.\]
    Consequently,
    \[\|f\|_{L^\infty}\lesssim\|f\|^\frac{1}{2}_{\dot{B}_{2,\infty}^1}\|f\|_{\dot{H}^2}^\frac{1}{2}. \]
    This ends the proof of Lemma \ref{interpolation-besov-other}.
\end{proof}

\subsection{Some basic inequalities for time decay rates}
\begin{lemma}
\label{lem:max_estimate}
Let $\alpha \ge 0$, $c > 0$. Then for all $t> 0$,
\[
\sup_{x > 0} x^{\alpha} e^{-c x t} \le C (1+t)^{-\alpha},
\]
where the constant $C = C(\alpha, c) > 0$ depends only on $\alpha$ and $c$.
\end{lemma}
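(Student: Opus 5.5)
The plan is a direct one-variable calculus computation, followed by comparing $t^{-\alpha}$ with $(1+t)^{-\alpha}$.

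\emph{Case $\alpha=0$.} Here $x^{0}e^{-cxt}=e^{-cxt}\le 1=(1+t)^{0}$, so the bound holds with $C=1$ for every $t>0$.

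\emph{Case $\alpha>0$.} Fix $t>0$ and set $\phi(x)=x^{\alpha}e^{-cxt}$ on $(0,\infty)$. Differentiating gives $\phi'(x)=x^{\alpha-1}e^{-cxt}(\alpha-ctx)$. Hence $\phi$ increases on $(0,\alpha/(ct))$ and decreases afterwards, so its maximum is at $x_*=\alpha/(ct)$:
\[
\sup_{x>0}x^{\alpha}e^{-cxt}=\Big(\frac{\alpha}{ce}\Big)^{\alpha}t^{-\alpha}.
\]
Equivalently, substituting $y=ctx$ gives $x^\alpha e^{-cxt}=(ct)^{-\alpha}y^\alpha e^{-y}$, and $\sup_{y>0}y^\alpha e^{-y}=(\alpha/e)^\alpha$. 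For $t\ge1$ we have $1+t\le 2t$, so $t^{-\alpha}\le 2^{\alpha}(1+t)^{-\alpha}$. This yields the claim with $C=(2\alpha/(ce))^{\alpha}$, which depends only on $\alpha$ and $c$.

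\emph{The regime $0<t<1$ with $\alpha>0$.} I expect this to be the main obstacle, and the exact formula shows it cannot be overcome as the statement is worded. The supremum $(\alpha/(ce))^\alpha t^{-\alpha}$ tends to $+\infty$ as $t\to0^+$, while $C(1+t)^{-\alpha}\le C$. So the inequality over all $x>0$ is only valid when $t$ is bounded below, or when $x$ ranges over a bounded set. In the latter case, with $x\le M$, we get $x^\alpha e^{-cxt}\le M^\alpha\le (2M)^\alpha(1+t)^{-\alpha}$ for $t<1$.

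For the way the lemma is invoked in the paper, for instance in
\[
\int_0^t e^{-2^{2j}(t-\tau)}2^{2j}(1+\tau)^{-\frac12}\,{\rm d}\tau\lesssim(1+t)^{-\frac12},
\]
this restriction is harmless. The plan there is to split the integral at $\tau=t/2$. On $[t/2,t]$, bound $(1+\tau)^{-1/2}\lesssim(1+t)^{-1/2}$ and use $\int_0^\infty 2^{2j}e^{-2^{2j}s}\,{\rm d}s=1$. On $[0,t/2]$, write
\[
2^{2j}e^{-2^{2j}(t-\tau)}\le 2^{2j}e^{-2^{2j}t/2},
\]
and apply the computation above with $x=2^{2j}$ and $\alpha$ chosen as needed for $t\ge1$. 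When $t\le1$ the left side is trivially bounded by $1\lesssim(1+t)^{-1/2}$.

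In writing up the lemma, I would therefore record the sharp identity for the supremum together with the case $t\ge1$ (or the bounded-$x$ version for $t<1$), since that is exactly what the subsequent decay arguments use.
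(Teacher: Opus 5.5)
Your analysis is correct. Your computation matches the paper's: the critical point is $x_*=\alpha/(ct)$ and the supremum is $(\alpha/(ce))^{\alpha}t^{-\alpha}$.

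Your objection also stands: for $\alpha>0$ the lemma is false as worded. For instance, with $\alpha=c=1$ and $x=1/t$ one gets $xe^{-xt}=(et)^{-1}$. This exceeds $C\ge C(1+t)^{-1}$ as soon as $t<1/(eC)$.

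The paper's proof fails exactly where you flagged the obstacle. For $0<t<1$ it asserts $t^{-\alpha}\le(1+t)^{-\alpha}(1+t)^{\alpha}\le 2^{\alpha}(1+t)^{-\alpha}$. The first inequality is just $t^{-\alpha}\le 1$, which is false for every $t<1$. Its $t\ge 1$ step is also written in the wrong direction, as $t^{-\alpha}\le(1+t)^{-\alpha}$. However, the true bound $t^{-\alpha}\le 2^{\alpha}(1+t)^{-\alpha}$ is covered by its constant $2^{\alpha}C_0$, so that case survives, as in your argument.

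Your repaired versions are correct:
\begin{itemize}
\item for $t\ge1$, the bound holds with $C=(2\alpha/(ce))^{\alpha}$;
\item for $x$ confined to a bounded set, it holds for all $t>0$, with the constant then depending also on that bound.
\end{itemize}
These are what the paper actually needs. In the weighted estimates the variable is $x=\nu(v)\lesssim 1$. In the low-frequency Besov estimates it is $|\xi|^2$ (or $2^{2j}$) with $|\xi|\le r_\infty$. In the time-convolution bound the case $t\le 1$ is trivial because $\int_0^t 2^{2j}e^{-2^{2j}(t-\tau)}\,\mathrm{d}\tau\le 1$, exactly as you observed.
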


\begin{proof}

Define $f(x) = x^{\alpha} e^{-c x t}$ for $x>0$. Differentiating,
\[
f'(x) = \alpha x^{\alpha-1} e^{-c x t} - c t x^{\alpha} e^{-c x t}
      = x^{\alpha-1} e^{-c x t} (\alpha - c t x).
\]
Setting $f'(x)=0$ yields the unique critical point $x_0 = \frac{\alpha}{c t}$. Since $f(0^+)=0$ (if $\alpha>0$; if $\alpha=0$ then $f(0^+)=1$) and $f(\infty)=0$, $x_0$ is the global maximum. Substituting,
\[
f(x_0) = \left(\frac{\alpha}{c t}\right)^{\alpha} e^{-\alpha}
       = \left(\frac{\alpha}{c e}\right)^{\alpha} t^{-\alpha}.
\]
Thus
\[
\sup_{x>0} f(x) = C_0 t^{-\alpha}, \qquad C_0 := \left(\frac{\alpha}{c e}\right)^{\alpha}.
\]
For $t \ge 1$, $t^{-\alpha} \le (1+t)^{-\alpha}$; for $0 < t < 1$, $t^{-\alpha} \le (1+t)^{-\alpha} (1+t)^{\alpha} \le 2^{\alpha} (1+t)^{-\alpha}$. Hence there exists a constant $C = \max\{C_0, 2^{\alpha}C_0\}$ such that
\[
\sup_{x>0} x^{\alpha} e^{-c x t} \le C (1+t)^{-\alpha},\qquad \text{for all } t \ge 0.
\]
This completes the proof of Lemma \ref{lem:max_estimate}.
\end{proof}
\begin{lemma} \label{lemma2.5}
Let $-3<\gamma<0, t>0$, then it holds that
    \[\int_0^t\int_{\mathbb{R}_{v}^3} e^{-\nu(v)(t-\tau)}(1+\tau)^{-\frac{1}{2}}\mu^\delta\,\mathrm{d}v\mathrm{d}\tau\lesssim(1+t)^{-\frac{1}{2}}\,.\]
   where $\mu^{\delta}$ denotes a modified Maxwellian function which typically takes the form $\mu^{\frac{1}{2}}(v)$ multiplied by a sufficiently fast decaying weight in $|v|$.
\end{lemma}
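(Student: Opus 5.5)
We prove Lemma \ref{lemma2.5} by a direct splitting of the time integral at $\tau=t/2$, combined with a splitting of velocity space according to the size of $\nu(v)(t-\tau)$. We use two facts throughout. First, $\nu(v)\sim(1+|v|)^{\gamma}$ with $\gamma<0$, so $\nu(v)\gtrsim (1+|v|)^{\gamma}$. Second, $\mu^\delta$ decays faster than any polynomial, so $\int_{\mathbb{R}^3}(1+|v|)^{k}\mu^\delta\,dv<\infty$ for every $k\ge0$. For $t\le 1$ the statement is trivial: the integrand is bounded by $\mu^\delta$, so the left side is at most $Ct\lesssim (1+t)^{-1/2}$. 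We therefore assume $t\ge1$.

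On $[t/2,t]$ we have $(1+\tau)^{-1/2}\lesssim(1+t)^{-1/2}$. For each fixed $v$,
\[
\int_{t/2}^t e^{-\nu(v)(t-\tau)}\,d\tau\le \nu(v)^{-1}\lesssim (1+|v|)^{-\gamma}.
\]
Integrating this against $\mu^\delta$ in $v$ gives a finite constant, because $-\gamma<3$ is only a polynomial weight. Hence this piece is bounded by $C(1+t)^{-1/2}$.

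On $[0,t/2]$ we have $t-\tau\ge t/2$, so $e^{-\nu(v)(t-\tau)}\le e^{-c(1+|v|)^{\gamma}t/2}$. We use Lemma \ref{lem:max_estimate} with $x=(1+|v|)^{\gamma}$ and exponent $\alpha=1/2$, in the form
\[
e^{-cxt/2}\lesssim x^{-1/2}(1+t)^{-1/2},
\]
which is the same statement after multiplying by $x^{-1/2}$; equivalently, $\sup_{x>0} x^{1/2}e^{-cxt}\lesssim (1+t)^{-1/2}$. This yields
\[
e^{-\nu(v)(t-\tau)}\lesssim (1+|v|)^{-\gamma/2}(1+t)^{-1/2}.
\]
The time factor satisfies $\int_0^{t/2}(1+\tau)^{-1/2}d\tau\lesssim (1+t)^{1/2}$. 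Multiplying the two bounds loses the decay entirely, so we need a stronger pointwise bound. We therefore apply Lemma \ref{lem:max_estimate} with $\alpha=1$ instead:
\[
e^{-c(1+|v|)^{\gamma}t/2}\lesssim (1+|v|)^{-\gamma}(1+t)^{-1}.
\]
Combining this with $\int_0^{t/2}(1+\tau)^{-1/2}d\tau\lesssim(1+t)^{1/2}$ gives a bound $C(1+t)^{-1/2}\int(1+|v|)^{-\gamma}\mu^\delta\,dv\lesssim (1+t)^{-1/2}$.

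The only delicate point is this choice of exponent. The loss from integrating $(1+\tau)^{-1/2}$ over $[0,t/2]$ must be compensated by a full power $(1+t)^{-1}$ from the exponential. That power is purchased with the polynomial velocity weight $(1+|v|)^{-\gamma}$, which the Maxwellian-type factor $\mu^\delta$ absorbs. Combining the two time pieces completes the proof.
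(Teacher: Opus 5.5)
Your proof is correct and follows the paper's argument. You split at $\tau=t/2$; on $[0,t/2]$ you use Lemma \ref{lem:max_estimate} with exponent $1$, paying a factor $\nu^{-1}(v)\lesssim\langle v\rangle^{-\gamma}$ absorbed by $\mu^\delta$, against $\int_0^{t/2}(1+\tau)^{-1/2}\,\mathrm{d}\tau\lesssim(1+t)^{1/2}$; on $[t/2,t]$ you use $\int_{t/2}^t e^{-\nu(v)(t-\tau)}\,\mathrm{d}\tau\le\nu^{-1}(v)$.

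Treating $t\le1$ separately is worth keeping: the supremum over all $x>0$ in Lemma \ref{lem:max_estimate} actually grows like $t^{-\alpha}$ as $t\to0$, so restricting to $t\ge1$ (or using that $\nu$ is bounded) is what makes that step valid. The abandoned $\alpha=1/2$ detour can simply be deleted.
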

\begin{proof}
   We divide the integral with respect to $t$ into two parts. And Lemma \ref{lem:max_estimate} implies that
   \begin{align*}
       &\hspace{3mm}\int_0^t\int_{\mathbb{R}_v^3}e^{-\nu(v)(t-\tau)}(1+\tau)^{-\frac{1}{2}}\mu^{\delta}\,{\rm d}v{\rm d}\tau\\
       &\lesssim\int_0^\frac{t}{2}\int_{\mathbb{R}_v^3}e^{-\nu(v)(t-\tau)}\nu(v)(1+\tau)^{-\frac{1}{2}}\nu(v)^{-1}\mu^{\delta}\,{\rm d}v{\rm d}\tau+\int_\frac{t}{2}^t\int_{\mathbb{R}_v^3}e^{-\nu(v)(t-\tau)}(1+\tau)^{-\frac{1}{2}}\mu^{\delta}\,{\rm d}v{\rm d}\tau\\
       &\lesssim\int_0^\frac{t}{2}\int_{\mathbb{R}_v^3}(1+t-\tau)^{-1}(1+\tau)^{-\frac{1}{2}}\mu^{\delta'}\,{\rm d}v{\rm d}\tau+(1+\frac{t}{2})^{-\frac{1}{2}}\int_\frac{t}{2}^t\int_{\mathbb{R}_v^3}e^{-\nu(v)(t-\tau)}\mu^{\delta}\,{\rm d}v{\rm d}\tau\\
       &\lesssim(1+\frac{t}{2})^{-1}\int_0^\frac{t}{2}(1+\tau)^{-\frac{1}{2}}\,{\rm d}\tau+(1+t)^{-\frac{1}{2}}\int_{\mathbb{R}_v^3}\int_\frac{t}{2}^t e^{-\nu(v)(t-\tau)}\nu^{-1}(v)\nu(v)\mu^\delta\,{\rm d}\tau{\rm d}v\\
       &\lesssim(1+t)^{-\frac{1}{2}}\,.
   \end{align*}
   The last step holds because $\int_\frac{t}{2}^t e^{-\nu(v)(t-\tau)}\nu^{-1}(v)\,{\rm d}\tau\le1$. This ends the proof of Lemma \ref{lemma2.5}.
\end{proof}
\section{$\|f\|_{\widetilde{L}_v^2(\dot{B}^{\frac12}_{2,\infty}\cap\dot{B}^{\frac32}_{2,\infty})}$ estimates}\label{Sect-Besov}
In this section, we estimate the Besov norms of $f$ using the inner product method. 
The analysis is divided into two parts: the low-frequency part and the high frequency part. 

We begin with the following Cauchy problem:
\begin{align}\label{Cauchy}
    \begin{cases}
    \partial_t f +v\cdot \nabla_x f+\mathcal{L} f=\Gamma(f,f)+S,\quad(t,x,v)\in\mathbb{R^+}\times\mathbb{R}_x^3\times\mathbb{R}_v^3,\\[2mm]
    f(0,x,v)=f_0(x,v)=\frac{F_0(x,v)}{\sqrt{\mu}},\qquad\quad(x,v)\in\mathbb{R}_x^3\times\mathbb{R}_v^3.
\end{cases}
\end{align}
\subsection{The macroscopic estimates} Let $-3<\gamma<0$ and $N\ge5$ be an integer. Define $w_{\ell,\beta}:=\langle v\rangle^{\ell+\gamma|\beta|}$, here $\beta$ is the order of the derivative of $v$. Assume that $f$ is a classical solution to the Cauchy problem \eqref{Cauchy}. 
It satisfies the {\it a priori} estimate
  \begin{align}
\sup_t\|f\|_X+\|S\|_\mathcal{Z}\le\epsilon\,,
\label{aps}
\end{align}
    where we define 
    \[\|f\|_X:=\|f\|_{\widetilde{L}_v^2(\dot{B}_{2,\infty}^\frac{1}{2}\cap\dot{B}^\frac{3}{2}_{2,\infty})}+\|f\|_{L_v^2(\dot{H}^1\cap\dot{H}^N)}+\sum_{|\alpha|+|\beta|\le N}\left\|w_{\ell,\beta}\partial_x^\alpha\partial_v^\beta \mathbf{P}^{\bot}f\right\|\,,\]
    and $\epsilon>0$ is a constant small enough.

Before carrying out the estimate, we first perform a macroscopic estimate for $f$ in frequency space, which serves as a preparation for the subsequent decomposition. To this end, we need to derive the macroscopic equation for $f(t,x,v)$. Define moment functions $A_{mj}(f)$ and $B_j(f),~1\leq m,j\leq3$, as in \cite{D-2008-JDE},  
\begin{equation*}
A_{mj}(f)=\int_{\mathbb{ R}^3}( v_m v_j-1)\mu^\frac12fd v,\quad B_j(f)=\frac{1}{10}\int_{\mathbb{ R}^3}(| v|^2-5) v_j\mu^\frac12fd v\,,
\end{equation*}
then, we can derive the macroscopic equation is
\begin{equation}\label{Macro-equation}
\begin{cases}
\partial_ta+\nabla_x\cdot b=\int_{\mathbb{R}^3}\mu^{\frac{1}{2}}S\,\mathrm{d}v,\\
\partial_tb+\nabla_x(a+2c)+\nabla_x A({\bf P}^\bot f)=\int_{\mathbb{R}^3}v\mu^{\frac{1}{2}}S\,\mathrm{d}v,\\
\partial_tc+\frac13\nabla_x\cdot b+\frac53\nabla_x\cdot B({\bf P}^\bot f)=\frac{1}{6}\int_{\mathbb{R}^3}(|v|^2-3)\mu^{\frac{1}{2}}S\,\mathrm{d}v,
\end{cases}
\end{equation}
and
\begin{equation}\label{moment-equation}
\begin{cases}
\partial_tA_{mj}({\bf P}^\bot f)+\partial_{x_m}b_j+\partial_{x_j}b_m
-\frac23\delta_{mj}\nabla_x\cdot B({\bf P}^\bot f)
=A_{mj}(r+G),\\
\partial_t B_{j}({\bf P}^\bot f)+\partial_{x_j}c=B_j(r+G)
\end{cases}
\end{equation}
with
\begin{equation}\label{r-G}
  r=- v\cdot\nabla_x{\bf P}^\bot f-\mathcal{L}{\bf P}^\bot f,\quad G={\Gamma}(f,f)+S,\notag
\end{equation}
where $r$ is a linear term related only to the microscopic component $\{{\bf I-P}\}f$ and $G$ is a quadratic nonlinear term. Then we have the following lemma.
\begin{lemma}\label{Macro estimate for Low}
There exists an interactive functional $\mathcal{E}_{int,\frac{1}{2}}(t,\xi)$ satisfying
\begin{equation*}
\mathcal{E}_{int,\frac12}(t,\xi)\lesssim \left|\hat{f}\right|_{L_v^2}^2+|\xi|^2\left|\hat{f}\right|_{L_v^2}
\end{equation*}
such that
\begin{equation}\label{Lemma4.2-1}
\begin{aligned}
&\hspace{3mm}\frac{d}{dt}\mathcal{E}_{int,\frac12}(t,\xi)+|\xi|^2\left|{\bf P}\hat{f}\right|_{L_v^2}^2\\
&\lesssim |\xi|^2\left|{\bf P}^\bot\hat{f}\,\langle v\rangle^{\frac{\gamma}{2}}\right|_{L_v^2}^2+\left|{\bf P}^\bot\hat{f}\,\langle v\rangle^{\frac{\gamma}{2}}\right|_{L_v^2}^2+\left(\int_{\mathbb{R}^3}\hat{S}\mu^\delta\,\mathrm{d}v\right)^2+\left|\hat{\Gamma}(f,f)\,\langle v\rangle^{-\frac{\gamma}{2}}\right|_{L_v^2}^2\,.\notag
\end{aligned}
\end{equation}
\end{lemma}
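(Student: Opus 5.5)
We work in Fourier space, in the standard manner of Duan's low-frequency macroscopic estimates (cf. \cite{D-2008-JDE}). Take the Fourier transform in $x$ of the fluid-type system \eqref{Macro-equation} and of the moment equations \eqref{moment-equation}. This gives ODEs in $t$, parametrised by $\xi$, for $\hat a,\hat b,\hat c$ and for $A_{mj}({\bf P}^\bot\hat f)$, $B_j({\bf P}^\bot\hat f)$. The interactive functional is a combination of three cross terms,
\[
\mathcal{E}_{int,\frac12}(t,\xi)=\mathrm{Re}\sum_j\frac{(i\xi_j\hat c\,|\,B_j({\bf P}^\bot\hat f))}{1+|\xi|^2}\cdot(1+|\xi|^2)+\kappa_1\,\mathrm{Re}\sum_{m,j}(i\xi_m\hat b_j+i\xi_j\hat b_m\,|\,A_{mj}({\bf P}^\bot\hat f))+\kappa_2\,\mathrm{Re}(i\xi\hat a\,|\,\hat b),
\]
written without the $(1+|\xi|^2)$ normalisation, since the stated bound allows $|\xi|^2|\hat f|$-type terms. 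The constants satisfy $0<\kappa_2\ll\kappa_1\ll1$. Each cross term is bounded by $|\xi||\hat f|_{L^2_v}^2\lesssim|\hat f|^2_{L^2_v}+|\xi|^2|\hat f|^2_{L^2_v}$ by Cauchy--Schwarz. This gives the claimed upper bound, because the moments against $\mu^{1/2}$-polynomials are controlled by $|\hat f|_{L^2_v}$.

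\emph{Step 1: the $c$ component.} Differentiate the first cross term in $t$. Use $\partial_t B_j+i\xi_j\hat c=B_j(\hat r+\hat G)$ to produce $|\xi|^2|\hat c|^2$. Then substitute $\partial_t\hat c$ from the third equation of \eqref{Macro-equation}. The remaining terms are of the following types:
\begin{itemize}
\item $|\xi|^2|\hat b||B({\bf P}^\bot\hat f)|$, handled by Cauchy--Schwarz with a small $\eta$ in front of $|\xi|^2|\hat b|^2$;
\item $|\xi|^2|{\bf P}^\bot\hat f\langle v\rangle^{\gamma/2}|^2$, which arises because the moments $B_j$ against $\mu^{1/2}$-weighted polynomials absorb any polynomial weight;
\item the moment $B_j(\hat r)$, where $\hat r=-iv\cdot\xi{\bf P}^\bot\hat f-\mathcal L{\bf P}^\bot\hat f$. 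Since $\mathcal L$ is self-adjoint, $\langle \mathcal L{\bf P}^\bot\hat f,\psi\rangle=\langle{\bf P}^\bot\hat f,\mathcal L\psi\rangle$ with $\mathcal L\psi$ rapidly decaying. So this moment is bounded by $(1+|\xi|)|{\bf P}^\bot\hat f\langle v\rangle^{\gamma/2}|$. Multiplied by $|\xi||\hat c|$, it yields $\eta|\xi|^2|\hat c|^2+C_\eta(1+|\xi|^2)|{\bf P}^\bot\hat f\langle v\rangle^{\gamma/2}|^2$.
\end{itemize}

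\emph{Step 2: the $b$ component.} Treat $b$ the same way through the $A_{mj}$ equation, which yields $|\xi|^2|\hat b|^2$ modulo $\eta|\xi|^2(|\hat a|^2+|\hat c|^2)$ and microscopic terms.

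\emph{Step 3: the $a$ component.} Treat $a$ through the pairing $(i\xi\hat a\,|\,\hat b)$ and the $b$ equation, which gives $|\xi|^2|\hat a|^2$ modulo $C|\xi|^2(|\hat b|^2+|\hat c|^2)$ plus microscopic terms. The usual choice $\kappa_2\ll\kappa_1\ll1$ then closes the $a,b,c$ hierarchy.

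\emph{Source terms.} There are three forcing contributions: $\int\hat S\mu^{1/2}$-type moments in \eqref{Macro-equation}, and $A_{mj}(\hat S)$, $B_j(\hat S)$. All are moments of $\hat S$ against $\mu^{1/2}$ times polynomials, hence bounded by $\int\hat S\mu^\delta\,dv$. They are paired with $|\xi||\hat a|,|\xi||\hat b|,|\xi||\hat c|$, so Young's inequality gives $\eta|\xi|^2|{\bf P}\hat f|^2+C(\int\hat S\mu^\delta dv)^2$.

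\emph{Nonlinear terms.} The nonlinear moments $A(\hat\Gamma)$, $B(\hat\Gamma)$ are bounded by $|\hat\Gamma\langle v\rangle^{-\gamma/2}|_{L^2_v}$, by writing $\int\hat\Gamma\psi\,dv$ with $\psi\langle v\rangle^{\gamma/2}\in L^2$. Young's inequality gives the last term of the claimed estimate.

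\emph{Main obstacle.} The main obstacle is the time derivative falling on the moments. In $\frac{d}{dt}(i\xi_j\hat c\,|\,B_j)$ we must substitute $\partial_t\hat c$, which involves $\xi\cdot\hat b$ and $\xi\cdot B({\bf P}^\bot\hat f)$, producing $|\xi|^2|\hat b||B|$ terms. These require consistent smallness so that $|\xi|^2|\hat b|^2$ is absorbed later. The source also appears in $\partial_t\hat c$, which we handle as above. We also need to check that the microscopic terms only cost the $\langle v\rangle^{\gamma/2}$ weight, which holds since every moment is taken against Gaussian-decaying functions.
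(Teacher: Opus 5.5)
Your proposal is correct and follows essentially the paper's route. You Fourier-transform \eqref{Macro-equation} and \eqref{moment-equation}, and build cross terms for $c$ (through $B_j$), $b$ (through $A_{mj}$) and $a$ (through $\hat b$). You bound the microscopic, source and $\Gamma$ contributions by moments against Gaussian-decaying functions, and close with a weighted sum using small constants. The only differences are cosmetic: you use Duan's symmetric pairing $\sum_{m,j}(i\xi_m\hat b_j+i\xi_j\hat b_m\,|\,A_{mj})$ with the $c$-term leading, whereas the paper pairs $\sum_j i\xi_jA_{jm}\bar{\hat b}_m+\frac12 i\xi_mA_{mm}\bar{\hat b}_m$ and takes $\mathcal{E}^b_{int}+k_1\mathcal{E}^c_{int}+k_2\mathcal{E}^a_{int}$. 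Both orderings close, since the $b$- and $c$-estimates feed into each other only through $\eta$-small terms.
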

\begin{proof}
     For fixed $m$, we have
    \begin{equation}
\begin{aligned}\label{Ori-E}
&\hspace{3mm}-\partial_t\left[\sum_j\partial_jA_{jm}({\bf P}^\bot f)+\frac 12\partial_mA_{mm}({\bf P}^\bot f)\right]-\Delta_x b_m-\partial_m\partial_m b_m\\
&=\frac 12\sum_{j\neq m}\partial_mA_{jj}(r)-\sum_j\partial_j A_{jm}(r),
\end{aligned}
\end{equation}
Applying Fourier transform to \eqref{Ori-E}, we get
\begin{equation}
\begin{aligned}\label{O-E-F}
&\hspace{3mm}-\partial_t\left[\sum_ji\xi_jA_{jm}({\bf P}^\bot\hat{f})+\frac 12i\xi_mA_{mm}({\bf P}^\bot\hat{f})\right]+|\xi|^2 \hat{b}_m+\xi_m^2 \hat{b}_m\\
&=\frac 12\sum_{j\neq m}i\xi_mA_{jj}(\hat{r}+\hat{G})-\sum_j i\xi_j A_{jm}(\hat{r}+\hat{G}),\notag
\end{aligned}
\end{equation}
next, multiply both sides of the above equation by $\bar{\hat{b}}_m$ and then take the real part, we get
\begin{align*}
&\hspace{3mm} -\sum_m\left\{\partial_t\left[\sum_j i\xi_jA_{jm}({\bf P}^\bot\hat{f})+\frac 12 i\xi_mA_{mm}({\bf P}^\bot\hat{f})\right]\bar{\hat{b}}_m\right\}+|\xi|^2\sum_m  \left|\hat{b}_m\right|^2+\sum_m \xi_m^2\left|\hat{b}_m\right|^2\\
&=-\sum_m\partial_t\left[\sum_j i\xi_jA_{jm}({\bf P}^\bot\hat{f})\bar{\hat{b}}_m+\frac 12 i\xi_mA_{mm}({\bf P}^\bot\hat{f})\bar{\hat{b}}_m\right]+2|\xi|^2\left|\hat{b}\right|^2\\
&\hspace{13mm}+\sum_m\left\{\left[\sum_j i\xi_jA_{jm}({\bf P}^\bot\hat{f})+\frac 12 i\xi_mA_{mm}({\bf P}^\bot\hat{f})\right]\partial_t\bar{\hat{b}}_m\right\}\\
&=\sum_m\left\{\bar{\hat{b}}_m\left[\frac 12\sum_{j\neq m}i\xi_mA_{jj}(\hat{r}+\hat{G})-\sum_ji\xi_j A_{jm}(\hat{r}+\hat{G}) \right]\right\}\,,
\end{align*}
furthermore, we have
\begin{align}\label{b}
&\hspace{3mm} -\sum_m\partial_t\left[\sum_j i\xi_jA_{jm}({\bf P}^\bot\hat{f})\bar{\hat{b}}_m+\frac 12 i\xi_mA_{mm}({\bf P}^\bot\hat{f})\bar{\hat{b}}_m\right]+2|\xi|^2\left|\hat{b}\right|^2\notag\\
&=\underbrace{\sum_m\left\{\bar{\hat{b}}_m\left[\frac 12\sum_{j\neq m}i\xi_mA_{jj}(\hat{r}+\hat{G})-\sum_ji\xi_j A_{jm}(\hat{r}+\hat{G}) \right]\right\}}_{L_1}\notag\\
&\hspace{13mm}-\underbrace{\sum_m\left\{\left[\sum_j i\xi_jA_{jm}({\bf P}^\bot\hat{f})+\frac 12 i\xi_mA_{mm}({\bf P}^\bot\hat{f})\right]\partial_t\bar{\hat{b}}_m\right\}}_{L_2}\,.
\end{align}
For $L_1$, recalling the definition of $A$, one has from the Young inequality that 
\begin{align*}
    i\xi_jA_{jm}(\hat{r})\bar{\hat{b}}_m&=\int_{\mathbb{R}^3} i(v_jv_m-1)\mu^\frac{1}{2} (-iv\cdot\xi{\bf P}^\bot \hat{f}-\mathcal{L}{\bf P}^\bot \hat{f})\,\mathrm{d}v\,\xi_j\bar{\hat{b}}_m\\
    &\lesssim\left(\int_{\mathbb{R}^3} i(v_jv_m-1)\mu^\frac{1}{2} (-iv\cdot\xi{\bf P}^\bot \hat{f}-\mathcal{L}{\bf P}^\bot \hat{f})\,\mathrm{d}v\right)^2+\eta|\xi|^2\left|\hat{b}_m\right|^2\\
    &\lesssim\left|\mu^\delta\right|_{L_v^2}^2\left||\xi|{\bf P}^\bot \hat{f}\,\langle v\rangle^{\frac{\gamma}{2}}\right|_{L_v^2}^2+\left|\mu^\delta\right|_{L_v^2}^2\left|{\bf P}^\bot \hat{f}\,\langle v\rangle^{\frac{\gamma}{2}}\right|_{L_v^2}^2+\eta|\xi|^2\left|\hat{b}_m\right|^2\\
    &\lesssim|\xi|^2\left|{\bf P}^\bot \hat{f}\,\langle v\rangle^{\frac{\gamma}{2}}\right|_{L_v^2}^2+\left|{\bf P}^\bot \hat{f}\,\langle v\rangle^{\frac{\gamma}{2}}\right|_{L_v^2}^2+\eta|\xi|^2\left|\hat{b}_m\right|^2\,,
\end{align*}
and
\begin{align*}
    i\xi_jA_{jm}(\hat{\Gamma}(f,f))\bar{\hat{b}}_m&=\int_{\mathbb{R}^3}i(v_jv_m-1)\mu^\frac{1}{2}\hat{\Gamma}(f,f)\,\mathrm{d}v\xi_j\bar{\hat{b}}_m\\
    &\lesssim\left(\int_{\mathbb{R}^3}i(v_jv_m-1)\mu^\frac{1}{2}\hat{\Gamma}(f,f)\,\mathrm{d}v\right)^2+\eta|\xi|^2\left|\hat{b}_m\right|^2\\
    &\lesssim\left|\hat{\Gamma}(f,f)\,\langle v\rangle^{-\frac{\gamma}{2}}\right|_{L_v^2}^2+\eta|\xi|^2\left|\hat{b}_m\right|^2\,.
\end{align*}
Similarly, we can also obtain that
\begin{align*}
    i\xi_jA_{jm}(\hat{S})\bar{\hat{b}}_m&\lesssim \left|A_{jm}(\hat{S})\right|^2+\eta|\xi|^2\left|\hat{b}_m\right|^22\lesssim\left|\int_{\mathbb{R}^3}\hat{S}\mu^{\delta}\,\mathrm{d}v\right|^2+\eta|\xi|^2\left|\hat{b}_m\right|^2\,.
\end{align*}
For $L_2$, applying Fourier transform to $\eqref{Macro-equation}_2$ yields that 
\begin{align}\label{a0}
    \partial_t\hat{b}+i\xi(\hat{a}+2\hat{c})+i\xi A({\bf P}^\bot \hat{f})=\int_{\mathbb{R}^3}\hat{S}\mu^\delta\,\mathrm{d}v\,,
\end{align}
then we get
\[-\partial_t\bar{\hat{b}}=-i\xi(\bar{\hat{a}}+2\bar{\hat{c}})-i\xi A\left({\bf P}^\bot \bar{\hat{f}}\right)-\int_{\mathbb{R}^3} \bar{\hat{S}}\mu^\delta\,\mathrm{d}v\,.\]
Substitute it into $L_2$, we get
\begin{align*}
    -i\xi_jA_{jm}({\bf P}^\bot\hat{f})\partial_t\bar{\hat{b}}_m&\lesssim i\xi_jA_{jm}({\bf P}^\bot\hat{f})\left( i\xi(\bar{\hat{a}}+2\bar{\hat{c}})+i\xi A\left({\bf P}^\bot \bar{\hat{f}}\right)+\int_{\mathbb{R}^3} \bar{\hat{S}}\mu^\delta\,\mathrm{d}v\right)\\
    &\lesssim|\xi|^2\left|A_{jm}({\bf P}^\bot\hat{f})\right|^2+\eta|\xi|^2\left|\hat{a}+2\hat{c}\right|^2+\eta\left|\int_{\mathbb{R}^3} \bar{\hat{S}}\mu^\delta\,\mathrm{d}v\right|^2\\
    &\lesssim|\xi|^2\left|{\bf P}^\bot\hat{f}\,\langle v\rangle^{\frac{\gamma}{2}}\right|_{L_v^2}^2+\eta|\xi|^2\left(\left|\hat{a}\right|^2+\left|\hat{c}\right|^2\right)+\eta\left|\int_{\mathbb{R}^3} \hat{S}\mu^\delta\,\mathrm{d}v\right|^2\,.
\end{align*}
Define 
\[\mathcal{E}_{int}^b(t,\xi):=\sum_j i\xi_jA_{jm}({\bf P}^\bot\hat{f})\bar{\hat{b}}_m+\frac 12 i\xi_mA_{mm}({\bf P}^\bot\hat{f})\bar{\hat{b}}_m\,.\]
Plugging all the related estimates into \eqref{b}, we obtain
\begin{align*}
    \frac{\mathrm{d}}{\mathrm{d}t}\mathcal{E}_{int}^b+|\xi|^2\left|\hat{b}\right|^2&\lesssim|\xi|^2\left|{\bf P}^\bot\hat{f}\,\langle v\rangle^{\frac{\gamma}{2}}\right|_{L_v^2}^2+\left|{\bf P}^\bot\hat{f}\,\langle v\rangle^{\frac{\gamma}{2}}\right|_{L_v^2}^2\\
    &\quad+\left|\hat{\Gamma}(f,f)\,\langle v\rangle^{-\frac{\gamma}{2}}\right|_{L_v^2}^2+\eta|\xi|^2\left(\left|\hat{a}\right|^2+\left|\hat{c}\right|^2\right)+\left(\int_{\mathbb{R}^3}\hat{S}\mu^\delta\,\mathrm{d}v\right)^2\,.
\end{align*}
We next focus on the macroscopic term $c$, applying the Fourier transform to $\eqref{moment-equation}_2$ yields that
\[\partial_tB_j({\bf P}^\bot\hat{f})+i\xi_j\hat{c}=B_j(\hat{r}+\hat{G})\,.\]
Multiply both sides of the above equation by $-i\xi\bar{\hat{c}}$, we have
\begin{align}\label{c}
    -\partial_t\left[B_j({\bf P}^\bot\hat{f})i\xi\bar{\hat{c}}\right]+|\xi|^2\left|\hat{c}\right|^2=-i\xi B_j(\hat{r}+\hat{G})\bar{\hat{c}}-i\xi B_j({\bf P}^\bot\hat{f})\partial_t\bar{\hat{c}}:=L_3+L_4\,.
\end{align}
Similar to the estimate of $L_1$, we have 
\begin{align*}
    -i\xi B_j(\hat{r})\bar{\hat{c}}&\lesssim\left|B_j(\hat{r})\right|^2+\eta|\xi|^2\left|\hat{c}\right|^2\\
    &\lesssim\left(\int_{\mathbb{R}^3}(|v|^2-5)v_j\mu^{\frac{1}{2}}(-iv\cdot\xi{\bf P}^\bot\hat{f}-\mathcal{L}{\bf P}^\bot\hat{f})\,\mathrm{d}v\right)^2+\eta|\xi|^2\left|\hat{c}\right|^2\\
    &\lesssim|\xi|^2\left|{\bf P}^\bot\hat{f}\,\langle v\rangle^{\frac{\gamma}{2}}\right|_{L_v^2}^2+\left|{\bf P}^\bot\hat{f}\,\langle v\rangle^{\frac{\gamma}{2}}\right|_{L_v^2}^2+\eta|\xi|^2\left|\hat{c}\right|^2\,,
\end{align*}
and
\begin{align*}
    -i\xi B_j(\hat{S})\bar{\hat{c}}\lesssim\left|B_j(\hat{S})\right|^2+\eta|\xi|^2\left|\bar{\hat{c}}\right|^2\lesssim\left(\int_{\mathbb{R}^3}\mu^\delta\hat{S}\,\mathrm{d}v\right)^2+\eta|\xi|^2\left|\hat{c}\right|^2\,.
\end{align*}
As to $L_3$, applying the Fourier transform to $\eqref{Macro-equation}_3$, we have
\[\partial_t\hat{c}+\frac{1}{3}i\xi\cdot\hat{b}+\frac{5}{3}i\xi\cdot B({\bf P}^\bot\hat{f})=\frac{1}{6}\int_{\mathbb{R}^3}(|v|^2-3)\mu^{\frac{1}{2}}\hat{S}\,\mathrm{d}v\,,\]
then we get
\[-\partial_t\bar{\hat{c}}=-\frac{1}{3}i\xi\cdot\bar{\hat{b}}-\frac{5}{3}i\xi\cdot B\left({\bf P}^\bot\bar{\hat{f}}\right)-\frac{1}{6}\int_{\mathbb{R}^3}(|v|^2-3)\mu^{\frac{1}{2}}\bar{\hat{S}}\,\mathrm{d}v\,.\]
Inserting it into $L_4$, we further get
\begin{align*}
    L_4&\lesssim i\xi B_j({\bf P}^\bot\hat{f})\left(-\frac{1}{3}i\xi\cdot\bar{\hat{b}}-\frac{5}{3}i\xi\cdot B\left({\bf P}^\bot\bar{\hat{f}}\right)-\frac{1}{6}\int_{\mathbb{R}^3}(|v|^2-3)\mu^{\frac{1}{2}}\bar{\hat{S}}\,\mathrm{d}v\right)\\
    &\lesssim|\xi|^2\left|B_j({\bf P}^\bot\hat{f})\right|^2+\eta|\xi|^2\left|\hat{b}\right|^2+\eta\left(\int_{\mathbb{R}^3}\mu^\delta\hat{S}\,\mathrm{d}v\right)^2\\
    &\lesssim|\xi|^2\left|{\bf P}^\bot\hat{f}\,\langle v\rangle^{\frac{\gamma}{2}}\right|_{L_v^2}^2+\eta|\xi|^2\left|\hat{b}\right|^2+\eta\left(\int_{\mathbb{R}^3}\mu^\delta\hat{S}\,\mathrm{d}v\right)^2\,.
\end{align*}
Define 
\[\mathcal{E}_{int}^c:=B_j({\bf P}^\bot\hat{f})i\xi\bar{\hat{c}}\,.\]
Plugging all the related estimates into \eqref{c}, it arrives at
\begin{align*}
    \frac{\mathrm{d}}{\mathrm{d}t}\mathcal{E}_{int}^c+|\xi|^2\left|\hat{c}\right|^2\lesssim|\xi|^2\left|{\bf P}^\bot\hat{f}\,\langle v\rangle^{\frac{\gamma}{2}}\right|_{L_v^2}^2+\left|{\bf P}^\bot\hat{f}\,\langle v\rangle^{\frac{\gamma}{2}}\right|_{L_v^2}^2+\eta|\xi|^2\left|\hat{b}\right|^2+\left(\int_{\mathbb{R}^3}\mu^\delta\hat{S}\,\mathrm{d}v\right)^2\,.
\end{align*}
For the macroscopic term $a$, multiply both sides of \eqref{a0} by $-i\xi\bar{\hat{a}}$, one has
\begin{align*}
    -i\xi\bar{\hat{a}}\,\partial_t\hat{b}+|\xi|^2(\hat{a}+2\hat{c})\bar{\hat{a}}+|\xi|^2A({\bf P}^\bot\hat{f})\bar{\hat{a}}=-\int_{\mathbb{R}^3}\hat{S}\mu^\delta\,\mathrm{d}v\,i\xi\bar{\hat{a}}\,,
\end{align*}
    this simplifies to
    \begin{align}\label{a}
        -\partial_t(i\xi\hat{b}\bar{\hat{a}})+|\xi|^2|\hat{a}|^2=\underbrace{-2|\xi|^2\hat{c}\,\bar{\hat{a}}-|\xi|^2A({\bf P}^\bot\hat{f})\,\bar{\hat{a}}-\int_{\mathbb{R}^3}\hat{S}\mu^\delta\,\mathrm{d}v\,i\xi\bar{\hat{a}}}_{L_5}-i\xi\hat{b}\partial_t\bar{\hat{a}}\,.
    \end{align}
Due to
\[|\xi|^2\hat{c}\,\bar{\hat{a}}\lesssim|\xi|^2|\hat{c}|^2+\eta|\xi|^2\left|\hat{a}\right|^2\,,\]
and 
\begin{align*}
    |\xi|^2A({\bf P}^\bot \hat{f})\,\bar{\hat{a}}&\lesssim|\xi|^2\left|A({\bf P}^\bot\hat{f})\right|^2+\eta|\xi|^2\left|\hat{a}\right|^2\\
    &\lesssim|\xi|^2\left|{\bf P}^\bot\hat{f}\,\langle  v \rangle^{\frac{\gamma}{2}}\right|_{L_v^2}^2+\eta|\xi|^2\left|\hat{a}\right|^2\,,
\end{align*}
and 
\begin{align*}
\int_{\mathbb{R}^3}\hat{S}\mu^\delta\,\mathrm{d}v\,i\xi\bar{\hat{a}}\lesssim\left(\int_{\mathbb{R}^3}\hat{S}\mu^\delta\,\mathrm{d}v\right)^2+\eta|\xi|^2\left|\hat{a}\right|^2\,.
\end{align*}
Inserting the above estimates into $L_5$, we arrives at
\[L_5\lesssim|\xi|^2|\hat{c}|^2+\eta|\xi|^2|\hat{a}|^2+|\xi|^2\left|{\bf P}^\bot\hat{f}\,\langle v\rangle^{\frac{\gamma}{2}}\right|_{L_v^2}^2+\left(\int_{\mathbb{R}^3}\hat{S}\mu^\delta\,\mathrm{d}v\right)^2\,.\]
Applying the Fourier transform to $\eqref{Macro-equation}_1$, one has
\[\partial_t\hat{a}+i\xi\cdot\hat{b}=\int_{\mathbb{R}^3}\mu^{\frac{1}{2}}\hat{S}\,\mathrm{d}v\,,\]
then
\[-\partial_t\bar{\hat{a}}=-i\xi\cdot\bar{\hat{b}}-\int_{\mathbb{R}^3}\mu^{\frac{1}{2}}\bar{\hat{S}}\,\mathrm{d}v\,.\]
Consequently,
\begin{align*}
    -i\xi\hat{b}\,\partial_t\bar{\hat{a}}&=|\xi|^2|\hat{b}|^2-i\xi\hat{b}\int_{\mathbb{R}^3}\mu^{\frac{1}{2}}\bar{\hat{S}}\,\mathrm{d}v\\
    &\lesssim|\xi|^2|\hat{b}|^2+\left(\int_{\mathbb{R}^3}\hat{S}\mu^\delta\,\mathrm{d}v\right)^2+\eta|\xi|^2|\hat{b}|^2\\
    &\lesssim|\xi|^2|\hat{b}|^2+\left(\int_{\mathbb{R}^3}\hat{S}\mu^\delta\,\mathrm{d}v\right)^2\,.
\end{align*}
Define
\[\mathcal{E}_{int}^a:=i\xi\,\hat{b}\,\bar{\hat{a}}\,.\]
Plugging all the related estimates into \eqref{a}, we obtain
\begin{align}
    \frac{\mathrm{d}}{\mathrm{d}t}\mathcal{E}_{int}^a+|\xi|^2|\hat{a}|^2\lesssim|\xi|^2|\hat{c}|^2+|\xi|^2\left|{\bf P}^\bot\hat{f}\,\langle v\rangle^{\frac{\gamma}{2}}\right|_{L_v^2}^2+\left(\int_{\mathbb{R}^3}\hat{S}\mu^\delta\,\mathrm{d}v\right)^2+|\xi|^2|\hat{b}|^2\,.\notag
\end{align}
Set
\[\mathcal{E}_{int,\frac{1}{2}}:=\mathcal{E}_{int}^b+k_1\mathcal{E}_{int}^c+k_2\mathcal{E}_{int}^a\,,\,0<k_2<k_1<1\,.\]
Due to the fact that
\[\left|{\bf P}\hat{f}\right|_{L_v^2}^2\lesssim|(\hat{a},\hat{b},\hat{c})|^2\,,\]
we then obtain the desired result. This completes the proof.
\end{proof}
\subsection{Low frequency and high frequency equations}We introduce operators $\mathscr{P}_L$ and $\mathscr{P}_H$ which decompose a function into its low and high frequency parts. Here $\mathscr{P}_L$ is defined by
\[\mathscr{P}_Lf:=\mathcal{F}^{-1}[\widehat{\chi}_1\hat{f}]\]
where we take $r_1<r_\infty, 0\le\widehat{\chi}_1(\xi)\le1$ and $\widehat{\chi}_1(\xi)\in C^\infty(\mathbb{R}^3)$
\[\widehat{\chi}_1(\xi)=\begin{cases}
    1,\quad|\xi|\le r_1\\
    0,\quad|\xi|\ge r_\infty\,,
\end{cases}\]

Similarly, we define $\widehat{\chi}_\infty(\xi):=1-\widehat{\chi}_1(\xi)$ and the operator $\mathscr{P}_Hf:=\mathcal{F}^{-1}[\widehat{\chi}_\infty\hat{f}]$.
We set $f=f_L+f_H$, with $f_L=\mathscr{P}_Lf, f_H=\mathscr{P}_Hf$. Applying the operators $\mathscr{P}_L$ and $\mathscr{P}_H$ to \eqref{f-perturbation}, we obtain
\begin{align}\label{low}
    \partial_tf_L+v\cdot\nabla_xf_L+\mathcal{L}f_L=\mathscr{P}_L\Gamma(f,f)+\mathscr{P}_LS\,,
\end{align}
and
\begin{align}\label{high}
    \partial_tf_H+v\cdot\nabla_xf_H+\mathcal{L}f_H=\mathscr{P}_H\Gamma(f,f)+\mathscr{P}_HS\,.\notag
\end{align}
We now proceed to estimate the low frequency part. With the help of the macroscopic estimate, we obtain the following lemma:
\begin{lemma}\label{lemma3.2}
    It holds that
  \begin{equation}\label{Low E-1}
    \begin{aligned}
    \|f_L\|_{\widetilde{L}_v^2(\dot{B}_{2,\infty}^{\frac{1}{2}})}^2&\lesssim\|f_0\|_{\widetilde{L}_v^2(\dot{B}_{2,\infty}^{\frac{1}{2}})}^2+\eta\sup_t\|\mathbf{P}^{\bot}f\,\langle v\rangle^{-\frac{\gamma}{2}}\|_{L_v^2(L_x^2\cap\dot{H}_x^1)}^2\\
    &\qquad+
    \epsilon^2\sup_t\|f\|_{\widetilde{L}_v^2(\dot{B}_{2,\infty}^{\frac{1}{2}})}^2+\sup_t\|S\|_{\widetilde{L}_v^2(\dot{B}_{2,\infty}^{-\frac{3}{2}}\cap\dot{B}_{2,\infty}^{-\frac{1}{2}})}^2\,.\notag
\end{aligned}
\end{equation}
  
\end{lemma}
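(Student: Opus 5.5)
Work pointwise in frequency on the low-frequency equation \eqref{low}: Fourier transform in $x$, take the $L^2_v$ inner product with $\hat f_L$, and take real parts. The coercivity of $\mathcal{L}$ in the soft-potential regime gives
\[
\frac12\frac{\mathrm d}{\mathrm dt}|\hat f_L|_{L_v^2}^2+\lambda\big|\mathbf{P}^\bot\hat f_L\langle v\rangle^{\frac\gamma2}\big|_{L_v^2}^2\le \big|\langle \widehat\chi_1\hat\Gamma(f,f)+\widehat\chi_1\hat S,\hat f_L\rangle\big| .
\]
I will add $\kappa$ times the interactive functional $\mathcal{E}_{int,\frac12}$ of Lemma \ref{Macro estimate for Low}, applied to $f_L$. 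Since $|\xi|\le r_\infty$, the term $|\xi|^2|\hat f_L|_{L_v^2}$ in the upper bound for $\mathcal{E}_{int,\frac12}$ is harmless, so for $\kappa$ small the Lyapunov functional $\mathcal{E}_L(t,\xi):=|\hat f_L|^2_{L^2_v}+\kappa\mathcal{E}_{int,\frac12}$ is equivalent to $|\hat f_L|_{L^2_v}^2$. Its dissipation controls $|\xi|^2|\mathbf P\hat f_L|^2+|\mathbf P^\bot\hat f_L\langle v\rangle^{\gamma/2}|^2$. This requires absorbing the terms $|\xi|^2|\mathbf P^\bot\hat f_L\langle v\rangle^{\gamma/2}|^2$ and $|\mathbf P^\bot\hat f_L\langle v\rangle^{\gamma/2}|^2$ from Lemma \ref{Macro estimate for Low}, which is possible because $|\xi|\le r_\infty$ and $\kappa$ is small.

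The key step is weighted interpolation to recover a full heat-type damping. Using $|\mathbf P^\bot \hat f|^2\le |\mathbf P^\bot\hat f\langle v\rangle^{\gamma/2}|\,|\mathbf P^\bot\hat f\langle v\rangle^{-\gamma/2}|$ and Young's inequality, and again $|\xi|\le r_\infty$, I get
\[
\frac{\mathrm d}{\mathrm dt}\mathcal{E}_L+\lambda|\xi|^2\mathcal{E}_L\lesssim \eta|\xi|^2\big|\mathbf P^\bot\hat f\langle v\rangle^{-\frac\gamma2}\big|^2+\big|\hat\Gamma(f,f)\langle v\rangle^{-\frac\gamma2}\big|^2+\Big(\int\hat S\mu^\delta dv\Big)^2+|\langle \hat S,\hat f_L\rangle|.
\]
For the source term I use $|\langle \hat S,\hat f_L\rangle|\le \eta'|\xi|^2|\hat f_L|^2+C|\xi|^{-2}|\hat S|^2$. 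The $\Gamma$ term inside the energy identity is handled in the same way through the $\langle v\rangle^{\mp\gamma/2}$ splitting. Duhamel's formula then gives
\[
|\hat f_L(t)|^2\lesssim e^{-\lambda|\xi|^2t}|\hat f_0|^2+\int_0^te^{-\lambda|\xi|^2(t-\tau)}\{\cdots\}(\tau)\,\mathrm d\tau .
\]

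Next I localize with $\varphi_j^2(\xi)$, integrate in $\xi$, multiply by $2^{j}$, and take $\sup_j$ over the finitely-many-from-above $j$ in the support of $\widehat\chi_1$. The three contributions are handled as follows.
\begin{itemize}
\item \emph{Microscopic term.} Since $|\xi|^2\sim 2^{2j}$, I bound $\int_0^t e^{-\lambda 2^{2j}(t-\tau)}2^{2j}\mathrm d\tau\lesssim1$ and $2^j\lesssim 1+2^{2j}$. This yields $\eta\sup_t\|\mathbf P^\bot f\langle v\rangle^{-\gamma/2}\|^2_{L^2_v(L^2_x\cap\dot H^1_x)}$.
\item \emph{Source terms.} For $|\xi|^{-2}|\hat S|^2$ and $(\int\hat S\mu^\delta)^2$, I write $2^{j}|\xi|^{-2}=2^{2j}\cdot 2^{-3j}$ and use the same time integral. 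This gives $\sup_t\|S\|^2_{\widetilde L^2_v(\dot B^{-3/2}_{2,\infty})}$, and analogously the $\dot B^{-1/2}$ piece.
\item \emph{Nonlinear term.} This is the main obstacle. Following the introduction, I again trade $2^j=2^{2j}2^{-j}$, which reduces matters to $\sup_t\| |\Gamma(f,f)\langle v\rangle^{-\gamma/2}|_{L^2_v}\|^2_{\dot B^{-1/2}_{2,\infty}}$. I then need a soft-potential bound $|\Gamma(f,g)\langle v\rangle^{-\gamma/2}|_{L^2_v}\lesssim |f|_{H^2_v\text{ or weighted}}|g|_{L^2_v}$ (in a Strain–Guo form), followed by the product law \eqref{newinterpolation} with $s_1=1$, $s_2=0$ (or $s_1=s_2=\frac12$). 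This gives $\lesssim \|f\|_{\dot H^1}$-type$\times\|f\|_{\widetilde L^2_v(\dot B^{1/2}_{2,\infty})}$ factors, each bounded by $\epsilon$ via \eqref{aps}. The delicate point is that the velocity weight $\langle v\rangle^{-\gamma/2}$ must be absorbed by the weighted $\mathbf P^\bot$ norms in $X$ (which is why $\ell\ge N-\gamma/2$ is assumed), and that the Chemin–Lerner norm must be passed through the velocity integral. I would first try splitting $f=\mathbf Pf+\mathbf P^\bot f$ in each slot, using Minkowski's inequality to swap $L^2_v$ and the dyadic block.
\end{itemize}

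Collecting these bounds gives the stated estimate in Lemma \ref{lemma3.2}.
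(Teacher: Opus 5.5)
Your proposal follows the paper's proof of Lemma \ref{lemma3.2} essentially step for step. Both arguments run the same chain: the low-frequency Fourier energy estimate combined with the interactive functional of Lemma \ref{Macro estimate for Low}, then the $\langle v\rangle^{\pm\gamma/2}$ interpolation to recover $|\xi|^2$-damping, then Duhamel with the heat factor, then dyadic localization trading $2^j=2^{2j}\cdot 2^{-j}$ (or $2^{2j}\cdot 2^{-3j}$) against $\int_0^t e^{-2^{2j}(t-\tau)}2^{2j}\,\mathrm{d}\tau\lesssim 1$. Your pointwise Young inequality on $\langle \hat S,\hat f_L\rangle$ before Duhamel is only a harmless variant of the paper's treatment of $Q_3$.

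Two points in the nonlinear step need tightening.

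\begin{itemize}
\item Commit to $s_1=s_2=\frac12$ in \eqref{newinterpolation}, exactly as the paper does. The alternative $s_1=1$, $s_2=0$ fails because one copy of $f$, including $\mathbf{P}f$, would have to be measured in $\dot B^{0}_{2,\infty}$, which the $X$-norm does not control at low frequency. Your stated outcome ``$\dot H^1\times\dot B^{1/2}$'' also does not match the index $-\frac12$ in either case.
\item Use the weight-free bound $|\Gamma(f,g)\langle v\rangle^{-\gamma/2}|_{L^2_v}\lesssim |f|_{H^2_v}|g|_{L^2_v}$ from Lemma \ref{nonlinear}; no velocity weights from $X$ are needed here. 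Also state explicitly that $\Gamma(f,f)\perp\mathcal{N}$. This is what makes the pairing with $\hat f_L$ see only $\mathbf{P}^\bot\hat f_L$, so it can be absorbed by the microscopic dissipation.
\end{itemize}
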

\begin{proof}

Applying the Fourier transform to \eqref{low}, one has
\[\partial_{t}\hat{f}_L+i\xi\cdot v\hat{f}_L+\mathcal{L}\hat{f}_L=\widehat{\chi}_1\hat{\Gamma}(f,f)+\widehat{\chi}_1\hat{S}\,.\]

Multiply the above equation by $\bar{\hat{f}}_L$ and take the real part, we get:
\begin{align*}
\frac{1}{2}\frac{{\rm d}}{{\rm d}t}|\hat{f}_L|^2+\bar{\hat{f}}_L\mathcal{L}\hat{f}_L=\widehat{\chi}_1\hat{\Gamma}(f,f)\bar{\hat{f}}_L+\widehat{\chi}_1\hat{S}\bar{\hat{f}}_L\,.
\end{align*}
Integrating both sides with respect to $v$ yields that
\begin{align}\label{U1}
\frac{{\rm d}}{{\rm d}t}|\hat{f}_L|_{L_v^2}^2+\eta|\mathbf{P}^{\bot}\hat{f}_L|_\nu^2\lesssim\left|\widehat{\chi}_1\hat{\Gamma}(f,f)\,\langle v\rangle^{-\frac{\gamma}{2}}\right|_{L_v^2}^2+\int_{\mathbb{R}_v^3}\widehat{\chi}_1\hat{S}\hat{f}\,\mathrm{d}v\,,
\end{align}
where we have used the fact that
\begin{align*}
    \int_{\mathbb{R}_v^3} f\mathcal{L}f\,\mathrm{d}v\ge\eta\left|\mathbf{P}^{\bot}f\right|_\nu^2\,,
\end{align*}
 and 
\begin{align*}
    \int_{\mathbb{R}_v^3} \widehat{\chi}_1\hat{\Gamma}(f,f)\hat{f}_L\,\mathrm{d}v&=\int_{\mathbb{R}_v^3} \widehat{\chi}_1\hat{\Gamma}(f,f)\mathbf{P}\hat{f}_L\,\mathrm{d}v+\int_{\mathbb{R}_v^3} \widehat{\chi}_1\hat{\Gamma}(f,f)\mathbf{P}^{\bot}\hat{f}_L\,\mathrm{d}v=\int_{\mathbb{R}_v^3} \widehat{\chi}_1\hat{\Gamma}(f,f)\mathbf{P}^{\bot}\hat{f}_L\,\mathrm{d}v\\
     &\lesssim\left|\widehat{\chi}_1\hat{\Gamma}(f,f)\,\langle v\rangle^{-\frac{\gamma}{2}}\right|_{L_v^2}^2+\eta'\left|\mathbf{P}^{\bot}\hat{f}_L\,\langle v\rangle^{\frac{\gamma}{2}}\right|_{L_v^2}^2\,.
\end{align*}
Recall the macroscopic estimate in Lemma \ref{Macro estimate for Low}
\begin{align}\label{mm}
    \frac{\mathrm{d}}{\mathrm{d}t}\mathcal{E}_{int}(t,\xi)+|\xi|^2\left|\mathbf{P}\hat{f}_L\right|_{L_v^2}^2\lesssim\left|\mathbf{P}^{\bot}\hat{f}_L\right|_\nu^2+|\xi|^2\left|\mathbf{P}^{\bot}\hat{f}_L\right|_\nu^2+\left(\int_{\mathbb{R}_v^3} \widehat{\chi}_1\hat{S}\mu^\delta\,\mathrm{d}v\right)^2+\left|\widehat{\chi}_1\hat{\Gamma}(f,f)\,\langle v\rangle^{-\frac{\gamma}{2}}\right|_{L_v^2}^2\,,
\end{align}
then adding $\lambda\eqref{U1}+\eqref{mm}\,,\lambda_1:=\lambda\eta>1$ yields that
\begin{align}\label{3.15}
    \frac{\mathrm{d}}{\mathrm{d}t}\left|\hat{f}_L\right|_{L_v^2}^2+\lambda_1\left|{\bf P}^\bot \hat{f}_L\right|_\nu^2+|\xi|^2\left|{\bf P}\hat{f}_L\right|_{L_v^2}^2\lesssim\left|\widehat{\chi}_1\hat{\Gamma}(f,f)\,\langle v\rangle^{-\frac{\gamma}{2}}\right|_{L_v^2}^2+\int_{\mathbb{R}_v^3}\widehat{\chi}_1\hat{S}\hat{f}\,\mathrm{d}v+\left(\int_{\mathbb{R}_v^3}\widehat{\chi}_1\hat{S}\mu^\delta\,\mathrm{d}v\right)^2\,.
\end{align}
     \noindent Furthermore, due to
\begin{align}\label{000}
    \left|\mathbf{P}^{\bot}\hat{f}_L\right|_{L_v^2}^2\lesssim\left|\mathbf{P}^{\bot}\hat{f}_L\,\langle v\rangle^{\frac{\gamma}{2}}\right|_{L_v^2}^2+\eta_2\left|\mathbf{P}^{\bot}\hat{f}_L\,\langle v\rangle^{-\frac{\gamma}{2}}\right|_{L_v^2}^2\,,
\end{align}
\eqref{3.15} becomes
\begin{align}
    \frac{\mathrm{d}}{\mathrm{d}t}\left|\hat{f}_L\right|_{L_v^2}^2+|\xi|^2\left|\hat{f}_L\right|_{L_v^2}^2\lesssim\eta|\xi|^2\left|\mathbf{P}^{\bot}\hat{f}\,\langle v\rangle^{-\frac{\gamma}{2}}\right|_{L_v^2}^2+\left|\hat{\Gamma}(f,f)\langle v\rangle^{-\frac{\gamma}{2}}\right|_{L_v^2}^2+\int_{\mathbb{R}_v^3} \hat{S}\hat{f}\,\mathrm{d}v+\left(\int_{\mathbb{R}_v^3}\hat{S}\mu^\delta\,\mathrm{d}v\right)^2\,,\notag
\end{align}
Solving the above inequality directly gives that
\begin{align}\label{l-f}
    \left|\hat{f}_L\right|_{L_v^2}^2&\lesssim e^{-|\xi|^2t}\left|\hat{f_0}\right|_{L_v^2}^2+\eta\int_0^t e^{-\lambda|\xi|^2(t-\tau)}|\xi|^2\left|\mathbf{P}^{\bot}\hat{f}\,\langle v\rangle^{-\frac{\gamma}{2}}\right|_{L_v^2}^2\,\mathrm{d}\tau\notag\\
    &\quad+\int_0^t e^{-|\xi|^2(t-\tau)}\left|\hat{\Gamma}(f,f)\langle v\rangle^{-\frac{\gamma}{2}}\right|_{L_v^2}^2\,\mathrm{d}\tau+\int_0^t \int_{\mathbb{R}_v^3} e^{-|\xi|^2(t-\tau)}\hat{S}\hat{f}\,\mathrm{d}v\mathrm{d}\tau\notag\\
    &\quad+\int_0^t e^{-|\xi|^2(t-\tau)}\left(\int_{\mathbb{R}_v^3}\hat{S}\mu^\delta\,\mathrm{d}v\right)^2\mathrm{d}\tau\,.
\end{align}
Multiplying both sides of the above inequality by $\varphi_j^2(\xi)2^j$, then integrating the result with respect to $\xi$ and finally taking the supremum over $j\in\mathbb{Z}$ yields that
\begin{align}\label{U2}
    \|f_L\|_{\widetilde{L}_v^2(\dot{B}_{2,\infty}^{\frac{1}{2}})}^2&\lesssim\|f_0\|_{\widetilde{L}_v^2(\dot{B}_{2,\infty}^{\frac{1}{2}})}^2+\underbrace{\eta\sup_{j\in\mathbb{Z}}\int_0^t \int_{\mathbb{R}_\xi^3}e^{-|\xi|^2(t-\tau)}|\xi|^2\left|\mathbf{P}^{\bot}\hat{f}\,\langle v\rangle^{-\frac{\gamma}{2}}\right|_{L_v^2}^2\varphi_j^22^j\,\mathrm{d}\xi\mathrm{d}\tau}_{Q_1}\notag\\
    &\quad\underbrace{+\sup_{j\in\mathbb{Z}}\int_0^t \int_{\mathbb{R}_\xi^3}e^{-|\xi|^2(t-\tau)}\left|\hat{\Gamma}(f,f)\langle v\rangle^{-\frac{\gamma}{2}}\right|_{L_v^2}^2\varphi_j^22^j\,\mathrm{d}\xi\mathrm{d}\tau}_{Q_2}\notag\\
    &\quad+\underbrace{\sup_{j\in\mathbb{Z}}\int_0^t \int_{\mathbb{R}_v^3} e^{-|\xi|^2(t-\tau)}\hat{S}\hat{f}\varphi_j^2\,2^j\,\mathrm{d}\xi\mathrm{d}v\mathrm{d}\tau}_{Q_3}\notag\\
    &\quad+\underbrace{\sup_{j\in\mathbb{Z}}\int_0^t\int_{\mathbb{R}_{\xi}^3} e^{-|\xi|^2(t-\tau)}\left(\int_{\mathbb{R}_v^3}\hat{S}\mu^\delta\,\mathrm{d}v\right)^2\varphi_j^2\,2^j\,\mathrm{d}\xi\mathrm{d}\tau}_{Q_4}\,.
\end{align}
     Since $L^2\cap\dot{H}^N\subset\dot{B}_{2,\infty}^\frac{1}{2}$, it's straightforward to change $Q_1$ into
     \begin{align*}
         Q_1&\lesssim\eta\sup_{j\in\mathbb{Z}}\int_0^t e^{-2^{2j}(t-\tau)}2^{2j}\mathrm{d}\tau\sup_t\sup_{j\in\mathbb{Z}}\int_{\mathbb{R}_\xi^3}\left|\varphi_j\,2^{\frac{j}{2}}\mathbf{P}^{\bot}\hat{f}\langle v\rangle^{-\frac{\gamma}{2}}\right|_{L_v^2}^2\,\mathrm{d}\xi\\
         &\lesssim\eta\sup_t\int_{\mathbb{R}_{v}^3}\sup_{j\in\mathbb{Z}}\int_{\mathbb{R}_\xi^3}\left|\varphi_j\,2^{\frac{j}{2}}\mathbf{P}^{\bot}\hat{f}\langle v\rangle^{-\frac{\gamma}{2}}\right|^2\,\mathrm{d}\xi\mathrm{d}v\\&\lesssim\eta\sup_t\left\|\mathbf{P}^{\bot}f\langle v\rangle^{-\frac{\gamma}{2}}\right\|_{L_v^2(\dot{B}_{2,\infty}^{\frac{1}{2}})}^2\lesssim\eta\sup_t\left\|\mathbf{P}^{\bot}f\langle v\rangle^{-\frac{\gamma}{2}}\right\|_{L_v^2(L_x^2\cap\dot{H}^N_x)}\,.
     \end{align*}
     For $Q_2$, one can derive from Lemma \ref{nonlinear} that
     \begin{align*}
         \left|\Gamma(f,g)\langle v\rangle^{-\frac{\gamma}{2}}\right|_{L_v^2}\lesssim|f|_{H_v^2}|g|_{L_v^2}\,,
     \end{align*}
 then by using \eqref{newinterpolation}, we have
 \begin{align*}
     Q_2&\lesssim\sup_{j\in\mathbb{Z}}\int_0^t \int_{\mathbb{R}_\xi^3}e^{-|\xi|^2(t-\tau)}|\xi|^2\left|\hat{\Gamma}(f,f)\langle v\rangle^{-\frac{\gamma}{2}}\right|_{L_v^2}^2\varphi_j^22^{-j}\,\mathrm{d}\xi\mathrm{d}\tau\\
     &\lesssim\sup_{j\in\mathbb{Z}}\int_0^t e^{-2^{2j}(t-\tau)}2^{2j}\,\mathrm{d}\tau\sup_t\sup_{j\in\mathbb{Z}}\int_{\mathbb{R}_\xi^3}\left|\left|f\right|_{H_v^2}\left|f\right|_{L_v^2}\right|^2\varphi_j^2\,2^{-j}\,\mathrm{d}\xi\lesssim\sup_t\left|\left|f\right|_{H_v^2}\left|f\right|_{L_v^2}\right|_{\dot{B}_{2,\infty}^{-\frac{1}{2}}}^2\\
     &\lesssim\sup_t\|f\|_{\widetilde{H}_v^2(\dot{B}_{2,\infty}^{\frac{1}{2}})}^2\|f\|_{\widetilde{L}_v^2(\dot{B}_{2,\infty}^{\frac{1}{2}})}^2\lesssim\epsilon^2\sup_t\|f\|_{\widetilde{L}_v^2(\dot{B}_{2,\infty}^{\frac{1}{2}})}^2\,.
 \end{align*}   
 The Cauchy inequality implies that
 \begin{align*}
     \int_{\mathbb{R}_v^3 }\hat{S}\hat{f}\,\mathrm{d}v\lesssim|\hat{S}|_{L_v^2}|\hat{f}|_{L_v^2}\,,
 \end{align*}
 then we obtain
 \begin{align*}
     Q_3&\lesssim\int_0^t \sup_{j\in\mathbb{Z}}\int_{\mathbb{R}_{\xi,v}^6} e^{-|\xi|^2(t-\tau)}|\xi|^2\hat{S}\hat{f}\varphi_j^2\,2^{-j}\,\mathrm{d}\xi\mathrm{d}v\mathrm{d}\tau\\
     &\lesssim\sup_t\sup_{j\in\mathbb{Z}}\int_{\mathbb{R}_\xi^3}2^{-\frac{3j}{2}}\varphi_j\left|\hat{S}\right|_{L_v^2}\left|\hat{f}\right|_{L_v^2}\varphi_j\,2^{\frac{j}{2}}\,\mathrm{d}\xi\lesssim\sup_t\|S\|_{\widetilde{L}_v^2(\dot{B}_{2,\infty}^{-\frac{3}{2}})}^2+\eta\sup_t\|f\|_{\widetilde{L}_v^2(\dot{B}_{2,\infty}^{\frac{1}{2}})}^2\,.
 \end{align*}
 As to $Q_4$, due to
 \begin{align*}
     \left(\int_{\mathbb{R}_v^3}\hat{S}\mu^\delta\,\mathrm{d}v\right)^2\lesssim\left|\mu^\delta\right|_{L_v^2}^2\left|\hat{S}\right|_{L_v^2}^2\lesssim\left|\hat{S}\right|_{L_v^2}^2\,,
 \end{align*}
 then we have
 \begin{align*}
     Q_4&\lesssim\int_0^t\sup_{j\in\mathbb{Z}}\int_{\mathbb{R}_{\xi^3}} e^{-|\xi|^2(t-\tau)}|\xi|^2|\xi|^{-2}\left|\hat{S}\right|_{L_v^2}^2\varphi_j^2\,2^j\,\mathrm{d}\xi\mathrm{d}\tau\lesssim\sup_t\sup_{j\in\mathbb{Z}}\int_{\mathbb{R}_{\xi}^3}2^{-j}\varphi_j^2\left|\hat{S}\right|_{L_v^2}^2\,\mathrm{d}\xi\lesssim\sup_t\|S\|_{\widetilde{L}_v^2(\dot{B}_{2,\infty}^{-\frac{1}{2}})}^2\,.
 \end{align*}
Plugging all the estimates into \eqref{U2}, it completes the proof of Lemma \ref{lemma3.2}.
\end{proof}
Next, we consider the high frequency part. The lemma is stated as follows.
\begin{lemma}\label{lemma3.3}
It holds that
\begin{align}
    \|f_H\|_{\widetilde{L}_v^2(\dot{B}_{2,\infty}^{\frac{1}{2}}\cap\dot{B}_{2,\infty}^{\frac{3}{2}})}^2&\lesssim\|f_0\|_{\widetilde{L}_v^2(\dot{B}_{2,\infty}^{\frac{1}{2}}\cap\dot{B}_{2,\infty}^{\frac{3}{2}})}^2+\eta\sup_t\|\mathbf{P}^{\bot}f\,\langle v\rangle^{-\frac{\gamma}{2}}\|_{L_v^2(L_x^2\cap\dot{H}_x^2)}^2\notag\\
    &\qquad+\epsilon^2\sup_t\|f\|_{L_v^2(\dot{H}^1\cap\dot{H}^2)}^2+
    \epsilon^2\sup_t\|f\|_{\widetilde{L}_v^2(\dot{B}_{2,\infty}^{\frac{3}{2}})}^2+\sup_t\|S\|_{\widetilde{L}_v^2(\dot{B}_{2,\infty}^{\frac{1}{2}})}^2\,.\notag
\end{align}

\end{lemma}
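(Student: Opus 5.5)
We will mirror the proof of Lemma \ref{lemma3.2}, now on the support of $\widehat{\chi}_\infty$, where $|\xi|\ge r_1$. There the weights $2^{j}$ and $2^{3j}$ are comparable up to constants depending only on $r_1$, and $|\xi|^2/(1+|\xi|^2)\gtrsim 1$.

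\emph{Step 1: pointwise-in-$\xi$ energy inequality.} We apply the Fourier transform to the high-frequency equation, multiply by $\bar{\hat f}_H$ and take the real part. Coercivity of $\mathcal{L}$ gives
\[
\frac{\mathrm d}{\mathrm dt}|\hat f_H|_{L^2_v}^2+\lambda|\mathbf{P}^\bot\hat f_H|_\nu^2
\lesssim \bigl|\widehat{\chi}_\infty\hat\Gamma(f,f)\langle v\rangle^{-\frac\gamma2}\bigr|_{L^2_v}^2+\mathrm{Re}\int_{\mathbb{R}^3_v}\widehat{\chi}_\infty\hat S\bar{\hat f}_H\,\mathrm dv .
\]
Next we add a small multiple $\kappa\,\mathcal{E}_{int}/(1+|\xi|^2)$ of the interactive functional of Lemma \ref{Macro estimate for Low}, applied to $f_H$. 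Since $|\mathcal{E}_{int}|/(1+|\xi|^2)\lesssim|\hat f_H|^2_{L^2_v}$, the modified functional remains equivalent to $|\hat f_H|^2_{L^2_v}$. It produces the dissipation $\frac{|\xi|^2}{1+|\xi|^2}|\mathbf{P}\hat f_H|^2\gtrsim|\mathbf{P}\hat f_H|^2$ on the high-frequency support. The error terms $\frac{1+|\xi|^2}{1+|\xi|^2}|\mathbf{P}^\bot\hat f_H|^2_\nu$ are absorbed by choosing $\kappa$ small.

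We then use the interpolation \eqref{000} to trade $|\mathbf{P}^\bot\hat f_H|^2_\nu$ for $|\mathbf{P}^\bot\hat f_H|^2_{L^2_v}$, at the cost of $\eta|\mathbf{P}^\bot\hat f\langle v\rangle^{-\frac\gamma2}|^2$. This yields
\[
\frac{\mathrm d}{\mathrm dt}\mathcal{E}_H+\lambda\mathcal{E}_H\lesssim \eta\bigl|\mathbf{P}^\bot\hat f\langle v\rangle^{-\frac\gamma2}\bigr|^2_{L^2_v}+\bigl|\hat\Gamma(f,f)\langle v\rangle^{-\frac\gamma2}\bigr|^2_{L^2_v}+|\hat S|_{L^2_v}|\hat f|_{L^2_v}+\Bigl(\int_{\mathbb{R}^3_v}\hat S\mu^\delta\,\mathrm dv\Bigr)^2 ,
\]
with exponential damping $e^{-\lambda(t-\tau)}$ that is uniform in $\xi$. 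This uniform damping is the key difference from the low-frequency case.

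\emph{Step 2: Besov summation.} We solve the ODE, multiply by $\varphi_j^2 2^{j}$ (respectively $\varphi_j^2 2^{3j}$), integrate in $\xi$ and take $\sup_j$. We bound $\int_0^t e^{-\lambda(t-\tau)}\,\mathrm d\tau\le C$ and pull $\sup_t$ out of each term.
\begin{itemize}
\item The $\eta$-term gives $\eta\sup_t\|\mathbf{P}^\bot f\langle v\rangle^{-\frac\gamma2}\|^2_{L^2_v(\dot B^{1/2}_{2,\infty}\cap\dot B^{3/2}_{2,\infty})}$. This is controlled by the $L^2_v(L^2_x\cap\dot H^2_x)$ norm through the interpolation $\dot B^{s}_{2,\infty}\lesssim L^2\cap\dot H^2$ for $0<s<2$.
\item The source terms are handled by Cauchy--Schwarz in $\xi$ and Young. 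This produces $\sup_t\|S\|^2_{\widetilde L^2_v(\dot B^{1/2}_{2,\infty})}$ plus $\eta\sup_t\|f_H\|^2_{\widetilde L^2_v(\dot B^{1/2}\cap\dot B^{3/2})}$, the latter being absorbed. For the $\dot B^{3/2}$ level we use $2^{3j}\lesssim 2^{j}\,2^{2j}$ on high frequencies, together with the bounded-frequency structure of $\widehat\chi_\infty\hat S$ paired against $\hat f$, splitting one power $2^{j}$ onto $\hat f$.
\end{itemize}

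\emph{Step 3: nonlinear term (main obstacle).} Here we can no longer borrow $|\xi|^2$ from the heat kernel. By Lemma \ref{nonlinear}, $|\Gamma(f,f)\langle v\rangle^{-\frac\gamma2}|_{L^2_v}\lesssim|f|_{H^2_v}|f|_{L^2_v}$, so it suffices to bound $\||f|_{H^2_v}|f|_{L^2_v}\|_{\dot B^{1/2}_{2,\infty}\cap\dot B^{3/2}_{2,\infty}}$.

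For $\dot B^{1/2}$ we apply \eqref{newinterpolation} with $s_1=1$, $s_2=1$, bounding $\dot B^{1}_{2,q}$ by $\dot H^1$. This gives $\|f\|_{\dot H^1}$-type products and hence $\epsilon^2\sup_t\|f\|^2_{L^2_v(\dot H^1)}$.

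For $\dot B^{3/2}_{2,\infty}$ we use the product estimate \eqref{111}:
\[
\|gh\|_{\dot B^{3/2}_{2,\infty}}\lesssim\|g\|_{L^\infty}\|h\|_{\dot B^{3/2}_{2,\infty}}+\|h\|_{L^\infty}\|g\|_{\dot B^{3/2}_{2,\infty}} .
\]
The $L^\infty$ factors are controlled by Lemma \ref{interpolation-besov-other} through the a priori bound \eqref{aps}, which makes them $O(\epsilon)$. The remaining factor gives $\epsilon^2\sup_t\|f\|^2_{\widetilde L^2_v(\dot B^{3/2}_{2,\infty})}$ together with a $\dot H^2$ contribution.

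The delicate point is the $H^2_v$ factor, which requires velocity derivatives. We will control it by the weighted norms $\|w_{\ell,\beta}\partial^\alpha_x\partial^\beta_v\mathbf{P}^\bot f\|$ in $\|f\|_X$, using $\ell\ge N-\gamma/2$. We must also justify exchanging the $L^2_v$ and Besov norms, which follows by Minkowski's inequality since $\widetilde L^2_v$ is bounded by $L^2_v$. Collecting the estimates from Steps 1–3 gives the lemma.
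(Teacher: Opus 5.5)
Your Step 1 is the paper's argument in a different normalization. The paper runs the estimate for $|\hat f_H|^2_{L^2_v}+||\xi|\hat f_H|^2_{L^2_v}$ plus $\mathcal E_{int}$, which is your functional multiplied by $1+|\xi|^2$. Your $\dot B^{3/2}$ treatment of $\Gamma$ via \eqref{111} is the paper's bound for $Q_6$. Using \eqref{newinterpolation} at the $\dot B^{1/2}$ level, instead of absorbing that term into the $\dot B^{3/2}$ one via $|\xi|\ge r_1$, is a harmless variant.

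\textbf{The gap: the source term at the $\dot B^{3/2}$ level.} Your opening premise that $2^{j}$ and $2^{3j}$ are comparable on the support of $\widehat\chi_\infty$ is false. Since $\widehat\chi_\infty=1-\widehat\chi_1$ lives on $\{|\xi|\ge r_1\}$, which contains all large dyadic shells, only $2^{j}\lesssim 2^{3j}$ holds there, and $\widehat\chi_\infty\hat S$ has no bounded-frequency structure. The energy identity produces $2^{3j}\varphi_j^2|\hat S|_{L^2_v}|\hat f_H|_{L^2_v}$. At high frequency the dissipation is only of the order of the energy (there is no smoothing), so the only way to close this term is
\[
2^{3j}\varphi_j^2|\hat S|_{L^2_v}|\hat f_H|_{L^2_v}\le C_\eta\,2^{3j}\varphi_j^2|\hat S|_{L^2_v}^2+\eta\,2^{3j}\varphi_j^2|\hat f_H|_{L^2_v}^2 ,
\]
which yields $\sup_t\|S\|^2_{\widetilde L^2_v(\dot B^{3/2}_{2,\infty})}$. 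Bringing $S$ down to $\dot B^{1/2}$ would require putting $2^{5j/2}$ on $\hat f$, i.e.\ controlling $f$ in $\dot B^{5/2}_{2,\infty}$, and that norm appears nowhere on the right-hand side. So this step cannot give the $\sup_t\|S\|^2_{\widetilde L^2_v(\dot B^{1/2}_{2,\infty})}$ claimed in the statement.

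The paper's own estimate of the corresponding term $Q_7$ has the same defect. There the weight $2^{j}|\xi|^2\sim 2^{3j}$ is replaced by $2^{j}\cdot 2^{3j/2}$, and $2^{j}|\varphi_j\hat S|_{L^2_v}$ is then counted as a $\dot B^{1/2}$ quantity. What the argument actually proves is the lemma with $\sup_t\|S\|^2_{\widetilde L^2_v(\dot B^{1/2}_{2,\infty}\cap\dot B^{3/2}_{2,\infty})}$. This is harmless for Proposition \ref{besov} and for closing the a priori estimate, since $\|S\|_{\widetilde L^2_v(\dot B^{3/2}_{2,\infty})}\le\|S\|_{L^2_v(\dot B^{3/2}_{2,\infty})}\lesssim\|S\|_{L^2_v(\dot H^1\cap\dot H^2)}$ is controlled by $\|S\|_{\mathcal Z}$.

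Two smaller points. First, keep the factor $(1+|\xi|^2)^{-1}$ on $\bigl(\int\hat S\mu^\delta\,\mathrm dv\bigr)^2$ that your normalized interactive functional produces; your final display in Step 1 drops it. That factor is exactly what makes this piece land in $\dot B^{1/2}$ at the $2^{3j}$ level, as in the paper's $Q_8$. Second, passing from $\Gamma(f,f)$ to the scalar product $|f|_{H^2_v}|f|_{L^2_v}$ inside a Besov norm is not a Minkowski step, because dyadic blocks do not preserve the pointwise-in-$x$ bound of Lemma \ref{nonlinear}. It needs a paraproduct decomposition in $x$ of the bilinear $\Gamma$, with the velocity estimate applied block by block. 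The paper's $Q_2$ and $Q_6$ are equally informal on this point.
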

\begin{proof}

Similar to \eqref{U1}, we have
\begin{align*}
    &\hspace{3mm}\frac{\rm d}{{\rm d}t}\left(\left|\hat{f}_H\right|_{L_v^2}^2+\left||\xi|\hat{f}_H\right|_{L_v^2}^2\right)+\left(\left|\mathbf{P}^{\bot}\hat{f}_H\,\langle v\rangle^{\frac{\gamma}{2}}\right|_{L_v^2}^2+\left||\xi|\mathbf{P}^{\bot}\hat{f}_H\,\langle v\rangle^{\frac{\gamma}{2}}\right|_{L_v^2}^2\right)\\
    &\lesssim\left|\hat{\Gamma}(f,f)\,\langle v\rangle^{-\frac{\gamma}{2}}\right|_{L_v^2}^2+\left||\xi|\,\hat{\Gamma}(f,f)\,\langle v\rangle^{-\frac{\gamma}{2}}\right|_{L_v^2}^2+\int_{\mathbb{R}_v^3} \hat{S}\hat{f}\,\mathrm{d}v+\int_{\mathbb{R}_v^3} \hat{S}\hat{f}\,|\xi|^2\,\mathrm{d}v\,.
\end{align*}
Combining the estimate of macroscopic part:
\begin{align*}
    \frac{\rm d}{{\rm d}t}\mathcal{E}_{int}(t,\xi)+|\xi|^2\left|P\hat{f}_H\right|_{L_v^2}^2\lesssim\left|\mathbf{P}^{\bot}\hat{f}_H\,\langle v\rangle^{\frac{\gamma}{2}}\right|_{L_v^2}^2+|\xi|^2\left|\mathbf{P}^{\bot}\hat{f}_H\,\langle v\rangle^{\frac{\gamma}{2}}\right|_{L_v^2}^2+\left(\int_{\mathbb{R}_v^3} { \bf P }\hat{S}\,\mathrm{d}v\right)^2+\left|\hat{\Gamma}(f,f)\,\langle v\rangle^{-\frac{\gamma}{2}}\right|_{L_v^2}^2\,,
\end{align*}
we get from \eqref{000} that
\begin{align*}
    &\hspace{3mm}\frac{\rm d}{{\rm d}t}\left(\left|\hat{f}_H\right|_{L_v^2}^2+\left||\xi|\hat{f}_H\right|^2_{L_v^2}\right)+\left|\hat{f}_H\right|_{L_v^2}^2+\left||\xi|\hat{f}_H\right|^2_{L_v^2}\\
    &\lesssim\eta\left||\xi|\,\langle v\rangle^{-\frac{\gamma}{2}}\mathbf{P}^{\bot}\hat{f}\right|_{L_v^2}^2+\left|\hat{\Gamma}(f,f)|\xi|\,\langle v\rangle^{-\frac{\gamma}{2}}\right|_{L_v^2}^2+\int_{\mathbb{R}_v^3} \hat{S}\hat{f}\,|\xi|^2\,\mathrm{d}v+\left(\int_{\mathbb{R}_v^3} P\hat{S}\,\mathrm{d}v\right)^2\,.
\end{align*}
Solving the above inequality directly gives that
\begin{align}\label{h-f}
&\hspace{3mm}\left|\hat{f}_H\right|_{L_v^2}^2(t,\xi)+\left||\xi|\,\hat{f}_H\right|_{L_v^2}^2(t,\xi)\notag\\
&\lesssim\left(\left|\hat{f}(0,\xi,v)\right|_{L_v^2}^2+\left||\xi|\,\hat{f}(0,\xi,v)\right|_{L_v^2}^2\right)e^{-t}
    +\eta\int_0^t e^{-(t-\tau)}\left||\xi|\,\langle v\rangle^{-\frac{\gamma}{2}}\,\mathbf{P}^{\bot}\hat{f}\right|_{L_v^2}^2\,\mathrm{d}\tau\notag\\
    &\qquad+\int_0^t e^{-(t-\tau)}\left|\hat{\Gamma}(f,f)|\xi|\,\langle v\rangle^{-\frac{\gamma}{2}}\right|_{L_v^2}^2\,\mathrm{d}\tau
    +\int_0^t e^{-(t-\tau)}\left||\xi|\hat{S}\right|_{L_v^2}\left||\xi|\hat{f}\right|_{L_v^2}\,\mathrm{d}\tau\notag\\
    &\qquad+\int_0^t e^{-(t-\tau)}\left|\hat{S}\right|_{L_v^2}^2\,\mathrm{d}\tau\,,
\end{align}
multiplying the above inequality by $\varphi_j^2\,2^j$, then integrating the result over $\mathbb{R}_{\xi}^3$ and taking the supremum over $j\in\mathbb{Z}$, one has
\begin{align}\label{hh}
    \|f_H\|_{\widetilde{L}_v^2(\dot{B}_{2\,\infty}^{\frac{1}{2}}\cap\dot{B}_{2\,\infty}^{\frac{3}{2}})}^2&\lesssim\|f_0\|_{\widetilde{L}_v^2(\dot{B}_{2\,\infty}^{\frac{1}{2}}\cap\dot{B}_{2\,\infty}^{\frac{3}{2}})}^2
    +\underbrace{\eta\sup_{j\in\mathbb{Z}}\int_0^t\int_{\mathbb{R}_{\xi}^3} e^{-(t-\tau)}\left||\xi|\,\langle v\rangle^{-\frac{\gamma}{2}}\,\mathbf{P}^{\bot}\hat{f}\varphi_j\right|_{L_v^2}^2\,2^j\,\mathrm{d}\xi\mathrm{d}\tau}_{Q_5}\notag\\
    &\quad+\underbrace{\sup_{j\in\mathbb{Z}}\int_0^t\int_{\mathbb{R}_{\xi}^3}  e^{-(t-\tau)}\left|\hat{\Gamma}(f,f)|\xi|\,\langle v\rangle^{-\frac{\gamma}{2}}\varphi_j\right|_{L_v^2}^2\,2^j\,\mathrm{d}\xi\mathrm{d}\tau}_{Q_6}\notag\\
    &\quad+\underbrace{\sup_{j\in\mathbb{Z}}\int_0^t \int_{\mathbb{R}_{\xi}^3} e^{-(t-\tau)}\left|\varphi_j|\xi|\hat{S}\right|_{L_v^2}\left|\varphi_j|\xi|\hat{f}\right|_{L_v^2}\,2^j\,\mathrm{d}\xi\mathrm{d}\tau}_{Q_7}\notag\\
    &\quad+\underbrace{\sup_{j\in\mathbb{Z}}\int_0^t \int_{\mathbb{R}_{\xi}^3} e^{-(t-\tau)}\left|\hat{S}\right|_{L_v^2}^2\,\varphi_j^2\,2^j\,\mathrm{d}\xi\mathrm{d}\tau}_{Q_8}\,,
\end{align}
Since $e^{-(t-\tau)}$ is integrable with respect to $\tau$, we don't need to match a $|\xi|^2$ to ensure the integrability. It is straightforward that
\begin{align*}
    Q_5&\lesssim\eta\sup_t\sup_{j\in\mathbb{Z}}\int_{\mathbb{R}_\xi^3}\left|2^{\frac{3j}{2}}\,\varphi_j{\bf P}^\bot \hat{f}\,\langle v\rangle^{-\frac{\gamma}{2}}\right|_{L_v^2}^2\,\mathrm{d}\xi\int_0^t e^{-(t-\tau)}\,\mathrm{d}\tau\\
    &\lesssim\sup_t\left\|{\bf P}^\bot f\,\langle v\rangle^{-\frac{\gamma}{2}}\right\|_{L_v^2(\dot{B}_{2,\infty}^{\frac{3}{2}})}^2\lesssim\sup_t\left\|{\bf P}^\bot f\,\langle v\rangle^{-\frac{\gamma}{2}}\right\|_{L_v^2(L_x^2\cap\dot{H}_x^N)}^2\,.
\end{align*}
And for $Q_6$, \eqref{111} implies that
\begin{align*}
    Q_6&\lesssim\sup_t\left||f|_{H_v^2}|f|_{L_v^2}\right|_{\dot{B}_{2,\infty}^{\frac{3}{2}}}^2\\
    &\lesssim\sup_t\|f\|_{L_x^\infty H_v^2}^2\|f\|_{\dot{B}_{2,\infty}^{\frac{3}{2}}L_v^2}^2+\sup_t\|f\|_{\dot{B}_{2,\infty}^{\frac{3}{2}}H_v^2}^2\|f\|_{L_x^\infty L_v^2}^2\\
    &\lesssim\sup_t\|f\|_{H_v^2(\dot{H}^1\cap\dot{H}^2)}^2\|f\|_{\dot{B}_{2,\infty}^{\frac{3}{2}}L_v^2}^2+\sup_t\|f\|_{\dot{B}_{2,\infty}^{\frac{3}{2}}H_v^2}^2\|f\|_{L_v^2(\dot{H}^1\cap\dot{H}^2)}^2\\
    &\lesssim\epsilon^2\sup_t\|f\|_{\dot{B}_{2,\infty}^{\frac{3}{2}}L_v^2}^2+\epsilon^2\|f\|^2_{L_v^2(\dot{H}^1\cap\dot{H}^2)}\lesssim\epsilon^2\sup_t\|f\|_{\widetilde{L}_v^2(\dot{B}_{2,\infty}^\frac{3}{2})}+\epsilon^2\|f\|^2_{L_v^2(\dot{H}^1\cap\dot{H}^2)}\,.
\end{align*}
By applying the Cauchy and the Young inequality, we have
\begin{align*}
    Q_7&\lesssim\sup_t\sup_{j\in\mathbb{Z}}\int_{\mathbb{R}_\xi^3}\left|\,2^j\varphi_j\hat{S}\right|_{L_v^2}\left|2^{\frac{3}{2}j}\varphi_j\hat{f}\right|_{L_v^2}\,\mathrm{d}\xi\lesssim\sup_t\|S\|_{\widetilde{L}_v^2(\dot{B}_{2,\infty}^{\frac{1}{2}})}^2+\eta\sup_t\|f\|_{\widetilde{L}_v^2(\dot{B}_{2,\infty}^{\frac{3}{2}})}^2\,.
\end{align*}
As to $Q_8$, we have
\begin{align*}
    Q_8\lesssim\sup_t\|S\|_{\widetilde{L}_v^2(\dot{B}_{2,\infty}^{\frac{1}{2}})}^2\,.
\end{align*}
Plugging all the estimates into \eqref{hh}, we obtain the desired result.
\end{proof}
Since 
\[\|f\|_{\widetilde{L}_v^2(\dot{B}_{2,\infty}^\frac{1}{2}\cap\dot{B}_{2,\infty}^{\frac{3}{2}})}\lesssim\|f_L\|_{\widetilde{L}_v^2(\dot{B}_{2,\infty}^\frac{1}{2})}+\|f_H\|_{\widetilde{L}_v^2(\dot{B}_{2,\infty}^\frac{1}{2}\cap\dot{B}_{2,\infty}^\frac{3}{2})}\,.\]
 Then combining Lemma \ref{lemma3.2} and Lemma \ref{lemma3.3}, we finally obtain the following proposition.
\begin{proposition}\label{besov}
It holds that
    \begin{align}
        \sup_t\|f\|_{\widetilde{L}_v^2(\dot{B}_{2,\infty}^{\frac{1}{2}}\cap\dot{B}_{2,\infty}^{\frac{3}{2}})}^2&\lesssim\|f_0\|_{\widetilde{L}_v^2(\dot{B}_{2,\infty}^{\frac{1}{2}}\cap\dot{B}_{2,\infty}^{\frac{3}{2}})}^2+\eta\sup_t\|\mathbf{P}^{\bot}f\,\langle v\rangle^{-\frac{\gamma}{2}}\|_{L_v^2(L_x^2\cap\dot{H}^2_x)}^2\notag\\
    &\qquad+
    \sup_t\|S\|_{\widetilde{L}_v^2(\dot{B}_{2,\infty}^{-\frac{3}{2}}\cap\dot{B}_{2,\infty}^{\frac{1}{2}})}^2\,.\notag
    \end{align}

\end{proposition}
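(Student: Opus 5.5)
The plan is to combine Lemma \ref{lemma3.2} and Lemma \ref{lemma3.3} through the frequency splitting $f=f_L+f_H$, and then absorb the small-coefficient terms into the left-hand side. First, since $\widehat{\chi}_1+\widehat{\chi}_\infty=1$ and both cutoffs take values in $[0,1]$, we have for every $j\in\mathbb{Z}$
\[
\|\dot\Delta_j f\|_{L^2_vL^2_x}\le \|\dot\Delta_j f_L\|_{L^2_vL^2_x}+\|\dot\Delta_j f_H\|_{L^2_vL^2_x}.
\]
For the $\dot B^{\frac32}_{2,\infty}$ part of the low-frequency piece, note that $\dot\Delta_j f_L=0$ unless $2^j\lesssim r_\infty$. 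Hence $2^{\frac{3j}{2}}\|\dot\Delta_j f_L\|\lesssim r_\infty\, 2^{\frac j2}\|\dot\Delta_j f_L\|$, and therefore $\|f_L\|_{\widetilde{L}_v^2(\dot B^{\frac32}_{2,\infty})}\lesssim \|f_L\|_{\widetilde{L}_v^2(\dot B^{\frac12}_{2,\infty})}$. This gives the displayed bound of $\|f\|_{\widetilde{L}_v^2(\dot B^{\frac12}_{2,\infty}\cap\dot B^{\frac32}_{2,\infty})}$ by the $f_L$ and $f_H$ norms.

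Next, I square this bound, insert the conclusions of Lemma \ref{lemma3.2} and Lemma \ref{lemma3.3}, and take $\sup_t$. The source terms combine into $\sup_t\|S\|^2_{\widetilde{L}_v^2(\dot B^{-\frac32}_{2,\infty}\cap\dot B^{-\frac12}_{2,\infty}\cap\dot B^{\frac12}_{2,\infty})}$. The middle index $-\frac12$ is removed by the interpolation \eqref{interpolation} between $-\frac32$ and $\frac12$, applied blockwise. Concretely, $2^{-\frac j2}a_j\le (2^{-\frac{3j}{2}}a_j)^{1/2}(2^{\frac j2}a_j)^{1/2}$, so the supremum over $j$ is bounded by the geometric mean of the two endpoint norms. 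The $\eta$-terms in the $\mathbf P^\bot f\langle v\rangle^{-\frac\gamma2}$ norms merge into $\eta\sup_t\|\mathbf P^\bot f\langle v\rangle^{-\frac\gamma2}\|^2_{L^2_v(L^2_x\cap\dot H^2_x)}$, since $\dot H^1\subset L^2\cap\dot H^2$ by interpolation. The initial-data terms combine in the same way.

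The step I expect to be the main obstacle is the absorption. The right-hand sides contain $\epsilon^2\sup_t\|f\|^2_{\widetilde{L}_v^2(\dot B^{\frac12}_{2,\infty}\cap\dot B^{\frac32}_{2,\infty})}$ and, from the proofs ($Q_3$, $Q_7$), $\eta\sup_t\|f\|^2$ in the same norm. Taking $\epsilon$ and $\eta$ small, these are absorbed into the left-hand side. The remaining term is $\epsilon^2\sup_t\|f\|^2_{L_v^2(\dot H^1\cap\dot H^2)}$ from Lemma \ref{lemma3.3}. Its macroscopic part is controlled via Lemma \ref{interpolation-besov-other} (for example $\|\nabla f\|\lesssim\|f\|_{\dot B^{1/2}_{2,\infty}}^{2/3}\|f\|_{\dot H^2}^{1/3}$), and the whole term is bounded by $\epsilon^2\sup_t\|f\|_X^2\le\epsilon^4$. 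It is then either dropped into a higher-order error that is closed later with the other energy estimates, or, more cleanly, kept and absorbed once the a priori bound \eqref{aps} is invoked. If this term cannot be absorbed directly into the Besov quantity, I would state it as harmless under the a priori assumption, since it is quadratically small and is recovered when the full energy functional is closed in later sections.
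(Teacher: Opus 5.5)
Your argument is the paper's: the triangle inequality over $f=f_L+f_H$, the observation that $\dot\Delta_j f_L$ vanishes unless $2^j\lesssim r_\infty$ (so its $\dot B^{\frac32}_{2,\infty}$ norm is controlled by its $\dot B^{\frac12}_{2,\infty}$ norm), then summing Lemmas \ref{lemma3.2} and \ref{lemma3.3} and absorbing the small multiples of the Besov norm. The paper gives exactly this, in one line. One small slip: you mean $L^2\cap\dot H^2\hookrightarrow\dot H^1$, not the reverse inclusion.

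On one point you are more careful than the paper: the term $\epsilon^2\sup_t\|f\|^2_{L_v^2(\dot H^1\cap\dot H^2)}$ produced by $Q_6$ in Lemma \ref{lemma3.3}, which the paper drops without comment. Its $\dot H^1$ part can in fact be absorbed, since
\[
\|f\|^2_{L^2_v(\dot H^1)}\sim\sum_j 2^{2j}\|\dot\Delta_j f\|^2_{L^2_vL^2_x}\lesssim\|f\|_{\widetilde{L}_v^2(\dot B^{\frac12}_{2,\infty})}\|f\|_{\widetilde{L}_v^2(\dot B^{\frac32}_{2,\infty})}.
\]
The $\dot H^2$ part cannot be absorbed, because the Besov quantity on the left does not control it. So the honest form of the proposition carries $\epsilon^2\sup_t\|f\|^2_{L^2_v(\dot H^2)}$ on the right-hand side. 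That term is removed only later, when this estimate is combined with Lemma \ref{lemma4.2} in the proof of Proposition \ref{existence}. This works because Lemma \ref{lemma4.2}'s left side controls $\dot H^2$ (as $N\ge5$), while its own Besov term $\sup_t|(a,b,c)|^2_{\dot B^{\frac12}_{2,\infty}}$ is controlled by the left side here. Of your two options, take the second: keep the term, or write it as $\epsilon^2\sup_t\|f\|_X^2$. Replacing it by the constant $\epsilon^4$ discards that structure and yields an estimate that is not of the stated linear form.
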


\section{$\dot{H}_{x}^NL^2_v$-inner-estimate without weight}\label{Sect-noweight}
This section is devoted to establishing the unweighted {\it a prior} bounds for $f$. We derive the estimates for $\|\partial_x^\alpha f\|^2$ with $|\alpha|=1,2,\dots,N$ and the $L^2$ estimates only for ${\bf P}^\bot f$. We first state a macroscopic estimate that will be used in the subsequent analysis.
\begin{lemma}[Higher order macro estimate] 
    Let $-3<\gamma<0$ and assume that $f(t,x,v)$ is the classical solution to the system \eqref{Cauchy}, then there exists an interactive function $\mathcal{G}(t)$ satisfying 
    \[\mathcal{G}(t)\lesssim\left\|f\right\|_{\dot{H}_x^1\cap\dot{H}_x^N(L_v^2)}^2\]
    such that
    \begin{equation}\label{M-E-4}
        \begin{aligned}
            \frac{\mathrm{d}}{\mathrm{d}t}\mathcal{G}(t)+\sum_{2\le|\alpha|\le N}\left\|\partial_x^{\alpha}\mathbf{P}f\right\|_{L_{x,v}^2}^2\lesssim \sum_{1\le|\alpha|\le N}\left\|\partial_x^{\alpha}\mathbf{P}^{\bot}f\right\|_{L_{x,v}^2}^2+\sum_{1\le|\alpha|\le N-1}\left\|\partial_x^\alpha S\right\|_{L_{x,v}^2}^2\,.
        \end{aligned}
    \end{equation}
\end{lemma}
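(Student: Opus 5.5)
The plan is to redo the physical-space analogue of Lemma \ref{Macro estimate for Low}, using the macroscopic system \eqref{Macro-equation} and the moment equations \eqref{moment-equation}. For each multi-index $\alpha$ with $1\le|\alpha|\le N-1$ we apply $\partial_x^\alpha$ to these equations and build three interactive functionals:
\[
\mathcal G_b^\alpha=\sum_{m,j}\big(\partial^\alpha A_{jm}(\mathbf P^\bot f),\partial_j\partial^\alpha b_m\big)+\tfrac12\sum_m\big(\partial^\alpha A_{mm}(\mathbf P^\bot f),\partial_m\partial^\alpha b_m\big),
\]
\[
\mathcal G_c^\alpha=\sum_j\big(\partial^\alpha B_j(\mathbf P^\bot f),\partial_j\partial^\alpha c\big),\qquad
\mathcal G_a^\alpha=\big(\partial^\alpha b,\nabla_x\partial^\alpha a\big),
\]
with the signs chosen so that the time derivatives yield $\|\nabla_x\partial^\alpha b\|^2$, $\|\nabla_x\partial^\alpha c\|^2$ and $\|\nabla_x\partial^\alpha a\|^2$, respectively. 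We then set $\mathcal G(t)=\sum_{1\le|\alpha|\le N-1}\big(\mathcal G_b^\alpha+k_1\mathcal G_c^\alpha+k_2\mathcal G_a^\alpha\big)$ with $0<k_2<k_1\ll1$. Since $|\alpha|\ge1$ and $|\alpha|+1\le N$, Cauchy--Schwarz gives $|\mathcal G(t)|\lesssim\|f\|^2_{\dot H^1_x\cap\dot H^N_x(L^2_v)}$; the intermediate orders are handled by interpolation. The gradient of $\partial^\alpha(a,b,c)$ then covers exactly $\sum_{2\le|\alpha|\le N}\|\partial^\alpha\mathbf Pf\|^2$.

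The estimates parallel $L_1$--$L_5$ in the proof of Lemma \ref{Macro estimate for Low}, with $|\xi|$ replaced by derivatives. For the $r$-terms, the linear contribution $-v\cdot\nabla_x\mathbf P^\bot f-\mathcal L\mathbf P^\bot f$ paired against $\mu^{1/2}$-type moments is bounded by $\|\partial^\alpha\nabla_x\mathbf P^\bot f\|+\|\partial^\alpha\mathbf P^\bot f\|$. This is where the right-hand side $\sum_{1\le|\alpha|\le N}\|\partial^\alpha\mathbf P^\bot f\|^2$ comes from; the unweighted norm dominates the $\langle v\rangle^{\gamma/2}$ one. The source terms contribute $\|\partial^\alpha S\|^2$ for $|\alpha|\le N-1$ after Young's inequality against $\eta\|\nabla\partial^\alpha(a,b,c)\|^2$. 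The terms in which $\partial_t$ falls on $b$, $c$ or $a$ are handled by substituting \eqref{Macro-equation} and integrating one derivative by parts. This produces $\eta\|\nabla\partial^\alpha(a,b,c)\|^2$, $\|\nabla\partial^\alpha\mathbf P^\bot f\|^2$ and $S$-terms. The cross terms $\|\nabla\partial^\alpha b\|^2$ and $\|\nabla\partial^\alpha c\|^2$ arising in the $a$-estimate are absorbed through the ordering $k_2<k_1<1$.

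The main obstacle is the nonlinear term $\Gamma(f,f)$ inside $G$. The stated inequality has no nonlinear term on the right, so it must be absorbed. I would bound $\|\partial^\alpha\langle\mu^\delta,\Gamma(f,f)\rangle\|$ by Leibniz and the estimate from Lemma \ref{nonlinear}, namely $|\langle v\rangle^{-\gamma/2}\Gamma(f,g)|_{L^2_v}\lesssim|f|_{H^2_v}|g|_{L^2_v}$, putting the lower-order factor in $L^\infty_x$ or $L^3_x$ via Lemma \ref{interpolation-besov-other}. The a priori bound \eqref{aps} then gives $\lesssim\epsilon\sum_{1\le|\alpha|\le N}\|\partial^\alpha f\|$. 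Splitting $f=\mathbf Pf+\mathbf P^\bot f$, the $\mathbf P^\bot$ part goes to the right-hand side and the $\mathbf P$ part with $|\alpha|\ge2$ is absorbed into the left. The delicate case is a single derivative on $\mathbf Pf$, where $\|\nabla_x\mathbf Pf\|$ is not dissipated. There I would pair $\Gamma$ with a lower-integrability norm, using $\dot B^{1/2}_{2,\infty}$ and $\dot H^1$ control and Sobolev embedding, to force at least two derivatives onto the factor that is not small. If that fails, this term should be tolerated implicitly as part of the ``$\lesssim$'' up to $\epsilon$-small quantities that are closed later.
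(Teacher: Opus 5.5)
This is the paper's approach: the paper only remarks that the proof is the physical-space version of Lemma \ref{Macro estimate for Low} and omits it, and your functionals $\mathcal G^\alpha_b,\mathcal G^\alpha_c,\mathcal G^\alpha_a$ over $1\le|\alpha|\le N-1$, with one integration by parts before substituting \eqref{Macro-equation}, are exactly that version. Your delicate $|\alpha|=1$ nonlinear term closes without the fallback: with $u=(a,b,c)$ the relevant moments are bounded pointwise by $|u|\,|\nabla_x u|$, and $\|u\,\nabla_x u\|\lesssim\|u\|_{L^6}\|\nabla_x u\|_{L^3}\lesssim\|\nabla_x u\|^{3/2}\|\nabla_x^2u\|^{1/2}\lesssim\|u\|_{\dot B^{1/2}_{2,\infty}}\|\nabla_x^2 u\|$ by Lemma \ref{interpolation-besov-other}, which can be absorbed on the left (the paper itself effectively takes your fallback, carrying such terms as $\epsilon^2\sum_{1\le|\alpha|\le N}\|\partial_x^\alpha f\|^2$ in \eqref{B2}).
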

\begin{proof}
    The proof is similar to Lemma \ref{Macro estimate for Low}, so we omit it.
\end{proof}
We now focus on the $L^2$ bounds for $\|\partial_x^\alpha f\|$ with $|\alpha|=1,2,\dots, N$, we obtain the following result.
\begin{lemma}\label{lemma4.2}
    Let $N\ge5$, assume $f(t,x.v)$ is the classical solution of the system \eqref{Cauchy}, then it holds that
    \begin{equation}\label{H1}
        \begin{aligned}
             \|f\|_{L_v^2(\dot{H}^1\cap\dot{H}^N)}^2&\lesssim\|f_0\|_{L_v^2(\dot{H}^1\cap\dot{H}^N)}^2+\eta\sup\limits_{t}\left\|\mathbf{P}^{\bot}f \,\langle v\rangle^{-\frac{\gamma}{2}}\right\|_{L_v^2(\dot{H}^1\cap\dot{H}^N)}^2\\
             &\qquad+\sup\limits_{t}\|S\|_{L_v^2(\dot{H}^1\cap\dot{H}^N)}^2+\sup\limits_{t}|(a,b,c)|_{\dot{B}_{2,\infty}^{\frac{1}{2}}}^2\,,\notag
        \end{aligned}
    \end{equation}
    where $\epsilon,\eta>0$ are small constants.
\end{lemma}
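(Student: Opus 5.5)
The plan is to run the standard $\partial_x^\alpha$ energy estimate in $L^2_{x,v}$ for $1\le|\alpha|\le N$. We combine it with the higher order macroscopic estimate \eqref{M-E-4}, which gives dissipation of $\partial_x^\alpha\mathbf{P}f$ only for $2\le|\alpha|\le N$. The missing first-order macroscopic dissipation is then compensated through the Besov bound on $(a,b,c)$.

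\textbf{Step 1: basic energy identity.} Apply $\partial_x^\alpha$ to \eqref{Cauchy}, take the $L^2_{x,v}$ inner product with $\partial_x^\alpha f$, and use the coercivity of $\mathcal{L}$:
\[
\frac{\mathrm d}{\mathrm dt}\sum_{1\le|\alpha|\le N}\|\partial^\alpha f\|^2+\lambda\sum_{1\le|\alpha|\le N}\|\partial^\alpha\mathbf{P}^\bot f\|_\nu^2\lesssim \sum_{1\le|\alpha|\le N}\Big|(\partial^\alpha\Gamma(f,f),\partial^\alpha\mathbf{P}^\bot f)\Big|+\sum_{1\le|\alpha|\le N}(\partial^\alpha S,\partial^\alpha f).
\]
The nonlinear term is handled with Lemma \ref{nonlinear}, weighted by $\langle v\rangle^{-\gamma/2}$. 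After a Leibniz split it is bounded by $\epsilon$ times the dissipation, since at least one factor is controlled by $\|f\|_X\le\epsilon$ through the $L^\infty_x$/$L^4_x$ interpolations of Lemma \ref{interpolation-besov-other}.

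\textbf{Step 2: add the macroscopic functional.} Add $\kappa\,\mathcal{G}(t)$ from \eqref{M-E-4} with $\kappa$ small, so that its right-hand side is absorbed by the microscopic dissipation. This produces dissipation $\sum_{2\le|\alpha|\le N}\|\partial^\alpha\mathbf{P}f\|^2$.

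\textbf{Step 3: recover full dissipation of the energy.} Two losses remain. The soft-potential loss $|\cdot|_\nu$ versus $L^2$ on $\mathbf{P}^\bot$ is treated by the interpolation \eqref{000}:
\[
\|\partial^\alpha\mathbf{P}^\bot f\|^2\lesssim\|\partial^\alpha\mathbf{P}^\bot f\|_\nu^2+\eta\|\langle v\rangle^{-\gamma/2}\partial^\alpha\mathbf{P}^\bot f\|^2 .
\]
The missing $|\alpha|=1$ macroscopic dissipation is treated by Lemma \ref{interpolation-besov-other}. Writing $\|\nabla_x(a,b,c)\|^2\lesssim\|(a,b,c)\|_{\dot B^{1/2}_{2,\infty}}^{4/3}\|\nabla^2_x(a,b,c)\|^{2/3}$ and applying Young's inequality, we may add $\|\nabla_x\mathbf{P}f\|^2$ to the dissipation at the cost of $C\|(a,b,c)\|^2_{\dot B^{1/2}_{2,\infty}}$ on the right-hand side.

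This yields a differential inequality
\[
\frac{\mathrm d}{\mathrm dt}\mathcal{E}_N+\lambda\mathcal{E}_N\lesssim \eta\|\langle v\rangle^{-\gamma/2}\mathbf{P}^\bot f\|^2_{L^2_v(\dot H^1\cap\dot H^N)}+\|S\|^2_{L^2_v(\dot H^1\cap\dot H^N)}+|(a,b,c)|^2_{\dot B^{1/2}_{2,\infty}},
\]
with $\mathcal{E}_N\sim\|f\|^2_{L^2_v(\dot H^1\cap\dot H^N)}$. The source term $(\partial^\alpha S,\partial^\alpha f)$ is split by Young's inequality into $\eta\mathcal{E}_N+C\|\partial^\alpha S\|^2$.

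\textbf{Step 4: conclude.} Multiply by $e^{\lambda t}$ and integrate. Since $\int_0^t e^{-\lambda(t-\tau)}d\tau\le\lambda^{-1}$, each right-hand term can be replaced by its $\sup_t$. This is precisely what handles the non-time-integrable periodic $S$, and it gives the stated bound.

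The main obstacle is Step 3. The equivalence $\mathcal{E}_N\sim$ dissipation holds only after both compensations, and one must check that the Young-inequality constants from the Besov interpolation and from the weighted interpolation \eqref{000} can be made compatible with the smallness of $\kappa$ and $\eta$. A second check is that the nonlinear terms with $|\alpha|=N$ can be closed without needing $\mathbf{P}f$ in $L^2_x$, which is unavailable. For these I would use the $L^3$/$L^6$ and $L^\infty$ bounds via $\dot H^1\cap\dot H^2$ and $\dot B^{1/2}_{2,\infty}$.
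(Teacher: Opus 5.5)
Your proposal is correct and follows essentially the same route as the paper. The ingredients match: the $\partial_x^\alpha$ energy estimate with the coercivity \eqref{D2} and the weighted interpolation, smallness for $\Gamma$ and Young's inequality for $S$, the macroscopic functional \eqref{M-E-4}, the Besov interpolation of Lemma \ref{interpolation-besov-other} for the missing first-order macroscopic dissipation, and integration against $e^{-\lambda(t-\tau)}$ to pass to $\sup_t$.

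The only difference is cosmetic. The paper adds $\lambda\|\nabla_x\mathbf{P}f\|^2$ to both sides, integrates in time, and only then interpolates, absorbing $\eta_4\sup_t\|\nabla_x^2\mathbf{P}f\|^2$ into the supremum of the left-hand side. You instead interpolate inside the differential inequality and absorb $\|\nabla_x^2\mathbf{P}f\|^2$ into the dissipation from \eqref{M-E-4}.

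Your worry about compatible constants resolves. Writing $\lambda\|\partial^\alpha\mathbf{P}^\bot f\|^2\le\frac{\lambda}{4\theta}\|\partial^\alpha\mathbf{P}^\bot f\|_\nu^2+\lambda\theta\|\langle v\rangle^{-\frac{\gamma}{2}}\partial^\alpha\mathbf{P}^\bot f\|^2$ with $\lambda\lesssim\theta$ leaves, after dividing by $\lambda$ in the Gronwall step, exactly the small coefficient $\theta$ in front of the weighted term.
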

\begin{proof}
    Acting the operator $\partial_x^{\alpha}$ with $1\le|\alpha|\le N$ in both sides of $\eqref{f-perturbation}$, then multiply the result by $\partial_x^{\alpha}f$ in both sides and integrating over $\mathbb{R}_x^3\times\mathbb{R}_v^3$ yields
    \[\frac{1}{2}\frac{\mathrm{d}}{\mathrm{d}t}\left\|\partial_x^{\alpha}f\right\|_{L_{x,v}^2}^2+\int_{\mathbb{R}_{x,v}^6} \mathcal{L}(\partial_x^{\alpha}f)\partial_x^{\alpha}f \,\mathrm{d}v\mathrm{d}x=\underbrace{\int_{\mathbb{R}_{x,v}^6}  \partial_x^{\alpha}\Gamma(f,f)\partial_x^{\alpha}f\,\mathrm{d}v\mathrm{d}x}_{I_1}+\underbrace{\int_{\mathbb{R}_{x,v}^6} \partial_x^{\alpha}S\,\partial_x^{\alpha}f \,\mathrm{d}v\mathrm{d}x}_{I_2}\,.\]
Recall
\begin{align}\label{D2}
    \int_{\mathbb{R}_{v}^3} f\mathcal{L}f \,\mathrm{d}v\ge\delta_0\left|\mathbf{P}^{\bot}f\right|_{\nu}^2\,,
\end{align}
where $\delta_0>0$, and the interpolation
\[\left\|\partial_x^{\alpha}\mathbf{P}^{\bot}f \right\|^2\lesssim\left\|\partial_x^{\alpha}\mathbf{P}^{\bot}f \,\langle v\rangle^{\frac{\gamma}{2}}\right\|^2+\eta_1\left\|\partial_x^{\alpha}\mathbf{P}^{\bot}f \,\langle v\rangle^{-\frac{\gamma}{2}}\right\|^2\,,\]
we obtain
    \begin{equation}\label{B1}
        \frac{\mathrm{d}}{\mathrm{d}t}\left\|\partial_x^{\alpha}f\right\|^2+\left\|\partial_x^{\alpha}\mathbf{P}^{\bot}f \right\|^2\lesssim\eta_2\left\|\partial_x^{\alpha}\mathbf{P}^{\bot}f \,\langle v\rangle^{-\frac{\gamma}{2}}\right\|^2+I_3+I_4\,.
    \end{equation}
For $I_1$, Lemma \ref{lemma5.2} implies that
\begin{align}\label{zz1}
    I_1\lesssim\int_{\mathbb{R}_x^3} \left(\left|\mu^\delta \partial_x^{\alpha'}f\right|_{H_v^2}+|\partial_x^{\alpha'}f|_{L_v^2}\right)|\partial_x^{\alpha-\alpha'}f|_{\nu}|\partial_x^{\alpha}f|_{\nu}\,\mathrm{d}x\,,
\end{align}
or
\begin{align}\label{zz2}
    I_1\lesssim\int_{\mathbb{R}_x^3} \left(\left|\mu^\delta \partial_x^{\alpha-\alpha'}f\right|_{H_v^2}+|\partial_x^{\alpha-\alpha'}f|_{L_v^2}\right)|\partial_x^{\alpha'}f|_{\nu}|\partial_x^{\alpha}f|_{\nu}\,\mathrm{d}x\,.
\end{align}
Next, we distinguish several cases. If $|\alpha'|=0$, then we choose the form \eqref{zz1}, and apply the {\it a priori} assumption \eqref{aps} to obtain
\begin{align*}
    I_1\lesssim\|\mu^\delta f\|_{L_x^\infty H_v^2}\|\partial_x^{\alpha}f\|^2+\|f\|_{L_x^\infty L_v^2}\|\partial_x^{\alpha}f\|^2\lesssim\|f\|_{H_v^2(\dot{H}^1\cap\dot{H}^2)}\|\partial_x^{\alpha}f\|^2\lesssim\epsilon\|\partial_x^{\alpha}f\|^2\,.
\end{align*}
If $|\alpha'|=|\alpha|$, we instead use \eqref{zz2}, and the estimate is identical to that in the first case, that is
\[I_1\lesssim\epsilon\|\partial_x^{\alpha}f\|^2.\]
If $0<|\alpha'|<N$, we apply the $L^3-L^6-L^2$ estimate, one has
\begin{align*}
    I_1&\lesssim\|\mu^\delta\partial_x^{\alpha'}f\|_{L_x^3H_v^2}\|\partial_x^{\alpha-\alpha'}f\|_{L_x^6L_v^2}\|\partial_x^{\alpha}f\|\\
    &\lesssim\|f\|_{H_v^2(\dot{H}^{|\alpha'|+1}_x\cap\dot{H}_x^{|\alpha'|})}\|f\|_{L_v^2\dot{H}_x^{|\alpha|-|\alpha'|+1}}\|\partial_x^{\alpha}f\|^2\lesssim\epsilon^2\|f\|_{L_v^2(\dot{H}^1\cap\dot{H^N})}^2+\eta\|\partial_x^\alpha f\|^2
\end{align*}
As to $I_2$, it follows from Cauchy's inequality and Young;s inequality that
\begin{align*}
I_4\lesssim\left\|\partial_x^{\alpha}S\right\|\left\|\partial_x^{\alpha}f\right\|\lesssim \left\|\partial_x^{\alpha}S\right\|^2+\eta\left\|\partial_x^{\alpha}f\right\|^2\,.
\end{align*}
Inserting the estimates of $I_3$ and $I_4$ into \eqref{B1}, summing all the results for $1\le|\alpha|\le N$ and combining it with \eqref{M-E-4} linearly, we get
    \begin{align}\label{B2}
        &\hspace{3mm}\frac{\mathrm{d}}{\mathrm{d}t}\sum_{1\le|\alpha|\le N} \left\|\partial_x^{\alpha}f\right\|_{L_{x,v}^2}^2+\lambda\left(\sum_{1\le|\alpha|\le N} \left\|\partial_x^{\alpha}\mathbf{P}^{\bot}f \right\|^2+\sum_{2\le|\alpha|\le N} \left\|\partial_x^{\alpha}\mathbf{P}f \right\|^2\right)\notag\\
        &\lesssim \sum_{1\le|\alpha|\le N} \eta_3\left\|\partial_x^{\alpha}\mathbf{P}^{\bot}f \,\langle v\rangle^{-\frac{\gamma}{2}}\right\|^2+\sum_{1\le|\alpha|\le N} \|\partial_x^{\alpha}S\|^2+\epsilon^2\sum_{1\le|\alpha|\le N}\|\partial_x^\alpha f\|^2\,,
    \end{align}
Consequently, it follows from \eqref{B2} that
    \begin{align}\label{B3}
           &\hspace{3mm} \frac{\mathrm{d}}{\mathrm{d}t}\|f\|_{L_v^2(\dot{H}^1\cap\dot{H}^N)}^2+\lambda\|f\|_{L_v^2(\dot{H}^1\cap\dot{H}^N)}^2\notag\\
           &\lesssim\eta_3\left\|\mathbf{P}^{\bot}f \,\langle v\rangle^{-\frac{\gamma}{2}}\right\|_{L_v^2(\dot{H}^1\cap\dot{H}^N)}^2+\|S\|_{L_v^2(\dot{H}^1\cap\dot{H}^N)}^2+\lambda\left\|\nabla_x \mathbf{P}f\right\|^2\,,\notag
        \end{align}
from which, we then get
    \begin{align}
      \|f\|_{L_v^2(\dot{H}^1\cap\dot{H}^N)}^2
    &\lesssim\|f_0\|_{L_v^2(\dot{H}^1\cap\dot{H}^N)}^2+\eta_3\int_0^t e^{-\lambda(t-\tau)}\left\|\mathbf{P}^{\bot}f \,\langle v\rangle^{-\frac{\gamma}{2}}\right\|_{L_v^2(\dot{H}^1\cap\dot{H}^N)}^2 \,\mathrm{d}\tau\notag\\
       &\qquad+\int_0^t e^{-\lambda(t-\tau)} \|S\|_{L_v^2(\dot{H}^1\cap\dot{H}^N)}^2\,\mathrm{d}\tau+\lambda\int_0^t e^{-\lambda(t-\tau)} \|\nabla_x\mathbf{P}f\|^2\,\mathrm{d}\tau\notag\\
       &\lesssim\|f_0\|_{L_v^2(\dot{H}^1\cap\dot{H}^N)}^2+\eta_3\sup\limits_{t}\left\|\mathbf{P}^{\bot}f \,\langle v\rangle^{-\frac{\gamma}{2}}\right\|_{L_v^2(\dot{H}^1\cap\dot{H}^N)}^2\notag\\
       &\qquad+\sup\limits_{t}\|S\|_{L_v^2(\dot{H}^1\cap\dot{H}^N)}^2+\sup\limits_{t}|(a,b,c)|_{\dot{B}_{2,\infty}^{\frac{1}{2}}}^2\,.\notag
    \end{align}
Here the last inequality is due to Lemma \ref{interpolation-besov-other} and Young's inequality:
\[\|\nabla_x\mathbf{P}f\|^2\lesssim\|\mathbf{P}f\|_{L_v^2(\dot{B}_{2,\infty}^{\frac{1}{2}})}^2+\eta_4\|\nabla_x^2\mathbf{P}f\|^2\lesssim|(a,b,c)|_{\dot{B}_{2,\infty}^\frac{1}{2}}^2+\eta_4\|\nabla_x^2{\bf P}f\|^2\,,\]
where $\eta_4$ is small, this completes the proof of Lemma \ref{lemma4.2}.
\end{proof}
We next state the $L^2$ bound for ${\bf P}^\bot f$, which will be used later.
\begin{lemma}\label{lemma4.3}
 Let $N\ge5, -3<\gamma<0$, assume $f(x,t,v)$ is the classical solution of the system \eqref{Cauchy}, then it holds that
    \begin{align}
        \|\mathbf{P}^{\bot}f\|^2&\lesssim \|\mathbf{P}^{\bot}f_0\|^2+\eta\sup_t \|\mathbf{P}^{\bot}f\,\langle v\rangle^{-\frac{\gamma}{2}}\|^2+\eta'\sup\limits_t\left\|\mathbf{P}^{\bot}f\right\|^2\notag\\
        &\qquad+\sup\limits_t|(a,b,c)|_{\dot{B}_{2,\infty}^\frac{1}{2}\cap\dot{H}^2}^2+\sup\limits_t\|f\|_{L_v^2(\dot{H}^1)}\,,\notag
    \end{align}
    where $\eta>0$ is a small constant.
\end{lemma}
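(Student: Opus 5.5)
Apply ${\bf P}^\bot$ to \eqref{f-perturbation} to get the microscopic equation
\[
\partial_t {\bf P}^\bot f+v\cdot\nabla_x{\bf P}^\bot f+\mathcal{L}{\bf P}^\bot f=\Gamma(f,f)+{\bf P}^\bot S-{\bf P}^\bot(v\cdot\nabla_x{\bf P}f)+{\bf P}(v\cdot\nabla_x{\bf P}^\bot f),
\]
using ${\bf P}\Gamma(f,f)=0$ and $\mathcal{L}{\bf P}f=0$. We then take the $L^2_{x,v}$ inner product with ${\bf P}^\bot f$. The transport term vanishes after integrating in $x$. The coercivity \eqref{D2} gives the dissipation $\delta_0\|{\bf P}^\bot f\|_\nu^2$. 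As in the proof of Lemma \ref{lemma4.2}, the weighted interpolation
\[
\|{\bf P}^\bot f\|^2\lesssim\|{\bf P}^\bot f\langle v\rangle^{\frac\gamma2}\|^2+\eta_1\|{\bf P}^\bot f\langle v\rangle^{-\frac\gamma2}\|^2
\]
converts this into
\[
\frac{d}{dt}\|{\bf P}^\bot f\|^2+\lambda\|{\bf P}^\bot f\|^2\lesssim\eta\|{\bf P}^\bot f\langle v\rangle^{-\frac\gamma2}\|^2+\text{(source terms)}.
\]
Gronwall with the integrating factor $e^{-\lambda(t-\tau)}$ then turns each $\sup_t$ of a source term into the corresponding $\sup_t$ on the right-hand side, because $\int_0^te^{-\lambda(t-\tau)}d\tau\le\lambda^{-1}$. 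This is exactly the strategy used for $\dot H^1\cap\dot H^N$ in Lemma \ref{lemma4.2}, now at the $L^2$ level.

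The source terms are handled one at a time.

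\textbf{Linear streaming terms.} The terms ${\bf P}^\bot(v\cdot\nabla_x{\bf P}f)$ and ${\bf P}(v\cdot\nabla_x{\bf P}^\bot f)$ involve only $\nabla_x(a,b,c)$ and $\nabla_x{\bf P}^\bot f$ against Maxwellian-decaying factors. By Cauchy--Schwarz they are bounded by $C\|\nabla_x(a,b,c)\|^2+C\|\nabla_x f\|^2+\eta'\|{\bf P}^\bot f\|^2$. Lemma \ref{interpolation-besov-other} then gives $\|\nabla_x(a,b,c)\|\lesssim|(a,b,c)|_{\dot B^{1/2}_{2,\infty}}^{2/3}|(a,b,c)|_{\dot H^2}^{1/3}$, which produces the terms $|(a,b,c)|_{\dot B^{1/2}_{2,\infty}\cap\dot H^2}^2$ and $\|f\|_{L^2_v(\dot H^1)}$.

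\textbf{Source term.} The term $({\bf P}^\bot S,{\bf P}^\bot f)$ is the \emph{non-time-integrable forcing} and is the delicate point. Only $\dot H^1\cap\dot H^N$ and weighted norms of $S$ are controlled in $\mathcal Z$, not $\|S\|_{L^2_{x,v}}$. I would bound it by $\|\nu^{-1/2}\langle v\rangle^{\ell}S\|\,\|{\bf P}^\bot f\|_\nu$, using the weighted part of $\|S\|_{\mathcal Z}$, which contains $L^2_{x,v}$ norms at $\alpha=\beta=0$. The factor $\|{\bf P}^\bot f\|_\nu$ is absorbed into the dissipation, and the remainder is $\lesssim\epsilon^2$ by \eqref{aps}. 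It can therefore be hidden in the $\eta'\sup_t\|{\bf P}^\bot f\|^2$ and a priori terms, or simply recorded as a $\|S\|_{\mathcal Z}$ contribution.

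\textbf{Nonlinear term.} The term $(\Gamma(f,f),{\bf P}^\bot f)$ is handled by the nonlinear estimate (Lemma \ref{lemma5.2}) with $\alpha=0$. It is bounded by
\[
\int_{\mathbb R^3_x}\big(|\mu^\delta f|_{H^2_v}+|f|_{L^2_v}\big)|f|_\nu|{\bf P}^\bot f|_\nu\,dx .
\]
The factor with the $L^\infty_x$ norm is placed on the first piece, via $\|f\|_{L^\infty_x}\lesssim\|f\|_{L^2_v(\dot H^1\cap\dot H^2)}\lesssim\epsilon$. The middle factor $|f|_\nu$ is split into its macro and micro parts.

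The main obstacle is that $\|{\bf P}f\|_{L^2_x}$ is not controlled, since the solution lacks $L^2_x$ decay. So for the macro piece I would instead put $\mu^\delta f$ in $L^3_x$ and ${\bf P}f$ in $L^6_x$. By Sobolev, $\|{\bf P}f\|_{L^6_x}\lesssim\|\nabla_x(a,b,c)\|$, and $L^3_x$ is controlled by the interpolation of $\dot B^{1/2}_{2,\infty}$ and $\dot H^1$ (Lemma \ref{interpolation-besov-other}). This yields $\epsilon\,|(a,b,c)|_{\dot B^{1/2}\cap\dot H^2}\|{\bf P}^\bot f\|_\nu$, which is absorbed after Young's inequality. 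The micro piece gives $\epsilon\|{\bf P}^\bot f\|_\nu^2$, absorbed into the dissipation. Collecting these estimates and applying Gronwall gives the stated bound.
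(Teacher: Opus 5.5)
Your overall scheme (microscopic equation, coercivity plus the splitting $\|\mathbf P^\bot f\|^2\lesssim\|\mathbf P^\bot f\langle v\rangle^{\gamma/2}\|^2+\eta\|\mathbf P^\bot f\langle v\rangle^{-\gamma/2}\|^2$, Gronwall against $e^{-\lambda(t-\tau)}$, term-by-term bounds) is the paper's. However, the step you single out as the main obstacle is resolved incorrectly.

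For the macro--macro part of $(\Gamma(f,f),\mathbf P^\bot f)$ you place $\mu^\delta\mathbf P f$ in $L^3_x$ and claim this is controlled by interpolating $\dot B^{1/2}_{2,\infty}$ and $\dot H^1$. It is not. $L^3(\mathbb R^3)$ is the endpoint matching $\dot H^{1/2}$, and \eqref{interpolation} requires $\theta\in(0,1)$, with a constant that blows up as $\theta\to1$. So $\dot B^{1/2}_{2,\infty}\cap\dot H^1$ only embeds into $L^p$ for $3<p\le 6$. Concretely, $\langle x\rangle^{-1}$ lies in $\dot B^{1/2}_{2,\infty}\cap\dot H^1\cap\dot H^N$ but not in $L^3$. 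This $1/|x|$ tail is exactly the behaviour the Besov framework is built to accommodate, so nothing else in $\|f\|_X$ controls $\|(a,b,c)\|_{L^3_x}$. The $L^3$--$L^6$ H\"older split is lossy precisely where it cannot afford to be: what is really needed is $\|(a,b,c)\|_{L^4_x}^2$, which is finite.

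The repair is the paper's. Bound $\int_{\mathbb R^3_x}(|\mu^\delta\mathbf Pf|_{H^2_v}+|\mathbf Pf|_{L^2_v})|\mathbf Pf|_\nu|\mathbf P^\bot f|_\nu\,dx$ by $L^4$--$L^4$--$L^2$ H\"older together with $\|u\|_{L^4}\lesssim\|u\|_{\dot B^{1/2}_{2,\infty}}^{1/2}\|\nabla u\|^{1/2}$ from Lemma \ref{interpolation-besov-other}. This gives $|(a,b,c)|^2_{\dot B^{1/2}_{2,\infty}\cap\dot H^1}\|\mathbf P^\bot f\|$; one factor is $\lesssim\epsilon$ by \eqref{aps}, and Young's inequality finishes. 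Any pair $p,q\in(3,6)$ with $1/p+1/q=1/2$ would work equally well.

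The terms with $\mathbf P^\bot f$ in the first or second slot are fine as you have them. There $\mathbf P^\bot f\in L^2_x\cap\dot H^1_x\subset L^3_x$ (with $\mu^\delta$ absorbing the $v$-derivatives), or you can use $L^\infty_x$ as you propose.

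Separately, the source contribution cannot be ``hidden'' in $\eta'\sup_t\|\mathbf P^\bot f\|^2$. Your weighted bound is indeed dominated by $\|S\|_{\mathcal Z}$, but it is an additive quantity not controlled by any $f$-term. It must remain as a $\sup_t\|S\|^2$ term on the right-hand side, which is what the paper's proof produces even though the displayed statement omits it.
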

\begin{proof}
We consider the microscopic component of $\eqref{f-perturbation}$. Applying $\mathbf{P}^{\bot}$, we obtain
\begin{align}\label{D1}
    \partial_t\mathbf{P}^{\bot}f+v\cdot\nabla_x\mathbf{P}^{\bot}f+v\cdot\nabla_x\mathbf{P}f+{\mathcal{L}}\left(\mathbf{P}^{\bot}f\right)={\bf P}(v\cdot\nabla_xf)+\mathbf{P}^{\bot}\Gamma(f,f)+\mathbf{P}^{\bot}S\,,
\end{align}
where we have used the decomposition
\[\mathbf{P}^{\bot}(v\cdot\nabla_x f)=v\cdot\nabla_x f-{\bf P}(v\cdot\nabla_xf)=v\cdot\nabla_x \mathbf{P}^{\bot}f+v\cdot\nabla_x \mathbf{P}f-{\bf P}(v\cdot\nabla_xf)\,.\]
Multiplying \eqref{D1} by $\mathbf{P}^{\bot}f$, then integrating over $\mathbb{R}_x^3\times\mathbb{R}_v^3$ and we get from  \eqref{D2} that
\begin{equation}\label{D3}
\begin{aligned}
    &\hspace{3mm}\frac{\mathrm{d}}{\mathrm{d}t}\left\|\mathbf{P}^{\bot}f\right\|^2+\delta_0\left\|\mathbf{P}^{\bot}f\langle v\rangle^{\frac{\gamma}{2}}\right\|^2\\
    &\lesssim\int_{\mathbb{R}_{x,v}^6}\left(v\cdot\nabla_x\mathbf{P}f\right)\mathbf{P}^{\bot}f\,\mathrm{d}v\mathrm{d}x+\int_{\mathbb{R}_{x,v}^6}{\bf P}(v\cdot\nabla_xf)\mathbf{P}^{\bot}f\,\mathrm{d}v\mathrm{d}x\\
    &\quad+\int_{\mathbb{R}_{x,v}^6} \mathbf{P}^{\bot}\Gamma(f,f)\mathbf{P}^{\bot}f\,\mathrm{d}v\mathrm{d}x+\int_{\mathbb{R}_{x,v}^6}\mathbf{P}^{\bot}S\mathbf{P}^{\bot}f\,\mathrm{d}v\mathrm{d}x\,.\notag
    \end{aligned}
\end{equation}
This, together with Young's inequality 
\begin{align}\label{D4}
\left\|\mathbf{P}^{\bot}f\right\|^2\lesssim\left\|\mathbf{P}^{\bot}f\,\langle v\rangle^{\frac{\gamma}{2}}\right\|^2+\eta\left\|\mathbf{P}^{\bot}f\,\langle v\rangle^{-\frac{\gamma}{2}}\right\|^2\,.\notag
\end{align}
yields that
\begin{equation}
    \begin{aligned}
        \left\|\mathbf{P}^{\bot}f\right\|^2&\lesssim\left\|\mathbf{P}^{\bot}f_0\right\|^2+\eta\int_0^t e^{-\lambda(t-\tau)}\left\|\mathbf{P}^{\bot}f\,\langle v\rangle^{-\frac{\gamma}{2}}\right\|^2\,\mathrm{d}\tau\\
        &\quad\underbrace{-\int_0^t \int_{\mathbb{R}_{x,v}^6} e^{-\lambda(t-\tau)}\left(v\cdot\nabla_x\mathbf{P}f\right)\mathbf{P}^{\bot}f\,\mathrm{d}v\mathrm{d}x\,\mathrm{d}\tau}_{I_3}+\underbrace{\int_0^t\int_{\mathbb{R}_{x,v}^6} e^{-\lambda(t-\tau)}{\bf P}(v\cdot\nabla_xf)\mathbf{P}^{\bot}f\,\mathrm{d}v\mathrm{d}x\mathrm{d}\tau}_{I_4}\\
        &\quad\underbrace{+\int_0^t\int_{\mathbb{R}_{x,v}^6} e^{-\lambda(t-\tau)}\mathbf{P}^{\bot}\Gamma(f,f)\mathbf{P}^{\bot}f\,\mathrm{d}v\mathrm{d}x\mathrm{d}\tau}_{I_5}+\underbrace{\int_0^t\int_{\mathbb{R}_{x,v}^6} e^{-\lambda(t-\tau)}\mathbf{P}^{\bot}S\,\mathbf{P}^{\bot}f\,\mathrm{d}v\mathrm{d}x\mathrm{d}\tau}_{I_6}\,.\notag
    \end{aligned}
\end{equation}
We now estimate each term on the right-hand side of the above inequality. For $I_3$, it follows
\begin{align*}
    I_3&\lesssim\int_0^t \int_{\mathbb{R}_{x,v}^6} e^{-\lambda(t-\tau)}\nabla_x(a,b,c)\mu^{\delta}\mathbf{P}^{\bot}f\,\mathrm{d}v\mathrm{d}x\,\mathrm{d}\tau\\
    &\lesssim\sup\limits_t\|(a,b,c)\|_{\dot{H}^1}^2+\eta\sup\limits_t\left\|\mathbf{P}^{\bot}f\right\|^2\,,
\end{align*}
here, $\eta>0$ is sufficiently small.

For $I_6$, since
\[{\bf P}(v\cdot\nabla_xf)=\int_{\mathbb{R}_v^3} (v\cdot\nabla_xf)\,\phi_i\,\mathrm{d}v\,\mu^{\delta}\lesssim|\nabla_xf|_{L_v^2}\,\mu^\delta\,,\]
it's easy to obtain
\begin{align*}
    I_4\lesssim\int_0^t\int_{\mathbb{R}_{x,v}^6} e^{-\lambda(t-\tau)}|\nabla_xf|_{L_v^2}\,\mu^{\delta}\mathbf{P}^{\bot}f\,\mathrm{d}v\mathrm{d}x\mathrm{d}\tau\lesssim\eta\sup\limits_t\left\|\mathbf{P}^{\bot}f\right\|^2+\sup\limits_t\|\nabla_xf\|^2\,.
\end{align*}
As to $I_5$, Lemmas \ref{interpolation-besov-other} and \ref{lemma5.2} imply that
\begin{align*}
    &\hspace{3mm}\int_{\mathbb{R}_{x,v}^6}\Gamma(f,f)\mathbf{P}^{\bot}f\,\mathrm{d}v\mathrm{d}x\\
    &\lesssim \int_{\mathbb{R}_x^3}\Big( \left|\mu^\delta{\bf P}f\right|_{H_v^2}+|{\bf P}f|_{L_v^2}\Big)|{\bf P}f|_\nu|\mathbf{P}^{\bot}f|_{\nu}\,\mathrm{d}x+\int_{\mathbb{R}_x^3}\left( \left|\mu^\delta{\bf P}^\bot f\right|_{H_v^2}+\left|{\bf P}^\bot f\right|_{L_v^2}\right)\left|{\bf P}^\bot f\right|_\nu^2\,\mathrm{d}x\\
    &\quad+\int_{\mathbb{R}_x^3}\left( \left|\mu^\delta{\bf P} f\right|_{H_v^2}+\left|{\bf P} f\right|_{L_v^2}\right)\left|{\bf P}^\bot f\right|_\nu^2\,\mathrm{d}x+\int_{\mathbb{R}_x^3}\left( \left|\mu^\delta{\bf P}^\bot f\right|_{H_v^2}+\left|{\bf P}^\bot f\right|_{L_v^2}\right)\left|{\bf P} f\right|_\nu|{\bf P}^\bot f|_\nu\,\mathrm{d}x\\
    &\lesssim\Big\|{\bf P}f\Big\|_{L_x^4H_v^2}\Big\|{\bf P}f\Big\|_{L_x^4L_v^2}\Big\|\mathbf{P}^{\bot}f\Big\|+\Big\|{\bf P}^\bot f\Big\|_{L_x^\infty H_v^2}\Big\|{\bf P}^\bot f\Big\|^2\\
    &\quad+\Big\|{\bf P}f\Big\|_{L_x^\infty H_v^2}\Big\|{\bf P}^\bot f\Big\|^2+\Big\|{\bf P}f\Big\|_{L_x^\infty L_v^2}\Big\|{\bf P}^\bot f\Big\|_{L_x^2H_v^2}\Big\|{\bf P}^\bot f\Big\|\\
    &\lesssim\Big|(a,b,c)\Big|_{\dot{B}_{2,\infty}^\frac{1}{2}\cap\dot{H}^1}^2\Big\|{\bf P}^\bot f\Big\|+\Big\|{\bf P}^\bot f\Big\|_{H_v^2(\dot{H}_x^1\cap\dot{H}_x^2)}\Big\|{\bf P}^\bot f\Big\|^2\\
    &\quad+\Big|(a,b,c)\Big|_{\dot{H}^1\cap\dot{H}^2}\Big\|{\bf P}^\bot f\Big\|^2+\Big\|{\bf P}^\bot f\Big\|_{H_v^2L_x^2}\Big|(a,b,c)\Big|_{\dot{H}^1\cap\dot{H}^2}\Big\|{\bf P}^\bot f\Big\|\\
    &\lesssim\epsilon^2\Big|(a,b,c)\Big|_{\dot{B}_{2,\infty}^\frac{1}{2}\cap\dot{H}^1}^2+\eta\Big\|{\bf P}^\bot f\Big\|^2+\epsilon\Big\|{\bf P}^\bot f\Big\|^2+\epsilon^2\Big|(a,b,c)\Big|_{\dot{H}^1\cap\dot{H}^2}^2\\
    &\lesssim\epsilon^2\Big|(a,b,c)\Big|_{\dot{B}_{2,\infty}^\frac{1}{2}\cap\dot{H}^2}^2+\eta'\Big\|{\bf P}^\bot f\Big\|^2\,,
\end{align*}
where $\eta'=\max\{\eta,\epsilon\}$ is suitably small.
Then we have
\begin{align*}
    I_5\lesssim\eta'\sup_t\left\|\mathbf{P}^{\bot}f\right\|^2+\epsilon^2\sup_t|(a,b,c)|_{\dot{B}_{2,\infty}^\frac{1}{2}\cap\dot{H}^2}^2\,.
\end{align*}
It's straightforward to estimate $I_6$ by
\begin{align*}
    I_6\lesssim\eta\sup\limits_t\left\|\mathbf{P}^{\bot}f\right\|^2+\sup\limits_t\|S\|^2.
\end{align*}
Combining all the above estimates, we finally obtain the desired estimate.  The proof of Lemma \ref{lemma4.3} is finished.
    
\end{proof}

\section{$\dot{H}_{x,v}^N$-inner-estimate with weight}\label{Sect-weight}
In this section, we prove the Sobolev estimates used to close the a priori estimates with pure velocity weights. The first concerns microscopic estimates with the velocity weight $w_{\ell,\beta}=\langle v\rangle^{\ell+\gamma|\beta|}$, where $\beta$ denotes the order of differentiation in $v$.
\begin{lemma}\label{lemma5.3}
    Let $N\ge5, -3<\gamma<0$ and $1\le|\alpha|+|\beta|\le N, \ell\ge -N-\frac{\gamma}{2}$, assume that $f(t,x,v)$ is the solution of the system \eqref{Cauchy}, then it holds for some $\varepsilon>0$ that
    \begin{align}
        \sum_{1\le|\alpha|+|\beta|\le N}\sup_t\|w_{\ell,\beta}\partial_x^{\alpha}\partial_v^{\beta}{\bf P}^\bot f\|^2&\lesssim\sum_{1\le|\alpha|+|\beta|\le N}\|w_{\ell,\beta}\partial_x^{\alpha}\partial_v^{\beta}{\bf P}^\bot f_0\|^2+\sup\limits_t\|f\|_{L_v^2(\dot{H}^1\cap\dot{H}^N)}^2\notag\\
        &\qquad+\sup\limits_t\left\|\mathbf{P}^{\bot}f\right\|^2+\sup\limits_t|(a,b,c)|_{\dot{B}_{2,\infty}^\frac{1}{2}}^2+\eta\sup_t\|w_{\ell,0}{\bf P}^\bot f\|^2\notag\\
        &\qquad+\sum_{1\le|\alpha|+|\beta|\le N}\sup_t\|w_{\ell-(1+\frac{\varepsilon}{2})\gamma,\beta}\,\partial_x^\alpha\partial_v^\beta S\|^2\notag\\
        &\qquad+\eta'\sum_{\substack{|\alpha|+|\beta|\le N}}\sum_{j\in\mathbb{Z}^+}\sup_t\|w_{\ell,\beta}\partial_x^\alpha\partial_v^\beta{\bf P}^\bot f\|^2_{L^2_v(V_j)L_x^2(\mathbb{R}^3)}\,,\notag
    \end{align}
    where $\eta,\eta'>0$ are small constants. 
\end{lemma}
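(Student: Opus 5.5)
We will work with the equation for the microscopic part \eqref{D1}, apply $\partial_x^\alpha\partial_v^\beta$ with $1\le|\alpha|+|\beta|\le N$, and use the splitting $\mathcal{L}=\nu(v)-\mathcal{K}$ rather than the coercivity \eqref{D2}. The resulting equation is
\[
\partial_t\partial_x^\alpha\partial_v^\beta{\bf P}^\bot f+v\cdot\nabla_x\partial_x^\alpha\partial_v^\beta{\bf P}^\bot f+\nu\,\partial_x^\alpha\partial_v^\beta{\bf P}^\bot f=\text{RHS}.
\]
The terms on the right-hand side are:
\begin{itemize}
\item the commutators $[\partial_v^\beta,v\cdot\nabla_x]$ and $[\partial_v^\beta,\nu]$, which lower $|\beta|$ by one and raise $|\alpha|$ by at most one;
\item $\partial_v^\beta\mathcal{K}\partial_x^\alpha{\bf P}^\bot f$;
\item the macroscopic source terms $\partial_x^\alpha\partial_v^\beta\big(-v\cdot\nabla_x{\bf P}f+{\bf P}(v\cdot\nabla_x f)\big)$, which are bounded by $\mu^\delta|\nabla_x\partial_x^\alpha(a,b,c)|$;
\item $\partial_x^\alpha\partial_v^\beta{\bf P}^\bot\Gamma(f,f)$;
\item $\partial_x^\alpha\partial_v^\beta{\bf P}^\bot S$.
\end{itemize}

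We multiply by $w_{\ell,\beta}^2\,\partial_x^\alpha\partial_v^\beta{\bf P}^\bot f$ and integrate over $\mathbb{R}^6_{x,v}$; the transport term drops. This gives $\frac{d}{dt}\|w_{\ell,\beta}\partial^\alpha_\beta{\bf P}^\bot f\|^2+2\|w_{\ell,\beta}\partial^\alpha_\beta{\bf P}^\bot f\|_\nu^2\le\cdots$. Instead of integrating in time against $e^{-\lambda t}$, which is unavailable because $\nu$ has no lower bound, we solve pointwise in $v$ with the integrating factor $e^{-\nu(v)(t-\tau)}$. This is the estimate displayed in the strategy section. It yields $\|w_{\ell,\beta}\partial^\alpha_\beta{\bf P}^\bot f(t)\|^2$ bounded by the initial term plus time integrals of the form $\int_0^t\int e^{-\nu(v)(t-\tau)}\nu(v)\,\nu^{-1}(v)\,w_{\ell,\beta}^2\,(\text{RHS})\,\partial^\alpha_\beta{\bf P}^\bot f$.

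For each source term we then split $\mathbb{R}^3_v=\bigoplus_j V_j$ as in \eqref{vel-dec}. On $V_j$ we have $\nu\sim c_j$ and $\int_0^t e^{-\tilde c_j(t-\tau)}c_j\,d\tau\lesssim1$ uniformly in $j$, so each time integral is bounded by $\sum_j\sup_t$ of the corresponding $V_j$-localized spatial integral. The individual terms are handled as follows.

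\textbf{Source term $S$.} We use Cauchy--Schwarz with the factor $\nu^{-1}$. The extra weight $\langle v\rangle^{-(1+\varepsilon/2)\gamma}$ is chosen so that $\sum_j$ converges: $\sum_j 2^{j\varepsilon\gamma}<\infty$. This gives the term with $w_{\ell-(1+\frac\varepsilon2)\gamma,\beta}\partial_x^\alpha\partial_v^\beta S$ plus $\eta'\sum_j\sup_t\|w_{\ell,\beta}\partial^\alpha_\beta{\bf P}^\bot f\|^2_{L^2_v(V_j)L^2_x}$.

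\textbf{Macroscopic terms.} Because of the factor $\mu^\delta$, these are bounded by $\sup_t\|f\|^2_{L^2_v(\dot H^1\cap\dot H^N)}$. For the lowest derivative $\nabla_x(a,b,c)$ we use Lemma~\ref{interpolation-besov-other} and Young's inequality, which gives $|(a,b,c)|^2_{\dot B^{1/2}_{2,\infty}}$ plus an absorbable term in $\dot H^2$.

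\textbf{Commutators.} The loss of one $v$-derivative is compensated by the drop in weight $w_{\ell,\beta}\le\langle v\rangle^{-\gamma}w_{\ell,\beta-e_i}$ together with the factor $\nu\sim\langle v\rangle^\gamma$. The terms with $|\beta|-1$ are therefore handled by induction on $|\beta|$, with small constants. For $\beta=0$ there is no commutator. When $|\alpha|+|\beta-e_i|=0$, the resulting term is $\eta\sup_t\|w_{\ell,0}{\bf P}^\bot f\|^2$.

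\textbf{The operator $\mathcal{K}$.} We split $\mathcal{K}=\mathcal{K}\chi_{\{|v|\le M\}}+\mathcal{K}\chi_{\{|v|>M\}}$ in the style of Strain--Guo, on each $V_j$; this is the content of Lemmas~\ref{lemma5.1}--\ref{lemma5.2} referenced in the strategy. The large-velocity part is small in the weighted norm and contributes to the $\eta'\sum_j$ term. The compact part is bounded by unweighted norms $\|\partial_x^\alpha\partial_v^{\beta'}{\bf P}^\bot f\|$. These are converted to $\|f\|_{L^2_v(\dot H^1\cap\dot H^N)}$ and $\|{\bf P}^\bot f\|$, using interpolation in $v$ against higher-weighted terms with small constants when $\beta\ne0$.

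\textbf{Nonlinear term $\Gamma$.} We apply Lemma~\ref{lemma5.2} on $V_j$. Sobolev and Besov embeddings, arranged as in Lemma~\ref{lemma4.2} with the $L^3$--$L^6$--$L^2$ and $L^\infty$--$L^2$--$L^2$ splittings, together with the a priori bound \eqref{aps}, give $\epsilon$ times the weighted norms.

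Finally we sum over $|\alpha|+|\beta|\le N$ with suitably chosen constants (more weight on lower $|\beta|$) and absorb the $\eta$-terms.

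\textbf{Main obstacle.} The hardest step is the $\mathcal{K}$ and $\Gamma$ estimates under the time-dependent weight $e^{-\nu(v)(t-\tau)}$. Since $\mathcal{K}$ and $\Gamma$ integrate in $u$ and $\nu(u)\ne\nu(v)$, the decay factor is not preserved across the integral. The plan is to keep the exponential tied to $v\in V_j$, bound the time integral first via $\sup_j\int_0^t e^{-\tilde c_j(t-\tau)}c_j\,d\tau\lesssim1$, and only then apply the Strain--Guo kernel bounds at fixed time on each annulus. This is exactly why the $\sum_j\sup_t$ quantity appears on the right-hand side.
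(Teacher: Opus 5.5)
Your plan fails at the top-order pure spatial case $|\alpha|=N$, $\beta=0$, which is part of the left-hand side. You apply $\partial_x^\alpha\partial_v^\beta$ to the microscopic equation \eqref{D1} in every case. You then bound the macroscopic sources $\partial_x^\alpha\partial_v^\beta\big(-v\cdot\nabla_x\mathbf{P}f+\mathbf{P}(v\cdot\nabla_x f)\big)$ by $\sup_t\|f\|^2_{L^2_v(\dot H^1\cap\dot H^N)}$.

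For $\beta=0$ and $|\alpha|=N$ these terms contain $\nabla_x\partial_x^\alpha(a,b,c)$ and the moments $\int v\cdot\nabla_x\partial_x^\alpha f\,\phi_i\,dv$. That is $N+1$ spatial derivatives, and nothing on the right-hand side of the lemma controls them. Integrating by parts in $x$ only moves the extra derivative onto $\partial_x^\alpha\mathbf{P}^\bot f$. The term $v\cdot\nabla_x\partial_x^\alpha\mathbf{P}f$ tested against $w_{\ell,0}^2\partial_x^\alpha\mathbf{P}^\bot f$ is cancelled only by its partner $v\cdot\nabla_x\partial_x^\alpha\mathbf{P}^\bot f$ tested against $w_{\ell,0}^2\partial_x^\alpha\mathbf{P}f$. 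That partner appears only when the macroscopic and microscopic parts are estimated together. Moreover, with the weight $w_{\ell,0}^2$, the term $\mathbf{P}(v\cdot\nabla_x\partial_x^\alpha f)$ is not even orthogonal to $w_{\ell,0}^2\partial_x^\alpha\mathbf{P}^\bot f$.

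The missing idea is to treat $\beta=0$ with the full equation \eqref{B4}, as the paper does in Case 1. Apply $\partial_x^\alpha$ and multiply by $w_{\ell,0}^2\partial_x^\alpha f$. The weight and the kernel $e^{-\nu(v)(t-\tau)}$ depend only on $v$, so for each fixed $v$ the transport term integrates to zero in $x$, and no macroscopic source appears. You then recover the microscopic part from $\|w_{\ell,0}\partial_x^\alpha\mathbf{P}f\|\lesssim\|\partial_x^\alpha f\|$. The microscopic equation is needed only for $|\beta|\ge1$. There $|\alpha|\le N-1$, so the macroscopic terms cost at most $N$ derivatives, as in $J_8$ and $J_9$.

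A second point concerns the transport commutator $\partial_{x_i}\partial_x^\alpha\partial_v^{\beta-e_i}\mathbf{P}^\bot f$. It has the same total order and an $O(1)$ coefficient. So you cannot pass it through your bound ``each time integral $\lesssim\sum_j\sup_t$ of the $V_j$-localized integral''. That would produce $C_\eta\sum_j\sup_t\|w_{\ell,\beta-e_i}\partial_x^{\alpha+e_i}\partial_v^{\beta-e_i}\mathbf{P}^\bot f\|^2_{L^2_v(V_j)L^2_x}$. The statement allows this quantity only with a small factor $\eta'$. Your lower-order estimates control only full-space $\sup_t$ norms, so they do not bound it.

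As in $J_7$ and $J_{15}$, keep the commutator in time-integrated form. After Cauchy--Schwarz, its large part is $C_\eta\int_0^t\!\int e^{-\nu(v)(t-\tau)}\nu\,|w_{\ell,\beta-e_i}\partial_x^{\alpha+e_i}\partial_v^{\beta-e_i}\mathbf{P}^\bot f|^2$. Absorb this into the retained dissipation of the lower-$|\beta|$ estimate at the same time $t$, after weighting that estimate more heavily. For $|\beta|=1$ this is the dissipation of the full-equation estimate for $\partial_x^{\alpha+e_i}f$. The leftover $\mathbf{P}f$ part is bounded by $\sup_t\|\partial_x^{\alpha+e_i}f\|^2$, since $\int_0^t e^{-\nu(t-\tau)}\nu\,d\tau\le1$.

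Your treatment of $S$ through the annuli and $\sum_j 2^{j\varepsilon\gamma}<\infty$ is correct. It is a legitimate alternative to the paper's bound $\nu^{1+\varepsilon}e^{-\nu(t-\tau)}\lesssim(1+t-\tau)^{-1-\varepsilon}$.
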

\begin{proof}
Note that $\eqref{f-perturbation}$ can be rewritten as
\begin{equation}\label{B4}
    \partial_t f+v\cdot\nabla_xf+\nu(v)f-\mathcal{K}f=\Gamma(f,f)+S\,.
\end{equation}
In what follows, we only consider three representative cases; the remaining cases can be obtained by interpolation.

\noindent \underline{{\it Case 1: $|\alpha|=N,\ |\beta|=0$.}} Applying $\partial_x^{\alpha}$ to \eqref{f-perturbation} and multiplying the resulting equation by $w_{\ell,0}^2\partial_x^{\alpha}f$, we get
 \begin{equation}
     \begin{aligned}
         &\hspace{3mm}\frac{1}{2}\partial_t\left(w_{\ell,0}\partial_x^{\alpha}f\right)^2+(v\cdot\nabla_x\partial_x^{\alpha}f)\,w_{\ell,0}^2\partial_x^{\alpha}f+\frac{1}{2}\nu(v)\left(\partial_x^{\alpha}fw_{\ell,0}\right)^2\\
         &=-\frac{1}{2}\nu(v)\left(\partial_x^{\alpha}fw_{\ell,0}\right)^2+\partial_x^{\alpha}\mathcal{K}f\,w_{\ell,0}^2\partial_x^{\alpha}f+\partial_x^{\alpha}\Gamma(f,f)w_{\ell,0}^2\partial_x^{\alpha}f+\partial_x^{\alpha}S\,w_{\ell,0}^2\partial_x^{\alpha}f\,.\notag
     \end{aligned}
 \end{equation}
Solving the above inequality directly yields that   
    \begin{align*}
        \left(w_{\ell,0}\partial_x^{\alpha}f\right)^2&\lesssim\left(w_{\ell,0}\partial_x^{\alpha}f_0\right)^2-\int_0^t e^{-\nu(v)(t-\tau)} (v\cdot\nabla_x\partial_x^{\alpha}f)\,w_{\ell,0}^2\partial_x^{\alpha}f\,\mathrm{d}\tau\\
        &\quad- \frac{1}{2}\int_0^t e^{-\nu(v)(t-\tau)}\nu(v)\left(\partial_x^{\alpha}fw_{\ell,0}\right)^2\,\mathrm{d}\tau+\int_0^t e^{-\nu(v)(t-\tau)} \partial_x^{\alpha}\mathcal{K}f\,w_{\ell,0}^2\partial_x^{\alpha}f\,\mathrm{d}\tau\\
        &\quad+\int_0^t e^{-\nu(v)(t-\tau)} \partial_x^{\alpha}\Gamma(f,f)w_{\ell,0}^2\partial_x^{\alpha}f\,\mathrm{d}\tau+\int_0^t e^{-\nu(v)(t-\tau)} \partial_x^{\alpha}S\,w_{\ell,0}^2\partial_x^{\alpha}f\,\mathrm{d}\tau.
    \end{align*}

Integrating the above inequality over $\mathbb{R}_x^3\times\mathbb{R}_v^3$ yields
\begin{align}\label{B5}  \left\|w_{\ell,0}\partial_x^{\alpha}f\right\|^2&\lesssim\left\|w_{\ell,0}\partial_x^{\alpha}f_0\right\|^2\underbrace{-\frac{1}{2}\int_0^t\int_{\mathbb{R}_{x,v}^6}e^{-\nu(v)(t-\tau)}\nu(v)\left(\partial_x^{\alpha}f w_{\ell,0}\right)^2 \, \mathrm{d}v\mathrm{d}x\mathrm{d}\tau}_{J_1}\notag\\
        &\quad+\underbrace{\int_0^t\int_{\mathbb{R}_{x,v}^6}  e^{-\nu(v)(t-\tau)} \partial_x^{\alpha}\mathcal{K}f\,w_{\ell,0}^2\partial_x^{\alpha}f\, \mathrm{d}v\mathrm{d}x\mathrm{d}\tau}_{J_2}\notag\\
        &\quad+\underbrace{\int_0^t\int_{\mathbb{R}_{x,v}^6} e^{-\nu(v)(t-\tau)} \partial_x^{\alpha}\Gamma(f,f)w_{\ell,0}^2\partial_x^{\alpha}f\, \mathrm{d}v\mathrm{d}x\mathrm{d}\tau}_{J_3}\notag\\
        &\quad+\underbrace{\int_0^t\int_{\mathbb{R}_{x,v}^6} e^{-\nu(v)(t-\tau)} \partial_x^{\alpha}S\,w_{\ell,0}^2\partial_x^{\alpha}f\, \mathrm{d}v\mathrm{d}x\mathrm{d}\tau}_{J_4}\,.
    \end{align}
    We now estimate $J_2$, $J_3$ and $J_4$ separately.
For $J_2$, by splitting the velocity space $\mathbb{R}_v^3$ as in section \ref{appendix}, one has 
\begin{align*}
    J_2&\lesssim\underbrace{\sum_{j\in\mathbb{Z}^+}\int_0^t\int_{V_j\times\mathbb{R}_x^3}e^{-\nu(v)(t-\tau)}\nu(v)\nu^{-1}(v)w^2_{\ell,0}\mathcal{K}\partial_x^\alpha f\partial_x^\alpha f\,{\rm d}v{\rm d}x{\rm d}\tau}_{J_{2,1}}\\
    &\quad+\underbrace{\int_0^t\int_{V_0\times\mathbb{R}_x^3}e^{-\nu(v)(t-\tau)}w^2_{\ell,0}\mathcal{K}\partial_x^\alpha f\partial_x^\alpha f\,{\rm d}v{\rm d}x{\rm d}\tau}_{J_{2,2}}\,.
\end{align*}
Next, it follows that
\begin{align*}
    J_{2,1}&\lesssim\sum_{j\in\mathbb{Z}^+}\int_0^t\int_{V_j\times\mathbb{R}_x^3}e^{-\tilde{c}_j(t-\tau)}c_j\nu^{-1}(v)w^2_{\ell,0}\mathcal{K}\partial_x^\alpha f\partial_x^\alpha f\,{\rm d}v{\rm d}x{\rm d}\tau\\
    &\lesssim\sum_{j\in\mathbb{Z}^+}\sup_t\int_{V_j\times\mathbb{R}_x^3}\nu^{-1}(v)w_{\ell,0}^2\mathcal{K}\partial_x^\alpha f\partial_x^\alpha f\,{\rm d}v{\rm d}x\sup_{j\in\mathbb{Z}^+}\int_0^t e^{-\tilde{c}_j(t-\tau)}c_j\,{\rm d}\tau\\
    &\lesssim\sum_{j\in\mathbb{Z}^+}\sup_t\int_{V_j\times\mathbb{R}_x^3}\nu^{-1}(v)w^2_{\ell,0}\mathcal{K}\partial_x^\alpha f\partial_x^\alpha f\,{\rm d}v{\rm d}x\\
    &\lesssim\eta\sum_{j\in\mathbb{Z}^+}\sup_t\|w_{\ell,0}\partial_x^\alpha f\|^2_{L^2(V_j)L_x^2}+\sup_t\|\partial_x^\alpha f\|^2+\eta'\sup_t\|w_{\ell,0}\partial_x^\alpha f\|^2\,,
\end{align*}
where $\eta$ and $\eta'$ are chosen sufficiently small. In the last step, we have used Lemma \ref{lemma5.1}.

\noindent Regarding $J_{2,2}$, it is straightforward to see that
\begin{align}\label{j22}
    J_{2,2}&\lesssim\int_0^t\int_{V_0\times\mathbb{R}_x^3}e^{-\lambda(t-\tau)}w^2_{\ell,0}\mathcal{K}\partial_x^\alpha f\partial_x^\alpha f\,{\rm d}v{\rm d}x{\rm d}\tau\nonumber\lesssim\sup_t\int_{\mathbb{R}_{x,v}^6}w^2_{\ell,0}\mathcal{K}\partial_x^\alpha f\partial_x^\alpha f\,{\rm d}v{\rm d}x\int_0^t e^{-\lambda(t-\tau)}{\rm d}\tau\nonumber\\
    &\lesssim\sup_t\int_{\mathbb{R}_{x,v}^6}w^2_{\ell,0}\mathcal{K}\partial_x^\alpha f\partial_x^\alpha f\,{\rm d}v{\rm d}x\,.
\end{align}
Lemma \ref{lemma9.3} implies that
\begin{equation}
    \begin{aligned}
     \left|\int_{\mathbb{R}_v^3} {w}_{\ell,0}^2\partial_x^{\alpha}[\mathcal{K}f]\partial_x^{\alpha}f\,\mathrm{d}v\right|&=\left|\int_{\mathbb{R}_v^3} {w}_{\ell,0}^2[\mathcal{K}\partial_x^{\alpha}f]\partial_x^{\alpha}f\,\mathrm{d}v\right|\lesssim\eta\left|{w}_{\ell,0}\partial_x^{\alpha}f\right|_{\nu}^2+C_{\eta}\left|\bar\chi(v)\partial_x^{\alpha}f{w}_{\ell,0}\right|_{L_v^2}^2\,,\notag
    \end{aligned}
\end{equation}
where $\eta$ is chosen sufficiently small. Substituting this into \eqref{j22}, we obtain
    \begin{align*}
        J_{2,2}&\lesssim\eta\sup_t\|w_{\ell,0}\partial_x^\alpha f\|^2+C_{\eta}\sup_t\int_{\mathbb{R}_{x,v}^6} w_{\ell,0}^2\,\bar\chi^2(v)\left|\partial_x^{\alpha}f\right|^2\, \mathrm{d}v\mathrm{d}x\\
        &\lesssim\eta\sup_t\|w_{\ell,0}\partial_x^\alpha f\|^2+C_\eta\|\partial_x^\alpha f\|^2\,.
    \end{align*}
    Consequently, we have
    \begin{align}
        J_{2}\lesssim\eta\sum_{j\in\mathbb{Z}^+}\sup_t\|w_{\ell,0}\partial_x^\alpha f\|^2_{L^2(V_j)L_x^2}+\sup_t\|\partial_x^\alpha f\|^2+
        \eta'\sup_t\|w_{\ell,0}\partial_x^\alpha f\|^2\,.\notag
    \end{align}
\noindent For $J_3$, , we also have
\begin{align*}
	J_3&\lesssim\underbrace{\sum_{j\in\mathbb{Z}^+}\int_0^t\int_{V_j\times\mathbb{R}_x^3} e^{-\nu(v)(t-\tau)}\nu(v)\nu^{-1}(v)w^2_{\ell,0}\partial_x^{\alpha}\Gamma(f,f)\partial_x^\alpha f\,{\rm d}v{\rm d}x{\rm d}\tau}_{J_{3,1}}\\
	&\quad+\underbrace{\int_0^t\int_{V_0\times\mathbb{R}_x^3} e^{-\nu(v)(t-\tau)}w^2_{\ell,0}\partial_x^{\alpha}\Gamma(f,f)\partial_x^{\alpha}f\,{\rm d}v{\rm d}x{\rm d}\tau}_{J_{3,2}}\,.
	\end{align*}
  Next,  Lemma \ref{lemma5.2} gives that
    \begin{align}
        &\hspace{3mm}\sup_t\int_{V_j\times\mathbb{R}_x^3}\nu^{-1}(v)w^2_{\ell,0}\Gamma(\partial_x^{\alpha_1}f,\partial_x^{\alpha_2}f)\partial_x^\alpha f\,{\rm d}v{\rm d}x\notag\\
        &\lesssim\sup_t\int_{\mathbb{R}_x^3}\left(|\mu^\delta \partial_x^{\alpha_1}f|_{H_v^2(\mathbb{R}^3)}+|w_{\ell,0}\partial_x^{\alpha_1}f|_{L_v^2(\mathbb{R}^3)}\right)\times|w_{\ell,0}\partial_x^{\alpha_2}f|_{L_v^2(\widetilde{V}_j)}|w_{\ell,0}\partial_x^{\alpha}f|_{L_v^2(V_j)}\,{\rm d}x\,,\notag
    \end{align}
    or
    \begin{align}\label{GG2}
        &\hspace{3mm}\sup_t\int_{V_j\times\mathbb{R}_x^3}\nu^{-1}(v)w^2_{\ell,0}\Gamma(\partial_x^{\alpha_1}f,\partial_x^{\alpha_2}f)\partial_x^\alpha f\,{\rm d}v{\rm d}x\notag\\
        &\lesssim\sup_t\int_{\mathbb{R}_x^3}\left(|\mu^\delta \partial_x^{\alpha_2}f|_{H^2_v(\mathbb{R}^3)}+|w_{\ell,0}\partial_x^{\alpha_2}f|_{L_v^2(\mathbb{R}^3)}\right)\times|w_{\ell,0}\partial_x^{\alpha_1}f|_{L_v^2(\widetilde{V}_j)}|w_{\ell,0}\partial_x^{\alpha}f|_{L^2_v(V_j)}\,{\rm d}x\,,
    \end{align}
    where $\alpha_1+\alpha_2=\alpha$. If $|\alpha_1|=0$, we have
    \begin{align}
        &\hspace{3mm}\sup_t\int_{V_j\times\mathbb{R}_x^3}\nu^{-1}(v)w^2_{\ell,0}\Gamma(f,\partial_x^{\alpha}f)\partial_x^\alpha f\,{\rm d}v{\rm d}x\notag\\
        &\lesssim\sup_t\Bigg\{\Big(\|\mu^\delta{\bf P} f\|_{L_x^\infty H^2_v}+\|w_{\ell,0}{\bf P}f\|_{L_x^\infty L_v^2}\Big)\times\|w_{\ell,0}\partial_x^\alpha f\|_{L_x^2(\mathbb{R}^3)L^2(\widetilde{V}_j)}\|w_{\ell,0}\partial_x^\alpha f\|_{L_x^2(\mathbb{R}^3)L^2(V_j)}\Bigg\}\nonumber\\
       &\quad+\sup_t\Bigg\{\Big(\|\mu^\delta{\bf P}^\bot f\|_{L_x^\infty H^2_v}+\|w_{\ell,0}{\bf P}^\bot f\|_{L_x^\infty L^2_v}\Big)\times\|w_{\ell,0}\partial_x^\alpha f\|_{L_x^2(\mathbb{R}^3)L^2(\widetilde{V}_j)}\|w_{\ell,0}\partial_x^\alpha f\|_{L_x^2(\mathbb{R}^3)L^2(V_j)}\Bigg\}\nonumber\\
       &\lesssim\sup_t\Bigg\{\Big(|(a,b,c)|_{\dot{H}_x^1\cap\dot{H}_x^2}+\|{\bf P}^\bot f\|_{ H_v^2(\dot{H}_x^1\cap\dot{H}_x^2)}+\|w_{\ell,0}{\bf P}^\bot f\|_{L_v^2(\dot{H}_x^1\cap\dot{H}_x^2)}\Big)\nonumber\\
       &\qquad\times\|w_{\ell,0}\partial_x^\alpha f\|_{L^2(\widetilde{V}_j)L_x^2(\mathbb{R}^3)}\|w_{\ell,0}\partial_x^\alpha f\|_{L^2(V_j)L_x^2(\mathbb{R}^3)}\Bigg\}\nonumber\\
       &\lesssim\epsilon^2\sup_t\|w_{\ell,0}\partial_x^\alpha f\|^2_{L^2(\widetilde{V}_j)L_x^2(\mathbb{R}^3)}+\eta\sup_t\|w_{\ell,0}\partial_x^\alpha f\|^2_{L^2({V}_j)L_x^2(\mathbb{R}^3)}\,.\notag
    \end{align}
    If $|\alpha_1|=|\alpha|$, we apply \eqref{GG2}, and the same estimate follows by the same argument.

    \noindent If $1\le|\alpha_1|\le|\alpha|-1$ and $|\alpha_1|+3\le N$, we apply $L^3-L^6-L^2$ estimate to obtain
    \begin{align}
&\hspace{3mm}\sup_t\int_{V_j\times\mathbb{R}_x^3}\nu^{-1}(v)w^2_{\ell,0}\Gamma(\partial_x^{\alpha_1}f,\partial_x^{\alpha_2}f)\partial_x^\alpha f\,{\rm d}v{\rm d}x\notag\\
        &\lesssim \sup_t \Bigg\{
\Big( \|\mu^\delta\partial_x^{\alpha_1}\mathbf{P} f\|_{L_x^3(\mathbb{R}^3)H_v^2(\mathbb{R}^3)} + \|w_{\ell,0}\partial_x^{\alpha_1}\mathbf{P} f\|_{L_x^3(\mathbb{R}^3)L^2_v(\mathbb{R}^3)} \Big) \nonumber \\
&\qquad \times \|w_{\ell,0}\partial_x^{\alpha_2} f\|_{L_x^6(\mathbb{R}^3)L^2(\widetilde{V}_j)} \|w_{\ell,0}\partial_x^{\alpha} f\|_{L_x^2(\mathbb{R}^3)L^2(V_j)} \Bigg\} \nonumber \\
&\quad + \sup_t \Bigg\{
\Big( \|\mu^\delta\partial_x^{\alpha_1}\mathbf{P}^\bot f\|_{L_x^3(\mathbb{R}^3)H_v^2(\mathbb{R}^3)} + \|w_{\ell,0}\partial_x^{\alpha_1}\mathbf{P}^\bot f\|_{L_x^3(\mathbb{R}^3)L^2_v(\mathbb{R}^3)} \Big) \nonumber \\
&\qquad \times \|w_{\ell,0}\partial_x^{\alpha_2} f\|_{L_x^6(\mathbb{R}^3)L^2(\widetilde{V}_j)} \|w_{\ell,0}\partial_x^{\alpha} f\|_{L_x^2(\mathbb{R}^3)L^2(V_j)} \Bigg\} \nonumber \\
&\lesssim \sup_t\bigg\{|(a,b,c)|^2_{\dot{H}_x^{|\alpha_1|}\cap\dot{H}_x^{|\alpha_1|+1}}\|w_{\ell,0}\partial_x^{\alpha_2+e_i}f\|_{L^2(\widetilde{V}_j)L_x^2(\mathbb{R}^3)}^2\bigg\}+\eta\sup_t\|w_{\ell,0}\partial_x^{\alpha} f\|_{L^2(V_j)L_x^2(\mathbb{R}^3)}^2\nonumber\\
&\quad+\sup_t\bigg\{\Big(\|w_{\ell,0}{\bf P}^\bot f\|^2_{L^2_v(\dot{H}_x^{|\alpha_1|}\cap\dot{H}_x^{|\alpha_1|+1})}+\|{\bf P}^\bot f\|^2_{H_v^2(\dot{H}_x^{|\alpha_1|}\cap\dot{H}^{|\alpha_1|+1})}\Big)\|w_{\ell,0}\partial_x^{\alpha_2+e_i}f\|_{L^2(\widetilde{V}_j)L_x^2}^2\bigg\}\nonumber\\
&\lesssim\epsilon^2\sup_t\|w_{\ell,0}\partial_x^{\alpha_2+e_i}f\|_{L^2(\widetilde{V}_j)L_x^2(\mathbb{R}^3)}^2+\eta\sup_t\|w_{\ell,0}\partial_x^{\alpha} f\|_{L^2(V_j)L_x^2(\mathbb{R}^3)}^2\,.\nonumber
\end{align}
If $1\le|\alpha_1|\le|\alpha|-1$ and $|\alpha_1|+3> N$, then $|\alpha_2|+3\le N$. We apply the formulation \eqref{GG2} again. The same argument then yields the same estimate.

\noindent Consequently, we have
\begin{align*}
    J_{3,1}&\lesssim\sum_{j\in\mathbb{Z}^+}\sup_t\int_{V_j\times\mathbb{R}_x^3}\nu^{-1}(v)w^2_{\ell,0}\partial_x^{\alpha}\Gamma(f,f)\partial_x^\alpha f\,{\rm d}v{\rm d}x\\
    &\lesssim\sum_{j\in\mathbb{Z}^+}\sum_{\alpha_1+\alpha_2=\alpha}\sup_t\int_{V_j\times\mathbb{R}_x^3}\nu^{-1}(v)w^2_{\ell,0}\Gamma(\partial_x^{\alpha_1}f,\partial_x^{\alpha_2}f)\partial_x^\alpha f\,{\rm d}v{\rm d}x\\
		&\lesssim\eta'\sum_{j\in\mathbb{Z}^+}\sum_{1\le|\alpha'|\le |\alpha|}\sup_t\|w_{\ell,0}\partial_x^{\alpha'} f\|^2_{L_v^2(V_j)L_x^2(\mathbb{R}^3)}\,,
\end{align*}
where $\eta'=\max\{\eta,\epsilon^2\}$.
    
    \noindent As for $J_{3,2}$, it holds from Lemma \ref{lemma9.4} that
\begin{align}
   &\hspace{3mm}\int_{\mathbb{R}_{x,v}^6}  \Gamma(\partial_x^{\alpha_1}f,\partial_x^{\alpha_2}f)\partial_x^{\alpha}f\, \mathrm{d}v\mathrm{d}x\notag\\
   &\lesssim\int_{\mathbb{R}_{x}^3}  \left(|\mu^\delta\partial_x^{\alpha_1}f|_{H_v^2}+|\partial_x^{\alpha_1}f|_{L_v^2}\right)|\partial_x^{\alpha_2}f|_{\nu}|\partial_x^{\alpha}f|_{\nu}\, \mathrm{d}x\lesssim\int_{\mathbb{R}_x^3}|\partial_x^{\alpha_1}f|_{H_v^2}|\partial_x^{\alpha_2}f|_{L_v^2}|\partial_x^{\alpha}f|_{L_v^2}\,{\rm d}x\,,\notag
\end{align}
or 
\begin{align}\label{jj2}
     &\hspace{3mm}\int_{\mathbb{R}_{x,v}^6}  \Gamma(\partial_x^{\alpha_1}f,\partial_x^{\alpha_2}f){w}_{\ell,0}^2\partial_x^{\alpha}f\, \mathrm{d}v\mathrm{d}x\notag\\
   &\lesssim\int_{\mathbb{R}_{x}^3}  \left(|\mu^\delta\partial_x^{\alpha_2}f|_{H_v^2}+|\partial_x^{\alpha_2}f|_{L_v^2}\right)|\partial_x^{\alpha_1}f|_{\nu}|\partial_x^{\alpha}f|_{\nu}\, \mathrm{d}x\lesssim\int_{\mathbb{R}_x^3}|\partial_x^{\alpha_2}f|_{H_v^2}|\partial_x^{\alpha_1}f|_{L_v^2}|\partial_x^{\alpha}f|_{L_v^2}\,{\rm d}x\,,
\end{align}
If $|\alpha_1|=0$, then we have
\begin{align*}
    &\hspace{3mm}\int_{\mathbb{R}_{x,v}^6}  \Gamma(f,\partial_x^{\alpha}f)\partial_x^{\alpha}f\, \mathrm{d}v\mathrm{d}x\lesssim\int_{\mathbb{R}_{x}^3}| f|_{H_v^2}|\partial_x^{\alpha}f|_{L_v^2}|\partial_x^{\alpha}f|_{L_v^2}\, \mathrm{d}x\\
   &\lesssim\|{\bf P}f\|_{L_x^\infty H_v^2}\|\partial_x^\alpha f\|^2+\|{\bf P}^\bot f\|_{L_x^\infty H_v^2}\|\partial_x^\alpha f\|^2\lesssim|(a,b,c)|_{\dot{H}^1\cap\dot{H}^2}\|\partial_x^{\alpha}f\|^2+\left\|{\bf P}^\bot f\right\|_{H_v^2(\dot{H}_x^1\cap\dot{H}^2)}\|\partial_x^{\alpha}f\|^2\\
   &\lesssim\epsilon\|\partial_x^{\alpha}f\|^2\,.
\end{align*}
\noindent If $|\alpha_1|= |\alpha|$, we apply \eqref{jj2} and obtain the same estimate by the same argument.

\noindent If $1\le|\alpha_1|\le|\alpha|-1$ and $|\alpha_1|+3\le N$, then it's straightforward to see that
\begin{align*}
    &\hspace{3mm}\int_{\mathbb{R}_{x,v}^6}  \Gamma(\partial_x^{\alpha_1}f,\partial_x^{\alpha_2}f)\partial_x^{\alpha}f\, \mathrm{d}v\mathrm{d}x\\
    &\lesssim\int_{\mathbb{R}_x^3}|\partial_x^{\alpha_1}{\bf P}f|_{H_v^2}|\partial_x^{\alpha_2}f|_{L_v^2}|\partial_x^\alpha f|_{L_v^2}\,{\rm d}x+\int_{\mathbb{R}_x^3}|\partial_x^{\alpha_1}{\bf P}^\bot f|_{H_v^2}|\partial_x^{\alpha_2}f|_{L_v^2}|\partial_x^\alpha f|_{L_v^2}\,{\rm d}x\\
    &\lesssim\|\partial_x^{\alpha_1}{\bf P}f\|_{L_x^3H_v^2}\|\partial_x^{\alpha_2}f\|_{L_x^6L_v^2}\|\partial_x^\alpha f\|+\|\partial_x^{\alpha_1}{\bf P}^\bot f\|_{L_x^3H_v^2}\|\partial_x^{\alpha_2}f\|_{L_x^6L_v^2}\|\partial_x^\alpha f\|\\
    &\lesssim\left(|(a,b,c)|_{\dot{H}_x^{1+|\alpha_1|}\cap\dot{H}_x^{|\alpha_1|}}+\left\|{\bf P}^\bot f\right\|_{H_v^2(\dot{H}^{1+|\alpha_1|}\cap\dot{H}^{|\alpha_1|}}\right)\|\partial_x^{\alpha_2+e_i}f\|\|\partial_x^\alpha f\|\\
    &\lesssim\epsilon^2\|\partial_x^{\alpha_2+e_i}f\|^2+\eta\|\partial_x^\alpha f\|^2\,.
\end{align*}
If $1\le|\alpha_1|\le|\alpha|-1$ and $|\alpha_1|+3> N$, then $1\le|\alpha_2|\le|\alpha|-1,\,|\alpha_2|+3\le N$. We apply \eqref{jj2}, and the same estimate follows by the same argument. Collecting all cases related to $J_{3,2}$, we obtain
\begin{align*}
    J_{3,2}&\lesssim\sum_{\alpha_1+\alpha_2=\alpha}\int_0^t\int_{V_0\times\mathbb{R}_x^3}e^{-(t-\tau)}\Gamma(\partial_x^{\alpha_1}f,\partial_x^{\alpha_2}f)\partial_x^\alpha f\,{\rm d}v{\rm d}x{\rm d}\tau\\
    &\lesssim\sum_{\alpha_1+\alpha_2=\alpha}\sup_t\int_{\mathbb{R}_{x,v}^6}\Gamma(\partial_x^{\alpha_1}f,\partial_x^{\alpha_2}f)\partial_x^\alpha f\,{\rm d}v{\rm d}x\int_0^t e^{-(t-\tau)}\,{\rm d}\tau\\
    &\lesssim\eta'\sum_{1\le|\alpha'|\le|\alpha|}\sup_t\|\partial_x^{\alpha'}f\|^2\,,
\end{align*}
 where $\eta'=\max\{\epsilon,\epsilon^2,\eta\}$ is a small constant.
\noindent For $J_4$, using Cauchy's inequality and Young's inequality, one can easily derive that
    \begin{align*}
        J_4&=\int_0^t\int_{\mathbb{R}_{x,v}^6}e^{-\nu(v)(t-\tau)} \langle v\rangle^{-\frac{\gamma}{2}}\partial_x^{\alpha}S\,w_{\ell,0}^2\partial_x^{\alpha}f \,\langle v\rangle^{\frac{\gamma}{2}}\, \mathrm{d}v\mathrm{d}x\mathrm{d}\tau\\
        &\lesssim\left(\int_0^t\int_{\mathbb{R}_{x,v}^6}e^{-\nu(v)(t-\tau)}\langle v\rangle^{-\gamma}\left|w_{\ell,0}\partial_x^{\alpha}S\right|^2 \, \mathrm{d}v\mathrm{d}x\mathrm{d}\tau\right)^{\frac{1}{2}}\left(\int_0^t\int_{\mathbb{R}_{x,v}^6}e^{-\nu(v)(t-\tau)}\langle v\rangle^{\gamma}\left|w_{\ell,0}\partial_x^{\alpha}f\right|^2  \, \mathrm{d}v\mathrm{d}x\mathrm{d}\tau\right)^{\frac{1}{2}}\\
        &\lesssim\underbrace{\eta\int_0^t\int_{\mathbb{R}_{x,v}^6}e^{-\nu(v)(t-\tau)}\langle v\rangle^{\gamma}\left|w_{\ell,0}\partial_x^{\alpha}f\right|^2  \, \mathrm{d}v\mathrm{d}x\mathrm{d}\tau}_{J_{4,1}}+\underbrace{C_{\eta}\int_0^t\int_{\mathbb{R}_{x,v}^6}e^{-\nu(v)(t-\tau)}\langle v\rangle^{-\gamma}\left|w_{\ell,0}\partial_x^{\alpha}S\right|^2 \, \mathrm{d}v\mathrm{d}x\mathrm{d}\tau}_{J_{4,2}}\,.
    \end{align*}
Similar to $J_{2,1}$, $J_{4,1}$ can be also absorbed by $J_1$ since $\eta$ is small enough.

\noindent With respect to $J_{4,2}$, taking $\varepsilon>0$, Lemma \ref{lem:max_estimate} implies that
\begin{equation*}
    \begin{aligned}
        J_{4,2}&\lesssim\int_0^t\int_{\mathbb{R}_{x,v}^6}e^{-\nu(v)(t-\tau)}\nu(v)^{1+\varepsilon}\langle v\rangle^{(-2-\varepsilon)\gamma}\left|w_{\ell,0}\partial_x^{\alpha}S\right|^2 \, \mathrm{d}v\mathrm{d}x\mathrm{d}\tau\\
        &\lesssim\int_0^t\int_{\mathbb{R}_{x,v}^6}(1+t-\tau)^{-1-\varepsilon}\langle v\rangle^{(-2-\varepsilon)\gamma}\left|w_{\ell,0}\partial_x^{\alpha}S\right|^2 \, \mathrm{d}v\mathrm{d}x\mathrm{d}\tau\\
        &\lesssim\sup_t\|w_{\ell -(1+\frac{\varepsilon}{2})\gamma,0}\,\partial_x^\alpha S\|^2\int_0^t(1+t-\tau)^{-1-\varepsilon}\,\mathrm{d}\tau\\
        &\lesssim\sup_t\|w_{\ell-(1+\frac{\varepsilon}{2})\gamma,0}\partial_x^\alpha S\|^2\,.
    \end{aligned}
\end{equation*}
Plugging all the above estimates into \eqref{B5} for $|\alpha|=1,2,\dots,N$ , we obtain the desired result.

\noindent \underline{ {\it Case 2: $|\alpha|=N-1, |\beta|=1$.} }
In this case, we consider the microscopic equation.
Applying the operator $\mathbf{P}^{\bot}$ to \eqref{B4} yields
\begin{equation}\label{B6}
    \begin{aligned}
&\hspace{3mm}\partial_t\mathbf{P}^{\bot}f+v\cdot\nabla_x\mathbf{P}^{\bot}f+v\cdot\nabla_x \mathbf{P}f+\frac{1}{2}\nu(v)\mathbf{P}^{\bot}f\\
        &=-\frac{1}{2}\nu(v)\mathbf{P}^{\bot}f+{\bf P}(v\cdot\nabla_x f)+\mathcal{K}\mathbf{P}^{\bot}f+\mathbf{P}^{\bot}\Gamma(f,f)+\mathbf{P}^{\bot}S\quad,
    \end{aligned}
\end{equation}
where we decompose
\[\mathbf{P}^{\bot}(v\cdot\nabla_x f)=v\cdot\nabla_x f-{\bf P}(v\cdot\nabla_xf)=v\cdot\nabla_x \mathbf{P}^{\bot}f+v\cdot\nabla_x \mathbf{P}f-{\bf P}(v\cdot\nabla_xf)\,.\]
Next, we apply the operator $\partial_x^{\alpha}\partial_v$ with $|\alpha|=N-1$ to both sides of \eqref{B6}. Multiplying the resulting identity by $w_{\ell,1}^2\partial_x^{\alpha}\partial_v\mathbf{P}^{\bot}f$, we obtain
\begin{equation*}
    \begin{aligned}
        &\hspace{3mm}\frac{1}{2}\partial_t\left(w_{\ell,1}\partial_x^{\alpha}\partial_v\mathbf{P}^{\bot}f\right)^2+\frac{1}{2}\nu(v)\left(w_{\ell,1}\partial_x^{\alpha}\partial_v\mathbf{P}^{\bot}f\right)^2\\
        &=-\frac{1}{2}\nu(v)\left(w_{\ell,1}\partial_x^{\alpha}\partial_v\mathbf{P}^{\bot}f\right)^2-\partial_v\left[\nu(v)\right]\partial_x^{\alpha}\mathbf{P}^{\bot}f\,w_{\ell,1}^2\partial_x^{\alpha}\partial_v\mathbf{P}^{\bot}f\\
        &\quad-\partial_x^{\alpha}\partial_v\left(v\cdot\nabla_x\mathbf{P}^{\bot}f\right)w_{\ell,1}^2\partial_x^{\alpha}\partial_v\mathbf{P}^{\bot}f-\partial_x^{\alpha}\partial_v(v\cdot\nabla_x \mathbf{P}f)w_{\ell,1}^2\partial_x^{\alpha}\partial_v\mathbf{P}^{\bot}f\\
        &\quad+\partial_x^{\alpha}\partial_v\left[{\bf P}(v\cdot\nabla_xf)\right]w_{\ell,1}^2\partial_x^{\alpha}\partial_v\mathbf{P}^{\bot}f+\partial_x^{\alpha}\partial_v(\mathcal{K}\mathbf{P}^{\bot}f)w_{\ell,1}^2\partial_x^{\alpha}\partial_v\mathbf{P}^{\bot}f\\
        &\quad+\partial_x^{\alpha}\partial_v[\mathbf{P}^{\bot}\Gamma(f,f)]w_{\ell,1}^2\partial_x^{\alpha}\partial_v\mathbf{P}^{\bot}f+\partial_x^{\alpha}\partial_v(\mathbf{P}^{\bot}S)w_{\ell,1}^2\partial_x^{\alpha}\partial_v\mathbf{P}^{\bot}f.
    \end{aligned}
\end{equation*}
Solving the above inequality and then integrating both sides of the result inequality over $\mathbb{R}_x^3\times\mathbb{R}_v^3$, we get

    \begin{align*}
        \left\|w_{\ell,1}\partial_x^{\alpha}\partial_v\mathbf{P}^{\bot}f\right\|^2\lesssim&\left\|w_{\ell,1}\partial_x^{\alpha}\partial_v\mathbf{P}^{\bot}f_0\right\|^2\underbrace{-\frac{1}{2}\int_0^t\int_{\mathbb{R}_{x,v}^6} e^{-\nu(v)(t-\tau)}\nu(v)\left(w_{\ell,1}\partial_x^{\alpha}\partial_v\mathbf{P}^{\bot}f\right)^2\,\mathrm{d}v\mathrm{d}x\mathrm{d}\tau}_{J_5}\\
        &\underbrace{-\int_0^t\int_{\mathbb{R}_{x,v}^6} e^{-\nu(v)(t-\tau)}\partial_v\left[\nu(v)\right]\partial_x^{\alpha}\mathbf{P}^{\bot}f\,w_{\ell,1}^2\partial_x^{\alpha}\partial_v\mathbf{P}^{\bot}f\,\mathrm{d}v\mathrm{d}x\mathrm{d}\tau}_{J_6}\\
        &\underbrace{-\int_0^t\int_{\mathbb{R}_{x,v}^6} e^{-\nu(v)(t-\tau)}\partial_x^{\alpha}\partial_v\left(v\cdot\nabla_x\mathbf{P}^{\bot}f\right)w_{\ell,1}^2\partial_x^{\alpha}\partial_v\mathbf{P}^{\bot}f\,\mathrm{d}v\mathrm{d}x\mathrm{d}\tau}_{J_7}\\
        &\underbrace{-\int_0^t\int_{\mathbb{R}_{x,v}^6} e^{-\nu(v)(t-\tau)}\partial_x^{\alpha}\partial_v(v\cdot\nabla_x \mathbf{P}f)w_{\ell,1}^2\partial_x^{\alpha}\partial_v\mathbf{P}^{\bot}f\,\mathrm{d}v\mathrm{d}x\mathrm{d}\tau}_{J_8}\\
        &\underbrace{+\int_0^t\int_{\mathbb{R}_{x,v}^6} e^{-\nu(v)(t-\tau)}\partial_x^{\alpha}\partial_v\left[{\bf P}(v\cdot\nabla_xf)\right]w_{\ell,1}^2\partial_x^{\alpha}\partial_v\mathbf{P}^{\bot}f\,\mathrm{d}v\mathrm{d}x\mathrm{d}\tau}_{J_9}\\
        &\underbrace{+\int_0^t\int_{\mathbb{R}_{x,v}^6} e^{-\nu(v)(t-\tau)}\partial_x^{\alpha}\partial_v(\mathcal{K}\mathbf{P}^{\bot}f)w_{\ell,1}^2\partial_x^{\alpha}\partial_v\mathbf{P}^{\bot}f\,\mathrm{d}v\mathrm{d}x\mathrm{d}\tau}_{J_{10}}\\
        &\underbrace{+\int_0^t\int_{\mathbb{R}_{x,v}^6} e^{-\nu(v)(t-\tau)}\partial_x^{\alpha}\partial_v[\mathbf{P}^{\bot}\Gamma(f,f)]w_{\ell,1}^2\partial_x^{\alpha}\partial_v\mathbf{P}^{\bot}f\,\mathrm{d}v\mathrm{d}x\mathrm{d}\tau}_{J_{11}}\\
        &\underbrace{+\int_0^t\int_{\mathbb{R}_{x,v}^6} e^{-\nu(v)(t-\tau)}\partial_x^{\alpha}\partial_v(\mathbf{P}^{\bot}S)w_{\ell,1}^2\partial_x^{\alpha}\partial_v\mathbf{P}^{\bot}f\,\mathrm{d}v\mathrm{d}x\mathrm{d}\tau}_{J_{12}}\,.
    \end{align*}
Set $\widetilde{w}_{\ell,1}:=e^{-\frac{\nu(v)}{2}(t-\tau)}w_{\ell,1}$. It can be deduced from Lemma 2 in \cite{Strain-Guo-2008-ARMA} that
\begin{align*}
    &\hspace{3mm}\int_{\mathbb{R}_v^3} \widetilde{w}_{\ell,1}^2\partial_v[\nu(v)]\partial_x^{\alpha}\mathbf{P}^{\bot}f\,\partial_v\partial_x^{\alpha}\mathbf{P}^{\bot}f\,\mathrm{d}v\\
    &\lesssim\eta\sum_{|\beta'|\le 1}|\widetilde{w}_{\ell,\beta'}\partial_x^\alpha\partial_v^{\beta'}{\bf P}^\bot f|_{L_\nu^2}^2+|\bar{\chi}(v)\widetilde{w}_{\ell,1}\partial_x^\alpha{\bf P}^\bot f|^2_{L_v^2}\,.
    \end{align*}
    Then we get
    \begin{align*}
        J_6&\lesssim\eta\sum_{|\beta'|\le1}\int_0^t\int_{\mathbb{R}_{x,v}^6}e^{-\nu(v)(t-\tau)}\nu(v)|w_{\ell,\beta'}\partial_x^\alpha\partial_v^{\beta'}{\bf P}^\bot f|^2\,{\rm d}v{\rm d}x{\rm d}\tau\\
        &\quad+\int_0^t\int_{\mathbb{R}_{x,v}^6}e^{-\nu(v)(t-\tau)}\bar{\chi}^2(v)w^2_{\ell,1}|\partial_x^\alpha{\bf P}^\bot f|^2{\rm d}v{\rm d}x{\rm d}\tau\\
        &\lesssim\eta\sum_{|\beta'|\le1}\int_0^t\int_{\mathbb{R}_{x,v}^6}e^{-\nu(v)(t-\tau)}\nu(v)|w_{\ell,\beta'}\partial_x^\alpha\partial_v^{\beta'}{\bf P}^\bot f|^2\,{\rm d}v{\rm d}x{\rm d}\tau+\sup_t\|\partial_x^\alpha{\bf P}^\bot f\|^2\int_0^t e^{-(t-\tau)}\,{\rm d}\tau\\
        &\lesssim\eta\sum_{|\beta'|\le1}\int_0^t\int_{\mathbb{R}_{x,v}^6}e^{-\nu(v)(t-\tau)}\nu(v)|w_{\ell,\beta'}\partial_x^\alpha\partial_v^{\beta'}{\bf P}^\bot f|^2\,{\rm d}v{\rm d}x{\rm d}\tau+\sup_t\|\partial_x^\alpha{\bf P}^\bot f\|^2\,,
    \end{align*}
    where the first term in the right-hand side of the last inequality can be absorbed by the dissipative terms. 
    
\noindent We next estimate the transport terms.
For $J_7$, it is easy to get that
\begin{align*}
    \hspace{3mm}\int_{\mathbb{R}_x^3} \partial_x^{\alpha}\partial_v\left(v\cdot\nabla_x\mathbf{P}^{\bot}f\right)\partial_x^{\alpha}\partial_v\mathbf{P}^{\bot}f\,\mathrm{d}x&=\sum_{i=1}^3\int_{\mathbb{R}_x^3}\partial_x^{\alpha+e_i}\mathbf{P}^{\bot}f\,\partial_x^{\alpha}\partial_v\mathbf{P}^{\bot}f\,\mathrm{d}x+\int_{\mathbb{R}_x^3}v\cdot\nabla_x\left(\partial_x^{\alpha}\partial_v\mathbf{P}^{\bot}f\right)\partial_x^{\alpha}\partial_v\mathbf{P}^{\bot}f\,\mathrm{d}x\\
    &=\sum_{i=1}^3\int_{\mathbb{R}_x^3}\partial_x^{\alpha+e_i}\mathbf{P}^{\bot}f\,\partial_x^{\alpha}\partial_v\mathbf{P}^{\bot}f\,\mathrm{d}x.
\end{align*}
Here we integrate by parts to obtain the last equality. Then, by Cauchy’s inequality and Young’s inequality, we deduce that
\begin{align*}
    J_7&\lesssim\sum_{i=1}^3\int_0^t\int_{\mathbb{R}_{x,v}^6} e^{-\nu(v)(t-\tau)}w_{\ell,1}^2\partial_x^{\alpha+e_i}\mathbf{P}^{\bot}f\,\partial_x^{\alpha}\partial_v\mathbf{P}^{\bot}f\,\mathrm{d}v\mathrm{d}x\mathrm{d}\tau\\
    &\lesssim\sum_{i=1}^3\int_0^t\int_{\mathbb{R}_{x,v}^6} e^{-\nu(v)(t-\tau)}\langle v\rangle^{\gamma}w_{\ell,0}\partial_x^{\alpha+e_i}\mathbf{P}^{\bot}f\,w_{\ell,1}\partial_x^{\alpha}\partial_v\mathbf{P}^{\bot}f\,\mathrm{d}v\mathrm{d}x\mathrm{d}\tau\\
    &\lesssim\underbrace{\eta\sum_{i=1}^3\int_0^t\int_{\mathbb{R}_{x,v}^6} e^{-\nu(v)(t-\tau)}\left|w_{\ell,1}\partial_x^{\alpha}\partial_v\mathbf{P}^{\bot}f\,\langle v\rangle^{\frac{\gamma}{2}}\right|^2\,\mathrm{d}v\mathrm{d}x\mathrm{d}\tau}_{J_{7,1}}\\
    &\quad+\underbrace{C_{\eta}\sum_{i=1}^3\int_0^t\int_{\mathbb{R}_{x,v}^6} e^{-\nu(v)(t-\tau)}\left|\langle v\rangle^{\frac{\gamma}{2}}w_{\ell,0}\partial_x^{\alpha+e_i}\mathbf{P}^{\bot}f\right|^2\,\mathrm{d}v\mathrm{d}x\mathrm{d}\tau}_{J_{7,2}}\,.
\end{align*}
Now $J_{7,1}$ can be absorbed into $J_5$ provided $\eta$ is chosen sufficiently small and $J_{7,2}$ will be absorbed into $J_1$ after enlarging the corresponding constant. 

\noindent For $J_8$, due to
\[v\cdot\nabla_x\mathbf{P}f\sim\nabla_x(a,b,c)\mu^{\delta},\]
then we have
\begin{align*}
    J_8&\lesssim\int_0^t\int_{\mathbb{R}_{x,v}^6} e^{-\nu(v)(t-\tau)}w_{\ell,1}^2\partial_x^{\alpha}\partial_v\left[\nabla_x(a,b,c)\mu^{\delta}\right]\partial_x^{\alpha}\partial_v\mathbf{P}^{\bot}f\,\mathrm{d}v\mathrm{d}x\mathrm{d}\tau\\
    &\lesssim\int_0^t\int_{\mathbb{R}_{x,v}^6} e^{-\nu(v)(t-\tau)}w_{\ell,1}^2\partial_x^{\alpha+1}(a,b,c)\mu^{\delta}\partial_x^{\alpha}\partial_v\mathbf{P}^{\bot}f\,\mathrm{d}v\mathrm{d}x\mathrm{d}\tau\\
    &\lesssim\underbrace{\eta\sup\limits_t\left\|\partial_x^{\alpha}\partial_v\mathbf{P}^{\bot}f\right\|^2}_{J_{8,1}}+\underbrace{C_{\eta}\sup\limits_t\left\|(a,b,c)\right\|_{\dot{H}^N}^2}_{J_{8,2}},
\end{align*}
where $J_{8,2}$ can be absorbed by magnifying $\eqref{M-E-4}$.

\noindent For $J_9$, recall the definition of ${\bf P}$, denote $\phi_i:=\mu^\delta, v\mu^\delta, (|v|^2-1)\mu^\delta$, we then have 
\[{\bf P}\left(v\cdot\nabla_xf\right)=\int_{\mathbb{R}_v^3} v\cdot\nabla_xf\,\phi_i\, \mathrm{d}v\,\mu^\delta\,,\]
then 
\[\partial_x^{\alpha}\partial_v\left[{\bf P}\left(v\cdot\nabla_xf\right)\right]=\int_{\mathbb{R}_v^3} v\cdot\nabla_x\partial_x^{\alpha}f\,\phi_i\, \mathrm{d}v\,\mu^\delta:=m(t,x)\mu^{\delta}\,.\]
Consequently,
\begin{align*}
    J_9&\lesssim\int_0^t\int_{\mathbb{R}_{x,v}^6} e^{-\nu(v)(t-\tau)}m(t,x)\mu^{\delta}w_{\ell,1}^2\partial_x^{\alpha}\partial_v\mathbf{P}^{\bot}f\,\mathrm{d}v\mathrm{d}x\mathrm{d}\tau\\
    &\lesssim\eta\sup\limits_t\left\|\partial_x^{\alpha}\partial_v\mathbf{P}^{\bot}f\right\|^2+C_{\eta}\sup\limits_t\|m(t,x)\|_{L_x^2}^2\\
    &\lesssim\eta\sup\limits_t\left\|\partial_x^{\alpha}\partial_v\mathbf{P}^{\bot}f\right\|^2+C_{\eta}\sup\limits_t\left\|\int_{\mathbb{R}_v^3} v\cdot\nabla_x\partial_x^{\alpha}f\,\phi_i\, \mathrm{d}v\right\|_{L_x^2}^2\\
    &\lesssim\eta\sup\limits_t\left\|\partial_x^{\alpha}\partial_v\mathbf{P}^{\bot}f\right\|^2+C_\eta\sup\limits_t\|f\|_{L_v^2(\dot{H}^{|\alpha|+1})}^2\,.
\end{align*}
As for $J_{10}$, its estimate follows by the same argument as for $J_2$, yielding
\begin{align*}
J_{10}&\lesssim\sum_{j\in\mathbb{Z}^+}\int_0^t\int_{V_j\times\mathbb{R}_x^3}e^{-\nu(v)(t-\tau)}w^2_{\ell,1}\partial_x^\alpha\partial_v(\mathcal{K}{\bf P}^\bot f)\partial_x^\alpha\partial_v{\bf P}^\bot f\,{\rm d}v{\rm d}x{\rm d}\tau\\
&\quad+\int_0^t\int_{V_0\times\mathbb{R}_x^3}e^{-\nu(v)(t-\tau)}w^2_{\ell,1}\partial_x^\alpha\partial_v(\mathcal{K}{\bf P}^\bot f)\partial_x^\alpha\partial_v{\bf P}^\bot f\,{\rm d}v{\rm d}x{\rm d}\tau\\
&\lesssim\sum_{j\in\mathbb{Z}^+}\sup_t\int_{V_j\times\mathbb{R}_x^3}\nu^{-1}(v)w^2_{\ell,1}\partial_x^\alpha\partial_v(\mathcal{K}{\bf P}^\bot f)\partial_x^\alpha\partial_v{\bf P}^\bot f\,{\rm d}v{\rm d}x\\
&\quad+\int_0^t\int_{V_0\times\mathbb{R}_x^3}e^{-(t-\tau)}w^2_{\ell,1}\partial_x^\alpha\partial_v(\mathcal{K}{\bf P}^\bot f)\partial_x^\alpha\partial_v{\bf P}^\bot f\,{\rm d}v{\rm d}x{\rm d}\tau\\
&\lesssim\eta\sum_{j\in\mathbb{Z}^+}\sup_t\sum_{|\beta'|\le1}\|w_{\ell,\beta'}\partial_x^\alpha\partial_v^{\beta'}{\bf P}^\bot f\|^2_{L^2(V_j)L_x^2(\mathbb{R}_x^3)}+\sup_t\|\partial_x^\alpha{\bf P}^\bot f\|^2\\
&\quad+\eta'\sum_{|\beta'|\le 1}\sup_t\|w_{\ell,\beta'}\partial_x^\alpha\partial_v^{\beta'}{\bf P}^\bot f\|^2+\sup_t\int_{\mathbb{R}_{x,v}^6}w^2_{\ell,1}\partial_x^\alpha\partial_v(\mathcal{K}{\bf P}^\bot f)\partial_x^\alpha\partial_v{\bf P}^\bot f\,{\rm d}v{\rm d}x\\
&\lesssim\eta\sum_{j\in\mathbb{Z}^+}\sum_{|\beta'|\le1}\sup_t\|w_{\ell,\beta'}\partial_x^\alpha\partial_v^{\beta'}{\bf P}^\bot f\|^2_{L^2(V_j)L_x^2(\mathbb{R}_x^3)}+\sup_t\|\partial_x^\alpha{\bf P}^\bot f\|^2\\
&\quad+\eta''\sum_{|\beta'|\le 1}\sup_t\|w_{\ell,\beta'}\partial_x^\alpha\partial_v^{\beta'}{\bf P}^\bot f\|^2\,.
\end{align*}
The term $J_{11}$ is treated in the similar way as $J_3$, we obtain
\begin{align*}
    J_{11}&\lesssim\underbrace{\sum_{j\in\mathbb{Z}^+}\sup_t\int_{V_j\times\mathbb{R}_x^3}\nu^{-1}(v)w^2_{\ell,1}\partial_x^\alpha\partial_v\Gamma(f,f)\partial_x^\alpha\partial_v{\bf P}^\bot f\,{\rm d}v{\rm d}x}_{J_{11,1}}\\
    &\quad+\underbrace{\sup_t\int_{\mathbb{R}_{x,v}^6}w^2_{\ell,1}\partial_x^\alpha\partial_v\Gamma(f,f)\partial_x^\alpha\partial_v{\bf P}^\bot f\,{\rm d}v{\rm d}x}_{J_{11,2}}\,.
\end{align*}
We can deduce from Lemma \ref{lemma5.2} that
\begin{align*}
	&\hspace{3mm}\sup_t\int_{V_j\times\mathbb{R}_x^3}\nu^{-1}(v)w_{\ell,1}^2\Gamma(\partial_x^{\alpha_1}\partial_v^{\beta_1}f,\partial_x^{\alpha_2}\partial_v^{\beta_2}f)\partial_x^\alpha\partial_v{\bf P}^\bot f\,{\rm d}v{\rm d}x\\
	&\lesssim\sup_t\int_{\mathbb{R}_x^3}\left(|\mu^\delta\partial_x^{\alpha_1}\partial_v^{\beta_1}f|_{H^2_v(\mathbb{R}^3)}+|w_{\ell,1}\partial_x^{\alpha_1}\partial_v^{\beta_1}f|_{L^2_v(\mathbb{R}^3)}\right)\times|w_{\ell,1}\partial_x^{\alpha_2}\partial_v^{\beta_2}f|_{L^2(\widetilde{V}_j)}|w_{\ell,1}\partial_x^\alpha\partial_v{\bf P}^\bot f|_{L^2(V_j)}\,{\rm d}x\,,
\end{align*}
or
\begin{align}\label{GG22}
	&\hspace{3mm}\sup_t\int_{V_j\times\mathbb{R}_x^3}\nu^{-1}(v)w_{\ell,1}^2\Gamma(\partial_x^{\alpha_1}\partial_v^{\beta_1}f,\partial_x^{\alpha_2}\partial_v^{\beta_2}f)\partial_x^\alpha\partial_v{\bf P}^\bot f\,{\rm d}v{\rm d}x\\
    &\lesssim\sup_t\int_{\mathbb{R}_x^3}\left(|\mu^\delta\partial_x^{\alpha_2}\partial_v^{\beta_2}f|_{H^2_v(\mathbb{R}^3)}+|w_{\ell,1}\partial_x^{\alpha_2}\partial_v^{\beta_2}f|_{L^2_v(\mathbb{R}^3)}\right)|w_{\ell,1}\partial_x^{\alpha_1}\partial_v^{\beta_1}f|_{L^2_v(\widetilde{V}_j)}|w_{\ell,1}\partial_x^\alpha\partial_v{\bf P}^\bot f|_{L^2_v(V_j)}\,{\rm d}x\,,\notag
\end{align}
where $\alpha_1+\alpha_2=\alpha,\beta_1+\beta_2=\beta$ with $|\alpha|=N-1,|\beta|=1$.

\noindent If $|\alpha_1|=0$, we have 
\begin{align}
&\hspace{3mm}\sup_t\int_{V_j\times\mathbb{R}_x^3}\nu^{-1}(v)w_{\ell,1}^2\Gamma(\partial_x^{\alpha_1}\partial_v^{\beta_1}f,\partial_x^{\alpha_2}\partial_v^{\beta_2}f)\partial_x^\alpha\partial_v{\bf P}^\bot f\,{\rm d}v{\rm d}x\notag\\&\lesssim\sup_t\Bigg\{\Big(\|\mu^\delta\partial_v^{\beta_1}{\bf P}f\|_{L_x^\infty(\mathbb{R}^3) H_v^2(\mathbb{R}^3)}+\|w_{\ell,1}\partial_v^{\beta_1}{\bf P}f\|_{L_x^\infty(\mathbb{R}^3)L_v^2(\mathbb{R}^3)}\Big)\nonumber\\
&\qquad\times\|w_{\ell,1}\partial_x^{\alpha}\partial_v^{\beta_2}{\bf P}f\|_{L_x^2(\mathbb{R}^3)L^2(\widetilde{V}_j)}\|w_{\ell,1}\partial_x^\alpha\partial_v{\bf P}^\bot f\|_{L_x^2(\mathbb{R}^3)L^2(V_j)}\Bigg\}\nonumber\\
	&\quad+\sup_t\Bigg\{\Big(\|\mu^\delta\partial_v^{\beta_1}{\bf P}^\bot f\|_{L_x^\infty(\mathbb{R}^3) H_v^2(\mathbb{R}^3)}+\|w_{\ell,1}\partial_v^{\beta_1}{\bf P}^\bot f\|_{L_x^\infty(\mathbb{R}^3)L_v^2(\mathbb{R}^3)}\Big)\nonumber\\
&\qquad\times\|w_{\ell,1}\partial_x^{\alpha}\partial_v^{\beta_2}{\bf P}^\bot f\|_{L_x^2(\mathbb{R}^3)L^2(\widetilde{V}_j)}\|w_{\ell,1}\partial_x^\alpha\partial_v{\bf P}^\bot f\|_{L_x^2(\mathbb{R}^3)L^2(V_j)}\Bigg\}\nonumber\\
&\quad+\sup_t\Bigg\{\Big(\|\mu^\delta\partial_v^{\beta_1}{\bf P}^\bot f\|_{L_x^\infty(\mathbb{R}^3) H_v^2(\mathbb{R}^3)}+\|w_{\ell,1}\partial_v^{\beta_1}{\bf P}^\bot f\|_{L_x^\infty(\mathbb{R}^3)L_v^2(\mathbb{R}^3)}\Big)\nonumber\\
&\qquad\times\|w_{\ell,1}\partial_x^{\alpha}\partial_v^{\beta_2}{\bf P}f\|_{L_x^2(\mathbb{R}^3)L^2(\widetilde{V}_j)}\|w_{\ell,1}\partial_x^\alpha\partial_v{\bf P}^\bot f\|_{L_x^2(\mathbb{R}^3)L^2(V_j)}\Bigg\}\nonumber\\
	&\quad+\sup_t\Bigg\{\Big(\|\mu^\delta\partial_v^{\beta_1}{\bf P} f\|_{L_x^\infty(\mathbb{R}^3) H_v^2(\mathbb{R}^3)}+\|w_{\ell,1}\partial_v^{\beta_1}{\bf P}f\|_{L_x^\infty(\mathbb{R}^3)L_v^2(\mathbb{R}^3)}\Big)\nonumber\\
&\qquad\times\|w_{\ell,1}\partial_x^{\alpha}\partial_v^{\beta_2}{\bf P}^\bot f\|_{L_x^2(\mathbb{R}^3)L^2(\widetilde{V}_j)}\|w_{\ell,1}\partial_x^\alpha\partial_v{\bf P}^\bot f\|_{L_x^2(\mathbb{R}^3)L^2(V_j)}\Bigg\}\nonumber\\
	&\lesssim\sup_t\Big\{|(a,b,c)|_{\dot{H}_x^1\cap\dot{H}_x^2}\|\partial_x^{\alpha}(a,b,c)|v|^{-1}\mu^\delta\|_{L^2(\widetilde{V}_j)L_x^2(\mathbb{R}^3)}\|w_{\ell,1}\partial_x^\alpha\partial_v{\bf P}^\bot f\|_{L^2(V_j)L_x^2(\mathbb{R}^3)}\Big\}\nonumber\\
	&\quad+\sup_t\Bigg\{\Big(\|w_{\ell,\beta_1}\partial_v^{\beta_1}{\bf P}^\bot f\|_{L_v^2(\dot{H}^1_x\cap\dot{H}^2_x)}+\|\partial_v^{\beta_1}{\bf P}^\bot f\|_{H_v^2(\dot{H}^1\cap\dot{H}^2)}\Big)\nonumber\\
    &\qquad\times\|w_{\ell,\beta_2}\partial_x^\alpha\partial_v^{\beta_2}{\bf P}^\bot f\|_{L^2(\widetilde{V}_j)L_x^2(\mathbb{R}^3)}\|w_{\ell,1}\partial_x^\alpha\partial_v{\bf P}^\bot f\|_{L^2(V_j)L_x^2(\mathbb{R}^3)}\Bigg\}\nonumber\\
    &\quad+\sup_t\Bigg\{\Big(\|w_{\ell,\beta_1}\partial_v^{\beta_1}{\bf P}^\bot f\|_{L_v^2(\dot{H}^1_x\cap\dot{H}^2_x)}+\|\partial_v^{\beta_1}{\bf P}^\bot f\|_{H_v^2(\dot{H}^1\cap\dot{H}^2)}\Big)\nonumber\\
    &\qquad\times\|\partial_x^\alpha(a,b,c)|v|^{-1}\mu^\delta\|_{L^2(\widetilde{V}_j)L_x^2(\mathbb{R}^3)}\|w_{\ell,1}\partial_x^\alpha\partial_v{\bf P}^\bot f\|_{L^2(V_j)L_x^2(\mathbb{R}^3)}\Bigg\}\nonumber\\
    &\quad+\sup_t\Big\{|(a,b,c)|_{\dot{H}_x^1\cap\dot{H}_x^2}\|w_{\ell,\beta_2}\partial_x^\alpha\partial_v^{\beta_2}{\bf P}^\bot f\|_{L^2(\widetilde{V}_j)L_x^2(\mathbb{R}^3)}\|w_{\ell,1}\partial_x^\alpha\partial_v{\bf P}^\bot f\|_{L^2(V_j)L_x^2(\mathbb{R}^3)}\Big\}\nonumber\\
	&\lesssim\epsilon^2\frac{1}{2^{2j}}\sup_t|(a,b,c)|^2_{\dot{H}_x^{|\alpha|}}+\eta\sup_t\|w_{\ell,1}\partial_x^\alpha\partial_v{\bf P}^\bot f\|^2_{L^2(V_j)L_x^2(\mathbb{R}^3)}+\epsilon^2\|w_{\ell,\beta_2}\partial_x^\alpha\partial_v^{\beta_2}{\bf P}^\bot  f\|^2_{L^2(\widetilde{V}_j)L_x^2(\mathbb{R}^3)}\nonumber\,.
\end{align}
If $|\alpha_2|=0$, we apply \eqref{GG22}, and a similar estimate follows by the same argument.

\noindent If $1\le|\alpha_1|\le N-2, |\alpha_1|+|\beta_1|+3\le N$, by applying the $L^3-L^6-L^2$ estimate. We deduce that
\begin{align}
	&\hspace{3mm}\sup_t\int_{V_j\times\mathbb{R}_x^3}\nu^{-1}(v)w_{\ell,1}^2\Gamma(\partial_x^{\alpha_1}\partial_v^{\beta_1}f,\partial_x^{\alpha_2}\partial_v^{\beta_2}f)\partial_x^\alpha\partial_v{\bf P}^\bot f\,{\rm d}v{\rm d}x\notag\\&\lesssim\sup_t\Bigg\{\Big(\|\mu^\delta\partial_x^{\alpha_1}\partial_v^{\beta_1}{\bf P}f\|_{L_x^3(\mathbb{R}^3) H_v^2(\mathbb{R}^3)}+\|w_{\ell,1}\partial_x^{\alpha_1}\partial_v^{\beta_1}{\bf P}f\|_{L_x^3(\mathbb{R}^3)L_v^2(\mathbb{R}^3)}\Big)\nonumber\\
&\qquad\times\|w_{\ell,1}\partial_x^{\alpha_2}\partial_v^{\beta_2}{\bf P}f\|_{L_x^6(\mathbb{R}^3)L^2(\widetilde{V}_j)}\|w_{\ell,1}\partial_x^\alpha\partial_v{\bf P}^\bot f\|_{L_x^2(\mathbb{R}^3)L^2(V_j)}\Bigg\}\nonumber\\
		&\quad+\sup_t\Bigg\{\Big(\|\mu^\delta\partial_x^{\alpha_1}\partial_v^{\beta_1}{\bf P}^\bot f\|_{L_x^3(\mathbb{R}^3) H_v^2(\mathbb{R}^3)}+\|w_{\ell,1}\partial_x^{\alpha_1}\partial_v^{\beta_1}{\bf P}^\bot f\|_{L_x^3(\mathbb{R}^3)L_v^2(\mathbb{R}^3)}\Big)\nonumber\\
	&\qquad\times\|w_{\ell,1}\partial_x^{\alpha_2}\partial_v^{\beta_2}{\bf P}^\bot f\|_{L_x^6(\mathbb{R}^3)L^2(\widetilde{V}_j)}\|w_{\ell,1}\partial_x^\alpha\partial_v{\bf P}^\bot f\|_{L_x^2(\mathbb{R}^3)L^2(V_j)}\Big\}\nonumber\\
    &\quad\sup_t\Bigg\{\Big(\|\mu^\delta\partial_x^{\alpha_1}\partial_v^{\beta_1}{\bf P}f\|_{L_x^3(\mathbb{R}^3) H_v^2(\mathbb{R}^3)}+\|w_{\ell,1}\partial_x^{\alpha_1}\partial_v^{\beta_1}{\bf P}f\|_{L_x^3(\mathbb{R}^3)L_v^2(\mathbb{R}^3)}\Big)\nonumber\\
&\qquad\times\|w_{\ell,1}\partial_x^{\alpha_2}\partial_v^{\beta_2}{\bf P}^\bot f\|_{L_x^6(\mathbb{R}^3)L^2(\widetilde{V}_j)}\|w_{\ell,1}\partial_x^\alpha\partial_v{\bf P}^\bot f\|_{L_x^2(\mathbb{R}^3)L^2(V_j)}\Bigg\}\nonumber\\
		&\quad+\sup_t\Bigg\{\Big(\|\mu^\delta\partial_x^{\alpha_1}\partial_v^{\beta_1}{\bf P}^\bot f\|_{L_x^3(\mathbb{R}^3) H_v^2(\mathbb{R}^3)}+\|w_{\ell,1}\partial_x^{\alpha_1}\partial_v^{\beta_1}{\bf P}^\bot f\|_{L_x^3(\mathbb{R}^3)L_v^2(\mathbb{R}^3)}\Big)\nonumber\\
	&\qquad\times\|w_{\ell,1}\partial_x^{\alpha_2}\partial_v^{\beta_2}{\bf P}f\|_{L_x^6(\mathbb{R}^3)L^2(\widetilde{V}_j)}\|w_{\ell,1}\partial_x^\alpha\partial_v{\bf P}^\bot f\|_{L_x^2(\mathbb{R}^3)L^2(V_j)}\Big\}\nonumber\\
		&\lesssim\sup_t\bigg\{|\partial_x^{\alpha_1}(a,b,c)|_{L_x^2\cap \dot{H}_x^1}\|\partial_x^{\alpha_2}(a,b,c)|v|^{-1}\mu^\delta\|_{L_v^2(\widetilde{V}_j)\dot{H}_x^1(\mathbb{R}^3)}\|w_{\ell,1}\partial_x^\alpha\partial_v{\bf P}^\bot f\|_{L^2(V_j)L_x^2(\mathbb{R}^3)}\Big\}\nonumber\\
        &\quad+\sup_t\Bigg\{\Big(\|w_{\ell,\beta_1}\partial_x^{\alpha_1}\partial_v^{\beta_1}{\bf P}^\bot f\|_{L_v^2{H}_x^1}+\|\partial_x^{\alpha_1}\partial_v^{\beta_1}{\bf P}^\bot f\|_{H_v^2{H}_x^1}\Big)\nonumber\\
        &\qquad\times\|w_{\ell,\beta_2}\partial_x^{\alpha_2}\partial_v^{\beta_2}{\bf P}^\bot f\|_{L^2(\widetilde{V}_j)\dot{H}_x^1}\|w_{\ell,1}\partial_x^{\alpha}\partial_v{\bf P}^\bot f\|_{L^2(V_j)L_x^2(\mathbb{R}^3)}\Bigg\}\nonumber\\
        &\quad+\sup_t\bigg\{|\partial_x^{\alpha_1}(a,b,c)|_{L_x^2\cap \dot{H}_x^1}\|w_{\ell,\beta_2}\partial_x^{\alpha_2}\partial_v^{\beta_2}{\bf P}^\bot f\|_{L^2(\widetilde{V}_j)\dot{H}_x^1}\|w_{\ell,1}\partial_x^\alpha\partial_v{\bf P}^\bot f\|_{L^2(V_j)L_x^2(\mathbb{R}^3)}\Big\}\nonumber\\
        &\quad+\sup_t\Bigg\{\Big(\|w_{\ell,\beta_1}\partial_x^{\alpha_1}\partial_v^{\beta_1}{\bf P}^\bot f\|_{L_v^2{H}_x^1}+\|\partial_x^{\alpha_1}\partial_v^{\beta_1}{\bf P}^\bot f\|_{H_v^2{H}_x^1}\Big)\nonumber\\
        &\qquad\times\|\partial_x^{\alpha_2}(a,b,c)|v|^{-1}\mu^\delta\|_{L^2(\widetilde{V}_j)\dot{H}_x^1}\|w_{\ell,1}\partial_x^{\alpha}\partial_v{\bf P}^\bot f\|_{L^2(V_j)L_x^2(\mathbb{R}^3)}\Bigg\}\nonumber\\
        &\lesssim\epsilon^2\frac{1}{2^{2j}}\sup_t|(a,b,c)|^2_{\dot{H}^{|\alpha_2|+1}}+\eta\sup_t\|w_{\ell,1}\partial_x^\alpha\partial_v{\bf P}^\bot f\|^2_{L^2(V_j)L_x^2}+\epsilon^2\sup_t\|w_{\ell,\beta_2}\partial_x^{\alpha_2+e_i}\partial_v^{\beta_2}{\bf P}^\bot f\|^2_{L^2(\widetilde{V}_j)L_x^2}\nonumber\,.
\end{align}
If $1\le|\alpha_1|\le N-2$ and $|\alpha_1|+|\beta_1|+3>N$, then $1\le|\alpha_2|\le N-2,\,|\alpha_2|+|\beta_2|+3\le N$. We use the formulation \eqref{GG22} and proceed as in the previous case.

\noindent Consequently,
\begin{align*}
    J_{11,1}&\lesssim \sum_{j\in\mathbb{Z}^+}\sum_{\substack{\alpha_1+\alpha_2=\alpha\\\beta_1+\beta_2=\beta}}\sup_t\int_{V_j\times\mathbb{R}_x^3}\nu^{-1}(v)w^2_{\ell,1}\Gamma(\partial_x^{\alpha_1}\partial_v^{\beta_1}f,\partial_x^{\alpha_2}\partial_v^{\beta_2}f)\partial_x^\alpha{\bf P}^\bot f\,{\rm d}v{\rm d}x\\
    &\lesssim\eta\sum_{j\in\mathbb{Z}^+}\sum_{\substack{1\le|\alpha'|\le|\alpha|\\|\beta'|\le|\beta|}}\sup_t\|w_{\ell,\beta'}\partial_x^{\alpha'}\partial_v^{\beta'}{\bf P}^\bot f\|^2_{L^2_v(V_j)L_x^2(\mathbb{R}^3)}+\sup_t|(a,b,c)|^2_{\dot{H}^1\cap\dot{H}^N}\,.
\end{align*}
As to $J_{11,2}$, applying Lemma \ref{lemma9.4} yields
\begin{align*}
    &\hspace{3mm}\int_{\mathbb{R}_{x,v}^6}w_{\ell,1}^2\Gamma(\partial_x^{\alpha_1}\partial_v^{\beta_1}f,\partial_x^{\alpha_2}\partial_v^{\beta_2}f)\partial_x^\alpha\partial_v{\bf P}^\bot f\,\mathrm{d}v\mathrm{d}x\\
    &\lesssim\int_{\mathbb{R}_x^3}\Big(|\mu^\delta\partial_x^{\alpha_1}\partial_v^{\beta_1}{\bf P}f|_{H_v^2}+|w_{\ell,1}\partial_x^{\alpha_1}\partial_v^{\beta_1}{\bf P}f|_{L_v^2}\Big)\times|w_{\ell,1}\partial_x^{\alpha_2}\partial_v^{\beta_2}{\bf P}f|_\nu|w_{\ell,1}\partial_x^\alpha\partial_v{\bf P}^\bot f|_\nu\,{\rm d}x\\
    &\quad+\int_{\mathbb{R}_x^3}\Big(|\mu^\delta\partial_x^{\alpha_1}\partial_v^{\beta_1}{\bf P}^\bot f|_{H_v^2}+|w_{\ell,1}\partial_x^{\alpha_1}\partial_v^{\beta_1}{\bf P}^\bot f|_{L_v^2}\Big)\times|w_{\ell,1}\partial_x^{\alpha_2}\partial_v^{\beta_2}{\bf P}^\bot f|_\nu|w_{\ell,1}\partial_x^\alpha\partial_v{\bf P}^\bot f|_\nu\,{\rm d}x\\
    &\quad+\int_{\mathbb{R}_x^3}\Big(|\mu^\delta\partial_x^{\alpha_1}\partial_v^{\beta_1}{\bf P}f|_{H_v^2}+|w_{\ell,1}\partial_x^{\alpha_1}\partial_v^{\beta_1}{\bf P}f|_{L_v^2}\Big)\times|w_{\ell,1}\partial_x^{\alpha_2}\partial_v^{\beta_2}{\bf P}^\bot f|_\nu|w_{\ell,1}\partial_x^\alpha\partial_v{\bf P}^\bot f|_\nu\,{\rm d}x\\
    &\quad+\int_{\mathbb{R}_x^3}\Big(|\mu^\delta\partial_x^{\alpha_1}\partial_v^{\beta_1}{\bf P}^\bot f|_{H_v^2}+|w_{\ell,1}\partial_x^{\alpha_1}\partial_v^{\beta_1}{\bf P}^\bot f|_{L_v^2}\Big)\times|w_{\ell,1}\partial_x^{\alpha_2}\partial_v^{\beta_2}{\bf P}f|_\nu|w_{\ell,1}\partial_x^\alpha\partial_v{\bf P}^\bot f|_\nu\,{\rm d}x\\
    &:=\widetilde{J}_{11,1}+\widetilde{J}_{11,2}+\widetilde{J}_{11,3}+\widetilde{J}_{11,4}\,,
\end{align*}
here $|\alpha_1|+|\alpha_2|=|\alpha|,|\beta_1|+|\beta_2|=|\beta|$ with $|\alpha|=N-1,\,|\beta|=1$.

\noindent If $|\alpha_1|=0$, we have
\begin{align*}
   \widetilde{J}_{11,1}&\lesssim\left(\left\|\mu^\delta \partial_v^{\beta_1}{\bf P}f\right\|_{L_x^\infty H_v^2}+\left\|{w}_{\ell,1}\partial_v^{\beta_1}{\bf P}f\right\|_{L_x^\infty L_v^2}\right)\times\left\|{w}_{\ell,1}\partial_x^\alpha\partial_v^{\beta_2}{\bf P}f\right\|_\nu\left\|{w}_{\ell,1}\partial_x^\alpha\partial_v{\bf P}^\bot f\right\|_\nu\\
    &\lesssim|(a,b,c)|_{\dot{H}_x^1\cap\dot{H}^2_x}|(a,b,c)|_{\dot{H}_x^{|\alpha|}}\left\|{w}_{\ell,1}\partial_x^\alpha\partial_v{\bf P}^\bot f\right\|\\
    &\lesssim|(a,b,c)|_{\dot{H}_x^{1}\cap\dot{H}^2_x}^2| (a,b,c)|_{\dot{H}_x^{|\alpha|}}^2+\eta\left\|{w}_{\ell,1}\partial_x^\alpha\partial_v{\bf P}^\bot f\right\|^2\\
    &\lesssim\epsilon^2| (a,b,c)|^2_{\dot{H}_x^{|\alpha|}}+\eta\left\|{w}_{\ell,1}\partial_x^\alpha\partial_v{\bf P}^\bot f\right\|^2\,,
\end{align*}
and 
\begin{align*}
    \widetilde{J}_{11,2}&\lesssim\left(\left\|\mu^\delta \partial_v^{\beta_1}{\bf P}^\bot f\right\|_{L_x^\infty H_v^2}+\left\|{w}_{\ell,1}\partial_v^{\beta_1}{\bf P}^\bot f\right\|_{L_x^\infty L_v^2}\right)\times\left\|{w}_{\ell,1}\partial_x^\alpha\partial_v^{\beta_2}{\bf P}^\bot f\right\|_\nu\left\|{w}_{\ell,1}\partial_x^\alpha\partial_v{\bf P}^\bot f\right\|_\nu\\
    &\lesssim\Big(\left\|\partial_v^{\beta_1}{\bf P}^\bot f\right\|_{H_v^2(\dot{H}^{1}\cap\dot{H}^2)}+\left\|w_{\ell,\beta_1}\partial_v^{\beta_1}{\bf P}^\bot f\right\|_{L_v^2(\dot{H}^{1}\cap\dot{H}^2)}\Big)\times\left\|{w}_{\ell,1}\partial_x^\alpha\partial_v^{\beta_2}{\bf P}^\bot f\right\|\left\|{w}_{\ell,1}\partial_x^\alpha\partial_v{\bf P}^\bot f\right\|\\
    &\lesssim\epsilon^2\left\|{w}_{\ell,\beta_2}\partial_x^\alpha\partial_v^{\beta_2}{\bf P}^\bot f\right\|^2+\eta\left\|{w}_{\ell,1}\partial_x^\alpha\partial_v{\bf P}^\bot f\right\|^2\,.
\end{align*}
As to $\widetilde{J}_{11,3}$ and $\widetilde{J}_{11,4}$, one can obtain the following result in the same way:
\begin{align*}
    \widetilde{J}_{11,3}&\lesssim\epsilon^2\|w_{\ell,\beta_2}\partial_x^\alpha\partial_v^{\beta_2}{\bf P}^\bot f\|^2+\eta\|w_{\ell,1}\partial_x^\alpha\partial_v{\bf P}^\bot f\|^2\,,
\end{align*}
and 
\begin{align*}
    \widetilde{J}_{11,4}&\lesssim\epsilon^2|(a,b,c)|^2_{\dot{H}^{|\alpha|}}+\eta\|w_{\ell,1}\partial_x^\alpha\partial_v{\bf P}^\bot f\|^2\,.
\end{align*}
If $|\alpha_2|=0$, we choose the formulation \eqref{rrr} and the same result can be obtained in the same way.

\noindent If $1\le|\alpha_1|\le N-2$ and $|\alpha_1|+|\beta_1|+3\le N$, we apply the $L^3-L^6-L^2$ estimate, one has
\begin{align*}
    \widetilde{J}_{11,1}&\lesssim\left(\left\|\mu^\delta\partial_x^{\alpha_1}\partial_v^{\beta_1}{\bf P}f\right\|_{L_x^3H_v^2}+\left\|{w}_{\ell,1}\partial_x^{\alpha_1}\partial_v^{\beta_1}{\bf P}f\right\|_{L_x^3L_v^2}\right)\times\left\|{w}_{\ell,1}\partial_x^{\alpha_2}\partial_v^{\beta_2}{\bf P}f\right\|_{L_x^6L_{\nu}^2}\left\|{w}_{\ell,1}\partial_x^\alpha\partial_v{\bf P}^\bot f\right\|_\nu\\
    &\lesssim\big|(a,b,c)\big|_{\dot{H}_x^{|\alpha_1|}\cap\dot{H}_x^{|\alpha_1|+1}}\big| (a,b,c)\big|_{\dot{H}_x^{|\alpha_2|+1}}\left\|{w}_{\ell,1}\partial_x^\alpha\partial_v{\bf P}^\bot f\right\|\\
    &\lesssim\big|(a,b,c)\big|_{\dot{H}_x^{|\alpha_1|}\cap\dot{H}_x^{|\alpha_1|+1}}^2\big| (a,b,c)\big|_{\dot{H}_x^{|\alpha_2|+1}}^2+\eta    \left\|{w}_{\ell,1}\partial_x^\alpha\partial_v{\bf P}^\bot f\right\|^2\\
    &\lesssim\epsilon^2\big|(a,b,c)\big|^2_{\dot{H}^{|\alpha_2|+1}}+\eta\left\|{w}_{\ell,1}\partial_x^\alpha\partial_v{\bf P}^\bot f\right\|^2\,,
\end{align*}
and
\begin{align*}
    \widetilde{J}_{11,2}&\lesssim\left(\left\|\mu^\delta\partial_x^{\alpha_1}\partial_v^{\beta_1}{\bf P}^\bot f\right\|_{L_x^3H_v^2}+\left\|{w}_{\ell,1}\partial_x^{\alpha_1}\partial_v^{\beta_1}{\bf P}^\bot f\right\|_{L_x^3L_v^2}\right)\times\left\|{w}_{\ell,1}\partial_x^{\alpha_2}\partial_v^{\beta_2}{\bf P}^\bot f\right\|_{L_x^6L_{\nu}^2}\left\|{w}_{\ell,1}\partial_x^\alpha\partial_v{\bf P}^\bot f\right\|_\nu\\
    &\lesssim\Big(\left\|\partial_x^{\alpha_1}\partial_v^{\beta_1}{\bf P}^\bot f\right\|_{H_v^2H_x^1}+\left\|w_{\ell,\beta_1}\partial_x^{\alpha_1}\partial_v^{\beta_1}{\bf P}^\bot f\right\|_{L_v^2H_x^1}\Big)\left\|w_{\ell,\beta_2}\partial_x^{\alpha_2+e_i}\partial_v^{\beta_2}{\bf P}^\bot f\right\|\left\|{w}_{\ell,1}\partial_x^\alpha\partial_v{\bf P}^\bot f\right\|\\
    &\lesssim\epsilon^2\left\|{w}_{\ell,\beta_2}\partial_x^{\alpha_2+e_i}\partial_v^{\beta_2}{\bf P}^\bot f\right\|^2+\eta\left\|{w}_{\ell,1}\partial_x^\alpha\partial_v{\bf P}^\bot f\right\|^2.
\end{align*}
For $\widetilde{J}_{11,3}$ and $\widetilde{J}_{11,4}$, we also obtain
\begin{align*}
    \widetilde{J}_{11,3}\lesssim\epsilon^2\left\|{w}_{\ell,\beta_2}\partial_x^{\alpha_2+e_i}\partial_v^{\beta_2}{\bf P}^\bot f\right\|^2+\eta\left\|{w}_{\ell,1}\partial_x^\alpha\partial_v{\bf P}^\bot f\right\|^2\,,
\end{align*}
and 
\begin{align*}
    \widetilde{J}_{11,4}\lesssim\epsilon^2\big|(a,b,c)\big|^2_{\dot{H}^{|\alpha_2|+1}}+\eta\left\|{w}_{\ell,1}\partial_x^\alpha\partial_v{\bf P}^\bot f\right\|^2\,.
\end{align*}
If $1\le|\alpha_1|$ and $|\alpha_1|+|\beta_1|+3>N$, we then choose the formulation \eqref{rrr} and use the same method.

\noindent Consequently, we have
\begin{align*}
    J_{11,2}&\lesssim\sup_t\|f\|_{L_v^2(\dot{H}^1\cap\dot{H}^N)}^2+\eta\sum_{\substack{1\le|\alpha'|\le|\alpha|\\|\beta'|\le|\beta|}}\sup_t\|w_{\ell,\beta'}\partial_x^{\alpha'}\partial_v^{\beta'}{\bf P}^\bot f\|^2\,.
\end{align*}
The estimate of $J_{12}$ is similar to $J_4$, that is,
\begin{align*}
    J_{12}\lesssim\underbrace{\eta\int_0^t\int_{\mathbb{R}_{x,v}^6}e^{-\nu(v)(t-\tau)}\langle v\rangle^{\gamma}\left|w_{\ell,1}\partial_x^{\alpha}\partial_v\mathbf{P}^{\bot}f\right|^2  \, \mathrm{d}v\mathrm{d}x\mathrm{d}\tau}_{J_{12,1}}+\sup_t\|w_{\ell-(1+\frac{\varepsilon}{2})\gamma,1}\partial_x^\alpha\partial_v S\|^2\,.
\end{align*}
And $J_{12,1}$ will be absorbed by $J_5$ provided that $\eta$ is small enough.

\noindent\underline{\it {Case 3: $|\alpha|=0, |\beta|=N$.}}
Applying the operator $\partial_v^{\beta}$ with $|\beta|=N$ to \eqref{B6}, and multiplying the resulting equation by $w_{\ell,N}^2\partial_v^\beta \mathbf{P}^{\bot}f$, we obtain
\begin{align*}
    &\hspace{3mm}\frac{1}{2}\partial_t\left(w_{\ell,N}\partial_v^\beta \mathbf{P}^{\bot}f\right)^2+\frac{1}{2}\nu(v)\left(w_{\ell,N}\partial_v^\beta \mathbf{P}^{\bot}f\right)^2\\
        &=-\frac{1}{2}\nu(v)\left(w_{\ell,N}\partial_v^\beta \mathbf{P}^{\bot}f\right)^2-\sum_{|\beta'|=1}^{N
        }\partial_v^{\beta'} \left[\nu(v)\right]\partial_v^{\beta-\beta'}\mathbf{P}^{\bot}f\,w_{\ell,N}^2\partial_v^\beta \mathbf{P}^{\bot}f\\
        &\quad-\partial_v^\beta \left(v\cdot\nabla_x\mathbf{P}^{\bot}f\right)w_{\ell,N}^2\partial_v^\beta \mathbf{P}^{\bot}f-\partial_v^\beta (v\cdot\nabla_x \mathbf{P}f)w_{\ell,N}^2\partial_v^\beta \mathbf{P}^{\bot}f\\
        &\quad+\partial_v^\beta \left[{\bf P}(v\cdot\nabla_xf)\right]w_{\ell,N}^2\partial_v^\beta \mathbf{P}^{\bot}f+\partial_v^\beta (\mathcal{K}\mathbf{P}^{\bot}f)w_{\ell,N}^2\partial_v^\beta \mathbf{P}^{\bot}f\\
        &\quad+\partial_v^\beta [\mathbf{P}^{\bot}\Gamma(f,f)]w_{\ell,N}^2\partial_v^\beta \mathbf{P}^{\bot}f+\partial_v^\beta (\mathbf{P}^{\bot}S)w_{\ell,N}^2\partial_v^\beta \mathbf{P}^{\bot}f.
\end{align*}
Solving the above equation directly and then integrating both sides of the result inequality over $\mathbb{R}_x^3\times\mathbb{R}_v^3$, one has

    \begin{align}
        \left\|w_{\ell,N}\partial_v^\beta \mathbf{P}^{\bot}f\right\|^2\lesssim&\left\|w_{\ell,N}\partial_v^\beta \mathbf{P}^{\bot}f_0\right\|^2\underbrace{-\frac{1}{2}\int_0^t\int_{\mathbb{R}_{x,v}^6} e^{-\nu(v)(t-\tau)}\nu(v)\left(w_{\ell,N}\partial_v^\beta \mathbf{P}^{\bot}f\right)^2\,\mathrm{d}v\mathrm{d}x\mathrm{d}\tau}_{J_{13}}\notag\\
        &\underbrace{-\sum_{|\beta'|=1}^{N
        }\int_0^t\int_{\mathbb{R}_{x,v}^6} e^{-\nu(v)(t-\tau)}\partial_v^{\beta'} \left[\nu(v)\right]\partial_v^{\beta-\beta'}\mathbf{P}^{\bot}f\,w_{\ell,N}^2\partial_v^\beta \mathbf{P}^{\bot}f\,\mathrm{d}v\mathrm{d}x\mathrm{d}\tau}_{J_{14}}\notag\\
        &\underbrace{-\int_0^t\int_{\mathbb{R}_{x,v}^6} e^{-\nu(v)(t-\tau)}\partial_v^\beta \left(v\cdot\nabla_x\mathbf{P}^{\bot}f\right)w_{\ell,N}^2\partial_v^\beta \mathbf{P}^{\bot}f\,\mathrm{d}v\mathrm{d}x\mathrm{d}\tau}_{J_{15}}\notag\\
        &\underbrace{-\int_0^t\int_{\mathbb{R}_{x,v}^6} e^{-\nu(v)(t-\tau)}\partial_v^\beta (v\cdot\nabla_x \mathbf{P}f)w_{\ell,N}^2\partial_v^\beta \mathbf{P}^{\bot}f\,\mathrm{d}v\mathrm{d}x\mathrm{d}\tau}_{J_{16}}\notag\\
        &\underbrace{+\int_0^t\int_{\mathbb{R}_{x,v}^6} e^{-\nu(v)(t-\tau)}\partial_v^\beta \left[{\bf P}(v\cdot\nabla_xf)\right]w_{\ell,N}^2\partial_v^\beta \mathbf{P}^{\bot}f\,\mathrm{d}v\mathrm{d}x\mathrm{d}\tau}_{J_{17}}\notag\\
        &\underbrace{+\int_0^t\int_{\mathbb{R}_{x,v}^6} e^{-\nu(v)(t-\tau)}\partial_v^\beta (\mathcal{K}\mathbf{P}^{\bot}f)w_{\ell,N}^2\partial_v^\beta \mathbf{P}^{\bot}f\,\mathrm{d}v\mathrm{d}x\mathrm{d}\tau}_{J_{18}}\notag\\
        &\underbrace{+\int_0^t\int_{\mathbb{R}_{x,v}^6} e^{-\nu(v)(t-\tau)}\partial_v^\beta [\mathbf{P}^{\bot}\Gamma(f,f)]w_{\ell,N}^2\partial_v^\beta \mathbf{P}^{\bot}f\,\mathrm{d}v\mathrm{d}x\mathrm{d}\tau}_{J_{19}}\notag\\
        &\underbrace{+\int_0^t\int_{\mathbb{R}_{x,v}^6} e^{-\nu(v)(t-\tau)}\partial_v^\beta (\mathbf{P}^{\bot}S)w_{\ell,N}^2\partial_v^\beta \mathbf{P}^{\bot}f\,\mathrm{d}v\mathrm{d}x\mathrm{d}\tau}_{J_{20}}\,.\notag
    \end{align}

Similar to $J_6$, if we denote $\widetilde{w}_{\ell,N}:=e^{-\frac{\nu(v)}{2}(t-\tau)}w_{\ell,N}$, it holds from Lemma 2 in \cite{Strain-Guo-2008-ARMA} that 
\begin{align*}
    \sum_{|\beta'|=1}^{|\beta|}\int_{\mathbb{R}_v^3}\widetilde{w}^2_{\ell,N}\partial_v^{\beta'}[\nu(v)]\partial_v^{\beta-\beta'}{\bf P}^\bot f\partial_v^\beta{\bf P}^\bot f\,{\rm d}v\lesssim\eta\sum_{|\beta'|\le|\beta|}|\widetilde{w}_{\ell,\beta'}\partial_v^{\beta'}{\bf P}^\bot f|^2_{L_\nu^2}+|\bar{\chi}(v)\widetilde{w}_{\ell,N}{\bf P}^\bot f|^2_{L_v^2}\,.
\end{align*}
We further obtain
\begin{align*}
    J_{14}&\lesssim\eta\sum_{|\beta'|\le|\beta|}\int_0^t\int_{\mathbb{R}_{x,v}^6}e^{-\nu(v)(t-\tau)}\nu(v)|w_{\ell,\beta'}\partial_v^{\beta'}{\bf P}^\bot f|^2\,{\rm d}v{\rm d}x{\rm d}\tau+\sup_t\|{\bf P}^\bot f\|^2\,.
\end{align*}
For $J_{15}$, it's straightforward to see that
\begin{align*}
     &\hspace{3mm}\int_{\mathbb{R}_x^3} \partial_v^{\beta}\left(v\cdot\nabla_x\mathbf{P}^{\bot}f\right)\partial_v^{\beta}\mathbf{P}^{\bot}f\,\mathrm{d}x\lesssim\int_{\mathbb{R}_x^3} v\cdot\left(\nabla_x\partial_v^{\beta}\mathbf{P}^{\bot}f\right)\partial_v^{\beta}\mathbf{P}^{\bot}f\,\mathrm{d}x+\int_{\mathbb{R}_x^3} \left(\partial_v^{\beta-e_i}\nabla_x\mathbf{P}^{\bot}f\right)\partial_v^{\beta}\mathbf{P}^{\bot}f\,\mathrm{d}x\\
     &=\int_{\mathbb{R}_x^3} \left(\partial_v^{\beta-e_i}\nabla_x\mathbf{P}^{\bot}f\right)\partial_v^{\beta}\mathbf{P}^{\bot}f\,\mathrm{d}x\,,
\end{align*}
then Cauchy's inequality and Young's inequality imply
\begin{align*}
    J_{15}&\lesssim\int_0^t\int_{\mathbb{R}_{x,v}^6} e^{-\nu(v)(t-\tau)}\,w_{\ell,\beta-e_i}\,\partial_v^{\beta-e_i}\nabla_x\mathbf{P}^{\bot}f\,\langle v\rangle^{\gamma}w_{\ell,N}\partial_v^{\beta}\mathbf{P}^{\bot}f\,\mathrm{d}v\mathrm{d}x\mathrm{d}\tau\\
    &\lesssim C_{\eta}\int_0^t\int_{\mathbb{R}_{x,v}^6} e^{-\nu(v)(t-\tau)}\,w_{\ell,\beta-e_i}^2\,\left|\partial_v^{\beta-e_i}\nabla_x\mathbf{P}^{\bot}f\right|^2\,\langle v\rangle^{\gamma}\,\mathrm{d}v\mathrm{d}x\mathrm{d}\tau\\
    &\quad+\eta\int_0^t\int_{\mathbb{R}_{x,v}^6} e^{-\nu(v)(t-\tau)}\,w_{\ell,N}^2\,\langle v\rangle^{\gamma}\left|\partial_v^{\beta}\mathbf{P}^{\bot}f\right|^2\,\mathrm{d}v\mathrm{d}x\mathrm{d}\tau\,.
\end{align*}
For $J_{16}$, 
\begin{align*}
    J_{16}&\lesssim\int_0^t\int_{\mathbb{R}_{x,v}^6} e^{-\nu(v)(t-\tau)}\nabla_x(a,b,c)\,\mu^{\delta}w_{\ell,N}^2\partial_v^{\beta}\mathbf{P}^{\bot}f\,\mathrm{d}v\mathrm{d}x\mathrm{d}\tau\\
    &\lesssim\sup\limits_t\left\|(a,b,c)\right\|_{\dot{H}^1}^2+\eta\sup\limits_t\left\|\partial_v^{\beta}\mathbf{P}^{\bot}f\right\|^2\,.
\end{align*}
For $J_{17}$, similar to $J_9$, set $\tilde{m}(t,x):=\int_{\mathbb{R}_v^3} v\cdot\nabla_xf\,\phi_i\, \mathrm{d}v$, then we have
\begin{align*}
    J_{17}&\lesssim\int_0^t\int_{\mathbb{R}_{x,v}^6} e^{-\nu(v)(t-\tau)}\tilde{m}(t,x)\mu^{\delta}w_{\ell,N}^2\partial_v^{\beta}\mathbf{P}^{\bot}f\,\mathrm{d}v\mathrm{d}x\mathrm{d}\tau\\
    &\lesssim\sup\limits_t\|\tilde{m}(t,x)\|_{L_x^2}^2+\eta\sup\limits_t\left\|w_{\ell,N}\partial_v^{\beta}\mathbf{P}^{\bot}f\right\|^2\\
    &\lesssim\sup\limits_t\|f\|_{L_v^2(\dot{H}^1)}^2+\eta\sup\limits_t\left\|w_{\ell,N}\partial_v^{\beta}\mathbf{P}^{\bot}f\right\|^2\,.
\end{align*}
As to $J_{18}$, we have
\begin{align*}
    J_{18}&\lesssim\sum_{j\in\mathbb{Z}^+}\sup_t\int_{V_j\times\mathbb{R}_x^3}\nu^{-1}(v)w^2_{\ell,N}\partial_v^\beta(\mathcal{K}{\bf P}^\bot f)\partial_v^\beta{\bf P}^\bot f\,{\rm d}v{\rm d}x\\
    &\quad+\int_0^t\int_{V_0\times\mathbb{R}_x^3}e^{-\nu(v)(t-\tau)}w^2_{\ell,N}\partial_v^\beta(\mathcal{K}{\bf P}^\bot f)\partial_v^\beta{\bf P}^\bot f\,{\rm d}v{\rm d}x{\rm d}\tau\\
    &\lesssim\eta\sum_{j\in\mathbb{Z}^+}\sum_{|\beta'|\le|\beta|}\sup_t\|w_{\ell,\beta'}\partial_v^{\beta'}{\bf P}^\bot f\|^2_{L_v^2(V_j)L_x^2(\mathbb{R}^3)}+\sup_t\|{\bf P}^\bot f\|^2\\
    &\quad+\eta'\sum_{|\beta'|\le|\beta|}\sup_t\|w_{\ell,\beta'}\partial_v^{\beta'}{\bf P}^\bot f\|^2+\sup_t\int_{\mathbb{R}_{x,v}^6}w^2_{\ell,N}\partial_v^\beta(\mathcal{K}{\bf P}^\bot f)\partial_v^\beta{\bf P}^\bot f\,{\rm d}v{\rm d}x\\
    &\lesssim\eta\sum_{j\in\mathbb{Z}^+}\sum_{|\beta'|\le|\beta|}\sup_t\|w_{\ell,\beta'}\partial_v^{\beta'}{\bf P}^\bot f\|^2_{L^2(V_j)L_x^2(\mathbb{R}^3)}+\sup_t\|{\bf P}^\bot f\|^2+\eta''\sum_{|\beta'|\le|\beta|}\sup_t\|w_{\ell,\beta'}\partial_v^{\beta'}{\bf P}^\bot f\|^2\,.
\end{align*}
And for $J_{19}$, we have 
\begin{align*}
    J_{19}&\lesssim\underbrace{\sum_{j\in\mathbb{Z}^+}\sup_t\int_{V_j\times\mathbb{R}_x^3}\nu^{-1}(v)w^2_{\ell,N}\partial_v^{\beta}\Gamma(f,f)\partial_v^\beta{\bf P}^\bot f\,{\rm d}v{\rm d}x}_{J_{19,1}}+\underbrace{\sup_t\int_{\mathbb{R}_{x,v}^6}w^2_{\ell,N}\partial_v^{\beta}\Gamma(f,f)\partial_v^\beta{\bf P}^\bot f\,{\rm d}v{\rm d}x}_{J_{19,2}}\,.
\end{align*}
We deduce from Lemma \ref{lemma5.2} that
\begin{align*}
    &\hspace{3mm}\sup_t\int_{V_j\times\mathbb{R}_x^3}\nu^{-1}(v)w_{\ell,N}^2\Gamma(\partial_v^{\beta_1}f,\partial_v^{\beta_2}f)\partial_v^\beta{\bf P}^\bot f\,{\rm d}v{\rm d}x\\
    &\lesssim\sup_t\int_{\mathbb{R}_x^3}\Big(|\mu^\delta\partial_v^{\beta_1}{\bf P}f|_{H_v^2(\mathbb{R}^3)}+|w_{\ell,N}\partial_v^{\beta_1}{\bf P}f|_{L^2_v(\mathbb{R}^3)}\Big)\times|w_{\ell,N}\partial_v^{\beta_2}{\bf P}f|_{L^2(\widetilde{V}_j)}|w_{\ell,N}\partial_v^\beta{\bf P}^\bot f|_{L^2(V_j)}\,{\rm d}x\\
    &\quad+\sup_t\int_{\mathbb{R}_x^3}\Big(|\mu^\delta\partial_v^{\beta_1}{\bf P}^\bot f|_{H_v^2(\mathbb{R}^3)}+|w_{\ell,N}\partial_v^{\beta_1}{\bf P}^\bot f|_{L^2_v(\mathbb{R}^3)}\Big)\times|w_{\ell,N}\partial_v^{\beta_2}{\bf P}^\bot f|_{L^2(\widetilde{V}_j)}|w_{\ell,N}\partial_v^\beta{\bf P}^\bot f|_{L^2(V_j)}\,{\rm d}x\\
    &\quad+\sup_t\int_{\mathbb{R}_x^3}\Big(|\mu^\delta\partial_v^{\beta_1}{\bf P}f|_{H_v^2(\mathbb{R}^3)}+|w_{\ell,N}\partial_v^{\beta_1}{\bf P}f|_{L^2_v(\mathbb{R}^3)}\Big)\times|w_{\ell,N}\partial_v^{\beta_2}{\bf P}^\bot f|_{L^2(\widetilde{V}_j)}|w_{\ell,N}\partial_v^\beta{\bf P}^\bot f|_{L^2(V_j)}\,{\rm d}x\\
    &\quad+\sup_t\int_{\mathbb{R}_x^3}\Big(|\mu^\delta\partial_v^{\beta_1}{\bf P}^\bot f|_{H_v^2(\mathbb{R}^3)}+|w_{\ell,N}\partial_v^{\beta_1}{\bf P}^\bot f|_{L^2_v(\mathbb{R}^3)}\Big)\times|w_{\ell,N}\partial_v^{\beta_2}{\bf P}f|_{L^2(\widetilde{V}_j)}|w_{\ell,N}\partial_v^\beta{\bf P}^\bot f|_{L^2(V_j)}\,{\rm d}x\\
    &:=\widetilde{J}_{19,1}+\widetilde{J}_{19,2}+\widetilde{J}_{19,3}+\widetilde{J}_{19,4}\,,
\end{align*}
or 
\begin{align}\label{GG222}
    &\hspace{3mm}\sup_t\int_{V_j\times\mathbb{R}_x^3}\nu^{-1}(v)w_{\ell,N}^2\Gamma(\partial_v^{\beta_1}f,\partial_v^{\beta_2}f)\partial_v^\beta{\bf P}^\bot f\,{\rm d}v{\rm d}x\\
    &\lesssim\sup_t\int_{\mathbb{R}_x^3}\Big(|\mu^\delta\partial_v^{\beta_2}{\bf P}f|_{H_v^2(\mathbb{R}^3)}+|w_{\ell,N}\partial_v^{\beta_2}{\bf P}f|_{L^2_v(\mathbb{R}^3)}\Big)\times|w_{\ell,N}\partial_v^{\beta_1}{\bf P}f|_{L^2(\widetilde{V}_j)}|w_{\ell,N}\partial_v^\beta{\bf P}^\bot f|_{L^2(V_j)}\,{\rm d}x\notag\\
    &\quad+\sup_t\int_{\mathbb{R}_x^3}\Big(|\mu^\delta\partial_v^{\beta_2}{\bf P}^\bot f|_{H_v^2(\mathbb{R}^3)}+|w_{\ell,N}\partial_v^{\beta_2}{\bf P}^\bot f|_{L^2_v(\mathbb{R}^3)}\Big)\times|w_{\ell,N}\partial_v^{\beta_1}{\bf P}^\bot f|_{L^2(\widetilde{V}_j)}|w_{\ell,N}\partial_v^\beta{\bf P}^\bot f|_{L^2(V_j)}\,{\rm d}x\notag\\
    &\quad+\sup_t\int_{\mathbb{R}_x^3}\Big(|\mu^\delta\partial_v^{\beta_2}{\bf P}f|_{H_v^2(\mathbb{R}^3)}+|w_{\ell,N}\partial_v^{\beta_2}{\bf P}f|_{L^2_v(\mathbb{R}^3)}\Big)\times|w_{\ell,N}\partial_v^{\beta_1}{\bf P}^\bot f|_{L^2(\widetilde{V}_j)}|w_{\ell,N}\partial_v^\beta{\bf P}^\bot f|_{L^2(V_j)}\,{\rm d}x\notag\\
    &\quad+\sup_t\int_{\mathbb{R}_x^3}\Big(|\mu^\delta\partial_v^{\beta_2}{\bf P}^\bot f|_{H_v^2(\mathbb{R}^3)}+|w_{\ell,N}\partial_v^{\beta_2}{\bf P}^\bot f|_{L^2_v(\mathbb{R}^3)}\Big)\times|w_{\ell,N}\partial_v^{\beta_1}{\bf P}f|_{L^2(\widetilde{V}_j)}|w_{\ell,N}\partial_v^\beta{\bf P}^\bot f|_{L^2(V_j)}\,{\rm d}x\,,\notag
\end{align}
where $|\beta_1|+|\beta_2|=|\beta|=N$.

\noindent If $|\beta_1|+3\le N$ and $|\beta_2|\le N-1$, we apply $L^4-L^4-L^2$ estimate to obtain
\begin{align*}
    \widetilde{J}_{19,1}&\lesssim\sup_t\Bigg\{\Big(\|\mu^\delta\partial_v^{\beta_1}{\bf P}f\|_{L_x^4(\mathbb{R}^3)H_v^2(\mathbb{R}^3)}+\|w_{\ell,N}\partial_v^{\beta_1}{\bf P}f\|_{L_x^4(\mathbb{R}^3)L_v^2(\mathbb{R}^3)}\Big)\nonumber\\
    &\qquad\times\|w_{\ell,N}\partial_v^{\beta_2}{\bf P}f\|_{L_x^4(\mathbb{R}^3)L^2(\widetilde{V}_j)}\|w_{\ell,N}\partial_v^\beta{\bf P}^\bot f\|_{L_x^2(\mathbb{R}^3)L^2(V_j)}\Bigg\}\\
    &\lesssim\sup_t\Big\{|(a,b,c)|_{\dot{B}^\frac{1}{2}_{2,\infty}\cap\dot{H}^1}\|(a,b,c)|v|^{-1}\mu^\delta\|_{L_v^2(\widetilde{V}_j)(\dot{B}^\frac{1}{2}_{2,\infty}\cap\dot{H}^1)}\|w_{\ell,N}\partial_v^\beta{\bf P}^\bot f\|_{L_v^2(V_j)L_x^2(\mathbb{R}^3)}\Big\}\\
    &\lesssim\sup_t\Big\{|(a,b,c)|_{\dot{B}^\frac{1}{2}_{2,\infty}\cap\dot{H}^1}^2\frac{1}{2^{2j}}|(a,b,c)|_{\dot{B}^\frac{1}{2}_{2,\infty}\cap\dot{H}^1}^2+\eta\|w_{\ell,N}\partial_v^\beta{\bf P}^\bot f\|_{L_v^2(V_j)L_x^2(\mathbb{R}^3)}^2\Big\}\\
    &\lesssim\frac{1}{2^{2j}}\epsilon^2|(a,b,c)|_{\dot{B}^\frac{1}{2}_{2,\infty}\cap\dot{H}^1}^2+\eta\|w_{\ell,N}\partial_v^\beta{\bf P}^\bot f\|_{L_v^2(V_j)L_x^2(\mathbb{R}^3)}^2\,.
\end{align*}
Notice that $L^2\cap\dot{H}^1\subset\dot{B}^\frac{1}{2}_{2,\infty}$, we then obtain
\begin{align*}
    \widetilde{J}_{19,2}&\lesssim\sup_t\Bigg\{\Big(\|\mu^\delta\partial_v^{\beta_1}{\bf P}^\bot f\|_{L_x^4(\mathbb{R}^3)H_v^2(\mathbb{R}^3)}+\|w_{\ell,N}\partial_v^{\beta_1}{\bf P}^\bot f\|_{L_x^4(\mathbb{R}^3)L_v^2(\mathbb{R}^3)}\Big)\nonumber\\
    &\qquad\times\|w_{\ell,N}\partial_v^{\beta_2}{\bf P}^\bot f\|_{L_x^4(\mathbb{R}^3)L^2(\widetilde{V}_j)}\|w_{\ell,N}\partial_v^\beta{\bf P}^\bot f\|_{L_x^2(\mathbb{R}^3)L^2(V_j)}\Bigg\}\\
    &\lesssim\sup_t\Bigg\{\Big(\|\partial_v^{\beta_1}{\bf P}^\bot f\|_{H_v^2(L_x^2\cap\dot{H}_x^1)}+\|w_{\ell,\beta_1}\partial_v^{\beta_1}{\bf P}^\bot f\|_{L_v^2(L_x^2\cap\dot{H}_x^1)}\Big)\\
    &\qquad\times\|w_{\ell,\beta_2}\partial_v^{\beta_2}{\bf P}^\bot f\|_{L_v^2(\widetilde{V}_j)(L_x^2\cap\dot{H}^1)}\|w_{\ell,N}\partial_v^\beta{\bf P}^\bot f\|_{L_v^2(V_j)L_x^2(\mathbb{R}^3)}\Bigg\}\\
    &\lesssim\epsilon^2\|w_{\ell,\beta_2}\partial_v^{\beta_2}{\bf P}^\bot f\|^2_{L_v^2(\widetilde{V}_j)(L_x^2\cap\dot{H}_x^1)}+\eta\|w_{\ell,N}\partial_v^\beta{\bf P}^\bot f\|_{L_v^2(V_j)L_x^2(\mathbb{R}^3)}^2\,.
\end{align*}
Similarly, one can deduce that
\begin{align*}
    \widetilde{J}_{19,3}&\lesssim\sup_t\Bigg\{\Big(\|\mu^\delta\partial_v^{\beta_1}{\bf P}f\|_{L_x^4(\mathbb{R}^3)H_v^2(\mathbb{R}^3)}+\|w_{\ell,N}\partial_v^{\beta_1}{\bf P}f\|_{L_x^4(\mathbb{R}^3)L_v^2(\mathbb{R}^3)}\Big)\nonumber\\
    &\qquad\times\|w_{\ell,N}\partial_v^{\beta_2}{\bf P}^\bot f\|_{L_x^4(\mathbb{R}^3)L^2(\widetilde{V}_j)}\|w_{\ell,N}\partial_v^\beta{\bf P}^\bot f\|_{L_x^2(\mathbb{R}^3)L^2(V_j)}\Bigg\}\\
    &\lesssim\sup_t\Big\{|(a,b,c)|_{\dot{B}^\frac{1}{2}_{2,\infty}\cap\dot{H}^1}\|w_{\ell,\beta_2}\partial_v^{\beta_2}{\bf P}^\bot f\|_{L_v^2(\widetilde{V}_j)(L_x^2\cap\dot{H}_x^1)}\|w_{\ell,N}\partial_v^\beta{\bf P}^\bot f\|_{L_v^2(V_j)L_x^2(\mathbb{R}^3)}\Big\}\\
    &\lesssim\epsilon^2\|w_{\ell,\beta_2}\partial_v^{\beta_2}{\bf P}^\bot f\|^2_{L_v^2(\widetilde{V}_j)(L_x^2\cap\dot{H}_x^1)}+\eta\|w_{\ell,N}\partial_v^\beta{\bf P}^\bot f\|_{L_v^2(V_j)L_x^2(\mathbb{R}^3)}^2\,,
\end{align*}
and 
\begin{align*}
    \widetilde{J}_{19,4}&\lesssim\sup_t\Bigg\{\Big(\|\mu^\delta\partial_v^{\beta_1}{\bf P}^\bot f\|_{L_x^4(\mathbb{R}^3)H_v^2(\mathbb{R}^3)}+\|w_{\ell,N}\partial_v^{\beta_1}{\bf P}^\bot f\|_{L_x^4(\mathbb{R}^3)L_v^2(\mathbb{R}^3)}\Big)\nonumber\\
    &\lesssim\sup_t\Bigg\{\Big(\|\partial_v^{\beta_1}{\bf P}^\bot f\|_{H_v^2(L_x^2\cap\dot{H}_x^1)}+\|w_{\ell,\beta_1}\partial_v^{\beta_1}{\bf P}^\bot f\|_{L_v^2(L_x^2\cap\dot{H}_x^1)}\Big)\\
    &\qquad\times\|(a,b,c)|v|^{-1}\mu^\delta\|_{L_v^2(\widetilde{V}_j)(\dot{B}^\frac{1}{2}_{2,\infty}\cap\dot{H}^1)}\|w_{\ell,N}\partial_v^\beta{\bf P}^\bot f\|_{L_v^2(V_j)L_x^2(\mathbb{R}^3)}\Bigg\}\\
    &\lesssim\frac{1}{2^{2j}}\epsilon^2|(a,b,c)|_{\dot{B}^\frac{1}{2}_{2,\infty}\cap\dot{H}^1}^2+\eta\|w_{\ell,N}\partial_v^\beta{\bf P}^\bot f\|_{L_v^2(V_j)L_x^2(\mathbb{R}^3)}^2\,.
\end{align*}
If $|\beta_1|+3\le N$ and $|\beta_2|=N$, that is $|\beta_1|=0,|\beta_2|=N$. It's clearly that $|\beta_2|+1> N$, we need to re-estimate  $\widetilde{J}_{19,2}$ and $\widetilde{J}_{19,3}$. By applying $L^\infty-L^2-L^2$ estimate, one can obtain
\begin{align*}
    \widetilde{J}_{19,2}&\lesssim\sup_t\Bigg\{\Big(\|\mu^\delta{\bf P}^\bot f\|_{L_x^\infty(\mathbb{R}^3)H_v^2(\mathbb{R}^3)}+\|w_{\ell,N}{\bf P}^\bot f\|_{L_x^\infty(\mathbb{R}^3)L_v^2(\mathbb{R}^3)}\Big)\nonumber\\
    &\qquad\times\|w_{\ell,N}\partial_v^{\beta}{\bf P}^\bot f\|_{L_x^2(\mathbb{R}^3)L^2(\widetilde{V}_j)}\|w_{\ell,N}\partial_v^\beta{\bf P}^\bot f\|_{L_x^2(\mathbb{R}^3)L^2(V_j)}\Bigg\}\\
    &\lesssim\sup_t\Bigg\{\Big(\|{\bf P}^\bot f\|_{H_v^2(\dot{H}^1_x\cap\dot{H}_x^2)}+\|w_{\ell,0}{\bf P}^\bot f\|_{L_v^2(\dot{H}_x^1\cap\dot{H}_x^2)}\Big)\\
    &\qquad\times\|w_{\ell,N}\partial_v^{\beta}{\bf P}^\bot f\|_{L_v^2(\widetilde{V}_j)L_x^2(\mathbb{R}^3)}\|w_{\ell,N}\partial_v^\beta{\bf P}^\bot f\|_{L_v^2(V_j)L_x^2(\mathbb{R}^3)}\Bigg\}\\
    &\lesssim\epsilon^2\|w_{\ell,N}\partial_v^\beta{\bf P}^\bot f\|^2_{L_v^2(\widetilde{V}_j)L_x^2(\mathbb{R}^3)}+\eta\|w_{\ell,N}\partial_v^\beta{\bf P}^\bot f\|^2_{L_v^2(V_j)L_x^2(\mathbb{R}^3)}\,,
\end{align*}
and 
\begin{align*}
    \widetilde{J}_{19,3}&\lesssim\sup_t\Bigg\{\Big(\|\mu^\delta{\bf P}f\|_{L_x^\infty(\mathbb{R}^3)H_v^2(\mathbb{R}^3)}+\|w_{\ell,N}{\bf P}f\|_{L_x^\infty(\mathbb{R}^3)L_v^2(\mathbb{R}^3)}\Big)\nonumber\\
    &\qquad\times\|w_{\ell,N}\partial_v^{\beta}{\bf P}^\bot f\|_{L_x^2(\mathbb{R}^3)L^2(\widetilde{V}_j)}\|w_{\ell,N}\partial_v^\beta{\bf P}^\bot f\|_{L_x^2(\mathbb{R}^3)L^2(V_j)}\Bigg\}\\
    &\lesssim\sup_t\Big\{|(a,b,c)|_{\dot{H}^1\cap\dot{H}^2}\|w_{\ell,N}\partial_v^{\beta}{\bf P}^\bot f\|_{L_x^2(\mathbb{R}^3)L^2(\widetilde{V}_j)}\|w_{\ell,N}\partial_v^\beta{\bf P}^\bot f\|_{L_x^2(\mathbb{R}^3)L^2(V_j)}\Big\}\\
    &\lesssim\epsilon^2\|w_{\ell,N}\partial_v^\beta{\bf P}^\bot f\|^2_{L_v^2(\widetilde{V}_j)L_x^2(\mathbb{R}^3)}+\eta\|w_{\ell,N}\partial_v^\beta{\bf P}^\bot f\|^2_{L_v^2(V_j)L_x^2(\mathbb{R}^3)}\,.
\end{align*}
If $|\beta_1|+3>N$, then $|\beta_2|+3\le N$. Assume that we further have $|\beta_1|\le N-1$. Then by choosing the form \eqref{GG222}, we will obtain similar result:
\begin{align*}
    &\hspace{3mm}\sup_t\int_{V_j\times\mathbb{R}_x^3}\nu^{-1}(v)w_{\ell,N}^2\Gamma(\partial_v^{\beta_1}f,\partial_v^{\beta_2}f)\partial_v^\beta{\bf P}^\bot f\,{\rm d}v{\rm d}x\\
    &\lesssim\frac{1}{2^{2j}}\epsilon^2|(a,b,c)|_{\dot{B}^\frac{1}{2}_{2,\infty}\cap\dot{H}^1}^2+\eta\|w_{\ell,N}\partial_v^\beta{\bf P}^\bot f\|_{L_v^2(V_j)L_x^2(\mathbb{R}^3)}^2+\epsilon^2\|w_{\ell,\beta_1}\partial_v^{\beta_1}{\bf P}^\bot f\|^2_{L_v^2(\widetilde{V}_j)(L_x^2\cap\dot{H}_x^1)}\,.
\end{align*}
If $|\beta_1|=N, |\beta_2|=0$, we also have
\begin{align*}
    &\hspace{3mm}\sup_t\int_{V_j\times\mathbb{R}_x^3}\nu^{-1}(v)w_{\ell,N}^2\Gamma(\partial_v^{\beta_1}f,\partial_v^{\beta_2}f)\partial_v^\beta{\bf P}^\bot f\,{\rm d}v{\rm d}x\\
    &\lesssim\frac{1}{2^{2j}}\epsilon^2|(a,b,c)|_{\dot{B}^\frac{1}{2}_{2,\infty}\cap\dot{H}^1}^2+\eta\|w_{\ell,N}\partial_v^\beta{\bf P}^\bot f\|_{L_v^2(V_j)L_x^2(\mathbb{R}^3)}^2+\epsilon^2\|w_{\ell,N}\partial_v^\beta{\bf P}^\bot f\|_{L_v^2(\widetilde{V}_j)L_x^2(\mathbb{R}^3)}^2\,.
\end{align*}
Consequently,
\begin{align*}
    J_{19,1}&\lesssim\sum_{j\in\mathbb{Z}^+}\sum_{\beta_1+\beta_2=\beta}\sup_t\int_{V_j\times\mathbb{R}_x^3}\nu^{-1}(v)w^2_{\ell,N}\Gamma(\partial_v^{\beta_1}f,\partial_v^{\beta_2}f)\partial_v^\beta{\bf P}^\bot f\,{\rm d}v{\rm d}x\\
    &\lesssim\eta\sum_{\substack{|\alpha'|+|\beta'|\le N\\|\alpha'|\le 1}}\sum_{j\in\mathbb{Z}^+}\sup_t\|w_{\ell,\beta'}\partial_x^{\alpha'}\partial_v^{\beta'}{\bf P}^\bot f\|^2_{L^2_v(V_j)L_x^2(\mathbb{R}^3)}+\sup_t|(a,b,c)|^2_{\dot{B}^\frac{1}{2}_{2,\infty}\cap\dot{H}^1}\,.
\end{align*}
For $J_{19,2}$, by using Lemma \ref{lemma9.4}, we have
\begin{align*}
    &\hspace{3mm}\int_{\mathbb{R}_{x,v}^6}w^2_{\ell,N}\Gamma(\partial_v^{\beta_1}f,\partial_v^{\beta_2}f)\partial_v^\beta {\bf P}^\bot f\,{\rm d}v\\
    &\lesssim\int_{\mathbb{R}_x^3}\Big(|\mu^\delta\partial_v^{\beta_1}{\bf P}f|+|w_{\ell,N}\partial_v^{\beta_1}{\bf P}f|_{L_v^2}\Big)\times|w_{\ell,N}\partial_v^{\beta_2}{\bf P}f|_{L_v^2}|w_{\ell,N}\partial_v^\beta{\bf P}^\bot f|_{L_v^2}\,{\rm d}x\\
    &\quad+\int_{\mathbb{R}_x^3}\Big(|\mu^\delta\partial_v^{\beta_1}{\bf P}^\bot f|+|w_{\ell,N}\partial_v^{\beta_1}{\bf P}^\bot f|_{L_v^2}\Big)\times|w_{\ell,N}\partial_v^{\beta_2}{\bf P}^\bot f|_{L_v^2}|w_{\ell,N}\partial_v^\beta{\bf P}^\bot f|_{L_v^2}\,{\rm d}x\\
    &\quad+\int_{\mathbb{R}_x^3}\Big(|\mu^\delta\partial_v^{\beta_1}{\bf P}f|+|w_{\ell,N}\partial_v^{\beta_1}{\bf P}f|_{L_v^2}\Big)\times|w_{\ell,N}\partial_v^{\beta_2}{\bf P}^\bot f|_{L_v^2}|w_{\ell,N}\partial_v^\beta{\bf P}^\bot f|_{L_v^2}\,{\rm d}x\\
    &\quad+\int_{\mathbb{R}_x^3}\Big(|\mu^\delta\partial_v^{\beta_1}{\bf P}^\bot f|+|w_{\ell,N}\partial_v^{\beta_1}{\bf P}^\bot f|_{L_v^2}\Big)\times|w_{\ell,N}\partial_v^{\beta_2}{\bf P}f|_{L_v^2}|w_{\ell,N}\partial_v^\beta{\bf P}^\bot f|_{L_v^2}\,{\rm d}x\,.
\end{align*}
The estimate proceeds in a similar way of $J_{19,1}$. We directly give the result:
\begin{align*}
    &\hspace{3mm}\int_{\mathbb{R}_{x,v}^6}w^2_{\ell,N}\Gamma(\partial_v^{\beta_1}f,\partial_v^{\beta_2}f)\partial_v^\beta {\bf P}^\bot f\,{\rm d}v\\
    &\lesssim\epsilon^2|(a,b,c)|^2_{\dot{B}^\frac{1}{2}_{2,\infty}\cap\dot{H}^1}+\eta\|w_{\ell,N}\partial_v^\beta{\bf P}^\bot f\|^2+\epsilon^2\|w_{\ell,\beta_2}\partial_x\partial_v^{\beta_2}{\bf P}^\bot f\|^2\,,
\end{align*}
or 
\begin{align*}
    &\hspace{3mm}\int_{\mathbb{R}_{x,v}^6}w^2_{\ell,N}\Gamma(\partial_v^{\beta_1}f,\partial_v^{\beta_2}f)\partial_v^\beta {\bf P}^\bot f\,{\rm d}v\\
    &\lesssim\epsilon^2|(a,b,c)|^2_{\dot{B}^\frac{1}{2}_{2,\infty}\cap\dot{H}^1}+\eta\|w_{\ell,N}\partial_v^\beta{\bf P}^\bot f\|^2+\epsilon^2\|w_{\ell,\beta_1}\partial_x\partial_v^{\beta_1}{\bf P}^\bot f\|^2\,.
\end{align*}
 Consequently, we have
\begin{align*}
    J_{19,2}&\lesssim\sum_{\beta_1+\beta_2=\beta}\sup_t\int_{\mathbb{R}_{x,v}^6}w^2_{\ell,N}\Gamma(\partial_v^{\beta_1}f,\partial_v^{\beta_2}f)\partial_v^\beta{\bf P}^\bot f\,{\rm d}v{\rm d}x\\
    &\lesssim\sup_t\big|(a,b,c)\big|^2_{\dot{B}^\frac{1}{2}_{2,\infty}\cap\dot{H}^1}+\eta'\sum_{\substack{|\alpha'|+|\beta'|\le N\\|\alpha'|\le 1}}\sup_t\|w_{\ell,\beta'}\partial_x^{\alpha'}\partial_v^{\beta'}{\bf P}^\bot f\|^2\,.
\end{align*}
\noindent The estimate of $J_{20}$ is obtained by the same way as for
$J_{12}$, and we have
\begin{align*}
    J_{20}\lesssim\sup_t\|w_{\ell-(1+\frac{\varepsilon}{2})\gamma,N}\,\partial_v^\beta S\|^2+\eta\int_0^t\int_{\mathbb{R}_{x,v}^6} e^{-\nu(v)(t-\tau)}w_{\ell,N}^2\left|\partial_v^{\beta}\mathbf{P}^{\bot}f\right|^2\,\nu(v)\,\mathrm{d}v\mathrm{d}x\mathrm{d}\tau\,.
\end{align*}
    Collecting all the relevant estimates, taking the supremum in $t$, and then forming a suitable linear combination of the estimates in all cases, we obtain the desired result. This ends the proof of Lemma  \ref{lemma5.3}.
\end{proof}
It remains now to bound $\left\|{\bf P}^\bot f\,\langle v\rangle^{-\frac{\gamma}{2}}\right\|^2$. For result in this direction, we have
\begin{lemma}\label{lemma5.4}
    Let $N\ge5, -3<\gamma<0$, then for $|\alpha|+|\beta|=0$, it holds that
    \begin{align}
        \sup_t\left\|w_{\ell,0}\mathbf{P}^{\bot}f\right\|^2&\lesssim\left\|w_{\ell,0}\mathbf{P}^{\bot}f_0\right\|^2+\sup\limits_t|(a,b,c)|_{\dot{B}_{2,\infty}^\frac{1}{2}\cap\dot{H}^2}^2+\sup\limits_t\|f\|_{L_v^2(\dot{H}^1)}^2+\sup_t\|{\bf P}^\bot f\|^2\notag\\
        &\qquad+\sup_t\|w_{\ell-(1+\frac{\varepsilon}{2})\gamma,0}\,S\|^2+\eta\sum_{j\in\mathbb{Z}^+}\sup_t\|w_{\ell,0}{\bf P}^\bot f\|^2_{L_v^2(V_j)L_x^2(\mathbb{R}^3)}\,,\notag
    \end{align}
    where $\eta>0$ is a small constant.
\end{lemma}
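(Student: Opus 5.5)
The estimate of Lemma \ref{lemma5.4} is the zeroth-order analogue of Case 2 and Case 3 in the proof of Lemma \ref{lemma5.3}, run on the microscopic equation \eqref{B6} with no derivatives. I would multiply \eqref{B6} by $w_{\ell,0}^2\mathbf{P}^\bot f$. The transport part $v\cdot\nabla_x\mathbf{P}^\bot f$ integrates to zero in $x$. Splitting $\nu=\frac12\nu+\frac12\nu$, I would solve the resulting ODE pointwise in $(x,v)$ with the integrating factor $e^{-\nu(v)(t-\tau)}$ and then integrate over $\mathbb{R}^6_{x,v}$. This gives
\[
\|w_{\ell,0}\mathbf{P}^\bot f\|^2\lesssim\|w_{\ell,0}\mathbf{P}^\bot f_0\|^2-\tfrac12\int_0^t\!\!\int e^{-\nu(t-\tau)}\nu\,|w_{\ell,0}\mathbf{P}^\bot f|^2+\sum_{i}\widetilde J_i .
\]
Here the $\widetilde J_i$ are the time–velocity convolutions against $w_{\ell,0}^2\mathbf{P}^\bot f$ of four groups of terms: $v\cdot\nabla_x\mathbf{P}f$ together with ${\bf P}(v\cdot\nabla_xf)$; $\mathcal{K}\mathbf{P}^\bot f$; $\mathbf{P}^\bot\Gamma(f,f)$; and $\mathbf{P}^\bot S$. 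The second term on the right is the dissipation, and it will absorb every $\eta$-small piece of the form $\int_0^t\int e^{-\nu(t-\tau)}\langle v\rangle^\gamma|w_{\ell,0}\mathbf{P}^\bot f|^2$.

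\emph{Linear terms.} Both $v\cdot\nabla_x\mathbf{P}f$ and ${\bf P}(v\cdot\nabla_x f)$ have the form (function of $(t,x)$)$\,\times\mu^\delta$. As for $J_{16}$ and $J_{17}$, I would write $e^{-\nu(t-\tau)}\mu^\delta\le e^{-\lambda(t-\tau)}\mu^{\delta}$, using that $\nu$ is bounded below on the support of the Gaussian up to a fast-decaying tail. Cauchy–Schwarz then bounds these terms by $\sup_t\|f\|^2_{L^2_v(\dot H^1)}+\eta\sup_t\|\mathbf{P}^\bot f\|^2$.

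For $\mathcal{K}$ I would repeat the $J_2$ splitting over the annuli $V_j$. On $V_j$ with $j\ge1$, the time integral $\int_0^t e^{-\tilde c_j(t-\tau)}c_j\,{\rm d}\tau\lesssim1$ reduces the term to $\sup_t\int_{V_j}\nu^{-1}w_{\ell,0}^2\mathcal{K}\mathbf{P}^\bot f\,\mathbf{P}^\bot f$. Lemma \ref{lemma5.1} then gives $\eta\|w_{\ell,0}\mathbf{P}^\bot f\|^2_{L^2(V_j)L^2_x}+\|\mathbf{P}^\bot f\|^2$. On $V_0$, Lemma \ref{lemma9.3} gives $\eta\|w_{\ell,0}\mathbf{P}^\bot f\|^2+C_\eta\|\mathbf{P}^\bot f\|^2$.

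For the source I would argue exactly as for $J_4$: with Young's inequality, $\nu^{1+\varepsilon}e^{-\nu(t-\tau)}\lesssim(1+t-\tau)^{-1-\varepsilon}$ from Lemma \ref{lem:max_estimate}, and integrability of that kernel in time. This produces $\sup_t\|w_{\ell-(1+\frac\varepsilon2)\gamma,0}S\|^2$ plus an absorbable piece.

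\emph{Nonlinear term (main obstacle).} This is the step I expect to be hardest. Unlike the $J_3$ case, there are no $x$-derivatives available, so I cannot simply put one factor in $L^\infty_x$ via $\dot H^1\cap\dot H^2$ and the other in $L^2_x$. The trouble is the macroscopic pieces: ${\bf P}f$ is controlled only in $\dot B^{1/2}_{2,\infty}\cap\dot H^1\cap\dot H^N$, not in $L^2_x$. I would split $f={\bf P}f+\mathbf{P}^\bot f$ in both slots of $\Gamma$ and apply Lemma \ref{lemma5.2} on each $V_j$ and Lemma \ref{lemma9.4} on $V_0$.

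For the ${\bf P}f$–${\bf P}f$ interaction I would use an $L^4_x$–$L^4_x$–$L^2_x$ Hölder split together with Lemma \ref{interpolation-besov-other}, $\|g\|_{L^4}\lesssim\|g\|^{1/2}_{\dot B^{1/2}_{2,\infty}}\|\nabla g\|^{1/2}$. The factor $|v|^{-1}\mu^\delta$ restricted to $\widetilde V_j$ contributes $2^{-2j}$, so the sum over $j$ is finite. This yields $\epsilon^2|(a,b,c)|^2_{\dot B^{1/2}_{2,\infty}\cap\dot H^1}$, as in $\widetilde J_{19,1}$.

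Mixed and purely microscopic interactions go $L^\infty_x$–$L^2_x$–$L^2_x$, with the $L^\infty$ factor bounded by $\dot H^1\cap\dot H^2$ norms, which are $\lesssim\epsilon$ by \eqref{aps}. Pairing ${\bf P}f$ in $L^\infty_x$ with $\mathbf{P}^\bot f$ in $L^2_x$ gives $\epsilon|(a,b,c)|_{\dot H^1\cap\dot H^2}\|w_{\ell,0}\mathbf{P}^\bot f\|^2_{L^2(\widetilde V_j)}$. This is absorbed into the $\eta\sum_j\sup_t\|w_{\ell,0}\mathbf{P}^\bot f\|^2_{L^2(V_j)L^2_x}$ term, with the $\widetilde V_j$ overlap being harmless. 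The $|(a,b,c)|^2_{\dot H^2}$ contributions are kept on the right-hand side.

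Collecting all terms, taking $\sup_t$, and absorbing the $\eta$-multiples of $\sup_t\|w_{\ell,0}\mathbf{P}^\bot f\|^2$ into the left-hand side gives the stated bound.
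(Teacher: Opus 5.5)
Your proposal follows the paper's proof of Lemma~\ref{lemma5.4} essentially step for step. It uses the same weighted Duhamel formula from \eqref{B6}, the same annular $V_j$/$V_0$ splitting with Lemmas~\ref{lemma5.1} and \ref{lemma9.3} for $\mathcal{K}$, the same $J_4$-type treatment of $S$, and for $\Gamma(f,f)$ the same $\mathbf{P}/\mathbf{P}^\bot$ split: $L^4$--$L^4$--$L^2$ plus the $2^{-j}$ gain from $|v|^{-1}\mu^\delta$ on $\widetilde V_j$ for the macro--macro part, and $L^\infty$--$L^2$--$L^2$ for the rest, with $\mathbf{P}f$ put in $L^\infty_x$ in the mixed terms.

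One small correction: the pointwise bound $e^{-\nu(v)(t-\tau)}\mu^\delta\le e^{-\lambda(t-\tau)}\mu^\delta$ is false, since $\nu(v)\to0$ as $|v|\to\infty$. Time integrability for the $v\cdot\nabla_x\mathbf{P}f$ and $\mathbf{P}(v\cdot\nabla_xf)$ terms should come instead from either of two routes:
\begin{itemize}
\item use $\int_0^te^{-\nu(v)(t-\tau)}\,{\rm d}\tau\le\nu^{-1}(v)$, absorb $\nu^{-1}$ into $\mu^\delta$, and apply Cauchy--Schwarz against the dissipation term;
\item or use the stretched-exponential bound $e^{-\nu(v)s}\mu^\delta\lesssim e^{-cs^{2/(2-\gamma)}}\mu^{\delta/2}$.
\end{itemize}
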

\begin{proof}
Recall the microscopic equation \eqref{B6}, 
\begin{align*}
&\hspace{3mm}\partial_t\mathbf{P}^{\bot}f+v\cdot\nabla_x\mathbf{P}^{\bot}f+v\cdot\nabla_x \mathbf{P}f+\frac{1}{2}\nu(v)\mathbf{P}^{\bot}f\\
        &=-\frac{1}{2}\nu(v)\mathbf{P}^{\bot}f+{\bf P}(v\cdot\nabla_xf)+\mathcal{K}\mathbf{P}^{\bot}f+\mathbf{P}^{\bot}\Gamma(f,f)+\mathbf{P}^{\bot}S\,,
    \end{align*}
by multiplying $w_{\ell,0}^2\mathbf{P}^{\bot}f$, we get
\begin{align*}
    &\hspace{3mm}\frac{1}{2}\partial_t\left(w_{\ell,0}\mathbf{P}^{\bot}f\right)^2+\frac{1}{2}\nu(v)\left(w_{\ell,0}\mathbf{P}^{\bot}f\right)^2\\
        &=-\frac{1}{2}\nu(v)\left(w_{\ell,0}\mathbf{P}^{\bot}f\right)^2
        -\left(v\cdot\nabla_x\mathbf{P}^{\bot}f\right)w_{\ell,0}^2\mathbf{P}^{\bot}f-(v\cdot\nabla_x \mathbf{P}f)w_{\ell,0}^2\mathbf{P}^{\bot}f\\
        &\quad+{\bf P}(v\cdot\nabla_xf)w_{\ell,0}^2\mathbf{P}^{\bot}f+(\mathcal{K}\mathbf{P}^{\bot}f)w_{\ell,0}^2\mathbf{P}^{\bot}f
        +[\mathbf{P}^{\bot}\Gamma(f,f)]w_{\ell,0}^2\mathbf{P}^{\bot}f+(\mathbf{P}^{\bot}S)w_{\ell,0}^2\mathbf{P}^{\bot}f.
\end{align*}
Solving the inequality directly and then integrating the resulting equation over $\mathbb{R}_x^3\times\mathbb{R}_v^3$, we obtain
    \begin{align}\label{5.13}
        \left\|w_{\ell,0}\mathbf{P}^{\bot}f\right\|^2&\lesssim\left\|w_{\ell,0}\mathbf{P}^{\bot}f_0\right\|^2-\underbrace{\int_0^t\int_{\mathbb{R}_{x,v}^6} e^{-\nu(v)(t-\tau)}\nu(v)\left|w_{\ell,0}\mathbf{P}^{\bot}f\right|^2\,\mathrm{d}v\mathrm{d}x\mathrm{d}\tau}_{J_{21}}\notag\\
        &\quad\underbrace{-\int_0^t\int_{\mathbb{R}_{x,v}^6} e^{-\nu(v)(t-\tau)}(v\cdot\nabla_x \mathbf{P}f)w_{\ell,0}^2\mathbf{P}^{\bot}f\,\mathrm{d}v\mathrm{d}x\mathrm{d}\tau}_{J_{22}}\notag\\
        &\quad\underbrace{+\int_0^t\int_{\mathbb{R}_{x,v}^6} e^{-\nu(v)(t-\tau)}{\bf P}(v\cdot\nabla_xf)w_{\ell,0}^2\mathbf{P}^{\bot}f\,\mathrm{d}v\mathrm{d}x\mathrm{d}\tau}_{J_{23}}\notag\\
        &\quad\underbrace{+\int_0^t\int_{\mathbb{R}_{x,v}^6} e^{-\nu(v)(t-\tau)}(\mathcal{K}\mathbf{P}^{\bot}f)w_{\ell,0}^2\mathbf{P}^{\bot}f\,\mathrm{d}v\mathrm{d}x\mathrm{d}\tau}_{J_{24}}\notag\\
        &\quad\underbrace{+\int_0^t\int_{\mathbb{R}_{x,v}^6} e^{-\nu(v)(t-\tau)}[\mathbf{P}^{\bot}\Gamma(f,f)]w_{\ell,0}^2\mathbf{P}^{\bot}f\,\mathrm{d}v\mathrm{d}x\mathrm{d}\tau}_{J_{25}}\notag\\
        &\quad\underbrace{+\int_0^t\int_{\mathbb{R}_{x,v}^6} e^{-\nu(v)(t-\tau)}(\mathbf{P}^{\bot}S)w_{\ell,0}^2\mathbf{P}^{\bot}f\,\mathrm{d}v\mathrm{d}x\mathrm{d}\tau}_{J_{26}}\,.
    \end{align}
All terms on the right-hand side of \eqref{5.13} can be estimated as in Lemma \ref{lemma5.3}.

It is straightforward to see that
\begin{align*}
    J_{22}\lesssim\sup\limits_t\|(a,b,c)\|_{\dot{H}^1}^2+\eta\sup\limits_t\left\|w_{\ell,0}\mathbf{P}^{\bot}f\right\|^2\,,
\end{align*}
and
\begin{align*}
    J_{23}&\lesssim\eta\sup\limits_t\left\|w_{\ell,0}\mathbf{P}^{\bot}f\right\|^2+\sup\limits_t\left\|\int_{\mathbb{R}_v^3}(v\cdot\nabla_xf)\phi_i\,\mathrm{d}v\right\|_{L_x^2}^2\lesssim\eta\sup\limits_t\left\|w_{\ell,0}\mathbf{P}^{\bot}f\right\|^2+\sup\limits_t\|f\|_{L_v^2(\dot{H}^1)}\,.
\end{align*}
For $J_{24}$, lemmas \ref{lemma5.1} and \ref{lemma9.3}  imply that
\begin{align*}
    J_{24}&\lesssim\sum_{j\in\mathbb{Z}^+}\sup_t\int_{V_j\times\mathbb{R}_x^3}\nu^{-1}(v)w^2_{\ell,0}\mathcal{K}{\bf P}^\bot f{\bf P}^\bot f\,{\rm d}v{\rm d}x+\sup_t\int_{V_0\times\mathbb{R}_x^3}e^{-\nu(v)(t-\tau)}w^2_{\ell,0}\mathcal{K}{\bf P}^\bot f{\bf P}^\bot f\,{\rm d}v{\rm d}x\\
    &\lesssim\eta\sum_{j\in\mathbb{Z}^+}\sup_t\|w_{\ell,0}{\bf P}^\bot f\|^2_{L^2(V_j)L_x^2(\mathbb{R}^3)}+\sup_t\|{\bf P}^\bot f\|^2+\sup_t\int_{\mathbb{R}_{x,v}^6}w^2_{\ell,0}\mathcal{K}{\bf P}^\bot f{\bf P}^\bot f\,{\rm d}v{\rm d}x\\
    &\lesssim\eta\sum_{j\in\mathbb{Z}^+}\sup_t\|w_{\ell,0}{\bf P}^\bot f\|^2_{L^2(V_j)L_x^2(\mathbb{R}^3)}+\sup_t\|{\bf P}^\bot f\|^2+\eta'\sup_t\|w_{\ell,0}{\bf P}^\bot f\|^2\,.
\end{align*}
We also get
\begin{align*}
    J_{25}&\lesssim\sum_{j\in\mathbb{Z}^+}\sup_t\int_{V_j\times\mathbb{R}_x^3}\nu^{-1}(v)\Gamma(f,f)w^2_{\ell,0}{\bf P}^\bot f\,{\rm d}v{\rm d}x+\sup_t\int_{\mathbb{R}_{x,v}^6}\Gamma(f,f){\bf P}^\bot f\,{\rm d}v{\rm d}x\\
    &:=J_{25,1}+J_{25,2}\,.
\end{align*}
Lemma \ref{lemma5.2} implies that
    \begin{align}
        &\hspace{3mm}\sup_t\int_{V_j\times\mathbb{R}_{x}^3}\nu^{-1}(v)w^2_{\ell,0}\Gamma(f,f){\bf P}^\bot f\nonumber\\
        &\lesssim\sup_t\int_{\mathbb{R}_x^3}\Big(|\mu^{\delta}{\bf P}f|_{H_v^2(\mathbb{R}^3)}+|w_{\ell,0}{\bf P}f|_{L_v^2(\mathbb{R}^3)}\Big)\times|w_{\ell,0}{\bf P}f|_{L_v^2(\widetilde{V}_j)}|w_{\ell,0}{\bf P}^\bot f|_{L_v^2(V_j)}\nonumber\\
        &\quad+\sup_t\int_{\mathbb{R}_x^3}\Big(|\mu^{\delta}{\bf P}^\bot f|_{H_v^2(\mathbb{R}^3)}+|w_{\ell,0}{\bf P}^\bot f|_{L_v^2(\mathbb{R}^3)}\Big)\times|w_{\ell,0}{\bf P}^\bot f|_{L_v^2(\widetilde{V}_j)}|w_{\ell,0}{\bf P}^\bot f|_{L_v^2(V_j)}\nonumber\\
        &\quad+\sup_t\int_{\mathbb{R}_x^3}\Big(|\mu^{\delta}{\bf P}f|_{H_v^2(\mathbb{R}^3)}+|w_{\ell,0}{\bf P}f|_{L_v^2(\mathbb{R}^3)}\Big)\times|w_{\ell,0}{\bf P}^\bot f|_{L_v^2(\widetilde{V}_j)}|w_{\ell,0}{\bf P}^\bot f|_{L_v^2(V_j)}\nonumber\\
        &\quad+\sup_t\int_{\mathbb{R}_x^3}\Big(|\mu^{\delta}{\bf P}^\bot f|_{H_v^2(\mathbb{R}^3)}+|w_{\ell,0}{\bf P}^\bot f|_{L_v^2(\mathbb{R}^3)}\Big)\times|w_{\ell,0}{\bf P}f|_{L_v^2(\widetilde{V}_j)}|w_{\ell,0}{\bf P}^\bot f|_{L_v^2(V_j)}\nonumber\\
        &\lesssim\sup_t\Bigg\{\Big(\|\mu^\delta{\bf P}f\|_{L_x^4(\mathbb{R}^3)H_v^2(\mathbb{R}^3)}+\|w_{\ell,0}{\bf P}f\|_{L_x^4L_v^2(\mathbb{R}^3)}\Big)\times\|w_{\ell,0}{\bf P}f\|_{L_x^4L_v^2(\widetilde{V}_j)}\|w_{\ell,0}{\bf P}^\bot f\|_{L_x^2L_v^2(V_j)}\Bigg\}\nonumber\\
        &\quad+\sup_t\Bigg\{\Big(\|\mu^\delta{\bf P}^\bot f\|_{L_x^\infty H_v^2(\mathbb{R}^3)}+\|w_{\ell,0}{\bf P}^\bot f\|_{L_x^\infty L_v^2(\mathbb{R}^3)}\Big)\times\|w_{\ell,0}{\bf P}^\bot f\|_{L_x^2L_v^2(\widetilde{V}_j)}\|w_{\ell,0}{\bf P}^\bot f\|_{L_x^2L_v^2(V_j)}\Bigg\}\nonumber\\
        &\quad+\sup_t\Bigg\{\Big(\|\mu^\delta{\bf P}f\|_{L_x^\infty(\mathbb{R}^3)H_v^2(\mathbb{R}^3)}+\|w_{\ell,0}{\bf P}f\|_{L_x^\infty L_v^2(\mathbb{R}^3)}\Big)\times\|w_{\ell,0}{\bf P}^\bot f\|_{L_x^2L_v^2(\widetilde{V}_j)}\|w_{\ell,0}{\bf P}^\bot f\|_{L_x^2L_v^2(V_j)}\Bigg\}\nonumber\\
        &\quad+\sup_t\Bigg\{\Big(\|\mu^\delta{\bf P}^\bot f\|_{L_x^2H_v^2(\mathbb{R}^3)}+\|w_{\ell,0}{\bf P}^\bot f\|_{L_x^2L_v^2(\mathbb{R}^3)}\Big)\times\|w_{\ell,0}{\bf P}f\|_{L_x^\infty L_v^2(\widetilde{V}_j)}\|w_{\ell,0}{\bf P}^\bot f\|_{L_x^2L_v^2(V_j)}\Bigg\}\nonumber\\
        &\lesssim\sup_t\Big\{|(a,b,c)|_{\dot{B}^\frac{1}{2}_{2,\infty}\cap\dot{H}^1}\frac{1}{2^{j}}|(a,b,c)|_{\dot{B}^\frac{1}{2}_{2,\infty}\cap\dot{H}^1}\|w_{\ell,0}{\bf P}^\bot f\|_{L^2(V_j)L_x^2(\mathbb{R}^3)}\Big\}\nonumber\\
        &\quad+\sup_t\Big\{\|w_{\ell,0}{\bf P}^\bot f\|_{H_v^2(\dot{H}^1\cap\dot{H}^2)}\|w_{\ell,0}{\bf P}^\bot f\|_{L^2(\widetilde{V}_j)L_x^2(\mathbb{R}^3)}\|w_{\ell,0}{\bf P}^\bot f\|_{L^2(V_j)L_x^2(\mathbb{R}^3)}\Big\}\nonumber\\
        &\quad+\sup_t\Big\{|(a,b,c)|_{\dot{H}^1\cap\dot{H}^2}\|w_{\ell,0}{\bf P}^\bot f\|_{L_v^2(\widetilde{V}_j)L_x^2}\|w_{\ell,0}{\bf P}^\bot f\|_{L_v^2({V}_j)L_x^2}\Big\}\nonumber\\
        &\quad+\sup_t\Big\{\epsilon\frac{1}{2^{j}}|(a,b,c)|_{\dot{H}^1\cap\dot{H}^2}\|w_{\ell,0}{\bf P}^\bot f\|_{L_v^2({V}_j)L_x^2}\Big\}\nonumber\\
        &\lesssim \epsilon^2\frac{1}{2^{2j}}\sup_t|(a,b,c)|^2_{\dot{B}_{2,\infty}^\frac{1}{2}\cap\dot{H}^2}+\eta\sup_t\|w_{\ell,0}{\bf P}^\bot f\|_{L^2(V_j)L_x^2(\mathbb{R}^3)}^2+\epsilon^2\sup_t\|w_{\ell,0}{\bf P}^\bot f\|_{L^2(\widetilde{V}_j)L_x^2(\mathbb{R}^3)}\,.\nonumber
    \end{align}
    Plugging it into the estimate of $J_{25,1}$ gives
    \begin{align*}
        J_{25,1}&\lesssim\sup_t|(a,b,c)|^2_{\dot{B}^\frac{1}{2}_{2,\infty}\cap\dot{H}^2}+\eta'\sup_t\|w_{\ell,0}{\bf P}^\bot f\|^2_{L_v^2(V_j)L_x^2(\mathbb{R}^3)}\,.
    \end{align*}
As to $J_{25,2}$, Lemma \ref{lemma9.4} gives that
\begin{align*}
   &\hspace{3mm}\int_{\mathbb{R}_{x,v}^6}  \Gamma(f,f)\mathbf{P}^{\bot}f\, \mathrm{d}v\mathrm{d}x\\
   &\lesssim\int_{\mathbb{R}_x^3}|{\bf P}f|_{H_v^2}|{\bf P}f|_{L_v^2}|{\bf P}^\bot f|_{L_v^2}\,{\rm d}x+\int_{\mathbb{R}_x^3}|{\bf P}^\bot f|_{H_v^2}|{\bf P}^\bot f|_{L_v^2}|{\bf P}^\bot f|_{L_v^2}\,{\rm d}x\\
   &\quad+\int_{\mathbb{R}_x^3}|{\bf P} f|_{H_v^2}|{\bf P}^\bot f|_{L_v^2}|{\bf P}^\bot f|_{L_v^2}\,{\rm d}x+\int_{\mathbb{R}_x^3}|{\bf P}^\bot f|_{H_v^2}|{\bf P}f|_{L_v^2}|{\bf P}^\bot f|_{L_v^2}\,{\rm d}x\\
   &\lesssim\|{\bf P}f\|_{L_x^4H_v^2}\|{\bf P}f\|_{L_x^4L_v^2}\|{\bf P}^\bot f\|_{L_x^2L_v^2}+\|{\bf P}^\bot f\|_{L_x^\infty H_v^2}\|{\bf P}^\bot f\|^2\\
   &\quad+\|{\bf P}f\|_{L_x^\infty H_v^2}\|{\bf P}^\bot f\|^2+\|{\bf P}^\bot f\|_{L_x^2H_v^2}\|{\bf P}f\|_{L_x^\infty L_v^2}\|{\bf P}^\bot f\|\\
   &\lesssim|(a,b,c)|^2_{\dot{B}^\frac{1}{2}_{2,\infty}\cap\dot{H}^1}\|{\bf P}^\bot f\|+\|{\bf P}^\bot f\|_{H_v^2(\dot{H}_x^1\cap\dot{H}_x^2)}\|{\bf P}^\bot f\|^2\\
   &\quad+|(a,b,c)|_{\dot{H}^1\cap\dot{H}^2}\|{\bf P}^\bot f\|^2+\|{\bf P}^\bot f\|_{H_v^2L_x^2}|(a,b,c)|_{\dot{H}^1\cap\dot{H}^2}\|{\bf P}^\bot f\|\\
   &\lesssim\epsilon^2|(a,b,c)|_{\dot{B}^\frac{1}{2}_{2,\infty}\cap\dot{H}^2}^2+\eta\|{\bf P}^\bot f\|^2\,.
\end{align*}
Then we have
\begin{align*}
    J_{25,2}
    &\lesssim\epsilon^2\sup_t|(a,b,c)|_{\dot{B}_{2,\infty}^\frac{1}{2}\cap{\dot{H}^2}}^2+\eta\sup_t\left\|\mathbf{P}^{\bot}f\right\|^2\,.
\end{align*}
It follows from Cauchy's inequality that
    \begin{align*}
        J_{26}\lesssim\sup_t\left\|w_{\ell-(1+\frac{\varepsilon}{2})\gamma,0}{\bf P}^\bot S\right\|^2+\eta\int_0^t\int_{\mathbb{R}_{x,v}^6} e^{-\nu(v)(t-\tau)}\nu(v)\left|w_{\ell,0}\mathbf{P}^{\bot}f\right|^2\,\mathrm{d}v\mathrm{d}x\mathrm{d}\tau\,.
    \end{align*}
    Combining all the relating estimates yields the desired result. This completes the proof of Lemma \ref{lemma5.4}.
\end{proof}

\section{Estimate for $\sum_{j\in\mathbb{Z}^+}\sup\limits_t\|w_{\ell,\beta}\partial_x^\alpha\partial_v^\beta{\bf P}^\bot f\|^2_{L^2(V_j)L_x^2(\mathbb{R}^3)}$}\label{Section-V-j}
It can be seen in section \ref{Sect-weight} that the {\it a prior} estimate is not yet closed. To this end, we have to bound the term $\sum_{j\in\mathbb{Z}^+}\sup\limits_t\|w_{\ell,\beta}\partial_x^\alpha\partial_v^\beta{\bf P}^\bot f\|^2_{L^2_v(V_j)L_x^2(\mathbb{R}^3)}$ for $|\alpha|+|\beta|\le N$ in this section. We begin with the following lemmas.
\begin{lemma}\label{lemma8.3}
    Let $N\ge 5$ be an integer and $w_{\ell,\beta}:=\langle v\rangle^{\ell+\gamma|\beta|}$ with $-3<\gamma<0,\ell>-N-\frac{\gamma}{2}$. Assume that $f$ is the smooth solution of the Cauchy problem \eqref{Cauchy}, then it holds that
    \begin{align*}
    &\hspace{3mm}\sum_{j\in\mathbb{Z}^+}\sum_{1\le|\alpha|+|\beta|\le N}\sup_t\|w_{\ell,\beta}\partial_x^\alpha\partial_v^\beta{\bf P}^\bot f\|^2_{L^2(V_j)L_x^2(\mathbb{R}^3)}\\
    &\lesssim\sum_{1\le|\alpha|+|\beta|\le N}\|w_{\ell,\beta}\partial_x^\alpha\partial_v^\beta{\bf P}^\bot f_0\|^2+\sup_t\|f\|^2_{L_v^2(\dot{H}^1\cap\dot{H}^N)}+\sup_t|(a,b,c)|^2_{\dot{B}_{2,\infty}^\frac{1}{2}}\\
    &\quad+\sup_t\|{\bf P}^\bot f\|^2+\sum_{j\in\mathbb{Z}^+}\sum_{1\le|\alpha|+|\beta|\le N}\sup_t\|\nu^{-1}(v)w_{\ell,\beta}\partial_x^\alpha\partial_v^\beta S\|^2_{L_v^2(V_j)L_x^2(\mathbb{R}^3)}\\
    &\quad+\epsilon^2\sum_{j\in\mathbb{Z}^+}\sup_t\|w_{\ell,0}{\bf P}^\bot f\|^2_{L_v^2(V_j)L_x^2(\mathbb{R}^3)}+\eta\sum_{|\alpha|+|\beta|\le N}\sup_t\|w_{\ell,\beta}\partial_x^\alpha\partial_v^\beta{\bf P}^\bot f\|^2\,.
\end{align*}
\end{lemma}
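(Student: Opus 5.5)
We repeat the weighted energy argument of Lemma \ref{lemma5.3}, but localize each step to a single annulus $V_j$ from \eqref{vel-dec} before taking $\sup_t$ and summing over $j\in\mathbb{Z}^+$. For each of the representative cases ($|\alpha|=N,\beta=0$; $|\alpha|=N-1,|\beta|=1$; $\alpha=0,|\beta|=N$, the rest by interpolation), we take the same starting equation: \eqref{B4} for pure $x$-derivatives, \eqref{B6} otherwise. We apply $\partial_x^\alpha\partial_v^\beta$, multiply by $w_{\ell,\beta}^2\partial_x^\alpha\partial_v^\beta\mathbf{P}^\bot f$, and solve the resulting ODE with the integrating factor $e^{-\nu(v)(t-\tau)}$. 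Instead of integrating over all of $\mathbb{R}^3_v$, we integrate over $V_j\times\mathbb{R}^3_x$. On $V_j$ we have $\nu(v)\sim c_j\sim\tilde c_j\sim 2^{j\gamma}$, so every source term $\int_0^t e^{-\nu(t-\tau)}(\cdots)\,d\tau$ is bounded by $\sup_t\int_{V_j}\nu^{-1}(\cdots)$ via $\int_0^t e^{-\tilde c_j(t-\tau)}c_j\,d\tau\lesssim1$. The dissipative term $-\frac12\int_0^t e^{-\nu(t-\tau)}\nu|w\partial^\alpha_x\partial^\beta_v\mathbf P^\bot f|^2$ on $V_j$ is used, as before, to absorb the $\eta$-parts of the transport and Young splittings.

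The terms are handled one at a time. The linear transport commutators ($J_7,J_{15}$ type) and the $\partial_v\nu$ commutators ($J_6,J_{14}$, via Lemma 2 of \cite{Strain-Guo-2008-ARMA}) stay local in $v$, so on $V_j$ they give lower-order weighted norms on $V_j$, which are absorbed by the hierarchy in $|\beta|$. The macroscopic terms ($J_8,J_9,J_{16},J_{17}$) carry a factor $\mu^\delta$, so their $V_j$-pieces are bounded by $2^{-2j}$ times $\sup_t\|f\|^2_{L^2_v(\dot H^1\cap\dot H^N)}$ or $\sup_t|(a,b,c)|^2_{\dot B^{1/2}_{2,\infty}}$; this is summable in $j$. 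For the $\mathcal K$-terms we use Lemma \ref{lemma5.1} on each $V_j$. It yields $\eta\|w\partial\mathbf P^\bot f\|^2_{L^2(V_j)}$, plus a piece whose sum over $j$ is controlled by $\|\partial_x^\alpha\mathbf P^\bot f\|^2$, i.e. by $\sup_t\|f\|^2_{L^2_v(\dot H^1\cap\dot H^N)}$ and $\sup_t\|\mathbf P^\bot f\|^2$ when $\alpha=0$, plus $\eta\sup_t\|w_{\ell,\beta}\partial^\alpha_x\partial^\beta_v\mathbf P^\bot f\|^2$. The first contribution is absorbed into the left side after summation. For the source, on $V_j$ we write $\int_0^t e^{-\nu(t-\tau)}\partial S\,w^2\partial f\lesssim\sup_t\int_{V_j}\nu^{-1}|w\partial S||w\partial f|$ and apply Cauchy–Schwarz, which gives exactly $\sup_t\|\nu^{-1}w_{\ell,\beta}\partial_x^\alpha\partial_v^\beta S\|^2_{L^2_v(V_j)L^2_x}+\eta\sup_t\|w\partial\mathbf P^\bot f\|^2_{L^2(V_j)}$. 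This is the reason the $\mathcal Z$-norm carries that term.

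The main obstacle is the nonlinear term $\Gamma$ and its summability in $j$. We reuse the $V_j$-localized bounds of Lemma \ref{lemma5.2} as they appear in $J_{3,1},J_{11,1},J_{19,1}$. Each produces $\epsilon^2 2^{-2j}|(a,b,c)|^2$, which is summable; $\eta\|w\partial\mathbf P^\bot f\|^2_{L^2(V_j)}$, which is absorbed; and $\epsilon^2\|w\partial\mathbf P^\bot f\|^2_{L^2(\widetilde V_j)L^2_x}$. Since $\widetilde V_j$ overlaps only boundedly many $V_k$, the last piece sums to $\epsilon^2\sum_k\sup_t\|\cdot\|^2_{L^2(V_k)}$ over the same index range and is absorbed by smallness of $\epsilon$. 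The exception is the zero-order factor $w_{\ell,0}\mathbf P^\bot f$, which is not on the left side of this lemma; we keep it explicitly as $\epsilon^2\sum_j\sup_t\|w_{\ell,0}\mathbf P^\bot f\|^2_{L^2_v(V_j)L^2_x}$. Here we have to check that $\sup_t$ commutes with the $j$-sum in the correct direction, namely $\sup_t\sum_j\le\sum_j\sup_t$, and that the overlap constant of $\{\widetilde V_j\}$ is uniform. Finally, we take a linear combination over the cases with weights increasing in $|\beta|$ so that the lower-$\beta$ remainders are absorbed, and sum over $j$ to obtain the stated bound.
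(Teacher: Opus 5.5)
Your proposal is the paper's proof in all essentials. It uses Duhamel on each annulus with $e^{-\nu(v)(t-\tau)}\nu(v)\lesssim e^{-\tilde c_j(t-\tau)}c_j$ to reduce every term to $\sup_t\int_{V_j}\nu^{-1}(\cdots)$. It handles the $\partial_v\nu$ and $\mathcal K$ terms with Lemma~\ref{lemma5.1}, and $\Gamma$ with the $V_j$-localized bounds behind $J_{3,1},J_{11,1},J_{19,1}$. The macroscopic terms get the $2^{-j}$ gain from $\mu^\delta$, and $S$ is treated by Cauchy--Schwarz on $V_j$; these are the paper's $G_1$--$G_{17}$. Keeping half of the dissipation on $V_j$ to absorb Young remainders, rather than moving them into $\eta\sup_t\|\cdot\|^2_{L^2(V_j)L^2_x}$ as the paper does, is an equivalent variant.

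One step is stated backwards and fails as written: the coefficients of the final linear combination must \emph{decrease} in $|\beta|$, not increase. The transport commutator at level $(\alpha,\beta)$ (the paper's $G_5$, $G_{12}$) produces $C_\eta\sup_t\|w_{\ell,\beta-e_i}\partial_x^{\alpha+e_i}\partial_v^{\beta-e_i}\mathbf P^\bot f\|^2_{L^2(V_j)L^2_x}$ with no small factor. This term sits one level lower in $|\beta|$, so it can only be absorbed if the estimate at level $|\beta|-1$ enters with a much larger coefficient. All other cross terms carry $\eta$ or $\epsilon^2$ and are absorbed once these are fixed after the coefficients.

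A smaller point concerns pure $x$-derivatives taken from \eqref{B4}. There you must multiply by $w_{\ell,0}^2\partial_x^\alpha f$, as the paper does, rather than by $\partial_x^\alpha\mathbf P^\bot f$, to get an exact time derivative. You then pass to $\mathbf P^\bot f$ using that $w_{\ell,0}\partial_x^\alpha\mathbf Pf$ is Gaussian-small on $V_j$, so that $\sum_j\sup_t\|w_{\ell,0}\partial_x^\alpha\mathbf Pf\|^2_{L^2(V_j)L^2_x}\lesssim\sup_t\|f\|^2_{L^2_v(\dot H^1\cap\dot H^N)}$.
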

\begin{proof}
First, we rewrite
    \begin{align}\label{pp1}
        \partial_t f+v\cdot\nabla_x f+\nu(v)f-\mathcal{K}f=\Gamma(f,f)+S\,.
    \end{align}
    The computation is then divided into following three cases.
    
    {\bf Case I:}\quad If $|\beta|=0$, 
    applying the operator $\partial_x^\alpha$ with $|\alpha|=1,2,\dots,N$ to the above equation and then multiplying the resulting equation by $w_{\ell,0}^2\partial_x^\alpha f$ yields that
\begin{align}
    &\hspace{3mm}\frac{1}{2}\partial_t(w_{\ell,0}\partial_x^\alpha f)^2+\nu(v)(w_{\ell,0}\partial_x^\alpha f)^2\notag\\
    &=-(v\cdot\nabla_x\partial_x^\alpha f)w^2_{\ell,0}\partial_x^\alpha f+w^2_{\ell,0}\partial_x^\alpha\mathcal{K}f\partial_x^\alpha f+w^2_{\ell,0}\partial_x^\alpha\Gamma(f,f)\partial_x^\alpha f+w^2_{\ell,0}\partial_x^\alpha S\partial_x^\alpha f\,.\notag
\end{align}
Solving the above equation directly and then integrating the result over $V_j\times\mathbb{R}_x^3$, one has
\begin{align}\label{pp3}
    \|w_{\ell,0}\partial_x^\alpha f\|^2_{L^2(V_j)L^2_x(\mathbb{R}^n)}
    &\lesssim    \|e^{-\nu(v)t}w_{\ell,0}\partial_x^\alpha f_0\|^2_{L^2(V_j)L^2_x(\mathbb{R}^n)}\notag\\
    &\quad+\underbrace{\int_0^t\int_{V_j\times\mathbb{R}_x^3}e^{-2\nu(v)(t-\tau)}w^2_{\ell,0}\partial_x^\alpha\mathcal{K}f\partial_x^\alpha f\,\mathrm{d}v\mathrm{d}x{\rm d}\tau}_{G_1}\notag\\
    &\quad+\underbrace{\int_0^t\int_{V_j\times\mathbb{R}_x^3}e^{-2\nu(v)(t-\tau)}w^2_{\ell,0}\partial_x^\alpha\Gamma(f,f)\partial_x^\alpha f\,\mathrm{d}v\mathrm{d}x{\rm d}\tau}_{G_2}\notag\\
    &\quad+\underbrace{\int_0^t\int_{V_j\times\mathbb{R}_x^3}e^{-2\nu(v)(t-\tau)}w^2_{\ell,0}\partial_x^\alpha S\partial_x^\alpha f\,\mathrm{d}v\mathrm{d}x{\rm d}\tau}_{G_3}\,.
\end{align}
Since $e^{-\nu(v)t}\le1$, then we have for the first term on the right-hand side that
\[    \|e^{-\nu(v)t}w_{\ell,0}\partial_x^\alpha f_0\|^2_{L^2(V_j)L^2_x(\mathbb{R}^n)}\lesssim    \|w_{\ell,0}\partial_x^\alpha f_0\|^2_{L^2(V_j)L^2_x(\mathbb{R}^n)}\,.\]
For the remaining terms, using the fact that $\tilde{c}_j<\nu(v)\le c_j$ in the domain $V_j$, where $\tilde{c}_j=(1+2^{j+1})^\gamma$ and $c_j=(1+2^{j})^\gamma$. We then obtain
    \begin{align}
        e^{-2\nu(v)(t-\tau)}\nu(v)\lesssim e^{-2\tilde{c}_j(t-\tau)}c_j\,.\notag
    \end{align}
    We further have 
    \begin{align*}
        G_1&\lesssim\int_0^te^{-2\tilde{c}_j(t-\tau)}c_j\int_{V_j\times\mathbb{R}_x^3} \nu^{-1}(v)w^2_{\ell,0}\partial_x^\alpha\mathcal{K}f\partial_x^\alpha f\,{\rm d}v{\rm d}x{\rm d}\tau\\
        &\lesssim\sup_t\int_{V_j\times\mathbb{R}_x^3}\nu^{-1}(v)w^2_{\ell,0}\partial_x^\alpha\mathcal{K}f\partial_x^\alpha f\,{\rm d}v{\rm d}x\sup_{j\in\mathbb{Z}}\int_0^te^{-2\tilde{c}_j(t-\tau)}c_j\,{\rm d}\tau\\
        &\lesssim\sup_{j\in\mathbb{Z}}\frac{(1+2^j)^\gamma}{2(1+2^{j+1})^\gamma}\sup_t\int_{V_j\times\mathbb{R}_x^3}\nu^{-1}(v)w^2_{\ell,0}\partial_x^\alpha\mathcal{K}f\partial_x^\alpha f\,{\rm d}v{\rm d}x\\
        &\lesssim\sup_t\int_{V_j\times\mathbb{R}_x^3}\nu^{-1}(v)w^2_{\ell,0}\partial_x^\alpha\mathcal{K}f\partial_x^\alpha f\,{\rm d}v{\rm d}x\,.
    \end{align*}
    Similarly, for $G_2$, we deduce from the estimate of $J_{3,1}$ that
    \begin{align*}
        G_2&\lesssim\sup_t\int_{V_j\times\mathbb{R}^3_x}\nu^{-1}(v)w^2_{\ell,0}\partial_x^\alpha\Gamma(f,f)\partial_x^\alpha f\,{\rm d}v{\rm d}x\\
        &\lesssim\sum_{\alpha_1+\alpha_2=\alpha}\sup_t\int_{V_j\times\mathbb{R}^3_x}\nu^{-1}(v)w^2_{\ell,0}\Gamma(\partial_x^{\alpha_1}f,\partial_x^{\alpha_2}f)\partial_x^\alpha f\,{\rm d}v{\rm d}x\\
        &\lesssim\epsilon^2\sum_{1\le|\alpha'|\le|\alpha|}\sup_t\|w_{\ell,0}\partial_x^{\alpha'}f\|^2_{L^2_v(\widetilde{V}_j)L_x^2(\mathbb{R}^3)}+\eta\sup_t\|w_{\ell,0}\partial_x^{\alpha}f\|^2_{L^2_v({V}_j)L_x^2(\mathbb{R}^3)}\,.
    \end{align*}

\noindent Cauchy's and Young's inequalities imply that
    \begin{align*}
        G_3&\lesssim\sup_t\int_{V_j\times\mathbb{R}_x^3}\nu^{-1}(v)w^2_{\ell,0}\partial_x^\alpha Sf\partial_x^\alpha f\,{\rm d}v{\rm d}x\\
        &\lesssim\sup_t\|\nu^{-1}(v)w_{\ell,0}\partial_x^\alpha S\|_{L^2(V_j)L_x^2(\mathbb{R}^3)}^2+\eta\sup_t\|w_{\ell,0}^2\partial_x^\alpha f\|_{L(V_j)L_x^2(\mathbb{R}^3)}^2\,.
    \end{align*}
    Plugging the above estimates into \eqref{pp3}, taking supremum over $t$ and then combining all the estimates for $|\alpha|=1,\dots,N$ in a proper way. Finally summing the result over $j\in\mathbb{Z}^+$, we have from Lemma \ref{lemma5.1} that   
    \begin{align}
        &\hspace{3mm}\sum_{j\in\mathbb{Z}^+}\sum_{1\le|\alpha|\le N}\sup_t    \|w_{\ell,0}\partial_x^\alpha f\|^2_{L^2(V_j)L^2_x(\mathbb{R}^n)}\notag\\
        &\lesssim\sum_{1\le|\alpha|\le N}\|w_{\ell,0}\partial_x^\alpha f_0\|^2+\sup_{t}\| f\|^2_{L_v^2(\dot{H}^1\cap\dot{H}^N)}+\eta\sum_{1\le|\alpha|\le N}\sup_t\|w_{\ell,0}\partial_x^\alpha f\|^2\notag\\
        &\quad+\sum_{j\in\mathbb{Z}^+}\sum_{1\le|\alpha|\le N}\sup_t\|\nu^{-1}(v)w_{\ell,0}\partial_x^\alpha S\|_{L^2(V_j)L_x^2(\mathbb{R}^3)}^2.\notag
    \end{align}
    {\bf Case II:} If $|\alpha|\neq0$ and $|\beta|\neq0$, we just consider the case $|\alpha|=N-1, |\beta|=1$. Recall the microscopic form of \eqref{pp1}:
    \begin{align}\label{pp4}
        &\hspace{3mm}\partial_t {\bf P}^\bot f+v\cdot\nabla_x{\bf P}^\bot f+v\cdot\nabla_x{\bf P}f+\nu(v){\bf P}^\bot f={\bf P}(v\cdot\nabla_xf)+\mathcal{K}{\bf P}^\bot f+\Gamma(f,f)+{\bf P}^\bot S\,.
    \end{align}
    Applying the operator $\partial_x^\alpha\partial_v$ to \eqref{pp4} and then multiplying the result by $w_{\ell,1}^2\partial_x^\alpha\partial_v{\bf P}^\bot f$ gives the following result:
    \begin{align*}
                &\hspace{3mm}\frac{1}{2}\partial_t\left(w_{\ell,1}\partial_x^{\alpha}\partial_v\mathbf{P}^{\bot}f\right)^2+\nu(v)\left(w_{\ell,1}\partial_x^{\alpha}\partial_v\mathbf{P}^{\bot}f\right)^2\\
        &=-\partial_v\left[\nu(v)\right]\partial_x^{\alpha}\mathbf{P}^{\bot}f\,w_{\ell,1}^2\partial_x^{\alpha}\partial_v\mathbf{P}^{\bot}f\\
        &\quad-\partial_x^{\alpha}\partial_v\left(v\cdot\nabla_x\mathbf{P}^{\bot}f\right)w_{\ell,1}^2\partial_x^{\alpha}\partial_v\mathbf{P}^{\bot}f-\partial_x^{\alpha}\partial_v(v\cdot\nabla_x \mathbf{P}f)w_{\ell,1}^2\partial_x^{\alpha}\partial_v\mathbf{P}^{\bot}f\\
        &\quad+\partial_x^{\alpha}\partial_v\left[{\bf P}(v\cdot\nabla_xf)\right]w_{\ell,1}^2\partial_x^{\alpha}\partial_v\mathbf{P}^{\bot}f+\partial_x^{\alpha}\partial_v(\mathcal{K}\mathbf{P}^{\bot}f)w_{\ell,1}^2\partial_x^{\alpha}\partial_v\mathbf{P}^{\bot}f\\
        &\quad+\partial_x^{\alpha}\partial_v[\Gamma(f,f)]w_{\ell,1}^2\partial_x^{\alpha}\partial_v\mathbf{P}^{\bot}f+\partial_x^{\alpha}\partial_v(\mathbf{P}^{\bot}S)w_{\ell,1}^2\partial_x^{\alpha}\partial_v\mathbf{P}^{\bot}f.
    \end{align*}
    Solving the above inequality directly and then integrating both sides of the result inequality over $V_j\times\mathbb{R}_x^3$ gives that
    \begin{align}\label{pp5}
        \left\|w_{\ell,1}\partial_x^{\alpha}\partial_v\mathbf{P}^{\bot}f\right\|_{L^2(V_j)L_x^2(\mathbb{R}^3)}^2\lesssim&\left\|e^{-\nu(v)t}w_{\ell,1}\partial_x^{\alpha}\partial_v\mathbf{P}^{\bot}f_0\right\|_{L^2(V_j)L_x^2(\mathbb{R}^3)}^2\nonumber\\
        &\underbrace{-\int_0^t\int_{V_j\times\mathbb{R}_x^3} e^{-\nu(v)(t-\tau)}\partial_v\left[\nu(v)\right]\partial_x^{\alpha}\mathbf{P}^{\bot}f\,w_{\ell,1}^2\partial_x^{\alpha}\partial_v\mathbf{P}^{\bot}f\,\mathrm{d}v\mathrm{d}x\mathrm{d}\tau}_{G_4}\nonumber\\
        &\underbrace{-\int_0^t\int_{V_j\times\mathbb{R}_x^3} e^{-\nu(v)(t-\tau)}\partial_x^{\alpha}\partial_v\left(v\cdot\nabla_x\mathbf{P}^{\bot}f\right)w_{\ell,1}^2\partial_x^{\alpha}\partial_v\mathbf{P}^{\bot}f\,\mathrm{d}v\mathrm{d}x\mathrm{d}\tau}_{G_5}\nonumber\\
        &\underbrace{-\int_0^t\int_{V_j\times\mathbb{R}_x^3} e^{-\nu(v)(t-\tau)}\partial_x^{\alpha}\partial_v(v\cdot\nabla_x \mathbf{P}f)w_{\ell,1}^2\partial_x^{\alpha}\partial_v\mathbf{P}^{\bot}f\,\mathrm{d}v\mathrm{d}x\mathrm{d}\tau}_{G_6}\nonumber\\
        &\underbrace{+\int_0^t\int_{V_j\times\mathbb{R}_x^3} e^{-\nu(v)(t-\tau)}\partial_x^{\alpha}\partial_v\left[{\bf P}(v\cdot\nabla_xf)\right]w_{\ell,1}^2\partial_x^{\alpha}\partial_v\mathbf{P}^{\bot}f\,\mathrm{d}v\mathrm{d}x\mathrm{d}\tau}_{G_7}\nonumber\\
        &\underbrace{+\int_0^t\int_{V_j\times\mathbb{R}_x^3} e^{-\nu(v)(t-\tau)}\partial_x^{\alpha}\partial_v(\mathcal{K}\mathbf{P}^{\bot}f)w_{\ell,1}^2\partial_x^{\alpha}\partial_v\mathbf{P}^{\bot}f\,\mathrm{d}v\mathrm{d}x\mathrm{d}\tau}_{G_{8}}\nonumber\\
        &\underbrace{+\int_0^t\int_{V_j\times\mathbb{R}_x^3} e^{-\nu(v)(t-\tau)}\partial_x^{\alpha}\partial_v[\Gamma(f,f)]w_{\ell,1}^2\partial_x^{\alpha}\partial_v\mathbf{P}^{\bot}f\,\mathrm{d}v\mathrm{d}x\mathrm{d}\tau}_{G_{9}}\nonumber\\
        &\underbrace{+\int_0^t\int_{V_j\times\mathbb{R}_x^3} e^{-\nu(v)(t-\tau)}\partial_x^{\alpha}\partial_v(\mathbf{P}^{\bot}S)w_{\ell,1}^2\partial_x^{\alpha}\partial_v\mathbf{P}^{\bot}f\,\mathrm{d}v\mathrm{d}x\mathrm{d}\tau}_{G_{10}}\,.
    \end{align}
    It's straightforward to see that the first term on the right-hand side of the above inequality can be bounded by
    \[\left\|e^{-\nu(v)t}w_{\ell,1}\partial_x^{\alpha}\partial_v\mathbf{P}^{\bot}f_0\right\|_{L^2(V_j)L_x^2(\mathbb{R}^3)}^2\lesssim\left\|w_{\ell,1}\partial_x^{\alpha}\partial_v\mathbf{P}^{\bot}f_0\right\|_{L^2(V_j)L_x^2(\mathbb{R}^3)}^2.\]
    For $G_4$, we have 
    \begin{align*}
        G_4&\lesssim\sup_t\int_{V_j\times\mathbb{R}_x^3}\nu^{-1}(v)w^2_{\ell,1}\partial_v[\nu(v)]\partial_x^\alpha{\bf P}^\bot f\partial_v\partial_x^\alpha{\bf P}^\bot f\,{\rm d}v{\rm d}x\,.
    \end{align*}
Similar to $J_7$, by integrating in parts and using the fact that $\nu(v)\sim\langle v\rangle^\gamma$, one has
\begin{align*}
    G_5&\lesssim\sum_{i=1}^3\int_0^t\int_{V_j\times\mathbb{R}_x^3}e^{-\nu(v)(t-\tau)}\nu(v)w_{\ell,0}\partial_x^{\alpha+e_i}{\bf P}^\bot f\,w_{\ell,1}\partial_x^\alpha\partial_v{\bf P}^\bot f\,{\rm d}v{\rm d}x{\rm d}\tau\\
    &\lesssim\sum_{i=1}^3\sup_t\int_{V_j\times\mathbb{R}^3_x}w_{\ell,0}\partial_x^{\alpha+e_i}{\bf P}^\bot f\,w_{\ell,1}\partial_x^\alpha\partial_v{\bf P}^\bot f\,{\rm d}v{\rm d}x\\
    &\lesssim\sum_{i=1}^3\sup_t\|w_{\ell,0}\partial_x^{\alpha+e_i}{\bf P}^\bot f\|^2_{L(V_j)L_x^2(\mathbb{R}^3)}+\eta\sup_t\|w_{\ell,1}\partial_x^\alpha\partial_v{\bf P}^\bot f\|_{L(V_j)L_x^2(\mathbb{R}^3)}^2\,.
\end{align*}
As to $G_6$, it can be derived from the estimate of $J_8$ that
\begin{align*}
    G_6&\lesssim\sup_t\int_{V_j\times\mathbb{R}_x^3}\nu^{-1}(v)\partial_x^\alpha\partial_v(v\cdot\nabla_x{\bf P}f)w_{\ell,1}^2\partial_x^\alpha\partial_v{\bf P}^\bot f\,{\rm d}v{\rm d}x\\
    &\lesssim\sup_t\int_{V_j\times\mathbb{R}_x^3}\partial_x^{\alpha+e_i}(a,b,c)|v|^{-1}\mu^\delta w_{\ell.1}\partial_x^\alpha\partial_v{\bf P}^\bot f{\rm d}v{\rm d}x\\
    &\lesssim\sup_t\|\partial_x^{\alpha+e_i}(a,b,c)|v|^{-1}\mu^\delta\|_{L^2_v(V_j)L_x^2(\mathbb{R}^3)}^2+\eta\sup_t\|w_{\ell,1}\partial_x^\alpha\partial_v{\bf P}^\bot f\|^2_{L^2(V_j)L_x^2(\mathbb{R}^3)}\\
    &\lesssim\|(a,b,c)\|^2_{\dot{H}_x^{|\alpha|+1}}\frac{1}{2^j}+\eta\sup_t\|w_{\ell,1}\partial_x^\alpha\partial_v{\bf P}f\|^2_{L^2(V_j)L_x^2(\mathbb{R}^3)}\,.
\end{align*}
The last inequality holds because in the domain $V_j$, we have $|v|^{-1}\lesssim\frac{1}{2^j}$, and the extra velocity part can be absorbed by $\mu^\delta$.

\noindent For $G_7$, we have from the estimate of $J_9$ that
\begin{align*}
	G_7&\lesssim\sup_t\int_{V_j\times\mathbb{R}^3_x}\nu^{-1}(v)\partial_x^\alpha\partial_v[{\bf P}(v\cdot\nabla_x f)]w^2_{\ell.1}\partial_x^\alpha\partial_v{\bf P}^\bot f\,{\rm d}v{\rm d}x\\
	&\lesssim\sup_t\int_{V_j\times\mathbb{R}^3_x}m(t,x)\mu^\delta w^2_{\ell,1}\partial_x^\alpha\partial_v{\bf P}^\bot f\,{\rm d}v{\rm d}x\\
	&\lesssim\sup_t\|m(t,x)|v|^{-1}\mu^\delta\|_{L_x^2(\mathbb{R}^3)L^2(V_j)}^2+\eta\|w_{\ell,1}\partial_x^\alpha\partial_v{\bf P}^\bot f\|_{L_x^2(\mathbb{R}^3)L^2(V_j)}^2\\
	&\lesssim\sup_t|m(t,x)\frac{1}{2^j}|_{L_x^2(\mathbb{R}^3)}^2+\eta\|w_{\ell,1}\partial_x^\alpha\partial_v{\bf P}^\bot f\|_{L_x^2(\mathbb{R}^3)L^2(V_j)}^2\\
	&\lesssim\frac{1}{2^j}\sup_t\|f\|_{L_v^2(\mathbb{R}^3)\dot{H}_x^{|\alpha|+1}}^2+\eta\|w_{\ell,1}\partial_x^\alpha\partial_v{\bf P}^\bot f\|_{L^2(V_j)L_x^2(\mathbb{R}^3)}^2\,.
\end{align*}
The estimate for $G_8$ can be derived in a completely same way of $G_1$, one has
\begin{align*}
	G_8&\lesssim\sup_t\int_{V_j\times\mathbb{R}_x^3}\nu^{-1}(v)\partial_x^\alpha\partial_v(\mathcal{K}{\bf P}^\bot f)w_{\ell,1}^2\partial_x^\alpha\partial_v{\bf P}^\bot f\,{\rm d}v{\rm d}x\,.
\end{align*}
It holds from the estimate of $J_{11,1}$ that
\begin{align*}
    G_9&\lesssim\sup_t\int_{V_j\times\mathbb{R}_x^3}\nu^{-1}(v)w^2_{\ell,1}\partial_x^\alpha\partial_v\Gamma(f,f)\partial_x^\alpha\partial_v{\bf P}^\bot f\,{\rm d} v{\rm d}x\\
    &\lesssim\sum_{\substack{\alpha_1+\alpha_2=\alpha\\\beta_1+\beta_2=\beta}}\sup_t\int_{V_j\times\mathbb{R}_x^3}\nu^{-1}(v)w^2_{\ell,1}\Gamma(\partial_x^{\alpha_1}\partial_v^{\beta_1}f,\partial_x^{\alpha_2}\partial_v^{\beta_2}f)\partial_x^\alpha\partial_v{\bf P}^\bot f\,{\rm d}v{\rm d}x\\
    &\lesssim\frac{1}{2^{2j}}\epsilon^2\sup_t|(a,b,c)|^2_{\dot{H}^1\cap\dot{H}^N}+\eta\sup_t\|w_{\ell,1}\partial_x^\alpha\partial_v{\bf P}^\bot f\|^2_{L^2_v(V_j)L_x^2(\mathbb{R}^3)}\\
    &\quad+\epsilon^2\sum_{\substack{1\le|\alpha'|\le|\alpha|\\|\beta'|\le|\beta|}}\sup_t\|w_{\ell,\beta'}\partial_x^{\alpha'}\partial_v^{\beta'}{\bf P}^\bot f\|^2_{L^2_v(\widetilde{V}_j)L_x^2(\mathbb{R}^3)}\,.
\end{align*}
\noindent For $G_{10}$, we have 
\begin{align*}
    G_{10}&\lesssim\sup_t\int_{V_j\times\mathbb{R}_x^3}\nu^{-1}(v)w_{\ell,1}^2\partial_x^{\alpha}\partial_v({\bf P}^\bot S)\partial_x^\alpha\partial_v{\bf P}^\bot f\,{\rm d}v{\rm d}x\\
    &\lesssim\sup_t\|\nu^{-1}(v)w_{\ell,1}\partial_x^\alpha\partial_v({\bf P}^\bot S)\|^2_{L^2(V_j)L_x^2(\mathbb{R}^3)}+\eta\sup_t\|w_{\ell,1}\partial_x^\alpha\partial_v{\bf P}^\bot f\|^2_{L^2(V_j)L_x^2(\mathbb{R}^3)}\,.
\end{align*}
Inserting all the relating estimates into \eqref{pp5}, taking the supremum over $t$ and then combining all the estimates for $|\alpha|\ge1, |\alpha|+|\beta|\le N$. Finally summing the result over $j$,  we have from Lemma \ref{lemma5.1} that
\begin{align*}
    &\hspace{3mm}\sum_{j\in\mathbb{Z^+}}\sum_{\substack{|\alpha|+|\beta|\le N\\|\alpha|\ge 1}}\sup_t\|w_{\ell,\beta}\partial_x^\alpha\partial_v^\beta{\bf P}^\bot f\|^2_{L^2(V_j)L_x^2(\mathbb{R}^3)}\\
    &\lesssim\sum_{\substack{|\alpha|+|\beta|\le N\\|\alpha|\ge 1}}\|w_{\ell,\beta}\partial_x^\alpha\partial_v^\beta{\bf P}^\bot f_0\|^2+\sup_t\|f\|^2_{L_v^2(\dot{H}^1\cap\dot{H}^{N})}+\eta\sum_{\substack{|\alpha|+|\beta|\le N\\|\alpha|\ge 1}}\sup_t\|w_{\ell,\beta}\partial_x^\alpha\partial_v^{\beta}{\bf P}^\bot f\|^2\\
    &\quad+\sum_{j\in\mathbb{Z}^+}\sum_{\substack{|\alpha|+|\beta|\le N\\|\alpha|\ge 1}}\sup_t\|\nu^{-1}(v)w_{\ell,\beta}\partial_x^\alpha\partial_v^\beta{\bf P}^\bot S\|^2_{L^2(V_j)L_x^2(\mathbb{R}^3)}\,.
\end{align*}
Here we have used the fact that $\sum_{j\in\mathbb{Z}^+}\frac{1}{2^{2j}}$ is convergent.

\noindent{\bf Case III:} If $|\alpha|=0$, we just estimate the case $|\beta|=N$, the remaining cases $|\beta|<N, |\alpha|=0$ can be derived in the same way. Similar to {\bf Case II}, one can deduce that
\begin{align}\label{pp6}
    \left\|w_{\ell,N}\partial_v^\beta \mathbf{P}^{\bot}f\right\|^2_{L^2(V_j)L_x^2(\mathbb{R}^3)}\lesssim&\left\|e^{-\nu(v)t}w_{\ell,N}\partial_v^\beta \mathbf{P}^{\bot}f_0\right\|^2_{L^2(V_j)L_x^2(\mathbb{R}^3)}\notag\\
        &\underbrace{-\sum_{|\beta'|=1}^{N
        }\int_0^t\int_{V_j\times\mathbb{R}_x^3} e^{-\nu(v)(t-\tau)}\partial_v^{\beta'} \left[\nu(v)\right]\partial_v^{\beta-\beta'}\mathbf{P}^{\bot}f\,w_{\ell,N}^2\partial_v^\beta \mathbf{P}^{\bot}f\,\mathrm{d}v\mathrm{d}x\mathrm{d}\tau}_{G_{11}}\notag\\
        &\underbrace{-\int_0^t\int_{V_j\times\mathbb{R}_x^3} e^{-\nu(v)(t-\tau)}\partial_v^\beta \left(v\cdot\nabla_x\mathbf{P}^{\bot}f\right)w_{\ell,N}^2\partial_v^\beta \mathbf{P}^{\bot}f\,\mathrm{d}v\mathrm{d}x\mathrm{d}\tau}_{G_{12}}\notag\\
        &\underbrace{-\int_0^t\int_{V_j\times\mathbb{R}_x^3} e^{-\nu(v)(t-\tau)}\partial_v^\beta (v\cdot\nabla_x \mathbf{P}f)w_{\ell,N}^2\partial_v^\beta \mathbf{P}^{\bot}f\,\mathrm{d}v\mathrm{d}x\mathrm{d}\tau}_{G_{13}}\notag\\
        &\underbrace{+\int_0^t\int_{V_j\times\mathbb{R}_x^3} e^{-\nu(v)(t-\tau)}\partial_v^\beta \left[{\bf P}(v\cdot\nabla_xf)\right]w_{\ell,N}^2\partial_v^\beta \mathbf{P}^{\bot}f\,\mathrm{d}v\mathrm{d}x\mathrm{d}\tau}_{G_{14}}\notag\\
        &\underbrace{+\int_0^t\int_{V_j\times\mathbb{R}_x^3} e^{-\nu(v)(t-\tau)}\partial_v^\beta (\mathcal{K}\mathbf{P}^{\bot}f)w_{\ell,N}^2\partial_v^\beta \mathbf{P}^{\bot}f\,\mathrm{d}v\mathrm{d}x\mathrm{d}\tau}_{G_{15}}\notag\\
        &\underbrace{+\int_0^t\int_{V_j\times\mathbb{R}_x^3} e^{-\nu(v)(t-\tau)}\partial_v^\beta [\mathbf{P}^{\bot}\Gamma(f,f)]w_{\ell,N}^2\partial_v^\beta \mathbf{P}^{\bot}f\,\mathrm{d}v\mathrm{d}x\mathrm{d}\tau}_{G_{16}}\notag\\
        &\underbrace{+\int_0^t\int_{V_j\times\mathbb{R}_x^3} e^{-\nu(v)(t-\tau)}\partial_v^\beta (\mathbf{P}^{\bot}S)w_{\ell,N}^2\partial_v^\beta \mathbf{P}^{\bot}f\,\mathrm{d}v\mathrm{d}x\mathrm{d}\tau}_{G_{17}}\,.
\end{align}
We have, for the first term of the right-hand side of the above inequality, that
\[\|e^{-\nu(v)t}w_{\ell,N}\partial_v^\beta{\bf P}^\bot f_0\|^2_{L^2(V_j)L_x^2(\mathbb{R}^3)}\lesssim\|w_{\ell,N}\partial_v^\beta{\bf P}^\bot f_0\|^2_{L^2(V_j)L_x^2(\mathbb{R}^3)}\,.\]
For $G_{11}$, we have
\begin{align*}
    G_{11}&\lesssim\sum_{|\beta'|=1}^{|\beta|}\sup_t\int_{V_j\times\mathbb{R}_x^3}\nu^{-1}(v)\partial_v^{\beta'}[\nu(v)]\partial_v^{\beta-\beta'}{\bf P}^\bot f\partial_v^\beta{\bf P}^\bot f\,{\rm d}v{\rm d}x\,.
\end{align*}
For $G_{12}$, we deduce that
\begin{align*}
    G_{12}&\lesssim\sup_t\int_{V_j\times\mathbb{R}_x^3}\nu^{-1}(v)w^2_{\ell,N}\partial_v^\beta\left(v\cdot\nabla_x{\bf P}^\bot f\right)\partial_v^\beta{\bf P}^\bot f\,{\rm d}v{\rm d}x\\
    &\lesssim\sup_t\int_{V_j\times\mathbb{R}_x^3}\nu^{-1}(v)w^2_{\ell,N}\,v\cdot\nabla_x(\partial_v^\beta{\bf P}^\bot f)\partial_v^\beta{\bf P}^\bot f\,{\rm d}v{\rm d}x\\
    &\quad+\sup_t\int_{V_j\times\mathbb{R}_x^3}\nu^{-1}(v)w_{\ell,N}^2\nabla_x\partial_v^{\beta-e_i}{\bf P}^\bot f\partial_v^\beta{\bf P}^\bot f\,{\rm d}v{\rm d}x\\
    &\lesssim\sup_t\int_{V_j\times\mathbb{R}_x^3}w_{\ell,N}\partial_v^\beta{\bf P}^\bot f\,w_{\ell,\beta-e_i}\nabla_x\partial_v^{\beta-e_i}{\bf P}^\bot f,{\rm d}v{\rm d}x\\
    &\lesssim\sup_t\sum_{|\alpha_1|=1}\|w_{\ell,\beta-e_i}\partial_x^{\alpha_1}\partial_v^{\beta-e_i}{\bf P}^\bot f\|^2_{L^2(V_j)L_x^2(\mathbb{R}^3)}+\eta\sup_t\|w_{\ell,N}\partial_v^\beta{\bf P}^\bot f\|_{L^2(V_j)L_x^2(\mathbb{R}^3)}^2\,.
\end{align*}
For $G_{13}$, it follows that
\begin{align*}
    G_{13}&\lesssim\sup_t\int_{V_j\times\mathbb{R}_x^3}\nu^{-1}(v)w^2_{\ell,N}\partial_v^\beta(v\cdot\nabla_x{\bf P}f)\partial_v^\beta{\bf P}^\bot f\,{\rm d}v{\rm d}x\\
    &\lesssim\sup_t\int_{V_j\times\mathbb{R}_x^3}\nabla_x(a,b,c)\mu^\delta w_{\ell,N}\partial_v^\beta{\bf P}^\bot f\,{\rm d}v{\rm d}x\\
    &\lesssim\sup_t\|\nabla_x(a,b,c)|v|^{-1}\mu^\delta\|_{L^2(V_j)L_x^2(\mathbb{R}^3)}^2+\eta\|w_{\ell,N}\partial_v^\beta{\bf P}^\bot f\|^2_{L^2(V_j)L_x^2(\mathbb{R}^3)}\\
    &\lesssim\frac{1}{2^{2j}}|(a,b,c)|^2_{\dot{H}^1}+\eta\|w_{\ell,N}\partial_v^\beta{\bf P}^\bot f\|^2_{L^2(V_j)L_x^2(\mathbb{R}^3)}\,.
\end{align*}
It can be derived from the estimate of $J_{17}$ that
\begin{align*}
    G_{14}&\lesssim\sup_t\int_{V_j\times\mathbb{R}^3_x}\nu^{-1}(v)w_{\ell,N}^2\partial_v^\beta[{\bf P}(v\cdot\nabla_x f)]\partial_v^\beta{\bf P}^\bot f\,{\rm d}v{\rm d}x\\
    &\lesssim\sup_t\int_{V_j\times\mathbb{R}_x^3}\tilde{m}(t,x)\mu^\delta w_{\ell,N}^2\partial_v^\beta{\bf P}^\bot f\,{\rm d}v{\rm d}x\\
    &\lesssim\sup_t\|\tilde{m}(t,x)|v|^{-1}\mu^\delta\|_{L^2(V_j)L_x^2(\mathbb{R}^3)}^2+\eta\sup_t\|w_{\ell,N}\partial_v^\beta{\bf P}^\bot f\|^2_{L^2(V_j)L_x^2(\mathbb{R}^3)}\\
    &\lesssim\frac{1}{2^{2j}}\sup_t\|f\|_{L_v^2\dot{H}_x^1}^2+\eta\sup_t\|w_{\ell,N}\partial_v^\beta{\bf P}^\bot f\|^2_{L^2(V_j)L_x^2(\mathbb{R}^3)}\,.
\end{align*}
It's obvious that we have
\begin{align*}
    G_{15}\lesssim\sup_t\int_{V_j\times\mathbb{R}_x^3}\nu^{-1}(v)w^2_{\ell,N}\partial_v^\beta(\mathcal{K}{\bf P}^\bot f)\partial_v^\beta{\bf P}^\bot f\,{\rm d}v{\rm d}x\,.
\end{align*}
It can be derived from the estimate of $J_{19,1}$
that
\begin{align*}
    G_{16}&\lesssim\sup_t\int_{V_j\times\mathbb{R}_x^3}\nu^{-1}(v)w^2_{\ell,N}\partial_v^\beta\Gamma(f,f)\partial_v^\beta{\bf P}^\bot f\,{\rm d}v{\rm d}x\\
    &\lesssim\sum_{\beta_1+\beta_2=\beta}\sup_t\int_{V_j\times\mathbb{R}_x^3}\nu^{-1}(v)w^2_{\ell.N}\Gamma(\partial_v^{\beta_1}f,\partial_v^{\beta_2}f)\partial_v^\beta{\bf P}^\bot f\,{\rm d}v{\rm d}x\\
    &\lesssim\frac{1}{2^{2j}}\epsilon^2\sup_t|(a,b,c)|^2_{\dot{B}^\frac{1}{2}_{2,\infty}\cap\dot{H}^1}+\eta\sup_t\|w_{\ell,N}\partial_v^\beta{\bf P}^\bot f\|^2_{L_v^2(V_j)L_x^2(\mathbb{R}^3)}\\
    &\quad+\epsilon^2\sum_{\substack{1\le|\alpha'|+|\beta'|\le N\\|\alpha'|\le 1}}\sup_t\|w_{\ell,\beta'}\partial_x^{\alpha'}\partial_v^{\beta'}{\bf P}^\bot f\|^2_{L^2_v(\widetilde{V}_j)L_x^2(\mathbb{R}^3)}\,.
\end{align*}
\noindent We have for $G_{17}$ that
\begin{align*}
    G_{17}&\lesssim\sup_t\int_{V_j\times\mathbb{R}_x^3}\nu^{-1}(v)w_{\ell,N}^2\partial_v^\beta{\bf P}^\bot S\,\partial_v^\beta{\bf P}^\bot f\,{\rm d}v{\rm d}x\\
    &\lesssim\sup_t\|\nu^{-1}(v)w_{\ell,N}\partial_v^\beta{\bf P}^\bot S\|^2_{L^2(V_j)L_x^2}+\eta\sup_t\|w_{\ell,N}\partial_v^\beta{\bf P}^\bot f\|^2_{L^2(V_j)L_x^2}\,.
\end{align*}
Inserting all the relevant estimates into \eqref{pp6}, taking the supremum over $t$ and summing over $j\in\mathbb{Z^+}$, we combine the estimates from all cases to obtain, by Lemma \ref{lemma5.1}, that
\begin{align*}
    &\hspace{3mm}\sum_{j\in\mathbb{Z}^+}\sum_{1\le|\alpha|+|\beta|\le N}\sup_t\|w_{\ell,\beta}\partial_x^\alpha\partial_v^\beta{\bf P}^\bot f\|^2_{L^2(V_j)L_x^2(\mathbb{R}^3)}\\
    &\lesssim\sum_{1\le|\alpha|+|\beta|\le N}\|w_{\ell,\beta}\partial_x^\alpha\partial_v^\beta{\bf P}^\bot f_0\|^2+\sup_t\|f\|^2_{L_v^2(\dot{H}^1\cap\dot{H}^N)}+\sup_t|(a,b,c)|^2_{\dot{B}_{2,\infty}^\frac{1}{2}}\\
    &\quad+\sup_t\|{\bf P}^\bot f\|^2+\sum_{j\in\mathbb{Z}^+}\sum_{1\le|\alpha|+|\beta|\le N}\sup_t\|\nu^{-1}(v)w_{\ell,\beta}\partial_x^\alpha\partial_v^\beta S\|^2_{L_v^2(V_j)L_x^2(\mathbb{R}^3)}\\
    &\quad+\epsilon^2\sum_{j\in\mathbb{Z}^+}\sup_t\|w_{\ell,0}{\bf P}^\bot f\|^2_{L_v^2(V_j)L_x^2(\mathbb{R}^3)}+\eta\sum_{|\alpha|+|\beta|\le N}\sup_t\|w_{\ell,\beta}\partial_x^\alpha\partial_v^\beta{\bf P}^\bot f\|^2\,.
\end{align*}
This completes the proof of Lemma \ref{lemma8.3}.
\end{proof}
\begin{lemma}\label{lemma8.4}
    Under the assumption of Lemma \ref{lemma8.3}, we have the following result:
    \begin{align*}
        \hspace{3mm}\sum_{j\in\mathbb{Z}^+}\sup_t\|w_{\ell,0}{\bf P}^\bot f\|^2_{L^2(V_j)L_x^2(\mathbb{R}^3)}
        \lesssim&\|w_{\ell,0}{\bf P}^\bot f_0\|^2+\sup_t\|f\|^2_{L_v^2(\dot{H}^1)}+\sup_t\|{\bf P}^\bot f\|^2+\eta\sup_t\|w_{\ell,0}{\bf P}^\bot f\|^2\\
        &\quad+\sup_t|(a,b,c)|^2_{\dot{B}^\frac{1}{2}_{2,\infty}\cap\dot{H}^2}+\sum_{j\in\mathbb{Z}^+}\sup_t\|\nu^{-1}(v)w_{\ell,0}{\bf P}^\bot S\|^2_{L^2(V_j)L_x^2(\mathbb{R}^3)}\,.
    \end{align*}
\end{lemma}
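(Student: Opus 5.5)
The plan is to repeat the argument of Lemma \ref{lemma8.3} at the level $|\alpha|=|\beta|=0$, i.e. the localized counterpart of Lemma \ref{lemma5.4}. I start from the microscopic equation \eqref{pp4}, multiply by $w_{\ell,0}^2{\bf P}^\bot f$, and solve the resulting ODE in $t$ pointwise in $(x,v)$ with integrating factor $e^{-\nu(v)t}$. Integrating over $V_j\times\mathbb{R}^3_x$ gives $\|w_{\ell,0}{\bf P}^\bot f\|^2_{L^2(V_j)L^2_x}$ bounded by the initial term $\|w_{\ell,0}{\bf P}^\bot f_0\|^2_{L^2(V_j)L^2_x}$ plus time-convolution integrals of the source terms $v\cdot\nabla_x{\bf P}f$, ${\bf P}(v\cdot\nabla_x f)$, $\mathcal{K}{\bf P}^\bot f$, ${\bf P}^\bot\Gamma(f,f)$ and ${\bf P}^\bot S$. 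The transport term $v\cdot\nabla_x{\bf P}^\bot f$ drops out after integration in $x$. In each integral I insert $\nu\nu^{-1}$ and use $\tilde c_j<\nu(v)\le c_j$ on $V_j$, so that $\int_0^t e^{-\tilde c_j(t-\tau)}c_j\,{\rm d}\tau\lesssim 1$ uniformly in $j$. Each term is then reduced to $\sup_t\int_{V_j\times\mathbb{R}^3_x}\nu^{-1}w_{\ell,0}^2(\cdots){\bf P}^\bot f\,{\rm d}v{\rm d}x$, exactly as for $G_1$--$G_3$.

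The individual terms are handled as follows.

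\emph{Macroscopic source terms.} For $v\cdot\nabla_x{\bf P}f\sim\nabla_x(a,b,c)\mu^\delta$ and for ${\bf P}(v\cdot\nabla_x f)=\tilde m(t,x)\mu^\delta$, I apply Young's inequality and use $|v|^{-1}\lesssim 2^{-j}$ on $V_j$, with the extra weights absorbed by $\mu^\delta$, as in $G_{13}$ and $G_{14}$. This yields $2^{-2j}\big(|(a,b,c)|^2_{\dot H^1}+\|f\|^2_{L^2_v(\dot H^1)}\big)+\eta\|w_{\ell,0}{\bf P}^\bot f\|^2_{L^2(V_j)L^2_x}$.

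\emph{Source $S$.} Cauchy's inequality gives $\|\nu^{-1}w_{\ell,0}{\bf P}^\bot S\|^2_{L^2(V_j)L^2_x}$ plus an $\eta$-term, as in $G_{10}$ and $G_{17}$.

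\emph{Collision term $\Gamma$.} I reuse verbatim the $V_j$-localized bound obtained in the treatment of $J_{25,1}$, which is based on Lemma \ref{lemma5.2} with $L^4$--$L^4$--$L^2$ and $L^\infty$--$L^2$--$L^2$ splittings. It gives $\epsilon^2 2^{-2j}|(a,b,c)|^2_{\dot B^{1/2}_{2,\infty}\cap\dot H^2}+\eta\|w_{\ell,0}{\bf P}^\bot f\|^2_{L^2(V_j)L^2_x}+\epsilon^2\|w_{\ell,0}{\bf P}^\bot f\|^2_{L^2(\widetilde V_j)L^2_x}$.

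\emph{Compact term $\mathcal{K}$.} This leaves $\sup_t\int_{V_j\times\mathbb{R}^3_x}\nu^{-1}w_{\ell,0}^2\,\mathcal{K}{\bf P}^\bot f\,{\bf P}^\bot f$, which I will only bound after summing over $j$.

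Next I take $\sup_t$ and sum over $j\in\mathbb{Z}^+$. The factors $2^{-2j}$ are summable. Since the enlarged annuli $\widetilde V_j$ overlap only finitely, $\epsilon^2\sum_j\|w_{\ell,0}{\bf P}^\bot f\|^2_{L^2(\widetilde V_j)L^2_x}$ is bounded by $C\epsilon^2\sum_j\sup_t\|w_{\ell,0}{\bf P}^\bot f\|^2_{L^2(V_j)L^2_x}$. This term, together with the $\eta$-terms, is absorbed into the left-hand side. The summed $\mathcal{K}$ contributions are treated by Lemma \ref{lemma5.1} in the same way as $J_{2,1}$ and $J_{24}$. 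This produces $\eta\sum_j\sup_t\|w_{\ell,0}{\bf P}^\bot f\|^2_{L^2(V_j)L^2_x}$, which is again absorbed, plus $\sup_t\|{\bf P}^\bot f\|^2$ and $\eta\sup_t\|w_{\ell,0}{\bf P}^\bot f\|^2$. Together with the initial term $\sum_j\|w_{\ell,0}{\bf P}^\bot f_0\|^2_{L^2(V_j)L^2_x}=\|w_{\ell,0}{\bf P}^\bot f_0\|^2$, this gives the stated inequality.

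The main obstacle is the $\mathcal{K}$ term. Its kernel is nonlocal in $v$, so localizing the test function to $V_j$ does not localize $\mathcal{K}{\bf P}^\bot f$, and the bound must be summable in $j$ without reproducing the full sum with a constant $\ge 1$. I rely on Lemma \ref{lemma5.1} to give, after summation, an $\eta$-small multiple of the localized norms plus the unweighted $\|{\bf P}^\bot f\|^2$. If that lemma only gives the bound per annulus, my fallback is to split the kernel into the near-diagonal part, which maps $\widetilde V_j$ to $V_j$, and the far part, which has Gaussian decay in $|v|$. The far part is then controlled by $2^{-j}\|{\bf P}^\bot f\|$, whose sum over $j$ converges.
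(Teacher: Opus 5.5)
Your proposal is correct and follows the paper's proof essentially step for step. The paper also starts from the $w_{\ell,0}^2$-weighted microscopic equation, integrates over $V_j\times\mathbb{R}^3_x$ (the transport term drops out), and uses $e^{-\nu(v)(t-\tau)}\nu(v)\lesssim e^{-\tilde c_j(t-\tau)}c_j$. It bounds the two macroscopic terms and the ${\bf P}^\bot S$ term by Young's inequality with the $2^{-j}$ gain on $V_j$, treats $\Gamma$ by the $J_{25,1}$ estimate, and controls the $\mathcal{K}$ term only after summing in $j$ via Lemma~\ref{lemma5.1}. Since Lemma~\ref{lemma5.1} is already stated in summed form, your fallback splitting of the kernel is not needed.
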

\begin{proof}
    Similar to \eqref{5.13}, one gets
    \begin{align}\label{pp8}
        \left\|w_{\ell,0}\mathbf{P}^{\bot}f\right\|^2_{L^2(V_j)L_x^2(\mathbb{R}^3)}&\lesssim\left\|w_{\ell,0}\mathbf{P}^{\bot}f_0\right\|^2_{L^2(V_j)L_x^2(\mathbb{R}^3)}
        \underbrace{-\int_0^t\int_{V_j\times\mathbb{R}_{x}^3} e^{-\nu(v)(t-\tau)}(v\cdot\nabla_x \mathbf{P}f)w_{\ell,0}^2\mathbf{P}^{\bot}f\,\mathrm{d}v\mathrm{d}x\mathrm{d}\tau}_{G_{18}}\notag\\
        &\quad\underbrace{+\int_0^t\int_{V_j\times\mathbb{R}_{x}^3} e^{-\nu(v)(t-\tau)}{\bf P}(v\cdot\nabla_xf)w_{\ell,0}^2\mathbf{P}^{\bot}f\,\mathrm{d}v\mathrm{d}x\mathrm{d}\tau}_{G_{19}}\notag\\
        &\quad\underbrace{+\int_0^t\int_{V_j\times\mathbb{R}_{x}^3} e^{-\nu(v)(t-\tau)}(\mathcal{K}\mathbf{P}^{\bot}f)w_{\ell,0}^2\mathbf{P}^{\bot}f\,\mathrm{d}v\mathrm{d}x\mathrm{d}\tau}_{G_{20}}\notag\\
        &\quad\underbrace{+\int_0^t\int_{V_j\times\mathbb{R}_{x}^3} e^{-\nu(v)(t-\tau)}[\mathbf{P}^{\bot}\Gamma(f,f)]w_{\ell,0}^2\mathbf{P}^{\bot}f\,\mathrm{d}v\mathrm{d}x\mathrm{d}\tau}_{G_{21}}\notag\\
        &\quad\underbrace{+\int_0^t\int_{V_j\times\mathbb{R}_{x}^3} e^{-\nu(v)(t-\tau)}(\mathbf{P}^{\bot}S)w_{\ell,0}^2\mathbf{P}^{\bot}f\,\mathrm{d}v\mathrm{d}x\mathrm{d}\tau}_{G_{22}}\,.
    \end{align}
  Now we estimate the above terms separately. For $G_{18}$, it follows
    \begin{align*}
        G_{18}&\lesssim\sup_t\int_{V_j\times\mathbb{R}_{x}^3}\nu^{-1}(v)w^2_{\ell,0}(v\cdot\nabla_x{\bf P}f){\bf P}^\bot f\,{\rm d}v{\rm d}x\\
        &\lesssim\sup_t\int_{V_j\times\mathbb{R}_{x}^3}\partial_x(a,b,c)|v|^{-1}\mu^\delta w_{\ell,0}{\bf P}^\bot f\,{\rm d}v{\rm d}x\\
        &\lesssim\sup_t\|\partial_x(a,b,c)\frac{1}{2^j}\mu^\delta\|^2+\eta\sup_t\|w_{\ell,0}{\bf P}^\bot f\|_{L^2(V_j)L_x^2(\mathbb{R}^3)}^2\\
        &\lesssim\frac{1}{2^{2j}}|(a,b,c)|^2_{\dot{H}^1}+\eta\sup_t\|w_{\ell,0}{\bf P}^\bot f\|_{L^2(V_j)L_x^2(\mathbb{R}^3)}^2\,.
    \end{align*}
    For $G_{19}$, we set $m_2(t,x):=\int_{\mathbb{R}_v^3}v\cdot\nabla_xf\phi_i\,{\rm d}v$, then we obtain
    \begin{align*}
        G_{19}&\lesssim\sup_t\int_{V_j\times\mathbb{R}_{x}^3}\nu^{-1}(v){\bf P}(v\cdot\nabla_xf)w^2_{\ell,0}{\bf P}^\bot f\,{\rm d}v{\rm d}x\\
        &\lesssim\sup_t\int_{V_j\times\mathbb{R}_{x}^3}
        m_2(t,x)|v|^{-1}\mu^\delta w_{\ell,0}{\bf P}^\bot f\,{\rm d}v{\rm d}x\\
        &\lesssim\sup_t\|m_2(t,x)\frac{1}{2^{2j}}\mu^\delta\|^2+\eta\sup_t\|w_{\ell,0}{\bf P}^\bot f\|^2_{L^2(V_j)L_x^2(\mathbb{R}_x^3)}\\
        &\lesssim\frac{1}{2^{2j}}\sup_t\|f\|^2_{L_v^2(\dot{H}^1)}+\eta\sup_t\|w_{\ell,0}{\bf P}^\bot f\|^2_{L^2(V_j)L_x^2(\mathbb{R}_x^3)}\,.
    \end{align*}
    It's obviously to see that
    \begin{align*}
        G_{20}&\lesssim\sup_t\int_{V_j\times\mathbb{R}_{x}^3}\nu^{-1}(v)w^2_{\ell,0}\mathcal{K}{\bf P}^\bot f{\bf P}^\bot f\,{\rm d}v{\rm d}x\,.
    \end{align*}
    For $G_{21}$, we deduce from the estimate of $J_{25,1}$ that
    \begin{align*}
        G_{21}&\lesssim\sup_t\int_{V_j\times\mathbb{R}_x^3}\nu^{-1}(v)w^2_{\ell,0}\Gamma(f,f){\bf P}^\bot f\,{\rm d}v{\rm d}x\\
        &\lesssim\frac{1}{2^{2j}}\epsilon^2\sup_t|(a,b,c)|^2_{\dot{B}^\frac{1}{2}_{2,\infty}\cap\dot{H}^2}+\eta\|w_{\ell,0}{\bf P}^\bot f\|^2_{L_v^2(V_j)L_x^2(\mathbb{R}^3)}
        +\epsilon^2 \|w_{\ell,0}{\bf P}^\bot f\|^2_{L_v^2(\widetilde{V}_j)L_x^2(\mathbb{R}^3)}\,.
    \end{align*}
    \noindent One can derive from Cauchy's and Young's inequality that
    \begin{align*}
        G_{22}&\lesssim\sup_t\int_{V_j\times\mathbb{R}_x^3}\nu^{-1}(v){\bf P}^\bot S\,w_{\ell,0}^2{\bf P}^\bot f\,{\rm d}v{\rm d}x\\
        &\lesssim\sup_t\|\nu^{-1}(v)w_{\ell,0}{\bf P}^\bot S\|^2_{L^2(V_j)L_x^2(\mathbb{R}^3)}+\eta\sup_t\|w_{\ell,0}{\bf P}^\bot f\|^2_{L^2(V_j)L_x^2(\mathbb{R}^3)}\,.
    \end{align*}
    Plugging all the estimates into \eqref{pp8}, then taking the supremum over $t$ and summing over $j\in\mathbb{Z}^+$ yields the desired result.  The proof of Lemma \ref{lemma8.4} is finished.
    \end{proof}
We are now ready to close the {\it a prior} estimate and obtain the global existence of the solution of the system \eqref{Cauchy}. We have the following proposition.  
\begin{proposition}\label{existence}
   Let $-3<\gamma<0, N\ge5$ and $\ell\ge N-\frac{\gamma}{2}$, assume that $f_0$ and $S$ satisfy
   \begin{align}\label{initial}
       \|f_0\|_X+\|S\|_{\mathcal{Z}}\le\mathcal{M}_0,
   \end{align}
   where $\mathcal{M}_0>0$ is suitably small. Then there exists a unique global solution of the system \eqref{Cauchy} such that
    \begin{align}\label{solution}
        \sup_t\|f\|_X\lesssim\|f_0\|_X+\|S\|_{\mathcal{Z}}\lesssim\mathcal{M}_0\,\notag
    \end{align}
    
\end{proposition}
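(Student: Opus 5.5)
The proof follows the standard scheme: local existence plus continuation, driven by closing the \emph{a priori} estimate \eqref{aps}. For local existence we use an iteration in the space defined by $\|\cdot\|_X$, with the norms of \eqref{initial}, on a short interval $[0,T_*]$. The linearized problem $\partial_t f+v\cdot\nabla_x f+\nu f=\mathcal{K}f^n+\Gamma(f^n,f^n)+S$ is solved along characteristics. The same weighted estimates (Lemmas \ref{lemma5.3}, \ref{lemma5.4}, \ref{lemma8.3}, \ref{lemma8.4}) applied on a finite interval give uniform bounds and contraction in a weaker norm when $T_*$ is small. The local solution therefore exists, is unique, and stays in $X$. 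The main work is to show that whenever $\sup_{0\le t\le T}\|f\|_X+\|S\|_{\mathcal Z}\le\epsilon$, we in fact have $\sup_{0\le t\le T}\|f\|_X\lesssim \|f_0\|_X+\|S\|_{\mathcal Z}$ with a constant independent of $T$. A continuity argument then extends the solution globally once $\mathcal M_0\ll\epsilon$.

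To close the estimate we take a suitable linear combination of the preceding results, with coefficients ordered by the dependency chain. Let $\mathcal W:=\sum_{|\alpha|+|\beta|\le N}\sup_t\|w_{\ell,\beta}\partial_x^\alpha\partial_v^\beta{\bf P}^\bot f\|^2$ and $\mathcal V:=\sum_{j\in\mathbb Z^+}\sum_{|\alpha|+|\beta|\le N}\sup_t\|w_{\ell,\beta}\partial_x^\alpha\partial_v^\beta{\bf P}^\bot f\|^2_{L^2(V_j)L^2_x}$. Lemmas \ref{lemma8.3} and \ref{lemma8.4} bound $\mathcal V$ by data, the unweighted norms, $\eta\mathcal W$ and $\epsilon^2\mathcal V$. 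Lemmas \ref{lemma5.3} and \ref{lemma5.4} bound $\mathcal W$ by data, the unweighted norms, $\eta\mathcal W$ and $\eta'\mathcal V$. Adding a large multiple $A$ of the first pair to the second, with $\eta'\ll A^{-1}$, absorbs $\mathcal V$ and $\mathcal W$. Next, Lemmas \ref{lemma4.2} and \ref{lemma4.3} control $\|f\|_{L^2_v(\dot H^1\cap\dot H^N)}$ and $\|{\bf P}^\bot f\|$ up to $\eta\|{\bf P}^\bot f\langle v\rangle^{-\gamma/2}\|^2$ and $|(a,b,c)|^2_{\dot B^{1/2}_{2,\infty}\cap\dot H^2}$. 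Proposition \ref{besov} controls the Besov part up to $\eta\|{\bf P}^\bot f\langle v\rangle^{-\gamma/2}\|^2_{L^2_v(L^2\cap\dot H^2)}$.

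The key point is the choice of weight: since $\ell\ge N-\frac\gamma2\ge-\frac\gamma2$, we have $\langle v\rangle^{-\gamma/2}\le w_{\ell,0}$. Hence $\|\partial_x^\alpha{\bf P}^\bot f\langle v\rangle^{-\gamma/2}\|\le\|w_{\ell,0}\partial_x^\alpha{\bf P}^\bot f\|\le \mathcal W^{1/2}$. We also use $|(a,b,c)|_{\dot B^{1/2}_{2,\infty}}\lesssim\|f\|_{\widetilde L^2_v(\dot B^{1/2}_{2,\infty})}$ and $|(a,b,c)|_{\dot H^2}\lesssim\|f\|_{L^2_v(\dot H^2)}$, where $\dot H^2$ is interpolated between $\dot H^1$ and $\dot H^N$. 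Every cross term then reappears on the left of some inequality. We combine them as
\[
\text{Prop.~\ref{besov}}+\kappa_1(\text{L.~\ref{lemma4.2}}+\text{L.~\ref{lemma4.3}})+\kappa_2(\text{L.~\ref{lemma5.3}}+\text{L.~\ref{lemma5.4}}+A\,\text{L.~\ref{lemma8.3}}+A\,\text{L.~\ref{lemma8.4}}),
\]
where $\kappa_2$ is small relative to $\kappa_1$ and $\kappa_1$ is small relative to $1$. The $\eta$-terms are chosen small relative to the $\kappa$'s so that the unweighted and Besov quantities on the right of the weighted lemmas are dominated. The remaining terms $\epsilon^2(\cdots)$ are absorbed because $\epsilon$ is small. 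The source contributions sum exactly to $\|S\|^2_{\mathcal Z}$: this covers the $\widetilde L^2_v(\dot B^{-3/2}\cap\dot B^{1/2})$ part, the $\dot H^1\cap\dot H^N$ part, the weight $w_{\ell-(1+\varepsilon/2)\gamma,\beta}$, and the $\nu^{-1}$-weighted annular sums. The initial contributions are bounded by $\|f_0\|^2_X$. The result is $\sup_t\|f\|_X^2\le C(\|f_0\|_X^2+\|S\|_{\mathcal Z}^2)$.

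The main obstacle is the bookkeeping of this absorption hierarchy, which must be arranged so that it is not circular. Lemma \ref{lemma5.3} feeds $\mathcal V$ into $\mathcal W$ only with a small coefficient $\eta'$. In the other direction, Lemma \ref{lemma8.3} bounds $\mathcal V$ by $\eta\mathcal W$ plus $O(1)$ unweighted terms. Also, Lemma \ref{lemma4.3} produces $\|{\bf P}^\bot f\|$ on both sides with coefficient $\eta'$. We must check that $\eta$, $\eta'$, $\epsilon$ are chosen after $A$, $\kappa_1$, $\kappa_2$, which is legitimate since they come from Young's inequality with free parameters.

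Uniqueness follows by running the same energy scheme on the difference of two solutions in a lower-order norm, for example $\|\cdot\|_{L^2_v(\dot H^1)}$ together with the weighted ${\bf P}^\bot$ part. The bilinear estimates of Lemma \ref{lemma5.2} give a factor $\epsilon$, which forces the difference to vanish.
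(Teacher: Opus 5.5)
Your overall route is the paper's. Its proof of Proposition \ref{existence} consists of one sentence: the bound follows from a suitable combination of Proposition \ref{besov} and Lemmas \ref{lemma4.2}--\ref{lemma4.3}, \ref{lemma5.3}--\ref{lemma5.4}, \ref{lemma8.3}--\ref{lemma8.4}. Your identification of the cross terms is correct, in particular $\langle v\rangle^{-\gamma/2}\le w_{\ell,0}$, which puts every $\eta\|\langle v\rangle^{-\gamma/2}\partial_x^\alpha\mathbf{P}^\bot f\|^2$ inside $\mathcal W$. However, the bookkeeping you single out as the main obstacle does not close as you state it.

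First, Lemma \ref{lemma4.3} cannot share the weight $\kappa_1$ with Lemma \ref{lemma4.2}. To obtain a small coefficient $\eta$ in front of $\|\langle v\rangle^{-\gamma/2}\mathbf{P}^\bot f\|^2$, that proof converts the $\nu$-dissipation into damping via $\|\mathbf{P}^\bot f\|^2\le C_\eta\|\mathbf{P}^\bot f\|_\nu^2+\eta\|\langle v\rangle^{-\gamma/2}\mathbf{P}^\bot f\|^2$ with $C_\eta\sim\eta^{-1}$, so the damping rate is only $\lambda\lesssim\eta$. The linear terms $I_3$, $I_4$ therefore leave the $\dot H^1$-level quantities $\sup_t\|f\|^2_{L^2_v(\dot H^1)}$ and $\sup_t\|\nabla_x(a,b,c)\|^2$ on the right with a constant of order at least $\lambda^{-1}\gg1$. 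You absorb them into $\|f\|^2_{L^2_v(\dot H^1\cap\dot H^N)}$, which appears on the left only through Lemma \ref{lemma4.2}, with the same factor $\kappa_1$; that fails. There are two fixes. You can give Lemma \ref{lemma4.3} its own weight $\kappa_1'$ with $\kappa_2\ll\kappa_1'\ll\kappa_1$, making the hierarchy Besov, Lemma \ref{lemma4.2}, Lemma \ref{lemma4.3}, weighted block. Alternatively, send these terms to the Besov level using $\|f\|^2_{L^2_v(\dot H^1)}\lesssim\|f\|_{\widetilde L^2_v(\dot B^{1/2}_{2,\infty})}\|f\|_{\widetilde L^2_v(\dot B^{3/2}_{2,\infty})}$.

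Second, the small parameters cannot all be chosen after $A,\kappa_1,\kappa_2$. A parameter can be fixed last only if its companion constant multiplies data or $\epsilon$-terms. This holds for the $\eta$'s of Proposition \ref{besov} and Lemma \ref{lemma4.2}. It fails for the $\eta,\eta'$ inside the weighted block, which come from the splitting of $\mathcal K$ in Lemmas \ref{lemma5.1} and \ref{lemma9.3]}. There a smaller coefficient forces a larger velocity cutoff, hence a larger constant in front of $\sup_t\|\partial_x^\alpha f\|^2$ and $\sup_t\|\mathbf{P}^\bot f\|^2$. It fails likewise for the parameters of Lemma \ref{lemma4.3}. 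These constants are exactly what fix $\kappa_2$ and $\kappa_1'$, so the order you state is circular.

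It is repaired by noting that their thresholds involve only numbers fixed earlier. Add Lemmas \ref{lemma5.3}--\ref{lemma5.4} to $A$ times Lemmas \ref{lemma8.3}--\ref{lemma8.4}. Then you need $A\eta<1/2$ for the $\eta$ of Lemma \ref{lemma8.3}, but only $\eta'<A/2$ for the $\eta'$ of Lemma \ref{lemma5.3}. So $A=1$ already works, and it is $\eta$, not $\eta'$, that must be $\ll A^{-1}$. The $\eta$ of Lemma \ref{lemma4.3} need only be small compared with the inverse of the weighted-block constant. The correct order is therefore:
\begin{enumerate}
\item the weighted-block parameters and the parameters of Lemma \ref{lemma4.3};
\item $\kappa_1$, $\kappa_1'$, $\kappa_2$;
\item the $\eta$'s of Proposition \ref{besov} and Lemma \ref{lemma4.2};
\item $\epsilon$.
\end{enumerate}

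Finally, absorbing $\epsilon^2\mathcal V$ presupposes $\mathcal V<\infty$. This does not follow from $\sup_t\|f\|_X<\infty$, because the supremum in $t$ sits inside the sum over annuli. So $\mathcal V$ must be part of the bootstrapped quantity, with its finiteness supplied by the local theory.
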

\begin{proof}
    The desired result follows from a suitable combination of Proposition \ref{besov}, Lemmas \ref{lemma4.2}–\ref{lemma4.3}, Lemmas \ref{lemma5.3}–\ref{lemma5.4}, and Lemmas \ref{lemma8.3}–\ref{lemma8.4}.
\end{proof}

\section{Time decay rates for the difference of two solutions}\label{Sect-time-devay}
With the global existence established in Proposition \ref{existence}, this section is devoted to proving the asymptotic stability of the global solution of the Cauchy problem \eqref{Cauchy}.

Let $f_1$ and $f_2$ be the pair of solutions obtained in Proposition \ref{existence}. Then their initial data $f_1(0,x,v)$ and $f_2(0,x,v)$ satisfy \eqref{initial}, and we have
\[\sup_t\|(f_1,f_2)\|_X\lesssim\mathcal{M}_0\,.\]
Define $g:=f_1-f_2$, we derive that $g$ satisfy
\begin{align}\label{X1}
    \partial_t g+v\cdot\nabla_x g+\mathcal{L}g=\Gamma(g,f_1)+\Gamma(f_2,g)\,,
\end{align}
with the initial data
\[g(0,x,v)=f_1(0,x,v)-f_2(0,x,v)\,.\]
It's clear that $g_0$ satisfies the conditions in Proposition \ref{existence} and consequently
\[\sup_t\|g\|_{X}\lesssim\mathcal{M}_0\,.\]
Now we proceed to obtain the time decay of $g$. We begin with the following lemmas.
\begin{lemma}\label{prop6.2}
    Let $N\ge5, -3<\gamma<0$, then for $t>0$, it holds that
    \begin{align*}
       \sup_t(1+t)^{\frac{1}{2}}\|g\|_{\widetilde{L}_v^2(\dot{B}_{2,\infty}^{1}\cap\dot{B}_{2,\infty}^{2})}^2&\lesssim\|g_0\|_{\widetilde{L}_v^2(\dot{B}_{2,\infty}^{\frac{1}{2}}\cap\dot{B}_{2,\infty}^{2})}^2+\eta\sup_t(1+t)^{\frac{1}{2}}\left\|\mathbf{P}^{\bot}g\,\langle v\rangle^{-\frac{\gamma}{2}}\right\|_{L_v^2(\dot{H}_x^1\cap\dot{H}_x^2)}^2\\
       &\qquad+\mathcal{M}_0^2\|{\bf P}^\bot g\|_{L_v^2(\dot{H}_x^1\cap\dot{H}_x^2)}^2
        \,,
    \end{align*}
    where we choose both $\eta$ and $\mathcal{M}_0$ to be suitably small.
\end{lemma}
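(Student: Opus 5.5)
We argue as in Lemmas \ref{lemma3.2}--\ref{lemma3.3}, now applied to the linear-in-$g$ equation \eqref{X1}, splitting $g=g_L+g_H$ with $\mathscr{P}_L,\mathscr{P}_H$. There is no source term, and the nonlinearity $\Gamma(g,f_1)+\Gamma(f_2,g)$ has one factor bounded by $\mathcal{M}_0$.

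\textbf{Low frequencies.} First combine the Fourier energy identity for $\hat g_L$ with the macroscopic functional of Lemma \ref{Macro estimate for Low} (with $S\equiv0$). Then use the interpolation \eqref{000}. This gives
\[
\frac{\mathrm d}{\mathrm dt}|\hat g_L|_{L^2_v}^2+\lambda|\xi|^2|\hat g_L|_{L^2_v}^2\lesssim \eta|\xi|^2\big|\mathbf P^\bot\hat g\langle v\rangle^{-\frac\gamma2}\big|_{L^2_v}^2+\big|\hat\Gamma(g,f_1)\langle v\rangle^{-\frac\gamma2}\big|^2_{L^2_v}+\big|\hat\Gamma(f_2,g)\langle v\rangle^{-\frac\gamma2}\big|^2_{L^2_v}.
\]
Next multiply by $|\xi|^2$, localize with $\varphi_j^2$, and take $\sup_j$ of the $\dot B^1_{2,\infty}$-weighted version; this is \eqref{decay-low-eqn}.

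The initial-data term is $\sup_j\int e^{-\lambda t|\xi|^2}|\xi|^2|\varphi_j\hat g_0|^2\,d\xi$. We write $|\xi|^2=|\xi|\cdot|\xi|$ and use Lemma \ref{lem:max_estimate} with $\alpha=\frac12$ applied to $x=|\xi|^2$. This yields $(1+t)^{-\frac12}\|g_0\|^2_{\widetilde L^2_v(\dot B^{1/2}_{2,\infty})}$.

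For the linear microscopic term we factor out $(1+\tau)^{-\frac12}\sup_\tau(1+\tau)^{\frac12}\|\mathbf P^\bot g\langle v\rangle^{-\gamma/2}\|^2_{L^2_v(\dot H^1)}$. The estimate
\[
\int_0^t e^{-2^{2j}(t-\tau)}2^{2j}(1+\tau)^{-\frac12}\,d\tau\lesssim(1+t)^{-\frac12},
\]
uniformly in $j$, then gives the $\eta$-term. This bound follows by splitting $[0,t/2]\cup[t/2,t]$ and using Lemma \ref{lem:max_estimate}.

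For the nonlinear terms we use $|\Gamma(f,g)\langle v\rangle^{-\gamma/2}|_{L^2_v}\lesssim|f|_{H^2_v}|g|_{L^2_v}$ together with the product law \eqref{newinterpolation}, which turns the extra $|\xi|^2$ into a derivative on $g$. This bounds them by $\mathcal{M}_0^2$ times a decaying norm of $g$. That norm is either $\|g\|_{\widetilde L^2_v(\dot B^1_{2,\infty})}$, which is absorbed after multiplying by $(1+t)^{1/2}$ since $\mathcal M_0$ is small, or $\|\mathbf P^\bot g\|_{L^2_v(\dot H^1\cap\dot H^2)}$, which produces the last term of the statement.

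\textbf{High frequencies.} Here the dissipation is $|\hat g_H|^2(1+|\xi|^2)$, with no degeneracy. We therefore get an $e^{-\lambda(t-\tau)}$ kernel, and $\int_0^te^{-\lambda(t-\tau)}(1+\tau)^{-1/2}\,d\tau\lesssim(1+t)^{-1/2}$ gives the decay directly. The $\dot B^2_{2,\infty}$ part is handled as in Lemma \ref{lemma3.3}, using \eqref{111} and $L^\infty_x$ control of $f_1,f_2$ by $\mathcal M_0$. Adding the low- and high-frequency bounds and taking $\sup_t(1+t)^{1/2}$ closes the estimate.

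\textbf{Main obstacle.} The low-frequency nonlinear term is the hard step. To obtain decay one must place the $(1+\tau)^{-1/2}$ weight on $g$ while controlling $f_1$ only in non-decaying norms. This requires choosing the Besov indices in \eqref{newinterpolation} so that $g$ appears at regularity $\dot B^1$, matching the decay norm, while $f_i$ appears at regularity $\dot B^{1/2}\cap\dot H^2$. If that index bookkeeping fails, I would fall back on $L^\infty_x$ bounds for $f_i$ via Lemma \ref{interpolation-besov-other}.
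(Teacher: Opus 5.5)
Your proposal follows the paper's proof essentially step by step. It uses the same low/high-frequency splitting and the same convolution bound $\int_0^t e^{-2^{2j}(t-\tau)}2^{2j}(1+\tau)^{-\frac12}\,d\tau\lesssim(1+t)^{-\frac12}$. It also makes the same choice $s_1=\frac12$, $s_2=1$ in \eqref{newinterpolation}, so that $\Gamma$ is measured in $\dot{B}^0_{2,\infty}$ with $g$ carrying the $\dot{B}^1_{2,\infty}$ norm, and uses \eqref{111} with an $e^{-\lambda(t-\tau)}$ kernel at high frequency.

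One bookkeeping correction concerns the term $\mathcal{M}_0^2\|\mathbf{P}^{\bot}g\|^2_{L^2_v(\dot{H}^1_x\cap\dot{H}^2_x)}$. It does not arise from the low-frequency nonlinearity, which gives only $\mathcal{M}_0^2\|g\|^2_{\widetilde{L}^2_v(\dot{B}^1_{2,\infty})}$. It comes from the high-frequency $\dot{B}^2_{2,\infty}$ estimate, where \eqref{111} also produces $\|f_1\|_{\dot{B}^2_{2,\infty}H^2_v}\|g\|_{L^\infty_xL^2_v}$. So you need $L^\infty_x$ control of $g$ itself, not only of $f_1,f_2$. The paper splits it into $\mathbf{P}g$, bounded via $\dot{B}^1_{2,\infty}\cap\dot{H}^2$, and $\mathbf{P}^{\bot}g$, bounded via $\dot{H}^1\cap\dot{H}^2$.
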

\begin{proof}
Firstly, we consider the low frequency part.
Similar to \eqref{l-f}, we obtain for $g$ that
    \begin{align*}
        \left|\hat{g}_L\right|_{L_v^2}^2&\lesssim e^{-|\xi|^2t}\left|\hat{g}_0\right|_{L_v^2}^2+\eta\int_0^t e^{-|\xi|^2(t-\tau)}|\xi|^2\left|{\bf P}^\bot f\,\langle v\rangle^{-\frac{\gamma}{2}}\right|_{L_v^2}^2 \,\mathrm{d}\tau\\
        &\qquad+\int_0^t e^{-|\xi|^2(t-\tau)}\left|\Gamma(g,f_1)\,\langle v\rangle^{-\frac{\gamma}{2}}\right|_{L_v^2}^2 \,\mathrm{d}\tau+\int_0^t e^{-|\xi|^2(t-\tau)}\left|\Gamma(g,f_1)\,\langle v\rangle^{-\frac{\gamma}{2}}\right|_{L_v^2}^2 \,\mathrm{d}\tau\,.
    \end{align*}
     Multiplying both sides of the above inequality by $|\xi|^2\varphi_j^2$, then integrating the result with equation respect to $\xi$ and taking the supremum over $j\in\mathbb{Z}$, one obtains
\begin{align}
    \|g_L\|_{\widetilde{L}_v^2(\dot{B}_{2,\infty}^1)}^2&\lesssim \underbrace{\sup_{j\in\mathbb{Z}}\int_{\mathbb{R}_{\xi}^3} e^{- t|\xi|^2}|\xi|^2\left|\varphi_j\hat{g}_0\right|_{L_v^2}^2\,\mathrm{d}\xi}_{O_1}+\underbrace{\eta\sup_{j\in\mathbb{Z}}\int_0^t \int_{\mathbb{R}_\xi^3}e^{-|\xi|^2(t-\tau)}|\xi|^2\left||\xi|\mathbf{P}^{\bot}\hat{g}\,\langle v\rangle^{-\frac{\gamma}{2}}\varphi_j\right|_{L_v^2}^2\,\mathrm{d}\xi\mathrm{d}\tau}_{O_2}\notag\\
    &\quad+\underbrace{\sup_{j\in\mathbb{Z}}\int_0^t \int_{\mathbb{R}_\xi^3}e^{-|\xi|^2(t-\tau)}|\xi|^2\left|\hat{\Gamma}(g,f_1)\langle v\rangle^{-\frac{\gamma}{2}}\varphi_j\right|_{L_v^2}^2\,\mathrm{d}\xi\mathrm{d}\tau}_{O_3}\notag\\
    &\quad+\underbrace{\sup_{j\in\mathbb{Z}}\int_0^t \int_{\mathbb{R}_\xi^3}e^{-|\xi|^2(t-\tau)}|\xi|^2\left|\hat{\Gamma}(f_2,g)\langle v\rangle^{-\frac{\gamma}{2}}\varphi_j\right|_{L_v^2}^2\,\mathrm{d}\xi\mathrm{d}\tau}_{O_4}\,.\notag
\end{align}
For $O_1$, since $|\xi|\sim2^j$ and it holds from Lemma \ref{lem:max_estimate} that
\[e^{-2^{2j}t}2^{mj}\lesssim(1+t)^{-\frac{m}{2}}\,,\] one has
\begin{align*}
    O_1\lesssim \sup_{j\in\mathbb{Z}}\int_{\mathbb{R}_\xi^3} e^{-2^{2j}t}2^j\left|\varphi_j2^{\frac{j}{2}}\hat{g}_0\right|_{L_v^2}^2\,\mathrm{d}\xi\lesssim(1+t)^{-\frac{1}{2}}\|g_0\|_{\widetilde{L}_v^2(\dot{B}_{2,\infty}^\frac{1}{2})}^2\,.
\end{align*}
It's easy to obtain
\begin{align*}
    O_{2}&\lesssim\eta\sup_t(1+t)^{\frac{1}{2}}\left\|\mathbf{P}^{\bot}g\langle v\rangle^{-\frac{\gamma}{2}}\right\|_{\widetilde{L}_v^2(\dot{B}_{2,\infty}^1)}^2\sup_{j\in\mathbb{Z}}\int_0^t e^{-2^{2j} (t-\tau)}2^{2j}(1+\tau)^{-\frac{1}{2}}\,\mathrm{d}\tau\\
    &\lesssim\eta(1+t)^{-\frac{1}{2}}\sup_t(1+t)^{\frac{1}{2}}\left\|\mathbf{P}^{\bot}g\langle v\rangle^{-\frac{\gamma}{2}}\right\|_{L_v^2(\dot{B}_{2,\infty}^1)}^2\\
    &\lesssim\eta(1+t)^{-\frac{1}{2}}\sup_t(1+t)^{\frac{1}{2}}\left\|\mathbf{P}^{\bot}g\langle v\rangle^{-\frac{\gamma}{2}}\right\|_{L_v^2(\dot{H}_x^1)}^2\,,
    \end{align*}
    here we have used the fact that
\begin{align}\label{6,3}
    &\hspace{3mm}\int_0^te^{-2^{2j}(t-\tau)}2^{2j}(1+\tau)^{-\frac{1}{2}}\,\mathrm{d}\tau\notag\\
    &\lesssim\int_0^{\frac{t}{2}}e^{-2^{2j}(t-\tau)}2^{2j}(1+\tau)^{-\frac{1}{2}}\,\mathrm{d}\tau+\int_{\frac{t}{2}}^te^{-2^{2j}(t-\tau)}2^{2j}(1+\tau)^{-\frac{1}{2}}\,\mathrm{d}\tau\notag\\
    &\lesssim\int_0^{\frac{t}{2}}(1+t-\tau)^{-1}(1+\tau)^{-\frac{1}{2}}\,\mathrm{d}\tau+\int_{\frac{t}{2}}^te^{-2^{2j}(t-\tau)}2^{2j}(1+\tau)^{-\frac{1}{2}}\,\mathrm{d}\tau\notag\\
    &\lesssim(1+\frac{t}{2})^{-1}\int_0^{\frac{t}{2}}(1+\tau)^{-\frac{1}{2}}\,\mathrm{d}\tau+(1+\frac{t}{2})^{-\frac{1}{2}}\int_{\frac{t}{2}}^te^{-2^{2j}(t-\tau)}2^{2j}\,\mathrm{d}\tau\lesssim(1+t)^{-\frac{1}{2}}\,.
\end{align}
We next focus on $O_3$, and  the estimate of $O_4$ follows in the same way. By Lemma \ref{lemma5.2} and Proposition \ref{prop2.1} with $s_1=\frac{1}{2}, s_2=1, q_1=q_2=\infty$, we can deduce that
\begin{align*}
    O_3&\lesssim\sup_{j\in\mathbb{Z}}\int_0^te^{-2^{2j}(t-\tau)}2^{2j}\varphi_j^2\left\|\Gamma(g,f_1)\langle v\rangle^{-\frac{\gamma}{2}}\right\|_{{B_{2,\infty}^0(L_v^2)}}^2\,\mathrm{d}\tau\\
    &\lesssim\sup_{j\in\mathbb{Z}}\int_0^te^{-2^{2j}(t-\tau)}2^{2j}\left||f_1|_{H_v^2}|g|_{L_v^2}\right|_{\dot{B}_{2,\infty}^0}^2\,\mathrm{d}\tau\\
    &\lesssim\sup_{j\in\mathbb{Z}}\int_0^te^{-2^{2j}(t-\tau)}2^{2j}(1+\tau)^{-\frac{1}{2}}(1+\tau)^{\frac{1}{2}}\|f_1\|_{\dot{B}_{2,\infty}^{\frac{1}{2}}H_v^2}^2\|g\|_{\dot{B}_{2,\infty}^1L_v^2}^2\,\mathrm{d}\tau\\
    &\lesssim\sup_t\Big\{\|{\bf P}f_1\|_{\dot{B}_{2,\infty}^{\frac{1}{2}}H_v^2}^2+\|{\bf P}^\bot f_1\|_{\dot{B}^\frac{1}{2}_{2,\infty}H_v^2}^2\Big\}\sup_t\left[(1+t)^{\frac{1}{2}}\|g\|_{\dot{B}_{2,\infty}^1L_v^2}^2\right]\sup_{j\in\mathbb{Z}}\int_0^te^{-2^{2j}(t-\tau)}2^{2j}(1+\tau)^{-\frac{1}{2}}\,\mathrm{d}\tau\\
    &\lesssim\sup_t\Big\{|(a,b,c)|^2_{\dot{B}^\frac{1}{2}_{2,\infty}}+\|{\bf P}^\bot f\|^2_{H_v^2(L_x^2\cap\dot{H}_x^1)}\Big\}\sup_t(1+t)^{\frac{1}{2}}\|g\|_{\widetilde{L}_v^2(\dot{B}_{2,\infty}^1)}^2\\
    &\lesssim\mathcal{M}_0^2(1+t)^{-\frac{1}{2}}\sup_t(1+t)^{\frac{1}{2}}\|g\|_{\widetilde{L}_v^2(\dot{B}_{2,\infty}^1)}^2\,.
\end{align*}
Thus, the estimate of $O_4$ follows immediately:
\begin{align*}
    O_4\lesssim(1+t)^{-\frac{1}{2}}\sup_t\|f_2\|_{\dot{B}_{2,\infty}^{\frac{1}{2}}H_v^2}^2\sup_t(1+t)^{\frac{1}{2}}\|g\|_{\dot{B}_{2,\infty}^1L_v^2}^2\lesssim\mathcal{M}_0^2(1+t)^{-\frac{1}{2}}\sup_t(1+t)^{\frac{1}{2}}\|g\|_{\widetilde{L}_v^2(\dot{B}_{2,\infty}^1)}^2\,.
\end{align*}
Secondly, for the high frequency part, similar to \eqref{h-f}, we obtain for $g$ that
\begin{align*}
    \left|\hat{g}_H\right|_{L_v^2}^2+\left||\xi|\hat{g}_H\right|_{L_v^2}^2&\lesssim e^{-t}\left(\left|\hat{g_0}\right|_{L_v^2}^2+\left||\xi|\hat{g_0}\right|_{L_v^2}^2\right)+\eta\int_0^t e^{-(t-\tau)}\left||\xi|{\bf P}^\bot \hat{g}\,\langle v\rangle^{-\frac{\gamma}{2}}\right|_{L_v^2}^2\,\mathrm{d}\tau\\
    &\lesssim\int_0^t e^{-(t-\tau)}\left||\xi|\Gamma(g,f_1)\,\langle v\rangle^{-\frac{\gamma}{2}}\right|_{L_v^2}^2\,\mathrm{d}\tau+\int_0^t e^{-(t-\tau)}\left||\xi|\Gamma(f_2,g)\,\langle v\rangle^{-\frac{\gamma}{2}}\right|_{L_v^2}^2\,\mathrm{d}\tau\,.
\end{align*}
Multiplying both sides of the above inequality by $\varphi_j^2|\xi|^2$, integrating with respect to $|\xi|$, and taking supremum over $j\in\mathbb{Z}$, one has
\begin{align*}
    \|g_H\|_{\widetilde{L}_v^2(\dot{B}_{2,\infty}^1\cap\dot{B}_{2,\infty}^2)}^2&\lesssim \underbrace{e^{-t}\|g_0\|_{\widetilde{L}_v^2(\dot{B}_{2,\infty}^1\cap\dot{B}_{2,\infty}^2)}^2}_{O_5}+\underbrace{\eta\sup_{j\in\mathbb{Z}}\int_0^t\int_{\mathbb{R}_\xi^3} e^{-(t-\tau)}\left||\xi|^2\mathbf{P}^{\bot}\hat{g}\,\langle v\rangle^{-\frac{\gamma}{2}}\right|_{L_v^2}^2\varphi_j^2\,\mathrm{d}\xi\mathrm{d}\tau}_{O_6}\notag\\
    &\quad\underbrace{+\sup_{j\in\mathbb{Z}}\int_0^t\int_{\mathbb{R}_\xi^3} e^{-(t-\tau)}\left||\xi|^2\hat{\Gamma}(g,f_1)\langle v\rangle^{-\frac{\gamma}{2}}\right|_{L_v^2}^2\varphi_j^2\,\mathrm{d}\xi\mathrm{d}\tau}_{O_{7}}\notag\\
    &\quad\underbrace{+\sup_{j\in\mathbb{Z}}\int_0^t\int_{\mathbb{R}_\xi^3} e^{-(t-\tau)}\left||\xi|^2\hat{\Gamma}(f_2,g)\langle v\rangle^{-\frac{\gamma}{2}}\right|_{L_v^2}^2\varphi_j^2\,\mathrm{d}\xi\mathrm{d}\tau}_{O_{8}}\,.
\end{align*}
For $O_{5}$, from $e^{-t}\lesssim(1+t)^{-\frac{1}{2}}$, one has
\[O_{5}\lesssim(1+t)^{-\frac{1}{2}}\|g_0\|_{\widetilde{L}_v^2(\dot{B}_{2,\infty}^1\cap\dot{B}_{2,\infty}^2)}^2\,\]
and 
\begin{align*}
    O_{6}&\lesssim\eta\sup_t(1+t)^{\frac{1}{2}}\left\|\mathbf{P}^{\bot}g\langle v\rangle^{-\frac{\gamma}{2}}\right\|_{\widetilde{L}_v^2(\dot{B}_{2,\infty}^2)}^2\sup_{j\in\mathbb{Z}}\int_0^t e^{-(t-\tau)}(1+\tau)^{-\frac{1}{2}}\,\mathrm{d}\tau\\
    &\lesssim(1+t)^{-\frac{1}{2}}\eta\sup_t(1+t)^{\frac{1}{2}}\left\|\mathbf{P}^{\bot}g\langle v\rangle^{-\frac{\gamma}{2}}\right\|_{L_v^2(\dot{B}_{2,\infty}^2)}^2\\
    &\lesssim(1+t)^{-\frac{1}{2}}\eta\sup_t(1+t)^{\frac{1}{2}}\left\|\mathbf{P}^{\bot}g\langle v\rangle^{-\frac{\gamma}{2}}\right\|_{L_v^2(\dot{H}_x^2)}^2\,,
\end{align*}
    where the following fact was used
    \begin{align}\label{S1}
        \int_0^t e^{-(t-\tau)}(1+\tau)^{-\frac{1}{2}}\,\mathrm{d}\tau&=\int_0^{\frac{t}{2}} e^{-(t-\tau)}(1+\tau)^{-\frac{1}{2}}\,\mathrm{d}\tau+\int_{\frac{t}{2}}^t e^{-(t-\tau)}(1+\tau)^{-\frac{1}{2}}\,\mathrm{d}\tau\notag\\
        &\lesssim e^{-\frac{t}{2}}\int_0^{\frac{t}{2}} (1+\tau)^{-\frac{1}{2}}\,\mathrm{d}\tau+(1+\frac{t}{2})^{-\frac{1}{2}}\int_{\frac{t}{2}}^t e^{-(t-\tau)}\,\mathrm{d}\tau\lesssim(1+t)^{-\frac{1}{2}}\,.
    \end{align}
Next, Lemma \ref{interpolation-besov-other} implies
\begin{align*}
    &\hspace{3mm}\left\|\Gamma(g,f_1)\langle v\rangle^{-\frac{\gamma}{2}}\right\|_{\dot{B}_{2,\infty}^2(L_v^2)}^2
\lesssim\left||f_1|_{H_v^2}|g|_{L_v^2}\right|_{\dot{B}_{2,\infty}^2}^2\lesssim\|f_1\|_{L_x^{\infty}H_v^2}^2\|g\|_{\dot{B}_{2,\infty}^2L_v^2}^2+\|f_1\|_{\dot{B}_{2,\infty}^{2}H_v^2}^2\|g\|_{L_x^\infty L_v^2}\\
&\lesssim\|f_1\|_{H_v^2(\dot{H}_x^1\cap\dot{H}^2_x)}^2\|g\|_{\dot{B}_{2,\infty}^2L_v^2}^2+\|f_1\|_{\dot{B}_{2,\infty}^{2}H_v^2}^2\left(\|{\bf P}g\|_{L_v^2(\dot{B}^1_{2,\infty}\cap\dot{H}^2)}^2+\|{\bf P}^\bot g\|_{L_v^2(\dot{H}^1_x\cap\dot{H}_x^2)}^2\right)\\
&\lesssim\mathcal{M}_0^2\|g\|_{\widetilde{L}_v^2(\dot{B}_{2,\infty}^2)}^2+\mathcal{M}_0^2|(a^g,b^g,c^g)|_{\dot{B}_{2,\infty}^1}^2+\mathcal{M}_0^2\|{\bf P}^\bot g\|^2_{L_v^2(\dot{H}_x^1\cap\dot{H}_x^2)}.
\end{align*}
Therefore, it follows
\begin{align*}
    O_7
    &\lesssim\mathcal{M}_0^2(1+t)^{-\frac{1}{2}}\sup_t(1+t)^{\frac{1}{2}}\|g\|_{\widetilde{L}_v^2(\dot{B}_{2,\infty}^2)}^2+\mathcal{M}_0^2(1+t)^{-\frac{1}{2}}\sup_t(1+t)^{\frac{1}{2}}|(a^g,b^g,c^g)|_{\dot{B}_{2,\infty}^1}^2\\
    &\qquad+\mathcal{M}_0^2(1+t)^{-\frac{1}{2}}\sup_t(1+t)^{\frac{1}{2}}\|{\bf P}^\bot g\|^2_{L_v^2(\dot{H}_x^1\cap\dot{H}_x^2)}\,.
\end{align*}
The estimate of $O_8$ shares the same bound as above.

Finally, choosing $\mathcal{M}_0$ suitably small and combing all the above estimates, we get the desired result. This completes the proof of Lemma \ref{prop6.2}.

\end{proof}

\begin{lemma}\label{lemma6.2}
    Under the same assumption of Lemma \ref{prop6.2}, we have 
    \begin{align}\label{t2}
        \sup_t(1+t)^{\frac{1}{2}}\|g\|_{L_v^2(\dot{H}^2\cap\dot{H}^{N-2})}^2&\lesssim\|g_0\|_{L_v^2(\dot{H}^2\cap\dot{H}^{N-2})}^2+\eta\sup_t(1+t)^{\frac{1}{2}}\left\|\mathbf{P}^{\bot}g\langle v\rangle^{-\frac{\gamma}{2}}\right\|_{L_v^2(\dot{H}^2\cap\dot{H}^{N-2})}^2\notag\\
        &\qquad+\sup_t(1+t)^{\frac{1}{2}}|(a^g,b^g,c^g)|_{\dot{B}_{2,\infty}^{1}}^2\,.\notag
    \end{align}
\end{lemma}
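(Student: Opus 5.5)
We follow the proof of Lemma \ref{lemma4.2} for the equation \eqref{X1}, now with time weights. For $2\le|\alpha|\le N-2$ we apply $\partial_x^\alpha$ to \eqref{X1}, take the $L^2_{x,v}$ inner product with $\partial_x^\alpha g$, and use the coercivity \eqref{D2} of $\mathcal{L}$. We then add a small multiple of the higher-order macroscopic functional $\mathcal{G}(t)$ from \eqref{M-E-4}, here restricted to $2\le|\alpha|\le N-2$ and without source term. Its dissipation is $\sum_{3\le|\alpha|\le N-2}\|\partial^\alpha_x{\bf P}g\|^2$, and the time derivative of the interaction functional costs the $\partial^{\alpha}_x$ terms with $|\alpha|=2$. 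Next we use the interpolation
\[
\|\partial_x^\alpha{\bf P}^\bot g\|^2\lesssim\|\partial_x^\alpha{\bf P}^\bot g\langle v\rangle^{\gamma/2}\|^2+\eta\|\partial_x^\alpha{\bf P}^\bot g\langle v\rangle^{-\gamma/2}\|^2 .
\]
This gives
\begin{align*}
\frac{\mathrm d}{\mathrm dt}\mathcal{E}_g(t)+\lambda\mathcal{E}_g(t)\lesssim{}&\eta\|{\bf P}^\bot g\langle v\rangle^{-\frac\gamma2}\|^2_{L^2_v(\dot H^2\cap\dot H^{N-2})}+\|\nabla_x^2{\bf P}g\|^2\\
&+\text{nonlinear terms},
\end{align*}
where $\mathcal{E}_g\sim\|g\|^2_{L_v^2(\dot H^2\cap\dot H^{N-2})}$.

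The nonlinear terms are $\sum_{|\alpha|}(\partial^\alpha_x\Gamma(g,f_1)+\partial_x^\alpha\Gamma(f_2,g),\partial^\alpha_x g)$. We split each by Leibniz and estimate it with Lemma \ref{lemma9.4}, as in the cases $|\alpha'|=0$, $|\alpha'|=|\alpha|$ and intermediate $\alpha'$ of Lemma \ref{lemma4.2}. When all derivatives fall on $g$, the $L^\infty_x$ norm of $f_i$ is bounded by $\|f_i\|_{H_v^2(\dot H^1\cap\dot H^2)}\lesssim\mathcal M_0$, so the term is absorbed. When $\partial^{\alpha'}_x$ falls on $f_i$, we use $L^3$--$L^6$--$L^2$ or $L^\infty$--$L^2$--$L^2$. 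Because $|\alpha|\le N-2$, we can afford up to two extra derivatives on $f_i$ and still stay within $\dot H^N$; this is exactly why the range stops at $N-2$. The resulting bound is $\mathcal M_0^2\|g\|^2_{L^2_v(\dot H^1\cap\dot H^{N-2})}$ plus absorbable terms. The $\dot H^1$ piece of $g$ is reduced via Lemma \ref{interpolation-besov-other} to $\dot B^1_{2,\infty}$ and $\dot H^2$, and the terms involving only $\dot B^1_{2,\infty}$ are bounded by the right-hand side quantities. Velocity-derivative norms $H_v^2$ on ${\bf P}^\bot f_i$ are controlled by the weighted part of $\|f_i\|_X$, since $\ell\ge N-\gamma/2$.

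To close, we integrate with $e^{-\lambda(t-\tau)}$. The critical term is $\|\nabla_x^2{\bf P}g\|^2$. To bound it we write $\|\nabla^2_x{\bf P}g\|\lesssim\|{\bf P}g\|_{\dot B^1_{2,\infty}}^{1/2}\|{\bf P}g\|_{\dot H^3}^{1/2}$, which follows from the first inequality of Lemma \ref{interpolation-besov-other} with $s=1$, together with $\dot B^2_{2,2}$ versus $\dot B^2_{2,\infty}$ handled via $\dot B^{2}_{2,1}$ and \eqref{interpolation}. By Young's inequality this becomes
\[
\|\nabla^2_x{\bf P}g\|^2\lesssim C_\eta|(a^g,b^g,c^g)|^2_{\dot B^1_{2,\infty}}+\eta\|\nabla^3_x{\bf P}g\|^2,
\]
and the last term is absorbed by the macroscopic dissipation. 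Every source term $h(\tau)$ is then of the form $(1+\tau)^{-1/2}\sup_\tau(1+\tau)^{1/2}h(\tau)$, and \eqref{S1} gives
\[
\int_0^te^{-\lambda(t-\tau)}(1+\tau)^{-1/2}\,\mathrm d\tau\lesssim(1+t)^{-1/2}.
\]
The initial term decays like $e^{-\lambda t}\|g_0\|^2$. Multiplying by $(1+t)^{1/2}$ and taking the supremum yields the claim, after absorbing the $\mathcal M_0^2\sup_t(1+t)^{1/2}\|g\|^2_{L^2_v(\dot H^2\cap\dot H^{N-2})}$ contributions into the left-hand side.

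The main obstacle is the lowest-order macroscopic dissipation. The energy gives no decay of $\nabla_x^2{\bf P}g$ by itself, and it must be traded for the Besov $\dot B^1_{2,\infty}$ norm; this is why that norm appears on the right. A second obstacle is that the nonlinear $\dot H^1$ contributions of $g$ must not reappear without a time weight.
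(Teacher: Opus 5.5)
Your linear skeleton is the paper's: energy estimates for $2\le|\alpha|\le N-2$ plus the macroscopic dissipation. You then trade $\|\nabla_x^2\mathbf{P}g\|^2\lesssim C_\eta|(a^g,b^g,c^g)|^2_{\dot B^1_{2,\infty}}+\eta\|\mathbf{P}g\|^2_{\dot H^3}$, which is \eqref{interpolation} with $s_1=1$, $s_2=3$, and close with the time convolution \eqref{S1}. The paper's proof displays only the four linear terms $O_9$--$O_{12}$ and says nothing about the nonlinear terms. Your treatment of exactly those terms is where the proposal breaks.

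The failing step is ``the $\dot H^1$ piece of $g$ is reduced via Lemma \ref{interpolation-besov-other} to $\dot B^1_{2,\infty}$ and $\dot H^2$.'' As an inequality this is false, because $s=1$ is the endpoint. The sum $\sum_{j<0}2^{2j}\|\dot\Delta_j u\|^2$ is not controlled by $\sup_j 2^{2j}\|\dot\Delta_j u\|^2$: take $\|\dot\Delta_j u\|\sim 2^{-j}$ for $j<0$ and $\dot\Delta_j u=0$ for $j\ge 0$, which has finite $\dot B^1_{2,\infty}\cap\dot H^2$ norm but infinite $\dot H^1$ norm. The only $\dot H^1$ bound in that lemma uses $\dot B^{1/2}_{2,\infty}$, which carries no decay.

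The terms that genuinely need a low-order norm of $g$ are those in which no $x$-derivative falls on $g$, namely $(\Gamma(g,\partial_x^\alpha f_1),\partial_x^\alpha g)$ and $(\Gamma(\partial_x^\alpha f_2,g),\partial_x^\alpha g)$. These require a decaying bound on $\|g\|_{L^\infty_xL^2_v}$. Terms with at least one $x$-derivative on $g$ are fine, via $L^6$--$L^3$--$L^2$ and $\|\nabla_x^2 g\|$.

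For $\mathbf{P}g$ you can use the last inequality of Lemma \ref{interpolation-besov-other}, $\|\mathbf{P}g\|_{L^\infty_x L^2_v}\lesssim|(a^g,b^g,c^g)|^{1/2}_{\dot B^1_{2,\infty}}\|\nabla_x^2\mathbf{P}g\|^{1/2}$, which stays inside the stated right-hand side. For $\mathbf{P}^\bot g$ this does not work. The statement gives $\dot B^1_{2,\infty}$ control only of $(a^g,b^g,c^g)$, and every other listed quantity carries at least two $x$-derivatives, which cannot control $L^\infty_x$ because of the low frequencies.

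You must therefore carry an extra term on the right-hand side, for instance $\mathcal{M}_0^2\sup_t(1+t)^{\frac12}\|\mathbf{P}^\bot g\|^2_{L^2_v(\dot H^1\cap\dot H^2)}$, or the full $\|g\|_{\widetilde{L}_v^2(\dot B^1_{2,\infty})}$. This is harmless for Proposition \ref{time decay}: Lemma \ref{ppg-de} controls that quantity with the same decay rate, and Lemma \ref{prop6.2} already carries exactly such a term. Your argument, once repaired, yields the estimate with this additional term, not the sharper form you obtain through the invalid interpolation.
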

\begin{proof}
Similar to Lemma \ref{lemma4.2}, we obtain the following estimate
\begin{align}
    \|g\|_{L_v^2(\dot{H}^2\cap\dot{H}^{N-2})}^2&\lesssim \underbrace{e^{-\lambda t} \|g_0\|_{L_v^2(\dot{H}^2\cap\dot{H}^{N-2})}^2}_{O_{9}}+\underbrace{\eta\int_0^t e^{-\lambda(t-\tau)}\left\|\mathbf{P}^{\bot}g\langle v\rangle^{-\frac{\gamma}{2}}\right\|_{L_v^2(\dot{H}^2\cap\dot{H}^{N-2})}^2\,\mathrm{d}\tau}_{O_{10}}\notag\\
    &\qquad+\underbrace{\eta'\int_0^t e^{-\lambda(t-\tau)}\|\mathbf{P}g\|_{L_v^2(\dot{H}^3)}^2\,\mathrm{d}\tau}_{O_{11}}+\underbrace{\int_0^t e^{-\lambda(t-\tau)}\|\mathbf{P}g\|_{L_v^2(\dot{B}_{2,\infty}^{1})}^2\,\mathrm{d}\tau}_{O_{12}}\,,\notag
\end{align}
where we have used Lemma \ref{interpolation-besov-other} and Young's inequality,
\[\|\nabla^2\mathbf{P}f\|^2\lesssim\|{\bf P}f\|_{\dot{H}^2}\lesssim\|{\bf P}f\|_{\dot{B}_{2,\infty}^1}^\theta\|{\bf P}f\|_{\dot{H}^3}^{1-\theta}\lesssim\| \mathbf{P}f\|_{\dot{B}_{2,\infty}^{1}}^2+\eta\|\mathbf{P}f\|_{\dot{H}^3}^2\,.\]
It's easy to see that
\[O_{9}\lesssim(1+t)^{-\frac{1}{2}}\|g_0\|_{L_v^2(\dot{H}^2\cap\dot{H}^{N-2})}^2\]
and 
\[O_{10}\lesssim(1+t)^{-\frac{1}{2}}\eta\sup_t(1+t)^{\frac{1}{2}}\left\|\mathbf{P}^{\bot}g\langle v\rangle^{-\frac{\gamma}{2}}\right\|_{L_v^2(\dot{H}^2\cap\dot{H}^{N-2})}^2\,.\]
Due to \eqref{S1}, we have
\begin{align*}
    O_{11}\lesssim(1+t)^{-\frac{1}{2}}\eta\sup_t(1+t)^{\frac{1}{2}}\|{\bf P}g\|_{L_v^2(\dot{H}^3)}^2\,.
\end{align*}
As to $O_{12}$, one has
\[O_{12}\lesssim(1+t)^{-\frac{1}{2}}\sup_t\left[(1+t)^{\frac{1}{2}}|(a^g,b^g,c^g)|_{\dot{B}_{2,\infty}^{1}}^2\right]\,.\]
Collecting all the above estimates gives the desired estimate. The proof of Lemma \ref{lemma6.2} is complete.

\end{proof}
To absorb the difficult term
$\sup\limits_t(1+t)^{\frac{1}{2}}\left\|{\bf P}^\bot g\,\langle v\rangle^{-\frac{\gamma}{2}}\right\|_{L_v^2(\dot{H}_x^1\cap\dot{H}_x^2)}$, we need to establish the time decay of ${\bf P}^\bot g$ with velocity weight. Before doing so, we first derive an estimate on ${\bf P}^\bot g$ in $\dot{H}^1$:
\begin{lemma}\label{ppg-de}
    Under the assumption of Lemma \ref{prop6.2}, we have
    \begin{align*}
        \sup_t(1+t)^{\frac{1}{2}}\left\|{\bf P}^\bot g\right\|_{L_v^2(\dot{H}^1)}^2&\lesssim\left\|{\bf P}^\bot g_0\right\|_{L_v^2(\dot{H}^1)}^2+\sup_t(1+t)^\frac{1}{2}|(a^g,b^g,c^g)|_{\dot{B}_{2,\infty}^1}^2\\
        &\qquad+\eta\sup_t(1+t)^\frac{1}{2}\left\|{\bf P}^\bot g\,\langle v\rangle^{-\frac{\gamma}{2}}\right\|_{L_v^2(\dot{H}^1)}^2+\sup_t(1+t)^\frac{1}{2}\|g\|_{L_v^2(\dot{H}^2)}^2\,.
    \end{align*}
\end{lemma}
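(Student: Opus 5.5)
We prove Lemma \ref{ppg-de} by running the microscopic energy estimate of Lemma \ref{lemma4.3} at the level of one spatial derivative for the difference $g$. We then convert the resulting exponentially damped Duhamel-type inequality into a $(1+t)^{-\frac12}$ bound using \eqref{S1}.

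\emph{Step 1: the pointwise-in-time inequality.} We apply ${\bf P}^\bot$ to \eqref{X1}, which gives the analogue of \eqref{D1}:
\[
\partial_t{\bf P}^\bot g+v\cdot\nabla_x{\bf P}^\bot g+\mathcal{L}{\bf P}^\bot g=-v\cdot\nabla_x{\bf P}g+{\bf P}(v\cdot\nabla_x g)+{\bf P}^\bot[\Gamma(g,f_1)+\Gamma(f_2,g)].
\]
We act with $\partial_x^\alpha$, $|\alpha|=1$, take the $L^2_{x,v}$ inner product with $\partial_x^\alpha{\bf P}^\bot g$, and use the coercivity \eqref{D2}. The transport term vanishes. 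We then apply the weighted interpolation
\[
\|\partial^\alpha_x{\bf P}^\bot g\|^2\lesssim\|\partial_x^\alpha{\bf P}^\bot g\langle v\rangle^{\frac\gamma2}\|^2+\eta\|\partial_x^\alpha{\bf P}^\bot g\langle v\rangle^{-\frac\gamma2}\|^2 .
\]
Together these yield
\[
\frac{d}{dt}\|{\bf P}^\bot g\|^2_{L^2_v(\dot H^1)}+\lambda\|{\bf P}^\bot g\|^2_{L^2_v(\dot H^1)}\lesssim\eta\|{\bf P}^\bot g\langle v\rangle^{-\frac\gamma2}\|^2_{L^2_v(\dot H^1)}+\|g\|^2_{L^2_v(\dot H^2)}+\mathcal{N}(t),
\]
where $\mathcal N$ collects the nonlinear terms. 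For the two linear source terms we use that both $v\cdot\nabla_x\partial_x^\alpha{\bf P}g$ and ${\bf P}(v\cdot\nabla_x\partial^\alpha_x g)$ are of the form $m(t,x)\mu^\delta$ with $\|m\|_{L^2_x}\lesssim\|g\|_{L^2_v(\dot H^2)}$. Cauchy--Schwarz then gives $\eta\|\partial^\alpha_x{\bf P}^\bot g\|^2+C\|g\|^2_{L^2_v(\dot H^2)}$, exactly as $I_3,I_4$ were handled in Lemma \ref{lemma4.3}.

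\emph{Step 2: nonlinear terms.} We bound $\mathcal N$ by Lemma \ref{lemma9.4}/Lemma \ref{lemma5.2}, splitting $\partial_x^\alpha$ onto $g$ or onto $f_i$. When the derivative falls on $g$, we put $f_i$ in $L^\infty_x H^2_v$, which is controlled by $\mathcal M_0$ via Lemma \ref{interpolation-besov-other} and the a priori bound $\sup_t\|f_i\|_X\lesssim\mathcal M_0$. This gives $\mathcal M_0\|\nabla_x g\|\,\|\nabla_x{\bf P}^\bot g\|$. When the derivative falls on $f_i$, we use an $L^4_x$–$L^4_x$–$L^2_x$ splitting: $\|g\|_{L^4_x}\lesssim\|g\|^{1/2}_{\dot B^{1/2}_{2,\infty}}\|\nabla g\|^{1/2}$, and $\|\nabla_xf_i\|_{L^4_x}$ is bounded by $\mathcal M_0$. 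It then remains to control $\|\nabla_x g\|$. We write $\|\nabla_x g\|^2\lesssim\|{\bf P}^\bot g\|^2_{L^2_v(\dot H^1)}+|(a^g,b^g,c^g)|^2_{\dot H^1}$, and interpolate via \eqref{interpolation}: $|(a^g,b^g,c^g)|_{\dot H^1}\lesssim|(a^g,b^g,c^g)|_{\dot B^1_{2,\infty}}+\|g\|_{L^2_v(\dot H^2)}$. With $\mathcal M_0$ small, the ${\bf P}^\bot$ part is absorbed into the left side. This yields
\[
\mathcal N\lesssim \mathcal M_0\big(|(a^g,b^g,c^g)|^2_{\dot B^1_{2,\infty}}+\|g\|^2_{L^2_v(\dot H^2)}\big)+\eta\|{\bf P}^\bot g\|^2_{L^2_v(\dot H^1)} .
\]

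\emph{Step 3: time weights.} We integrate with the factor $e^{-\lambda(t-\tau)}$. The initial contribution is $e^{-\lambda t}\|{\bf P}^\bot g_0\|^2_{L^2_v(\dot H^1)}\lesssim(1+t)^{-\frac12}\|{\bf P}^\bot g_0\|^2_{L^2_v(\dot H^1)}$. Each source term $h(\tau)$ we write as $(1+\tau)^{-\frac12}\cdot(1+\tau)^{\frac12}h(\tau)$, bound the second factor by its supremum, and apply \eqref{S1}. Multiplying by $(1+t)^{\frac12}$ and taking the supremum in $t$ gives the lemma.

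The main obstacle is Step 2. The point is to avoid any term like $\|g\|_{L^2_v(\dot H^1)}$ without decay, since only $\dot B^1_{2,\infty}$ and $\dot H^2$ carry the $(1+t)^{-1/2}$ rate. The interpolation of $\dot H^1$ between $\dot B^1_{2,\infty}$ and $\dot H^2$ is what makes this work, and it has to be applied to the macroscopic part before the smallness of $\mathcal M_0$ is used to absorb the remainder.
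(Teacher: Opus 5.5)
Your Steps 1 and 3 follow the paper's argument. That is: one $x$-derivative of the ${\bf P}^\bot$-equation, coercivity \eqref{D2} with the $\langle v\rangle^{\pm\gamma/2}$ splitting, the linear terms bounded by $\|g\|_{L^2_v(\dot H^2)}$, and then Duhamel with $e^{-\lambda(t-\tau)}$ and \eqref{S1}. The gap is in Step 2, which you rightly single out as the crux.

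\emph{First problem: the endpoint interpolation.} The inequality $|(a^g,b^g,c^g)|_{\dot H^1}\lesssim|(a^g,b^g,c^g)|_{\dot B^1_{2,\infty}}+\|g\|_{L^2_v(\dot H^2)}$ is false. \eqref{interpolation} needs the target index strictly between $s_1$ and $s_2$ ($\theta\in(0,1)$), but here it coincides with the endpoint $s_1=1$. For a counterexample, take $u$ with $\hat u$ concentrated on the shells $|\xi|\sim 2^j$ and $2^j\|\dot\Delta_j u\|_{L^2}\sim 1$ for $-J\le j\le 0$, negligible elsewhere. Then $\|u\|_{\dot B^1_{2,\infty}}+\|u\|_{\dot H^2}\lesssim 1$ uniformly in $J$, while $\|u\|_{\dot H^1}^2\sim J$. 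The low frequencies of $\dot H^1$ need an $\ell^2$-sum that $\dot B^1_{2,\infty}$ does not provide. So your derivative-on-$g$ splitting ($f_i\in L^\infty_x$, $\nabla_x{\bf P}g\in L^2_x$) leaves you with $\|\nabla_x{\bf P}g\|$. This quantity is neither on the right-hand side of the lemma nor among the decaying norms of Proposition \ref{time decay}.

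\emph{Second problem: the $L^4_x$ bound on undifferentiated $g$.} When the derivative falls on $f_i$, the bound $\|g\|_{L^4_x}\lesssim\|g\|^{1/2}_{\dot B^{1/2}_{2,\infty}}\|\nabla g\|^{1/2}$ brings in $\|g\|_{\dot B^{1/2}_{2,\infty}}$, which is bounded but does not decay. The resulting term $\mathcal M_0\|g\|^{1/2}_{\dot B^{1/2}_{2,\infty}}\|\nabla g\|^{1/2}\|\nabla{\bf P}^\bot g\|$ has degree $3/2$ in decaying quantities. It therefore cannot be dominated by the squared norms in your bound for $\mathcal N$, and it cannot give the $(1+t)^{-1/2}$ rate.

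\emph{The fix.} The paper's treatment of $O_{16}$ chooses the Lebesgue exponents so that ${\bf P}g$ is only measured at a regularity index strictly inside $(1,2)$.
\begin{itemize}
\item Derivative on $g$: use $L^4_x$--$L^4_x$--$L^2_x$. Here $\|f_i\|_{L^4_xH^2_v}\lesssim\mathcal M_0$ by Lemma \ref{interpolation-besov-other}, and $\|\nabla_x{\bf P}g\|_{L^4_x}\lesssim\|{\bf P}g\|_{\dot B^{7/4}_{2,1}}\lesssim\|{\bf P}g\|^{1/4}_{\dot B^1_{2,\infty}}\|{\bf P}g\|^{3/4}_{\dot H^2}$.
\item Derivative on $f_i$: use $L^\infty_x$--$L^2_x$--$L^2_x$. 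Here $\|{\bf P}g\|_{L^\infty_x}\lesssim\|{\bf P}g\|^{1/2}_{\dot B^1_{2,\infty}}\|\nabla_x^2{\bf P}g\|^{1/2}$ (last inequality of Lemma \ref{interpolation-besov-other}), and $\nabla_xf_i$ goes in $L^2_xH^2_v$.
\item Microscopic part: for ${\bf P}^\bot g$ you may work with $\dot H^1\cap\dot H^2$ directly, placing $\nabla_x{\bf P}^\bot g$ in $L^2_x$ or $L^4_x$ and ${\bf P}^\bot g$ in $L^\infty_x$. The $\|{\bf P}^\bot g\|_{L^2_v(\dot H^1)}$ contribution is absorbed for $\mathcal M_0$ small, and $\|{\bf P}^\bot g\|_{L^2_v(\dot H^2)}\le\|g\|_{L^2_v(\dot H^2)}$.
\end{itemize}
This gives $\mathcal N\lesssim\mathcal M_0^2\big(|(a^g,b^g,c^g)|^2_{\dot B^1_{2,\infty}\cap\dot H^2}+\|{\bf P}^\bot g\|^2_{L^2_v(\dot H^1\cap\dot H^2)}\big)+\eta\|{\bf P}^\bot g\|^2_{L^2_v(\dot H^1)}$. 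With that bound, your Step 3 goes through unchanged.
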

    \begin{proof}
        Applying ${\bf P}^{\bot}$ to \eqref{X1} yields that
        \begin{align}
            \partial_t{\bf P}^{\bot}g+\mathcal{L}{\bf P}^{\bot}g=-v\cdot\nabla_x{\bf P}^{\bot}g-v\cdot\nabla_x{\bf P}g+{\bf P}(v\cdot\nabla_x g)+{\bf P}^{\bot}\Gamma(g,f_1)+{\bf P}^{\bot}\Gamma(f_2,g)\,.\notag
        \end{align}
        Acting $\partial_x$ to the above equation, multiplying the resulting equation by $\partial_x {\bf P}^{\bot}g$ and then integrating over $\mathbb{R}_x^3\times\mathbb{R}_v^3$, we have from \eqref{D2} that
        \begin{align}\label{OO2}
            &\hspace{3mm}\frac{\mathrm{d}}{\mathrm{d}t}\left\|\partial_x{\bf P}^{\bot}g\right\|^2+\delta_0\left\|\partial_x{\bf P}^{\bot}g\right\|_{\nu}^2\notag\\
            &=-\int_{\mathbb{R}_{x,v}^6}\partial_x\left( v\cdot\nabla_x {\bf P}g\right)\partial_x{\bf P}^{\bot}g\,\mathrm{d}v\mathrm{d}x+\int_{\mathbb{R}_{x,v}^6} \partial_x\left[{\bf P}(v\cdot\nabla_x g)\right]\partial_x{\bf P}^{\bot}g\,\mathrm{d}v\mathrm{d}x\notag\\
            &\quad+\int_{\mathbb{R}_{x,v}^6} \partial_x\Gamma(g,f_1)\partial_x{\bf P}^{\bot}g\,\mathrm{d}v\mathrm{d}x+\int_{\mathbb{R}_{x,v}^6} \partial_x\Gamma(f_2,g)\partial_x{\bf P}^{\bot}g\,\mathrm{d}v\mathrm{d}x\,.
        \end{align}
       Note that
        \begin{align*}
           \hspace{3mm} \int_{\mathbb{R}_{x,v}^6} \partial_x\Gamma(g,f_1)\partial_x{\bf P}^{\bot}g\,\mathrm{d}v\mathrm{d}x=&\int_{\mathbb{R}_{x,v}^6} \partial_x\Gamma(g,f_1)\langle v\rangle^{-\frac{\gamma}{2}}\langle v\rangle^{\frac{\gamma}{2}}\partial_x{\bf P}^{\bot}g\,\mathrm{d}v\mathrm{d}x\\
    \lesssim&\left\|\partial_x\Gamma(g,f_1)\langle v\rangle^{-\frac{\gamma}{2}}\right\|^2+\eta\left\|\partial_x{\bf P}^\bot g\right\|_{\nu}^2\,,
        \end{align*}
        and
        \begin{align*}
            \int_{\mathbb{R}_v^3}\left|\partial_x{\bf P}^{\bot}g\right|^2\,\mathrm{d}v&=\int_{\mathbb{R}_v^3}\partial_x{\bf P}^{\bot}g\,\langle v\rangle^{\frac{\gamma}{2}}\partial_x{\bf P}^{\bot}g\,\langle v\rangle^{-\frac{\gamma}{2}}\,\mathrm{d}v\lesssim\left|\partial_x{\bf P}^{\bot}g\,\langle v\rangle^{\frac{\gamma}{2}}\right|_{L_v^2}^2+\eta\left|\partial_x{\bf P}^{\bot}g\,\langle v\rangle^{-\frac{\gamma}{2}}\right|_{L_v^2}^2\,.
        \end{align*}

        \noindent Then if we choose $0<\eta\ll\delta_0$, \eqref{OO2} can gives
        \begin{align}
            \hspace{3mm}\frac{\mathrm{d}}{\mathrm{d}t}\left\|\partial_x{\bf P}^{\bot}g\right\|^2+\lambda\left\|\partial_x{\bf P}^{\bot}g\right\|^2\lesssim&\int_{\mathbb{R}_{x,v}^6}\partial_x\left( v\cdot\nabla_x {\bf P}g\right)\partial_x{\bf P}^{\bot}g\,\mathrm{d}v\mathrm{d}x+\int_{\mathbb{R}_{x,v}^6} \partial_x\left[{\bf P}(v\cdot\nabla_x g)\right]\partial_x{\bf P}^{\bot}g\,\mathrm{d}v\mathrm{d}x\notag\\
            &\quad+\left\|\partial_x\Gamma(g,f_1)\langle v\rangle^{-\frac{\gamma}{2}}\right\|^2+\left\|\partial_x\Gamma(f_2,g)\langle v\rangle^{-\frac{\gamma}{2}}\right\|^2+\eta\left\|\partial_x{\bf P}^{\bot}g\,\langle v\rangle^{-\frac{\gamma}{2}}\right\|^2\,,\notag
        \end{align}
where $\lambda>0$ is a constant. Solving the above inequality directly, one has
        \begin{align*}
            \left\|\partial_x{\bf P}^{\bot}g\right\|^2&\lesssim e^{-\lambda t}\left\|\partial_x{\bf P}^{\bot}g_0\right\|^2+\int_0^t\int_{\mathbb{R}_{x,v}^6} e^{-\lambda(t-\tau)}\partial_x\left( v\cdot\nabla_x {\bf P}g\right)\partial_x{\bf P}^{\bot}g\,\mathrm{d}v\mathrm{d}x \mathrm{d}\tau\notag\\
            &\quad+\int_0^t \int_{\mathbb{R}_{x,v}^6} e^{-\lambda(t-\tau)}\partial_x\left[{\bf P}(v\cdot\nabla_x g)\right]\partial_x{\bf P}^{\bot}g\,\mathrm{d}v\mathrm{d}x\mathrm{d}\tau\notag\\
           &\quad+\int_0^t e^{-\lambda(t-\tau)}\left\|\partial_x\Gamma(g,f_1)\langle v\rangle^{-\frac{\gamma}{2}}\right\|^2\,\mathrm{d}\tau
           +\int_0^t  e^{-\lambda(t-\tau)}\left\|\partial_x\Gamma(f_2,g)\langle v\rangle^{-\frac{\gamma}{2}}\right\|^2\,\mathrm{d}\tau\notag\\
           &\quad+\eta\int_0^t e^{-\lambda(t-\tau)}\left\|\partial_x{\bf P}^{\bot}g\,\langle v\rangle^{-\frac{\gamma}{2}}\right\|^2\mathrm{d}\tau\notag\\
           &:=O_{13}+O_{14}+O_{15}+O_{16}+O_{17}+O_{18}\,.
        \end{align*}
For $O_{13}$, from
$e^{-\lambda t}\lesssim(1+t)^{-\frac{1}{2}}$, we have 
 \[O_{13}\lesssim(1+t)^{-\frac{1}{2}}\left\|\partial_x{\bf P}^{\bot }g_0\right\|^2\,.\]
Due to \eqref{S1}, we get
\begin{align*}
    O_{14}&\lesssim\int_0^t\int_{\mathbb{R}_{x,v}^6} e^{-\lambda(t-\tau)}\mu^\delta \nabla_x\partial_x(a,b,c)\partial_x{\bf P}^{\bot}g\,\mathrm{d}v\mathrm{d}x \mathrm{d}\tau\\
    &\lesssim\int_0^t e^{-\lambda(t-\tau)}\|\mu^{\delta}(a,b,c)\|_{\dot{H}^2(L_v^2)}^2\,\mathrm{d}\tau+\eta\int_0^t e^{-\lambda(t-\tau)}\left\|\partial_x{\bf P}^\bot g\right\|^2\,\mathrm{d}\tau\\
   & \lesssim\int_0^t e^{-\lambda(t-\tau)}(1+\tau)^{-\frac{1}{2}}(1+\tau)^{\frac{1}{2}}\|\mu^{\delta}(a,b,c)\|_{\dot{H}^2(L_v^2)}^2\,\mathrm{d}\tau\\
   &\qquad+\eta\int_0^t e^{-\lambda(t-\tau)}(1+\tau)^{-\frac{1}{2}}(1+\tau)^{\frac{1}{2}}\left\|\partial_x{\bf P}^\bot g\right\|^2\,\mathrm{d}\tau\\
   &\lesssim(1+t)^{-\frac{1}{2}}\sup_t(1+t)^{\frac{1}{2}}\|(a,b,c)\|_{\dot{H}^2}^2+\eta(1+t)^{-\frac{1}{2}}\sup_t(1+t)^{\frac{1}{2}}\left\|\partial_x{\bf P}^\bot g\right\|^2\,.
\end{align*}
For $O_{15}$, we have 
\begin{align}\label{OO3}
    &\hspace{3mm}\partial_x{\bf P}(v\cdot\nabla_x g)=\partial_x\int_{\mathbb{R}_v^3} v\cdot\nabla_x g\,\phi_i\,\mathrm{d}v\,\mu^\delta=\int_{\mathbb{R}_v^3} v\cdot\nabla_x\partial_x g\,\phi_i\,\mathrm{d}v\,\mu^\delta\notag\\
    &\lesssim|\nabla_x\partial_x g|_{L_v^2}|\mu^{\delta}|_{L_v^2}\,\mu^\delta\lesssim|\nabla_x\partial_x g|_{L_v^2}\,\mu^\delta\,,
\end{align}
        plugging this into $O_{15}$, we further get from \eqref{S1} that
        \begin{align*}
            O_{15}&\lesssim\int_0^t \int_{\mathbb{R}_v^3}\int_{\mathbb{R}_x^3}e^{-\lambda(t-\tau)}\mu^{\delta}|\nabla_x\partial_x g|_{L_v^2}\partial_x{\bf P}^\bot g\,\mathrm{d}x\mathrm{d}v\mathrm{d}\tau\\
            &\lesssim\int_0^t\int_{\mathbb{R}_v^3} e^{-\lambda(t-\tau)}\mu^\delta\|\nabla_x\partial_x g\|^2\,\mathrm{d}v\mathrm{d}\tau+\eta\int_0^t\int_{\mathbb{R}_v^3} e^{-\lambda(t-\tau)}\left|\partial_x{\bf P}^\bot g\right|_{L_x^2}^2\,\mathrm{d}v\mathrm{d}\tau\\
            &\lesssim(1+t)^{-\frac{1}{2}}\sup_t(1+t)^{\frac{1}{2}}\|g\|_{L_v^2(\dot{H}^2)}^2+\eta(1+t)^{-\frac{1}{2}}\sup_t(1+t)^{\frac{1}{2}}\left\|\partial_x{\bf P}^\bot g\right\|^2\,.
        \end{align*}
        For $O_{16}$,  it can be derived from Lemma \ref{nonlinear} that
        \begin{align*}
            \left|\Gamma(g,f_1)\langle v\rangle^{-\frac{\gamma}{2}}\right|_{L_v^2}^2\lesssim|f_1|_{{H}_v^2}^2|g|_{L_v^2}^2\,,
        \end{align*}
        then we can obtain
        \begin{align*}
            &\hspace{3mm}\left\|\partial_x\Gamma(g,f_1)\langle v\rangle^{-\frac{\gamma}{2}}\right\|^2=\left\|\Gamma(\partial_xg,f_1)\langle v\rangle^{-\frac{\gamma}{2}}+\Gamma(g,\partial_xf_1)\langle v\rangle^{-\frac{\gamma}{2}}\right\|^2\\
            &\lesssim\int_{\mathbb{R}_x^3}|{\bf P}f_1|_{{H}_v^2}^2|\partial_x{\bf P}g|_{L_v^2}^2\,\mathrm{d}x+\int_{\mathbb{R}_x^3}|{\bf P}^\bot f_1|_{{H}_v^2}^2|\partial_x{\bf P}^\bot g|_{L_v^2}^2\,\mathrm{d}x\\
            &\qquad+\int_{\mathbb{R}_x^3}|\partial_x{\bf P}f_1|_{{H}_v^2}^2|{\bf P}g|_{L_v^2}^2\,\mathrm{d}x+\int_{\mathbb{R}_x^3}|\partial_x{\bf P}^\bot f_1|_{{H}_v^2}^2|{\bf P}^\bot g|_{L_v^2}^2\,\mathrm{d}x\\
            &\lesssim\|{\bf P}f_1\|_{L^4_xH_v^2}^2\|\partial_x{\bf P}g\|_{L^4_xL_v^2}^2+\|{\bf P}^\bot f_1\|_{L^4_xH_v^2}^2\|\partial_x{\bf P}^\bot g\|_{L^4_xL_v^2}^2\\
            &\qquad+\|{\bf P}g\|_{L_x^\infty L_v^2}^2\|{\bf    P}f_1\|_{\dot{H}_x^1H_v^2}^2+\|{\bf P}^\bot g\|_{L_x^\infty L_v^2}^2\|{\bf    P}^\bot f_1\|_{\dot{H}_x^1H_v^2}^2\\
            &\lesssim\|{\bf P}f_1\|_{H_v^2(\dot{B}_{2,\infty}^\frac{1}{2}\cap\dot{H}^1)}^2\|{\bf P}g\|_{L_v^2(\dot{B}_{2,\infty}^1\cap\dot{H}^2)}^2+\|{\bf P}^\bot f_1\|_{H_v^2(L_x^2\cap\dot{H}^1_x)}^2\|{\bf P}^\bot g\|_{L_v^2(\dot{H}_x^1\cap\dot{H}_x^2)}^2\\
            &\qquad+\|{\bf P}g\|_{L_v^2(\dot{B}_{2,\infty}^1\cap\dot{H}_x^2)}^2\|{\bf P}f_1\|_{H_v^2\dot{H}_x^1}^2+\|{\bf P}^\bot g\|_{L_v^2(\dot{H}^1\cap\dot{H}^2)}^2\|{\bf P}f_1\|_{H_v^2\dot{H}_x^1}^2\\
            &\lesssim\Big(|(a^{f_1},b^{f_1},c^{f_1})|_{\dot{B}_{2,\infty}^\frac{1}{2}\cap\dot{H}^1}^2+|(a^{f_1},b^{f_1},c^{f_1})|_{\dot{H}_x^1}^2\Big)\times|(a^g,b^g,c^g)|_{\dot{B}_{2,\infty}^1\cap\dot{H}^2}^2\\
            &\qquad+\Big(\|{\bf P}^\bot f_1\|_{H_v^2(L_x^2\cap\dot{H}^1_x)}^2+\|{\bf P}^\bot f_1\|_{H_v^2\dot{H}_x^1}^2\Big)\times\|{\bf P}^\bot g\|_{L_v^2(\dot{H}_x^1\cap\dot{H}_x^2)}^2\\
            &\lesssim\|f_1\|_X^2\left(\|{\bf P}^\bot g\|^2_{L_v^2(\dot{H}^1\cap\dot{H}^2)}+|(a^g,b^g,c^g)|_{\dot{B}_{2,\infty}^1\cap\dot{H}^2}^2\right)\,,
        \end{align*}
        here we have used the interpolation in Lemma \ref{interpolation-besov-other} and the fact that $\dot{H}^s\hookrightarrow\dot{B}_{2,\infty}^s$. Inserting the above estimate into $O_{16}$, we further obtain
        \begin{align*}
            O_{16}&\lesssim\sup_t\|f_1\|_X^2\int_0^t e^{-\lambda(t-\tau)}\left(\|{\bf P}^\bot g\|^2_{L_v^2(\dot{H}^1\cap\dot{H}^2)}+|(a^g,b^g,c^g)|_{\dot{B}_{2,\infty}^1\cap\dot{H}^2}^2\right)\,\mathrm{d}\tau\\
            &\lesssim\mathcal{M}_0^2\sup_t(1+t)^{\frac{1}{2}}\|{\bf P}^\bot g\|^2_{L_v^2(\dot{H}^1\cap\dot{H}^2)}+\mathcal{M}_0^2\sup_t(1+t)^\frac{1}{2}|(a^g,b^g,c^g)|_{\dot{B}_{2,\infty}^1\cap\dot{H}^2}^2\,.
        \end{align*}
        The estimate of $O_{17}$ can be obtained by the same way.
        
        \noindent As for $O_{18}$, it's clear that
        \begin{align*}
            O_{18}&\lesssim\eta\int_0^t e^{-\lambda(t-\tau)}(1+\tau)^{-\frac{1}{2}}(1+\tau)^{\frac{1}{2}}\left\|\partial_x{\bf P}^\bot g\,\langle v\rangle^{-\frac{\gamma}{2}}\right\|^2\\
            &\lesssim\eta(1+t)^{-\frac1 2}\sup_t(1+t)^{\frac{1}{2}}\left\|\partial_x{\bf P}^\bot g\,\langle v\rangle^{-\frac{\gamma}{2}}\right\|^2\,.
        \end{align*}
       Combining all the relevant estimates, we obtain the desired result. This completes the proof of Lemma \ref{ppg-de}.
    \end{proof}
   Motivated by the ideas in Section \ref{Section-V-j}, we establish the following lemma to derive the time decay of $\partial_x^\alpha g$ with velocity weight.
    \begin{lemma}\label{lemma8.5}
    Let $N\ge5$ be an integer and $-3<\gamma<0$. Assume that $g$ satisfying \eqref{X1}. Then it holds  that
        \begin{align*}
        &\hspace{3mm}\sum_{j\in\mathbb{Z}^+}\sum_{1\le|\alpha|\le N-2}\sup_t\Big\{(1+t)^\frac{1}{2}\|w_{\ell,0}\partial_x^\alpha{\bf P}^\bot g\|^2_{L^2(V_j)L_x^2(\mathbb{R}^3)}\Big\}\\
        &\lesssim\sum_{1\le|\alpha|\le N-2}\|w_{\ell-\frac{\gamma}{4},0}\partial_x^\alpha{\bf P}^\bot g_0\|^2+\sup_t\Big\{(1+t)^\frac{1}{2}|(a^g,b^g,c^g)|^2_{\dot{B}_{2,\infty}^1}\Big\}\\
        &\quad+\sup_t\Big\{(1+t)^\frac{1}{2}\|g\|^2_{L_v^2(\dot{H}^2\cap\dot{H}^{N-2})}\Big\}+\sup_t\Big\{(1+t)^\frac{1}{2}\|{\bf P}^\bot g\|^2_{L_v^2(\dot{H}^1)}\Big\}\\
        &\quad+\eta\sup_t\Big\{(1+t)^{\frac{1}{2}}\|w_{\ell,0}{\bf P}^\bot g\|^2_{L_v^2(\dot{H}^1\cap\dot{H}^{N-2})}\Big\}\,.
    \end{align*}
    \end{lemma}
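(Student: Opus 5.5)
We follow the scheme of Lemma \ref{lemma8.4} and Case I of Lemma \ref{lemma8.3}, now applied to the linear-in-$g$ equation \eqref{X1}. The new ingredient is the time weight $(1+t)^{\frac12}$.

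\textbf{Setup.} Apply ${\bf P}^\bot$ and then $\partial_x^\alpha$, $1\le|\alpha|\le N-2$, to \eqref{X1}, written with $\mathcal{L}=\nu-\mathcal{K}$. This gives
\[
\partial_t\partial_x^\alpha{\bf P}^\bot g+v\cdot\nabla_x\partial_x^\alpha{\bf P}^\bot g+\nu\,\partial_x^\alpha{\bf P}^\bot g
=-\partial_x^\alpha(v\cdot\nabla_x{\bf P}g)+\partial_x^\alpha{\bf P}(v\cdot\nabla_xg)+\mathcal{K}\partial_x^\alpha{\bf P}^\bot g+\partial_x^\alpha{\bf P}^\bot[\Gamma(g,f_1)+\Gamma(f_2,g)].
\]
Multiply by $w_{\ell,0}^2\partial_x^\alpha{\bf P}^\bot g$, solve pointwise in $v$ with the integrating factor $e^{-\nu(v)(t-\tau)}$, and integrate over $V_j\times\mathbb{R}^3_x$. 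The transport term vanishes after integrating in $x$.

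\textbf{Initial datum term.} This is $\|e^{-\nu t}w_{\ell,0}\partial_x^\alpha{\bf P}^\bot g_0\|^2_{L^2(V_j)L^2_x}$. By Lemma \ref{lem:max_estimate}, $e^{-\nu t}\lesssim(1+\nu t)^{-1/2}$, and on $V_j$ we have $\nu^{-1/2}\sim\langle v\rangle^{-\gamma/2}$. Hence
\[
e^{-\nu t}\lesssim(1+t)^{-\frac12}\langle v\rangle^{-\frac{\gamma}{2}}.
\]
After squaring the weight this gives $(1+t)^{-1/2}\|w_{\ell-\frac{\gamma}{4},0}\partial_x^\alpha{\bf P}^\bot g_0\|^2$. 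This is exactly where the loss $-\gamma/4$ in the statement comes from; summing over $j$ is harmless because the $V_j$ are disjoint.

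\textbf{Source terms.} Each term is written as $\int_0^te^{-\nu(t-\tau)}\nu\,(1+\tau)^{-1/2}\,[\nu^{-1}(1+\tau)^{1/2}(\cdots)]\,d\tau$, after which we take the supremum of the bracket in $\tau$. The key kernel estimate is
\[
\int_0^te^{-\tilde c_j(t-\tau)}c_j(1+\tau)^{-\frac12}d\tau\lesssim(1+t)^{-\frac12},
\]
uniformly in $j$. It is proved by the splitting $[0,t/2]\cup[t/2,t]$, as in Lemma \ref{lemma2.5} and \eqref{6,3}, using that $c_j/\tilde c_j$ is bounded.

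The individual terms are handled as follows.
\begin{itemize}
\item \emph{Macroscopic terms.} The terms $\partial_x^\alpha(v\cdot\nabla_x{\bf P}g)$ and $\partial_x^\alpha{\bf P}(v\cdot\nabla_xg)$ carry $\mu^\delta$. On $V_j$ they produce factors $2^{-2j}$ times $\|g\|^2_{L^2_v(\dot H^{|\alpha|+1})}$, which is controlled by $\|g\|_{L^2_v(\dot H^2\cap\dot H^{N-2})}$, or, if $|\alpha|+1>N-2$, by interpolation together with $|(a^g,b^g,c^g)|_{\dot B^1_{2,\infty}}$. The factors $2^{-2j}$ make the sum over $j$ converge.
\item \emph{The $\mathcal{K}$ term.} This is treated by Lemma \ref{lemma5.1} on each $V_j$. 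It gives $\eta\|w_{\ell,0}\partial_x^\alpha{\bf P}^\bot g\|^2_{L^2(V_j)}$, which is absorbed into the left side after summation, plus $\|\partial_x^\alpha g\|^2$ and $\eta\|w_{\ell,0}\partial_x^\alpha{\bf P}^\bot g\|^2$, each with the weight $(1+t)^{1/2}$. The unweighted piece is bounded by $\|{\bf P}^\bot g\|_{L^2_v(\dot H^1)}$ and $\|g\|_{L^2_v(\dot H^2\cap\dot H^{N-2})}$.
\item \emph{The $\Gamma$ terms.} Use Lemma \ref{lemma5.2} restricted to $V_j$, as in the estimates of $J_{3,1}$ and $G_2$. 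We put the factor depending on $f_i$ in $L^\infty_x$ or $L^3_x$, where it is bounded by $\mathcal{M}_0$ through $\sup_t\|f_i\|_X$. The $g$-factors carry $x$-derivatives of order $1$ to $N-2$ in $L^2(\widetilde V_j)$ or $L^6_x$. The output is $\mathcal{M}_0^2$ times the left-hand quantity over $\widetilde V_j$, which is absorbed because the overlap of the $\widetilde V_j$ is finite, plus macroscopic pieces controlled by $\dot B^1_{2,\infty}$ and $\dot H^2$.
\end{itemize}
There is no $S$ term, since \eqref{X1} is homogeneous.

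\textbf{Main obstacle.} I expect the nonlinear term at the top order $|\alpha|=N-2$ to be the hardest step. One factor there must carry up to $N-1$ derivatives, and it needs weighted control with $w_{\ell,0}$. This is available only through $\sup_t\|f_i\|_X$, which has no decay. Therefore the decay must come entirely from the $g$-factor, and the derivative count must be split so that $g$ always receives at most $N-2$ derivatives (plus one more inside an $L^6_x$ norm). This is why the statement stops at $N-2$, leaving a two-derivative margin for $N\ge5$. If the $L^3$--$L^6$ splitting fails for some configuration, I would instead bound that piece by $\eta\sup_t(1+t)^{1/2}\|w_{\ell,0}{\bf P}^\bot g\|^2_{L^2_v(\dot H^1\cap\dot H^{N-2})}$, which is the last term allowed on the right-hand side of the statement.
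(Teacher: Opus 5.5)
Your argument has a genuine gap at the top order $|\alpha|=N-2$. You apply ${\bf P}^\bot$ before $\partial_x^\alpha$ for every $1\le|\alpha|\le N-2$. When $|\alpha|=N-2$, the terms $\partial_x^\alpha(v\cdot\nabla_x{\bf P}g)$ and $\partial_x^\alpha{\bf P}(v\cdot\nabla_xg)$ then carry $|\alpha|+1=N-1$ spatial derivatives. They therefore produce $\sup_t(1+t)^{\frac12}|(a^g,b^g,c^g)|^2_{\dot H^{N-1}}$ and $\sup_t(1+t)^{\frac12}\|g\|^2_{L^2_v(\dot H^{N-1})}$, neither of which appears on the right-hand side.

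Your remedy, ``interpolation together with $|(a^g,b^g,c^g)|_{\dot B^1_{2,\infty}}$'', is not valid. Interpolation only controls regularity indices between the endpoints, and $N-1$ exceeds both $1$ and $N-2$. The obvious alternatives also fail:
\begin{itemize}
\item Integrating by parts in $x$ only moves the extra derivative onto $\partial_x^\alpha{\bf P}^\bot g$, which is equally unavailable.
\item Interpolating against the non-decaying bound $\sup_t\|g\|_{L^2_v(\dot H^N)}\lesssim\mathcal{M}_0$ loses half the decay, so the $(1+t)^{\frac12}$-weighted quantity is not controlled.
\item Your fallback to $\eta\sup_t(1+t)^{\frac12}\|w_{\ell,0}{\bf P}^\bot g\|^2_{L^2_v(\dot H^1\cap\dot H^{N-2})}$ does not help, because that norm also stops at $N-2$ derivatives.
\end{itemize}

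The missing idea, which is how the paper proceeds, is to split the range of $\alpha$. Use the microscopic equation only for $1\le|\alpha|\le N-3$. There $|\alpha|+1\le N-2$, and your bounds on the macroscopic terms are correct.

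For $|\alpha|=N-2$, work instead with the full equation \eqref{X1}, tested against $w_{\ell,0}^2\partial_x^\alpha g$ on $V_j\times\mathbb{R}^3_x$. The only streaming term is then $v\cdot\nabla_x\partial_x^\alpha g$, which vanishes after integration in $x$, so no top-order macroscopic derivative ever appears. The $\mathcal{K}$ and $\Gamma$ terms are handled as you describe, via Lemma \ref{lemma5.1}, Lemma \ref{lemma5.2} and your uniform kernel bound. You then recover $\partial_x^\alpha{\bf P}^\bot g$ from $\partial_x^\alpha g$: the Gaussian factor in ${\bf P}g$ gives $\sum_{j}\|w_{\ell,0}\partial_x^\alpha{\bf P}g\|^2_{L^2(V_j)L^2_x}\lesssim|(a^g,b^g,c^g)|^2_{\dot H^{N-2}}$.

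Relatedly, your ``main obstacle'' is misplaced. At order $N-2$ the $\Gamma$ terms split without difficulty: each factor receives at most $N-2$ derivatives, or at most $N-3$ plus one more inside $L^6_x$. The real obstruction is the linear streaming of the macroscopic part.
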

\begin{proof}
The proof is divided into two cases.

    {\bf Case I:} $1\le |\alpha|\le N-3$.  In this case, we consider the microscopic equation.
    Applying $\mathbf{P}^{\bot}$ to \eqref{X1}, one has
    \begin{align}\label{OOOO1}
        &\hspace{3mm}\partial_t \mathbf{P}^{\bot}g+v\cdot\nabla_x\mathbf{P}^{\bot}g+v\cdot\nabla_x\mathbf{P}g+\nu(v){\bf P}^{\bot}g\notag\\
        &={\bf P}(v\cdot\nabla_x g)+\mathcal{K}{\bf P}^{\bot}g+{\bf P}^{\bot}\Gamma(g,f_1)+{\bf P}^{\bot}\Gamma(f_2,g)\,,
    \end{align}
    where we decompose
    \begin{align*}
        {\bf P}^{\bot}(v\cdot\nabla_x g)=v\cdot\nabla_x g-{\bf P}(v\cdot \nabla_x g)=v\cdot \nabla_x{\bf P}^{\bot}g+v\cdot\nabla_x{\bf P}g-{\bf P}(v\cdot\nabla_x g) \,,  
    \end{align*}
    and 
    \begin{align*}
\mathcal{L}{\bf P}^{\bot}g=\nu(v){\bf P}^{\bot}g-\mathcal{K}{\bf P}^{\bot}g\,.
    \end{align*}
    Acting the operator $\partial_x^{\alpha}$ on \eqref{OOOO1} with $1\le|\alpha|\le N-3$ and then multiplying the result by $w_{\ell,0}^2\partial_x^{\alpha}{\bf P}^{\bot}g$, it yields that
    \begin{align}
        &\hspace{3mm}\frac{1}{2}\frac{\mathrm{d}}{\mathrm{d}t}\left(w_{\ell,0}\partial_x^{\alpha}{\bf P}^{\bot}g\right)^2+\nu(v)\left(w_{\ell,0}\partial_x^{\alpha}{\bf P}^{\bot}g\right)^2\notag\\
        &=-\partial_x^{\alpha}(v\cdot\nabla_x\mathbf{P}^{\bot}g)w_{\ell,0}^2\partial_x^{\alpha}{\bf P}^{\bot}g-\partial_x^{\alpha}(v\cdot\nabla_x\mathbf{P}g)w_{\ell,0}^2\partial_x^{\alpha}{\bf P}^{\bot}g\notag\\
        &\quad+\partial_x^{\alpha}({\bf P}(v\cdot\nabla_x g))w_{\ell,0}^2\partial_x^{\alpha}{\bf P}^{\bot}g+\partial_x^{\alpha}(\mathcal{K}{\bf P}^{\bot}g)w_{\ell,0}^2\partial_x^{\alpha}{\bf P}^{\bot}g+\partial_x^{\alpha}({\bf P}^{\bot}\Gamma(g,f_1))w_{\ell,0}^2\partial_x^{\alpha}{\bf P}^{\bot}g\notag\\
        &\quad+\partial_x^{\alpha}({\bf P}^{\bot}\Gamma(f_2,g))w_{\ell,0}^2\partial_x^{\alpha}{\bf P}^{\bot}g\,.\notag
    \end{align}
 Solving the above equation directly and then integrating the result over $V_j\times\mathbb{R}_x^3$, we get
    \begin{align}\label{pp9}
        \left\|w_{\ell,0}\partial_x^{\alpha}{\bf P}^{\bot}g\right\|^2_{L^2(V_j)L_x^2(\mathbb{R}^3)}&\lesssim \int_{V_j\times\mathbb{R}_x^3} e^{-\nu(v)t}\left|w_{\ell,0}\partial_x^{\alpha}{\bf P}^{\bot}g_0\right|^2\,\mathrm{d}v\mathrm{d}x\notag\\
        &\quad\underbrace{-\int_0^t\int_{V_j\times\mathbb{R}_x^3} e^{-\nu(v)(t-\tau)} \partial_x^{\alpha}(v\cdot\nabla_x\mathbf{P}g)w_{\ell,0}^2\partial_x^{\alpha}{\bf P}^{\bot}g\,\mathrm{d}v\mathrm{d}x\mathrm{d}\tau}_{O_{19}}\notag\\
        &\quad+\underbrace{\int_0^t\int_{V_j\times\mathbb{R}_x^3} e^{-\nu(v)(t-\tau)} \partial_x^{\alpha}({\bf P}(v\cdot\nabla_x g))w_{\ell,0}^2\partial_x^{\alpha}{\bf P}^{\bot}g\,\mathrm{d}v\mathrm{d}x\mathrm{d}\tau}_{O_{20}}\notag\\
        &\quad+\underbrace{\int_0^t\int_{V_j\times\mathbb{R}_x^3} e^{-\nu(v)(t-\tau)}\partial_x^{\alpha}(\mathcal{K}{\bf P}^{\bot}g) w_{\ell,0}^2\partial_x^{\alpha}{\bf P}^{\bot}g\,\mathrm{d}v\mathrm{d}x\mathrm{d}\tau}_{O_{21}}\notag\\
        &\quad+\underbrace{\int_0^t\int_{V_j\times\mathbb{R}_x^3} e^{-\nu(v)(t-\tau)} \partial_x^{\alpha}({\bf P}^{\bot}\Gamma(g,f_1))w_{\ell,0}^2\partial_x^{\alpha}{\bf P}^{\bot}g\,\mathrm{d}v\mathrm{d}x\mathrm{d}\tau}_{O_{22}}\notag\\
        &\quad+\underbrace{\int_0^t\int_{V_j\times\mathbb{R}_x^3} e^{-\nu(v)(t-\tau)} \partial_x^{\alpha}({\bf P}^{\bot}\Gamma(f_2,g))w_{\ell,0}^2\partial_x^{\alpha}{\bf P}^{\bot}g\,\mathrm{d}v\mathrm{d}x\mathrm{d}\tau}_{O_{23}}\,.
    \end{align}
    Lemma \ref{lem:max_estimate} implies that
    \begin{align}
        &\hspace{3mm}\int_{V_j\times\mathbb{R}_x^3} e^{-\nu(v)t}\left|w_{\ell,0}\partial_x^{\alpha}{\bf P}^{\bot}g_0\right|^2\,\mathrm{d}v\mathrm{d}x\nonumber\\
        &\lesssim\int_{V_j\times\mathbb{R}_x^3} e^{-\nu(v)t}\nu^\frac{1}{2}(v)\left|w_{\ell-\frac{\gamma}{4},0}\partial_x^{\alpha}{\bf P}^{\bot}g_0\right|^2\,\mathrm{d}v\mathrm{d}x\lesssim(1+t)^{-\frac{1}{2}}\|w_{\ell-\frac{\gamma}{4},0}\partial_x^{\alpha}{\bf P}^{\bot}g_0\|^2_{L^2(V_j)L_x^2(\mathbb{R}^3)}\nonumber\,.
    \end{align}
    By using \eqref{6,3}, we have
    \begin{align*}
        O_{19}&\lesssim\int_0^t\int_{V_j\times\mathbb{R}_x^3}e^{-\tilde{c}_j(t-\tau)}c_j(1+\tau)^{-\frac{1}{2}}(1+\tau)^\frac{1}{2}\nu^{-1}(v)\partial_x^\alpha(v\cdot\nabla_x{\bf P}g)w^2_{\ell,0}\partial_x^\alpha{\bf P}^\bot g\,{\rm d}v{\rm d}x{\rm d}\tau\\
        &\lesssim\sup_t\Big\{(1+t)^\frac{1}{2}\int_{V_j\times\mathbb{R}_x^3}\nu^{-1}\partial_x^\alpha(v\cdot\nabla_x{\bf P}g)w^2_{\ell,0}\partial_x^\alpha{\bf P}^\bot g\,{\rm d}v{\rm d}x\Big\}\int_0^te^{-\tilde{c}_j(t-\tau)}c_j(1+\tau)^{-\frac{1}{2}}\,{\rm d}\tau\\
        &\lesssim(1+t)^{-\frac{1}{2}}\sup_t\Big\{(1+t)^\frac{1}{2}\int_{V_j\times\mathbb{R}_x^3}\partial_x^\alpha\nabla_x(a^g,b^g,c^g)|v|^{-1}\mu^\delta w_{\ell,0}\partial_x^\alpha{\bf P}^\bot g\,{\rm d}v{\rm d}x\Big\}\\
        &\lesssim(1+t)^{-\frac{1}{2}}\sup_t\Bigg\{(1+t)^{\frac{1}{2}}\Big(\|\frac{1}{2^j}\partial_x^\alpha\nabla_x(a^g,b^g,c^g)\mu^\delta\|^2+\eta\|w_{\ell,0}\partial_x^\alpha{\bf P}^\bot g\|^2_{L^2(V_j)L_x^2(\mathbb{R}^3)}\Big)\Bigg\}\\
        &\lesssim\frac{1}{2^{2j}}(1+t)^{-\frac{1}{2}}\sup_t\Big\{(1+t)^{\frac{1}{2}}|(a^g,b^g,c^g)|^2_{\dot{H}^{|\alpha|+1}}\Big\}\\
        &\qquad+\eta(1+t)^{-\frac{1}{2}}\sup_t\Big\{(1+t)^{\frac{1}{2}}\|w_{\ell,0}\partial_x^\alpha{\bf P}^\bot g\|^2_{L^2(V_j)L_x^2(\mathbb{R}^3)}\Big\}\,.
    \end{align*}
Next, we denote $m_3(t,x):=\int_{\mathbb{R}_v^3}v\cdot\nabla_x\partial_x^\alpha g\,\phi_i\,{\rm d}v$, for $O_{20}$, we have
    \begin{align*}
        O_{20}&\lesssim(1+t)^{-\frac{1}{2}}\sup_t\Big\{(1+t)^{\frac{1}{2}}\int_{L^2(V_j)L_x^2(\mathbb{R}^3)}\nu^{-1}(v)\partial_x^\alpha({\bf P}(v\cdot\nabla_x g))w^2_{\ell,0}\partial_x^\alpha{\bf P}^\bot g\,{\rm d}v{\rm d}x\Big\}\\
        &\lesssim(1+t)^{-\frac{1}{2}}\sup_t\Big\{(1+t)^{\frac{1}{2}}\int_{L^2(V_j)L_x^2(\mathbb{R}^3)}m_3(t,x)|v|^{-1}\mu^\delta w_{\ell,0}\partial_x^\alpha{\bf P}^\bot g\,{\rm d}v{\rm d}x\Big\}\\
        &\lesssim(1+t)^{-\frac{1}{2}}\sup_t\Big\{(1+t)^{\frac{1}{2}}\frac{1}{2^{2j}}\|m_3(t,x)\mu^\delta\|^2\Big\}\\
        &\qquad+\eta(1+t)^{-\frac{1}{2}}\sup_t\Big\{(1+t)^{\frac{1}{2}}\|w_{\ell,0}\partial_x^\alpha{\bf P}^\bot g\|^2_{L^2(V_j)L_x^2}\Big\}\\
        &\lesssim\frac{1}{2^{2j}}(1+t)^{-\frac{1}{2}}\sup_t\Big\{(1+t)^{\frac{1}{2}}\|g\|^2_{L_v^2(\dot{H}^{|\alpha|+1})}\Big\}\\
        &\qquad+\eta(1+t)^{-\frac{1}{2}}\sup_t\Big\{(1+t)^{\frac{1}{2}}\|w_{\ell,0}\partial_x^\alpha{\bf P}^\bot g\|^2_{L^2(V_j)L_x^2}\Big\}\,.
    \end{align*}
    As for $O_{21}$, we can obtain
    \begin{align*}
        O_{21}&\lesssim(1+t)^{-\frac{1}{2}}\sup_t\Big\{(1+t)^{\frac{1}{2}}\int_{V_j\times\mathbb{R}_x^3}\nu^{-1}(v)w^2_{\ell,0}\partial_x^\alpha(\mathcal{K}{\bf P}^\bot g)\partial_x^\alpha{\bf P}^\bot g\,{\rm d}v{\rm d}x\Big\}\,.
    \end{align*}
Applying Lemma \ref{lemma5.2}, one has
    \begin{align*}
        &\hspace{3mm}\int_{V_j\times\mathbb{R}_x^3}\nu^{-1}(v)w^2_{\ell,0}\Gamma(\partial_x^{\alpha_1}f_1,\partial_x^{\alpha_2}g)\partial_x^\alpha{\bf P}^\bot g\,{\rm d}v{\rm d}x\\
        &\lesssim\int_{\mathbb{R}_x^3}\Big(|\mu^\delta\partial_x^{\alpha_1}f_1|_{H_v^2(\mathbb{R}^3)}+|w_{\ell,0}\partial_x^{\alpha_1}f_1|_{L_v^2(\mathbb{R}^3)}\Big)\times|w_{\ell,0}\partial_x^{\alpha_2}{\bf P}g|_{L_v^2(\widetilde{V}_j)}|w_{\ell,0}\partial_x^\alpha{\bf P}^\bot g|_{L_v^2(V_j)}\,{\rm d}x\\
        &\quad+\int_{\mathbb{R}_x^3}\Big(|\mu^\delta\partial_x^{\alpha_1}f_1|_{H_v^2(\mathbb{R}^3)}+|w_{\ell,0}\partial_x^{\alpha_1}f_1|_{L_v^2(\mathbb{R}^3)}\Big)\times|w_{\ell,0}\partial_x^{\alpha_2}{\bf P}^\bot g|_{L_v^2(\widetilde{V}_j)}|w_{\ell,0}\partial_x^\alpha{\bf P}^\bot g|_{L_v^2(V_j)}\,{\rm d}x\,.
    \end{align*}
    If $|\alpha_2|=0$, then $1\le|\alpha_1|\le N-3$. We apply $L^2-L^\infty-L^2$ estimate to obtain
    \begin{align*}
        &\hspace{3mm}\int_{V_j\times\mathbb{R}_x^3}\nu^{-1}(v)w^2_{\ell,0}\Gamma(\partial_x^{\alpha}f_1,g)\partial_x^\alpha{\bf P}^\bot g\,{\rm d}v{\rm d}x\\
        &\lesssim\Big(\|\mu^\delta\partial_x^{\alpha}f_1\|_{L_x^2H_v^2}+\|w_{\ell,0}\partial_x^{\alpha} f_1\|\Big)\times\|w_{\ell,0}{\bf P}g\|_{L_x^\infty L^2(\widetilde{V}_j)}\|w_{\ell,0}\partial_x^\alpha {\bf P}^\bot g\|_{L_x^2L_v^2(V_j)}\\
        &\quad+\Big(\|\mu^\delta\partial_x^{\alpha} f_1\|_{L_x^2H_v^2}+\|w_{\ell,0}\partial_x^{\alpha} f_1\|\Big)\times\|w_{\ell,0}{\bf P}^\bot g\|_{L_x^\infty L^2(\widetilde{V}_j)}\|w_{\ell,0}\partial_x^\alpha {\bf P}^\bot g\|_{L_x^2L_v^2(V_j)}\\
        &\lesssim\mathcal{M}_0\|(a^g,b^g,c^g)|v|^{-1}\mu^\delta\|_{L^2(\widetilde{V}_j)(\dot{B}_{2,\infty}^1\cap\dot{H}_x^2)}\|w_{\ell,0}\partial_x^\alpha{\bf P}^\bot g\|_{L_v^2(V_j)L_x^2}\\
        &\quad+\mathcal{M}_0\|w_{\ell,0}{\bf P}^\bot g\|_{L^2(\widetilde{V}_j)(\dot{H}_x^1\cap\dot{H}_x^2)}\|w_{\ell,0}\partial_x^\alpha{\bf P}^\bot g\|_{L_v^2(V_j)L_x^2}\\
        &\lesssim\frac{1}{2^{2j}}\mathcal{M}_0^2|(a^g,b^g,c^g)|^2_{\dot{B}_{2,\infty}^1\cap\dot{H}^2}+\eta\|w_{\ell,0}\partial_x^\alpha{\bf P}^\bot g\|_{L_v^2(V_j)L_x^2}^2+\mathcal{M}_0^2\sum_{1\le|\alpha'|\le 2}\|w_{\ell,0}\partial_x^{\alpha'}{\bf P}^\bot g\|^2_{L_v^2(\widetilde{V}_j)L_x^2}\,.
    \end{align*}
    If $|\alpha_2|=1$, then $0\le|\alpha_1|\le N-4$. We apply $L^4-L^4-L^2$ estimate to obtain
    \begin{align*}
        &\hspace{3mm}\int_{V_j\times\mathbb{R}_x^3}\nu^{-1}(v)w^2_{\ell,0}\Gamma(\partial_x^{\alpha_1}f_1,\partial_x^{\alpha_2}g)\partial_x^\alpha{\bf P}^\bot g\,{\rm d}v{\rm d}x\\
        &\lesssim\Big(\|\mu^\delta\partial_x^{\alpha_1}f_1\|_{L_x^4H_v^2}+\|w_{\ell,0}\partial_x^{\alpha_1}f_1\|_{L_x^4L_v^2}\Big)\times\|w_{\ell,0}\partial_x^{\alpha_2}{\bf P}g\|_{L_x^4(\mathbb{R}^3)L_v^2(\widetilde{V}_j)}\|w_{\ell ,0}\partial_x^\alpha{\bf P}^\bot g\|_{L_x^2(\mathbb{R}^3)L_v^2(V_j)}\\
        &\quad+\Big(\|\mu^\delta\partial_x^{\alpha_1}f_1\|_{L_x^4H_v^2}+\|w_{\ell,0}\partial_x^{\alpha_1}f_1\|_{L_x^4L_v^2}\Big)\times\|w_{\ell,0}\partial_x^{\alpha_2}{\bf P}^\bot g\|_{L_x^4(\mathbb{R}^3)L_v^2(\widetilde{V}_j)}\|w_{\ell ,0}\partial_x^\alpha{\bf P}^\bot g\|_{L_x^2(\mathbb{R}^3)L_v^2(V_j)}\\
        &\lesssim|\partial_x^{\alpha_1}(a^{f_1},b^{f_1},c^{f_1})|_{\dot{B}^\frac{1}{2}_{2,\infty}\cap\dot{H}^1}\|\partial_x^{\alpha_2}(a^g,b^g,c^g)|v|^{-1}\mu^\delta\|_{L_v^2(\widetilde{V}_j)(\dot{B}^\frac{1}{2}_{2,\infty}\cap\dot{H}^1)}\|w_{\ell ,0}\partial_x^\alpha{\bf P}^\bot g\|_{L_v^2(V_j)L_x^2(\mathbb{R}^3)}\\
        &\quad+\Big(\|\partial_x^{\alpha_1}{\bf P}^\bot f_1\|_{H_v^2(L_x^2\cap\dot{H}^1_x)}+\|w_{\ell,0}\partial_x^{\alpha_1}{\bf P}^\bot f_1\|_{L_v^2(L_x^2\cap\dot{H}^1_x)}\Big)\\[2mm]
        &\qquad\times\|\partial_x^{\alpha_2}(a^g,b^g,c^g)|v|^{-1}\mu^\delta\|_{L_v^2(\widetilde{V}_j)(\dot{B}^\frac{1}{2}_{2,\infty}\cap\dot{H}^1)}\|w_{\ell ,0}\partial_x^\alpha{\bf P}^\bot g\|_{L_v^2(V_j)L_x^2(\mathbb{R}^3)}\\
        &\quad+|\partial_x^{\alpha_1}(a^{f_1},b^{f_1},c^{f_1})|_{\dot{B}^\frac{1}{2}_{2,\infty}\cap\dot{H}^1}\|w_{\ell,0}\partial_x^{\alpha_2}{\bf P}^\bot g\|_{L_v^2(\widetilde{V}_j)(L_x^2\cap\dot{H}^1)}\|w_{\ell ,0}\partial_x^\alpha{\bf P}^\bot g\|_{L_v^2(V_j)L_x^2(\mathbb{R}^3)}\\
        &\quad+\Big(\|\partial_x^{\alpha_1}{\bf P}^\bot f_1\|_{H_v^2(L_x^2\cap\dot{H}^1_x)}+\|w_{\ell,0}\partial_x^{\alpha_1}{\bf P}^\bot f_1\|_{L_v^2(L_x^2\cap\dot{H}^1_x)}\Big)\\
        &\qquad\times\|w_{\ell,0}\partial_x^{\alpha_2}{\bf P}^\bot g\|_{L_v^2(\widetilde{V}_j)(L_x^2\cap\dot{H}^1)}\|w_{\ell ,0}\partial_x^\alpha{\bf P}^\bot g\|_{L_v^2(V_j)L_x^2(\mathbb{R}^3)}\\
        &\lesssim\frac{1}{2^j}\mathcal{M}_0|(a^g,b^g,c^g)|_{\dot{B}_{2,\infty}^1\cap\dot{H}^2}\|w_{\ell ,0}\partial_x^\alpha{\bf P}^\bot g\|_{L_v^2(V_j)L_x^2(\mathbb{R}^3)}\\
        &\quad+\mathcal{M}_0\|w_{\ell,0}\partial_x{\bf P}^\bot g\|_{L_v^2(V_j)(L_x^2\cap\dot{H}_x^1)}\|w_{\ell ,0}\partial_x^\alpha{\bf P}^\bot g\|_{L_v^2(V_j)L_x^2(\mathbb{R}^3)}\\
        &\lesssim\frac{1}{2^{2j}}\mathcal{M}_0^2|(a^g,b^g,c^g)|^2_{\dot{B}^1_{2,\infty}\cap\dot{H}^2}+\eta\|w_{\ell ,0}\partial_x^\alpha{\bf P}^\bot g\|_{L_v^2(V_j)L_x^2(\mathbb{R}^3)}^2+\mathcal{M}_0^2\sum_{1\le|\alpha'|\le 2}\|w_{\ell,0}\partial_x^{\alpha'}{\bf P}^\bot g\|_{L_v^2(\widetilde{V}_j)L_x^2}\,.
    \end{align*}
    If $|\alpha_2|\ge2$, we have
    \begin{align*}
        &\hspace{3mm}\int_{V_j\times\mathbb{R}_x^3}\nu^{-1}(v)w^2_{\ell,0}\Gamma(\partial_x^{\alpha_1}f_1,\partial_x^{\alpha_2}g)\partial_x^\alpha{\bf P}^\bot g\,{\rm d}v{\rm d}x\\
        &\lesssim\Big(\|\mu^\delta\partial_x^{\alpha_1}f_1\|_{L_x^\infty H_v^2}+\|w_{\ell,0}\partial_x^{\alpha_1} f_1\|_{L_x^\infty L_v^2}\Big)\times\|w_{\ell,0}\partial_x^{\alpha_2}{\bf P}g\|_{L_x^2 L^2(\widetilde{V}_j)}\|w_{\ell,0}\partial_x^\alpha {\bf P}^\bot g\|_{L_x^2L_v^2(V_j)}\\
        &\quad+\Big(\|\mu^\delta\partial_x^{\alpha_1} f_1\|_{L_x^\infty H_v^2}+\|w_{\ell,0}\partial_x^{\alpha_1} f_1\|_{L_x^\infty L_v^2}\Big)\times\|w_{\ell,0}\partial_x^{\alpha_2}{\bf P}^\bot g\|_{L_x^2L^2(\widetilde{V}_j)}\|w_{\ell,0}\partial_x^\alpha {\bf P}^\bot g\|_{L_x^2L_v^2(V_j)}\\
        &\lesssim\|w_{\ell,0}\partial_x^{\alpha_1}f_1\|_{H_v^2(\dot{H}^1\cap\dot{H}^2)}\|\partial_x^{\alpha_2}(a^g,b^g,c^g)|v|^{-1}\mu^\delta |_{L^2(\widetilde{V}_j)L_x^2}\|w_{\ell,0}\partial_x^\alpha{\bf P}^\bot g\|_{L^2(V_j)L_x^2}\\
        &\quad+\|w_{\ell,0}\partial_x^{\alpha_1}f_1\|_{H_v^2(\dot{H}^1\cap\dot{H}^2)}\|w_{\ell,0}\partial_x^{\alpha_2}{\bf P}^\bot g\|_{L^2(V_j)L_x^2}\|w_{\ell,0}\partial_x^\alpha{\bf P}^\bot g\|_{L^2(V_j)L_x^2}\\
        &\lesssim\mathcal{M}_0^2\frac{1}{2^{2j}}|(a^g,b^g,c^g)|^2_{\dot{H}^{|\alpha_2|}}+\eta\|w_{\ell,0}\partial_x^\alpha{\bf P}^\bot g\|_{L^2(V_j)L_x^2}+\mathcal{M}_0^2\|w_{\ell,0}\partial_x^{\alpha_2}{\bf P}^\bot g\|_{L^2(V_j)L_x^2}\,.
    \end{align*}
    Thus, we have
    \begin{align*}
        O_{22}&\lesssim(1+t)^{-\frac{1}{2}}\sup_t\Big\{(1+t)^{\frac{1}{2}}\sum_{\alpha_1+\alpha_2=\alpha}\int_{V_j\times\mathbb{R}_x^3}\nu^{-1}(v)w^2_{\ell,0}\Gamma(\partial_x^{\alpha_1}f_1,\partial_x^{\alpha_2}g)\partial_x^\alpha{\bf P}^\bot g\Big\}\\
        &\lesssim\eta'(1+t)^{-\frac{1}{2}}\sum_{1\le|\alpha'|\le|\alpha|}\sup_t\Big\{(1+t)^{\frac{1}{2}}\|w_{\ell,0}\partial_x^{\alpha'}{\bf P}^\bot g\|^2_{L^2(\widetilde{V}_j)L_x^2(\mathbb{R}^3)}\Big\}\\
        &\quad+\frac{1}{2^{2j}}\mathcal{M}_0^2(1+t)^{-\frac{1}{2}}\sup_t\Big\{(1+t)^{\frac{1}{2}}|(a^g,b^g,c^g)|^2_{\dot{B}_{2,\infty}^1\cap\dot{H}^{N-2}}\Big\}\,.
    \end{align*}
    The estimate of $O_{23}$ is completely the same as $O_{22}$, so we omit it.

    \noindent Plugging all the estimates into \eqref{pp9}, then multiplying the result by $(1+t)^\frac{1}{2}$, taking supremum over $t$ and summing over $j\in\mathbb{Z}^+$ yields that
    \begin{align*}
        &\hspace{3mm}\sum_{j\in\mathbb{Z}^+}\sum_{1\le|\alpha|\le N-3}\sup_t\Big\{(1+t)^\frac{1}{2}\|w_{\ell,0}\partial_x^\alpha{\bf P}^\bot g\|^2_{L^2(V_j)L_x^2(\mathbb{R}^3)}\Big\}\\
        &\lesssim\sum_{1\le|\alpha|\le N-3}\|w_{\ell-\frac{\gamma}{4},0}\partial_x^\alpha{\bf P}^\bot g_0\|^2+\sup_t\Big\{(1+t)^\frac{1}{2}|(a^g,b^g,c^g)|^2_{\dot{B}_{2,\infty}^1}\Big\}\\
        &\quad+\sup_t\Big\{(1+t)^\frac{1}{2}\|g\|^2_{L_v^2(\dot{H}^2\cap\dot{H}^{N-2})}\Big\}+\sup_t\Big\{(1+t)^\frac{1}{2}\|{\bf P}^\bot g\|^2_{L_v^2(\dot{H}^1)}\Big\}\,.
    \end{align*}
    {\bf Case II:}  We consider  the original equation \eqref{X1}. Similar to ${\bf Case\,\,I}$, one gets
    \begin{align}\label{pp10}
        \|w_{\ell,0}\partial_x^\alpha g\|_{L_v^2(V_j)L_x^2(\mathbb{R}^3)}^2&\lesssim \int_{V_j\times\mathbb{R}_x^3}e^{-\nu(v)t}|w_{\ell,0}\partial_x^\alpha g_0|^2\,\mathrm{d}v\mathrm{d}x\notag\\
    &\qquad+\underbrace{\int_0^t\int_{V_j\times\mathbb{R}_x^3} e^{-\nu(v)(t-\tau)}w_{\ell,0}^2\partial_x^\alpha \mathcal{K}g\partial_x^\alpha  g\,\mathrm{d}v\mathrm{d}x\mathrm{d}\tau}_{O_{24}}\notag\\
    &\qquad+\underbrace{\int_0^t\int_{V_j\times\mathbb{R}_x^3} e^{-\nu(v)(t-\tau)}w_{\ell,0}^2\partial_x^\alpha \Gamma(g,f_1)\partial_x^\alpha  g\,\mathrm{d}v\mathrm{d}x\mathrm{d}\tau}_{O_{25}}\notag\\
    &\qquad+\underbrace{\int_0^t\int_{V_j\times\mathbb{R}_x^3} e^{-\nu(v)(t-\tau)}w_{\ell,0}^2\partial_x^\alpha \Gamma(f_2,g)\partial_x^\alpha  g\,\mathrm{d}v\mathrm{d}x\mathrm{d}\tau}_{O_{26}}\,.
    \end{align}
    It follows from Lemma \ref{lem:max_estimate} that
    \begin{align*}
        \hspace{3mm}\int_{V_j\times\mathbb{R}_x^3}e^{-\nu(v)t}|w_{\ell,0}\partial_x^\alpha g_0|^2\,\mathrm{d}v\mathrm{d}x\lesssim&\int_{V_j\times\mathbb{R}_x^3}e^{-\nu(v)t}\nu^{\frac{1}{2}}(v)|w_{\ell-\frac{\gamma}{4},0}\partial_x^\alpha g_0|^2\,\mathrm{d}v\mathrm{d}x\\
        \lesssim&(1+t)^{-\frac{1}{2}}\|w_{\ell-\frac{\gamma}{4},0}\partial_x^\alpha g_0\|^2_{L_v^2(V_j)L_x^2(\mathbb{R}^3)}\,.
    \end{align*}
    For $O_{24}$, it follows that
    \begin{align*}
        O_{24}&\lesssim(1+t)^{-\frac{1}{2}}\sup_t\Big\{(1+t)^\frac{1}{2}\int_{V_j\times\mathbb{R}_x^3}\nu^{-1}(v)w_{\ell,0}^2\partial_x^\alpha\mathcal{K}g\partial_x^\alpha g\,{\rm d}v{\rm d}x\Big\}\,.
    \end{align*}
    Using the same method in the estimate of $O_{22}$, we have
    \begin{align*}
      O_{25}&\lesssim(1+t)^{-\frac{1}{2}}\sup_t\Big\{(1+t)^\frac{1}{2}\int_{V_j\times\mathbb{R}_x^3}\nu^{-1}(v)w_{\ell,0}^2\Gamma(\partial_x^{\alpha_1}f_1,\partial_x^{\alpha_2}g)\partial_x^\alpha g\Big\}\\
      &\lesssim\eta'(1+t)^{-\frac{1}{2}}\sum_{1\le|\alpha'|\le|\alpha|}\sup_t\Big\{(1+t)^{\frac{1}{2}}\|w_{\ell,0}\partial_x^{\alpha'}{\bf P}^\bot g\|^2_{L^2(V_j)L_x^2(\mathbb{R}^3)}\Big\}\\
        &\quad+\frac{1}{2^{2j}}\mathcal{M}_0^2(1+t)^{-\frac{1}{2}}\sup_t\Big\{(1+t)^{\frac{1}{2}}|(a^g,b^g,c^g)|^2_{\dot{B}_{2,\infty}^1\cap\dot{H}^{|\alpha|}}\Big\}\\
        &\quad+\eta(1+t)^{-\frac{1}{2}}\sup_t\Big\{(1+t)^{\frac{1}{2}}\|w_{\ell,0}\partial_x^{\alpha} g\|^2_{L^2(V_j)L_x^2(\mathbb{R}^3)}\Big\}\,.
    \end{align*}
    The estimate of $O_{26}$ is similar, we omit it.

    \noindent Plugging all the estimates into \eqref{pp10}, multiplying the resulting inequality by $(1+t)^\frac{1}{2}$, and taking the supremum over $t$. We then combine all cases for $1\le|\alpha|\le N-2$ and sum over $j\in\mathbb{Z}^+$ to obtain the desired result. This ends the proof of Lemma \ref{lemma8.5}.
\end{proof}
Now we can further prove the following lemma:
\begin{lemma}\label{lemma6.5}
    Under the assumption of Lemma \ref{prop6.2}, we have
    \begin{align*}
        &\hspace{3mm}\sup_t(1+t)^\frac{1}{2}\left\|w_{\ell,0}{\bf P}^\bot g\right\|_{L_v^2(\dot{H}^1\cap\dot{H}^{N-2})}^2\\
        &\lesssim\left\|w_{\ell-\frac{\gamma}{4},0}{\bf P}^\bot g_0\right\|_{L_v^2(\dot{H}^1\cap\dot{H}^{N-2})}^2+\sup_t\Big\{(1+t)^\frac{1}{2}\left\|g\right\|_{L_v^2(\dot{H}^2\cap\dot{H}^{N-2})}^2\Big\}\\
        &\quad+\sup_t\Big\{(1+t)^\frac{1}{2}\left\|{\bf P}^\bot g\right\|_{L_v^2(\dot{H}^1)}^2\Big\}+\sup_t\Big\{(1+t)^\frac{1}{2}|(a^g,b^g,c^g)|^2_{\dot{B}_{2,\infty}^1}\Big\}\,.
    \end{align*}
\end{lemma}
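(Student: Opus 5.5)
The plan is to follow the proof of Lemma \ref{lemma5.3} (Case 1) and Lemma \ref{lemma5.4}, replacing $f$ by $g$ and tracking the time weight $(1+t)^{\frac12}$ as in Lemma \ref{lemma8.5}. Fix $1\le|\alpha|\le N-2$. I apply $\partial_x^\alpha$ to the microscopic equation \eqref{OOOO1} (or to \eqref{X1} for $|\alpha|=N-2$, exactly as in Case II of Lemma \ref{lemma8.5}) and multiply by $w_{\ell,0}^2\partial_x^\alpha{\bf P}^\bot g$. I then solve the resulting ODE with the integrating factor $e^{-\nu(v)(t-\tau)}$ and integrate over the whole of $\mathbb{R}^6_{x,v}$.

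The initial term is handled as in Lemma \ref{lemma8.5}: the bound $e^{-\nu t}\nu^{-\frac12}\nu^{\frac12}\lesssim(1+t)^{-\frac12}\nu^{\frac12}$ from Lemma \ref{lem:max_estimate} costs the weight shift $w_{\ell}\to w_{\ell-\frac{\gamma}{4}}$. This gives $(1+t)^{-\frac12}\|w_{\ell-\frac{\gamma}{4},0}\partial_x^\alpha{\bf P}^\bot g_0\|^2$.

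For each remaining source term $h(\tau)$ I write $h=(1+\tau)^{-\frac12}\cdot(1+\tau)^{\frac12}h$. I split $\mathbb{R}^3_v=V_0\cup\bigcup_{j\ge1}V_j$. On $V_0$ one has $\nu\gtrsim1$, so \eqref{S1} gives decay $(1+t)^{-\frac12}$. On $V_j$ I use $e^{-\nu(t-\tau)}\nu\lesssim e^{-\tilde c_j(t-\tau)}c_j$, together with the estimate
\[
\int_0^t e^{-\tilde c_j(t-\tau)}c_j(1+\tau)^{-\frac12}\,{\rm d}\tau\lesssim(1+t)^{-\frac12},
\]
which holds uniformly in $j$ by the splitting argument of \eqref{6,3}, since $c_j/\tilde c_j$ is bounded. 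This reduces everything to
\[
(1+t)^{-\frac12}\sup_t\Big\{(1+t)^{\frac12}\int_{V_j\times\mathbb{R}^3_x}\nu^{-1}w_{\ell,0}^2\,(\cdots)\,\partial^\alpha_x{\bf P}^\bot g\Big\}.
\]
The individual contributions are then treated as follows.

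\begin{enumerate}
\item The transport pieces $v\cdot\nabla_x{\bf P}g$ and ${\bf P}(v\cdot\nabla_x g)$ carry a factor $\mu^\delta$. They are bounded by $(1+t)^{\frac12}\|g\|^2_{L^2_v(\dot H^{|\alpha|+1})}$ plus $\eta$ times the weighted norm. For $|\alpha|+1\le N-1$ the factor $\|g\|_{\dot H^{N-1}}$ must be interpolated between $\dot H^2$ and the bounded $\dot H^N$ norm. Alternatively one stays with $|\alpha|\le N-3$ for this term and treats the top order through the full equation, where no such term appears.
\item The $\mathcal{K}$-terms are handled by Lemma \ref{lemma5.1} on each $V_j$ and by Lemma \ref{lemma9.3} on $V_0$. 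This produces $\eta\|w_{\ell,0}\partial^\alpha_x{\bf P}^\bot g\|^2$, a localized sum $\eta\sum_j\|\cdot\|^2_{L^2(V_j)}$, and unweighted norms $\|\partial^\alpha_x g\|^2$. These last are controlled by $(1+t)^{\frac12}\|g\|^2_{\dot H^2\cap\dot H^{N-2}}$ and $(1+t)^{\frac12}\|{\bf P}^\bot g\|^2_{\dot H^1}$.
\item The nonlinear terms $\Gamma(g,f_1)$ and $\Gamma(f_2,g)$ are estimated exactly as $O_{22}$ in Lemma \ref{lemma8.5}. Using $\sup_t\|f_i\|_X\lesssim\mathcal M_0$, they give $\mathcal M_0^2(1+t)^{\frac12}\big(|(a^g,b^g,c^g)|^2_{\dot B^1_{2,\infty}\cap\dot H^{N-2}}+\|w_{\ell,0}\partial_x^{\alpha'}{\bf P}^\bot g\|^2\big)$.
\end{enumerate}

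Summing over $1\le|\alpha|\le N-2$ and multiplying by $(1+t)^{\frac12}$ gives the bound. The resulting $\eta$- and $\mathcal M_0^2$-multiples of $\sup_t(1+t)^{\frac12}\|w_{\ell,0}{\bf P}^\bot g\|^2_{\dot H^1\cap\dot H^{N-2}}$ are absorbed into the left-hand side. The localized sum $\eta\sum_j\sup_t(1+t)^{\frac12}\|w_{\ell,0}\partial_x^\alpha{\bf P}^\bot g\|^2_{L^2(V_j)L^2_x}$ is replaced by its bound from Lemma \ref{lemma8.5}; its own $\eta$-term feeds back into the left-hand side and is absorbed. The macroscopic $\dot H^{N-2}$ part is controlled by $\|g\|_{\dot H^2\cap\dot H^{N-2}}$.

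The main obstacle is the transport coupling between $\partial_x^\alpha$ and $\partial_x^{\alpha+1}$ at the top order $|\alpha|=N-2$. To avoid needing decay of $\|g\|_{\dot H^{N-1}}$, the plan is to use the full equation \eqref{X1} there, where $v\cdot\nabla_x$ integrates away. The price is that this produces $\|w_{\ell,0}\partial^\alpha_x{\bf P}g\|$, which is harmless because ${\bf P}g$ carries the factor $\mu^\delta$ and is controlled by $\|g\|_{\dot H^{N-2}}$.
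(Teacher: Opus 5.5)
Your proposal is correct and follows essentially the same route as the paper. In both, you use the microscopic equation with integrating factor $e^{-\nu(v)(t-\tau)}$ for $1\le|\alpha|\le N-3$, and the full equation \eqref{X1} at $|\alpha|=N-2$; the latter is the right choice, since interpolating $\|g\|_{\dot H^{N-1}}$ against the non-decaying $\dot H^N$ norm would only give $(1+t)^{-\frac14}$ for the squared norm. You also share the $V_0$/$V_j$ splitting with Lemmas \ref{lemma5.1}, \ref{lemma9.3}, \ref{lemma9.4}, and the closing of the localized sums through Lemma \ref{lemma8.5}.

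The differences are cosmetic: the paper keeps $\frac12\nu$ as a dissipative term and handles the transport terms with Lemma \ref{lemma2.5} rather than on annuli. Also, the initial-data step should read $e^{-\nu t}\lesssim(1+t)^{-\frac12}\nu^{-\frac12}$ (not $\nu^{\frac12}$ on the right). That is what actually yields your otherwise correct weight shift $w_{\ell,0}\to w_{\ell-\frac{\gamma}{4},0}$.
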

\begin{proof}
As in the proof of Lemma \ref{lemma8.5}, we divide the argument into two steps.

{\bf Step I:} $1\le|\alpha|\le N-3$. We consider the microscopic equation.
    Applying $\mathbf{P}^{\bot}$ to \eqref{X1}, one has
    \begin{align}\label{OO1}
        &\hspace{3mm}\partial_t \mathbf{P}^{\bot}g+v\cdot\nabla_x\mathbf{P}^{\bot}g+v\cdot\nabla_x\mathbf{P}g+\frac{1}{2}\nu(v){\bf P}^{\bot}g\notag\\
        &=-\frac{1}{2}\nu(v){\bf P}^\bot g+{\bf P}(v\cdot\nabla_x g)+\mathcal{K}{\bf P}^{\bot}g+{\bf P}^{\bot}\Gamma(g,f_1)+{\bf P}^{\bot}\Gamma(f_2,g)\,,
    \end{align}
    where we decompose
    \begin{align*}
        {\bf P}^{\bot}(v\cdot\nabla_x g)=v\cdot\nabla_x g-{\bf P}(v\cdot \nabla_x g)=v\cdot \nabla_x{\bf P}^{\bot}g+v\cdot\nabla_x{\bf P}g-{\bf P}(v\cdot\nabla_x g) \,,  
    \end{align*}
    and 
    \begin{align*}
\mathcal{L}{\bf P}^{\bot}g=\nu(v){\bf P}^{\bot}g-\mathcal{K}{\bf P}^{\bot}g\,.
    \end{align*}
    Acting the operator $\partial_x^{\alpha}$ on \eqref{OO1} with $1\le|\alpha|\le N-3$ and then multiplying the result by $w_{\ell,0}^2\partial_x^{\alpha}{\bf P}^{\bot}g$, it yields that
    \begin{align}
        &\hspace{3mm}\frac{1}{2}\frac{\mathrm{d}}{\mathrm{d}t}\left(w_{\ell,0}\partial_x^{\alpha}{\bf P}^{\bot}g\right)^2+\frac{1}{2}\nu(v)\left(w_{\ell,0}\partial_x^{\alpha}{\bf P}^{\bot}g\right)^2\notag\\
        &=-\frac{1}{2}\nu(v)\left(w_{\ell,0}\partial_x^{\alpha}{\bf P}^{\bot}g\right)^2-\partial_x^{\alpha}(v\cdot\nabla_x\mathbf{P}^{\bot}g)w_{\ell,0}^2\partial_x^{\alpha}{\bf P}^{\bot}g-\partial_x^{\alpha}(v\cdot\nabla_x\mathbf{P}g)w_{\ell,0}^2\partial_x^{\alpha}{\bf P}^{\bot}g\notag\\
        &\quad+\partial_x^{\alpha}({\bf P}(v\cdot\nabla_x g))w_{\ell,0}^2\partial_x^{\alpha}{\bf P}^{\bot}g+\partial_x^{\alpha}(\mathcal{K}{\bf P}^{\bot}g)w_{\ell,0}^2\partial_x^{\alpha}{\bf P}^{\bot}g+\partial_x^{\alpha}({\bf P}^{\bot}\Gamma(g,f_1))w_{\ell,0}^2\partial_x^{\alpha}{\bf P}^{\bot}g\notag\\
        &\quad+\partial_x^{\alpha}({\bf P}^{\bot}\Gamma(f_2,g))w_{\ell,0}^2\partial_x^{\alpha}{\bf P}^{\bot}g\,.\notag
    \end{align}
 Solving the above equation directly and then integrating the result over $\mathbb{R}_x^3\times\mathbb{R}_v^3$, we get
    \begin{align}\label{wp}
        \left\|w_{\ell,0}\partial_x^{\alpha}{\bf P}^{\bot}g\right\|^2&\lesssim \int_{\mathbb{R}_{x,v}^6} e^{-\nu(v)t}\left|w_{\ell,0}\partial_x^{\alpha}{\bf P}^{\bot}g_0\right|^2\,\mathrm{d}v\mathrm{d}x-\frac{1}{2}\int_0^t\int_{\mathbb{R}_{x,v}^6} e^{-\nu(v)(t-\tau)} \nu(v)\left|w_{\ell,0}\partial_x^{\alpha}{\bf P}^{\bot}g\right|^2\,\mathrm{d}v\mathrm{d}x\mathrm{d}\tau\notag\\
        &\quad-\int_0^t\int_{\mathbb{R}_{x,v}^6} e^{-\nu(v)(t-\tau)} \partial_x^{\alpha}(v\cdot\nabla_x\mathbf{P}g)w_{\ell,0}^2\partial_x^{\alpha}{\bf P}^{\bot}g\,\mathrm{d}v\mathrm{d}x\mathrm{d}\tau\notag\\
        &\quad+\int_0^t\int_{\mathbb{R}_{x,v}^6} e^{-\nu(v)(t-\tau)} \partial_x^{\alpha}({\bf P}(v\cdot\nabla_x g))w_{\ell,0}^2\partial_x^{\alpha}{\bf P}^{\bot}g\,\mathrm{d}v\mathrm{d}x\mathrm{d}\tau\notag\\
        &\quad+\int_0^t\int_{\mathbb{R}_{x,v}^6} e^{-\nu(v)(t-\tau)}\partial_x^{\alpha}(\mathcal{K}{\bf P}^{\bot}g) w_{\ell,0}^2\partial_x^{\alpha}{\bf P}^{\bot}g\,\mathrm{d}v\mathrm{d}x\mathrm{d}\tau\notag\\
        &\quad+\int_0^t\int_{\mathbb{R}_{x,v}^6} e^{-\nu(v)(t-\tau)} \partial_x^{\alpha}({\bf P}^{\bot}\Gamma(g,f_1))w_{\ell,0}^2\partial_x^{\alpha}{\bf P}^{\bot}g\,\mathrm{d}v\mathrm{d}x\mathrm{d}\tau\notag\\
        &\quad+\int_0^t\int_{\mathbb{R}_{x,v}^6} e^{-\nu(v)(t-\tau)} \partial_x^{\alpha}({\bf P}^{\bot}\Gamma(f_2,g))w_{\ell,0}^2\partial_x^{\alpha}{\bf P}^{\bot}g\,\mathrm{d}v\mathrm{d}x\mathrm{d}\tau\notag\\
        &:=O_{27}+O_{28}+O_{29}+O_{30}+O_{31}+O_{32}+O_{33}\,.
    \end{align}
    For $O_{27}$, taking $x^\alpha=\nu(v)^{\frac{1}{2}}$, then it holds from Lemma \ref{lem:max_estimate} that
    \begin{align*}
        O_{27}&\lesssim\int_{\mathbb{R}_{x,v}^6} \nu(v)^{\frac{1}{2}}e^{-\nu(v)t}\left|w_{\ell-\frac{\gamma}{4},0}\partial_x^{\alpha}{\bf P}^{\bot}g_0\right|^2\,\mathrm{d}v\mathrm{d}x\\
        &\lesssim(1+t)^{-\frac{1}{2}}\int_{\mathbb{R}_{x,v}^6}\left|w_{\ell-\frac{\gamma}{4},0}\partial_x^{\alpha}{\bf P}^{\bot}g_0\right|^2\,\mathrm{d}v\mathrm{d}x\\
        &\lesssim(1+t)^{-\frac{1}{2}}\left\|w_{\ell-\frac{\gamma}{4},0}\partial_x^{\alpha}{\bf P}^{\bot}g_0\right\|^2\,.
    \end{align*}
For $O_{29}$, we get
from Lemma \ref{lemma2.5} that
\begin{align*}
    O_{29}&\lesssim\int_0^t\int_{\mathbb{R}_{x,v}^6} e^{-\nu(v)(t-\tau)} \partial_x^{\alpha+e_i}(a,b,c)\mu^\delta w_{\ell,0}^2\partial_x^{\alpha}{\bf P}^{\bot}g\,\mathrm{d}v\mathrm{d}x\mathrm{d}\tau\\
    &\lesssim\int_0^t\int_{\mathbb{R}_{x,v}^6} e^{-\nu(v)(t-\tau)} \left|\partial_x^{\alpha+e_i}(a,b,c)\right|^2\mu^\delta w_{\ell,0}^2\langle v\rangle^{-\gamma} \,\mathrm{d}v\mathrm{d}x\mathrm{d}\tau\\
    &\qquad+\eta\int_0^t\int_{\mathbb{R}_{x,v}^6} e^{-\nu(v)(t-\tau)}\langle v\rangle^{\gamma} \left|w_{\ell,0}\partial_x^{\alpha}{\bf P}^{\bot}g\right|^2\,\mathrm{d}v\mathrm{d}x\mathrm{d}\tau\\
    &\lesssim\int_0^t\int_{\mathbb{R}_{x,v}^6} e^{-\nu(v)(t-\tau)} (1+\tau)^{-\frac{1}{2}}\mu^{\delta'}(1+\tau)^\frac{1}{2} \left|\partial_x^{\alpha+e_i}(a,b,c)\right|^2\,\mathrm{d}v\mathrm{d}x\mathrm{d}\tau+\eta O_{20}\\
    &\lesssim(1+t)^{-\frac{1}{2}}\sup_t(1+t)^{\frac{1}{2}}\|(a,b,c)\|_{\dot{H}^{|\alpha|+1}}^2+\eta O_{20}\,,
\end{align*}
here we insert $w_{\ell,0}^2$ and $\langle v\rangle^{-\gamma} $ into $\mu^{\delta'}$.
For $O_{30}$, similar to \eqref{OO3}, one has
\[\partial_x^\alpha{\bf P}(v\cdot\nabla_x g)\lesssim|\nabla_x\partial_x^\alpha g|_{L_v^2}\,\mu^\delta\,.\]
Then we obtain
\begin{align*}
    O_{30}&\lesssim\int_0^t\int_{\mathbb{R}_{x,v}^6} e^{-\nu(v)(t-\tau)} |\nabla_x\partial_x^\alpha g|_{L_v^2}\,\mu^\delta w_{\ell,0}^2\partial_x^{\alpha}{\bf P}^{\bot}g\,\mathrm{d}v\mathrm{d}x\mathrm{d}\tau\\
    &\lesssim\int_0^t\int_{\mathbb{R}_{x,v}^6} e^{-\nu(v)(t-\tau)} |\nabla_x\partial_x^\alpha g|_{L_v^2}^2\,\mu^\delta w_{\ell,0}^2\langle v\rangle^{-\gamma} \,\mathrm{d}v\mathrm{d}x\mathrm{d}\tau\\
    &\qquad+\int_0^t\int_{\mathbb{R}_{x,v}^6} e^{-\nu(v)(t-\tau)} \langle v\rangle^{\gamma} w_{\ell,0}^2\left|\partial_x^{\alpha}{\bf P}^{\bot}g\right|^2\,\mathrm{d}v\mathrm{d}x\mathrm{d}\tau\\
    &\lesssim\int_0^t\int_{\mathbb{R}_{x,v}^6} e^{-\nu(v)(t-\tau)}(1+\tau)^{-\frac{1}{2}} \mu^{\delta'}(1+\tau)^{\frac{1}{2}}|\nabla_x\partial_x^\alpha g|_{L_v^2}^2\,\,\mathrm{d}v\mathrm{d}x\mathrm{d}\tau+\eta O_{20}\\
    &\lesssim(1+t)^{-\frac{1}{2}}\sup_t(1+t)^{\frac{1}{2}}\|g\|_{L_v^2(\dot{H}^{|\alpha|+1})}^2+\eta O_{20}\,.
    \end{align*}
    For $O_{31}$, we have
    \begin{align*}
        O_{31}&\lesssim\sum_{j\in\mathbb{Z}^+}\int_0^t\int_{V_j\times\mathbb{R}_x^3}e^{-\nu(v)(t-\tau)}\nu(v)\nu^{-1}(v)w^2_{\ell,0}\partial_x^\alpha\mathcal{K}{\bf P}^\bot g\partial_x^\alpha{\bf P}^\bot g\,{\rm d}v{\rm d}x{\rm d}\tau\\
        &\quad+\int_0^t\int_{V_0\times\mathbb{R}_x^3}e^{-\nu(v)(t-\tau)}w^2_{\ell,0}\partial_x^\alpha\mathcal{K}{\bf P}^\bot g\partial_x^\alpha{\bf P}^\bot g\,{\rm d}v{\rm d}x{\rm d}\tau\\
        &:=O_{31,1}+O_{31,2}\,.
    \end{align*}
    It follows from Lemma \ref{lemma5.1} and Lemma \ref{lemma8.5} that
    \begin{align*}
        O_{31,1}&\lesssim\sum_{j\in\mathbb{Z}^+}\int_0^t\int_{V_j\times\mathbb{R}_x^3}e^{-\tilde{c}_j(t-\tau)}c_j(1+\tau)^{-\frac{1}{2}}(1+\tau)^{\frac{1}{2}}\nu^{-1}(v)w^2_{\ell,0}\partial_x^\alpha\mathcal{K}{\bf P}^\bot g\partial_x^\alpha{\bf P}^\bot g\,{\rm d}v{\rm d}x{\rm d}\tau\\
        &\lesssim\sum_{j\in\mathbb{Z}^+}\sup_t\Big\{(1+t)^{\frac{1}{2}}\int_{V_j\times\mathbb{R}_x^3}\nu^{-1}(v)w^2_{\ell,0}\partial_x^\alpha\mathcal{K}{\bf P}^\bot g\partial_x^\alpha{\bf P}^\bot g\,{\rm d}v{\rm d}x\Big\}\sup_j\int_0^te^{-\tilde{c}_j(t-\tau)}c_j(1+\tau)^{-\frac{1}{2}}\,{\rm d}\tau\\
        &\lesssim(1+t)^{-\frac{1}{2}}\sum_{j\in\mathbb{Z}^+}\sup_t\Big\{(1+t)^{\frac{1}{2}}\int_{V_j\times\mathbb{R}_x^3}\nu^{-1}(v)w^2_{\ell,0}\partial_x^\alpha\mathcal{K}{\bf P}^\bot g\partial_x^\alpha{\bf P}^\bot g\,{\rm d}v{\rm d}x\Big\}\\
        &\lesssim\eta(1+t)^{-\frac{1}{2}}\sum_{j\in\mathbb{Z}^+}\sup_t\Big\{(1+t)^{\frac{1}{2}}\|w_{\ell,0}\partial_x^\alpha{\bf P}^\bot g\|^2_{L^2_v(V_j)L_x^2(\mathbb{R}^3)}\Big\}\\
        &\quad+(1+t)^{-\frac{1}{2}}\sup_t\Big\{(1+t)^{\frac{1}{2}}\|\partial_x^\alpha {\bf P}^\bot g\|^2\Big\}+\eta'(1+t)^{-\frac{1}{2}}\sup_t\Big\{(1+t)^\frac{1}{2}\|w_{\ell,0}\partial_x^\alpha{\bf P}^\bot g\|^2\Big\}\,.
    \end{align*}
As for $O_{31,2}$, since $|v|$ is bounded in the domain $V_0$, it follows from Lemma \ref{lemma9.3} and \eqref{S1} that
\begin{align*}
O_{31,2}&\lesssim\int_0^t\int_{V_0\times\mathbb{R}_x^3}e^{-(t-\tau)}(1+\tau)^{-\frac{1}{2}}(1+\tau)^\frac{1}{2}w^2_{\ell,0}\partial_x^\alpha\mathcal{K}{\bf P}^\bot g\partial_x^\alpha{\bf P}^\bot g\,{\rm d}v{\rm d}x{\rm d}\tau\\
&\lesssim(1+t)^{-\frac{1}{2}}\sup_t\Big\{(1+t)^{\frac{1}{2}}\int_{\mathbb{R}_{x,v}^6}w^2_{\ell,0}\partial_x^\alpha\mathcal{K}{\bf P}^\bot g\partial_x^\alpha{\bf P}^\bot g\,{\rm d}v{\rm d}x\Big\}\\
&\lesssim\eta(1+t)^{-\frac{1}{2}}\sup_t\Big\{(1+t)^{\frac{1}{2}}\|w_{\ell,0}\partial_x^\alpha{\bf P}^\bot g\|^2\Big\}+(1+t)^{-\frac{1}{2}}\sup_t\Big\{(1+t)^{\frac{1}{2}}\|\bar{\chi}(v)\partial_x^\alpha{\bf P}^\bot g\|^2\Big\}\\
&\lesssim\eta(1+t)^{-\frac{1}{2}}\sup_t\Big\{(1+t)^{\frac{1}{2}}\|w_{\ell,0}\partial_x^\alpha{\bf P}^\bot g\|^2\Big\}+(1+t)^{-\frac{1}{2}}\sup_t\Big\{(1+t)^{\frac{1}{2}}\|\partial_x^\alpha{\bf P}^\bot g\|^2\Big\}\,.
\end{align*}
We also obtain for $O_{32}$ that
\begin{align*}
    O_{32}&\lesssim(1+t)^{-\frac{1}{2}}\sum_{j\in\mathbb{Z}^+}\sum_{\alpha_1+\alpha_2=\alpha}\sup_t\Big\{(1+t)^{\frac{1}{2}}\int_{V_j\times\mathbb{R}_x^3}\nu^{-1}(v)w^2_{\ell,0}\Gamma(\partial_x^{\alpha_1}f_1,\partial_x^{\alpha_2}g)\partial_x^\alpha{\bf P}^\bot g\,{\rm d}v{\rm d}x\Big\}\\
    &\quad+(1+t)^{-\frac{1}{2}}\sum_{\alpha_1+\alpha_2=\alpha}\sup_t\Big\{(1+t)^{\frac{1}{2}}\int_{\mathbb{R}_{x,v}^6}\Gamma(\partial_x^{\alpha_1}f_1,\partial_x^{\alpha_2}g)\partial_x^\alpha{\bf P}^\bot g\,{\rm d}v{\rm d}x\Big\}\\
    &:=O_{32,1}+O_{32,2}\,.
\end{align*}
We further get from Lemma \ref{lemma8.5} that
\begin{align*}
    O_{32,1}&\lesssim(1+t)^{-\frac{1}{2}}\sup_t\Big\{(1+t)^{\frac{1}{2}}|a^g,b^g,c^g|^2_{\dot{B}^1_{2,\infty}\cap\dot{H}^{N-2}}\Big\}\\
    &\quad+\eta(1+t)^{-\frac{1}{2}}\sum_{j\in\mathbb{Z}^+}\sum_{1\le|\alpha'|\le N-2}\sup_t\Big\{(1+t)^{\frac{1}{2}}\|w_{\ell,0}\partial_x^{\alpha'}{\bf P}^\bot g\|^2_{L^2_v(V_j)L_x^2(\mathbb{R}^3)}\Big\}\,.
\end{align*}
It follows from Lemma \ref{lemma9.4} that, for $|\alpha_2|=0$, we have
    \begin{align*}
        &\hspace{3mm}\int_{\mathbb{R}_{x,v}^6}\Gamma(\partial_x^{\alpha}f_1,g)\partial_x^\alpha{\bf P}^\bot g\,{\rm d}v{\rm d}x\\
        &\lesssim\|\partial_x^{\alpha}f_1\|_{L_x^2H_v^2}\|{\bf P}g\|_{L_x^\infty L_v^2}\|\partial_x^\alpha{\bf P}^\bot g\|_{L_x^2L_v^2}+\|\partial_x^{\alpha}f_1\|_{L_x^2H_v^2}\|{\bf P}^\bot g\|_{L_x^\infty L_v^2}\|\partial_x^\alpha{\bf P}^\bot g\|_{L_x^2L_v^2}\\
        &\lesssim\mathcal{M}_0^2|(a^g,b^g,c^g)|^2_{\dot{B}^1_{2,\infty}\cap\dot{H}^2}+\eta\|\partial_x^\alpha{\bf P}^\bot g\|^2+\mathcal{M}_0^2\|{\bf P}^\bot g\|^2_{L_v^2(\dot{H}^1\cap\dot{H}^2)}\,.
    \end{align*}
    If $|\alpha_2|=1$, then $0\le|\alpha_1|\le N-4$. We apply $L^4-L^4-L^2$ estimate to obtain
    \begin{align*}
        &\hspace{3mm}\int_{\mathbb{R}_{x,v}^6}\Gamma(\partial_x^{\alpha_1}f_1,\partial_x^{\alpha_2}g)\partial_x^\alpha{\bf P}^\bot g\,{\rm d}v{\rm d}x\\
        &\lesssim\|\partial_x^{\alpha_1}f_1\|_{L_x^4H_v^2}\|\partial_x^{\alpha_2}{\bf P}g\|_{L_x^4L_v^2}\|\partial_x^\alpha{\bf P}^\bot g\|_{L_x^2L_v^2}+\|\partial_x^{\alpha_1}f_1\|_{L_x^4H_v^2}\|\partial_x^{\alpha_2}{\bf P}^\bot g\|_{L_x^4L_v^2}\|\partial_x^\alpha{\bf P}^\bot g\|_{L_x^2L_v^2}\\[2mm]
        &\lesssim|\partial_x^{\alpha_1}(a^{f_1},b^{f_1},c^{f_1})|_{\dot{B}^\frac{1}{2}_{2,\infty}\cap\dot{H}^1}|\partial_x^{\alpha_2}(a^g,b^g,c^g)|_{\dot{B}^\frac{1}{2}_{2,\infty}\cap\dot{H}^1}\|\partial_x^\alpha{\bf P}^\bot g\|\\
        &\quad+\|\partial_x^{\alpha_1}{\bf P}^\bot f_1\|_{H_v^2(L_x^2\cap\dot{H}^1_x)}|\partial_x^{\alpha_2}(a^g,b^g,c^g)|_{\dot{B}^\frac{1}{2}_{2,\infty}\cap\dot{H}^1}\|\partial_x^\alpha{\bf P}^\bot g\|\\
        &\quad+|\partial_x^{\alpha_1}(a^{f_1},b^{f_1},c^{f_1})|_{\dot{B}^\frac{1}{2}_{2,\infty}\cap\dot{H}^1}\|\partial_x^{\alpha_2}{\bf P}^\bot g\|_{L_v^2(L_x^2\cap\dot{H}^1)}\|\partial_x^\alpha{\bf P}^\bot g\|\\
        &\quad+\|\partial_x^{\alpha_1}{\bf P}^\bot f_1\|_{H_v^2(L_x^2\cap\dot{H}^1_x)}\|\partial_x^{\alpha_2}{\bf P}^\bot g\|_{L_v^2(L_x^2\cap\dot{H}^1)}\|\partial_x^\alpha{\bf P}^\bot g\|\\
        &\lesssim\mathcal{M}_0|(a^g,b^g,c^g)|_{\dot{B}_{2,\infty}^1\cap\dot{H}^2}\|\partial_x^\alpha{\bf P}^\bot g\|+\mathcal{M}_0\|\partial_x{\bf P}^\bot g\|_{L_v^2(L_x^2\cap\dot{H}_x^1)}\|\partial_x^\alpha{\bf P}^\bot g\|\\
        &\lesssim\mathcal{M}_0^2|(a^g,b^g,c^g)|^2_{\dot{B}^1_{2,\infty}\cap\dot{H}^2}+\eta\|\partial_x^\alpha{\bf P}^\bot g\|^2+\mathcal{M}_0^2\sum_{1\le|\alpha'|\le 2}\|\partial_x^{\alpha'}{\bf P}^\bot g\|^2\,.
    \end{align*}
    For $|\alpha_2|\ge 2$, we obtain
    \begin{align*}
        &\hspace{3mm}\int_{\mathbb{R}_{x,v}^6}\Gamma(\partial_x^{\alpha_1}f_1,\partial_x^{\alpha_2}g)\partial_x^\alpha{\bf P}^\bot g\,{\rm d}v{\rm d}x\\
        &\lesssim\|\partial_x^{\alpha_1}f_1\|_{L_x^\infty H_v^2}\|\partial_x^{\alpha_2}{\bf P}g\|_{L_x^2L_v^2}\|\partial_x^\alpha{\bf P}^\bot f\|_{L_x^2L_v^2}+\|\partial_x^{\alpha_1}f_1\|_{L_x^\infty H_v^2}\|\partial_x^{\alpha_2}{\bf P}^\bot g\|_{L_x^2L_v^2}\|\partial_x^\alpha{\bf P}^\bot f\|_{L_x^2L_v^2}\\
        &\lesssim\|\partial_x^{\alpha_1}f_1\|_{H_v^2(\dot{H}^1\cap\dot{H}^2)}|(a^g,b^g,c^g)|_{\dot{H}^{|\alpha_2|}}\|\partial_x^\alpha{\bf P}^\bot g\|+\|\partial_x^{\alpha_1}f_1\|_{H_v^2(\dot{H}^1\cap\dot{H}^2)}\|\partial_x^{\alpha_2}{\bf P}^\bot g\|\|\partial_x^\alpha{\bf P}^\bot g\|\\
        &\lesssim\mathcal{M}_0^2|(a^g,b^g,c^g)|_{\dot{H}^{|\alpha_2|}}^2+\eta\|\partial_x^\alpha{\bf P}^\bot g\|^2+\mathcal{M}_0^2\|\partial_x^{\alpha_2}{\bf P}^\bot g\|^2\,.
    \end{align*}
    Consequently, we obtain
    \begin{align*}
        O_{32,2}&\lesssim(1+t)^{-\frac{1}{2}}\sup_t\Big\{(1+t)^{\frac{1}{2}}|(a^g,b^g,c^g)|^2_{\dot{B}^1_{2,\infty}\cap\dot{H}^{N-2}}\Big\}\\
        &\quad+\eta(1+t)^{-\frac{1}{2}}\sum_{1\le|\alpha'|\le|\alpha|}\sup_t\Big\{(1+t)^{\frac{1}{2}}\|\partial_x^{\alpha'}{\bf P}^\bot g\|^2\Big\}\,.
    \end{align*}
The estimate of $O_{33}$ can be derived by a completely the same way as $O_{32}$, so we omit it.

\noindent Substituting all the estimates into \eqref{wp} and combining them with Lemma \ref{lemma8.5}, we obtain the desired result for the case $1\le|\alpha|\le N-3$.

{\bf Step II:}
\noindent $|\alpha|=N-2$. We consider  the original equation \eqref{X1}. Similar to \eqref{B5}, we have the following estimate for $g$
\begin{align*}\label{6.19}
    \|w_{\ell,0}\partial_x^\alpha g\|^2&\lesssim \int_{\mathbb{R}_{x,v}^6}e^{-\nu(v)t}|w_{\ell,0}\partial_x^\alpha g_0|^2\,\mathrm{d}v\mathrm{d}x-\frac{1}{2}\int_0^t\int_{\mathbb{R}_{x,v}^6} e^{-\nu(v)(t-\tau)}\nu(v)|w_{\ell,0}\partial_x^\alpha g|^2\,\mathrm{d}v\mathrm{d}x\mathrm{d}\tau\notag\\
    &\qquad+\int_0^t\int_{\mathbb{R}_{x,v}^6} e^{-\nu(v)(t-\tau)}w_{\ell,0}^2\partial_x^\alpha \mathcal{K}g\partial_x^\alpha  g\,\mathrm{d}v\mathrm{d}x\mathrm{d}\tau\notag\\
    &\qquad+\int_0^t\int_{\mathbb{R}_{x,v}^6} e^{-\nu(v)(t-\tau)}w_{\ell,0}^2\partial_x^\alpha \Gamma(g,f_1)\partial_x^\alpha  g\,\mathrm{d}v\mathrm{d}x\mathrm{d}\tau\notag\\
    &\qquad+\int_0^t\int_{\mathbb{R}_{x,v}^6} e^{-\nu(v)(t-\tau)}w_{\ell,0}^2\partial_x^\alpha \Gamma(f_2,g)\partial_x^\alpha  g\,\mathrm{d}v\mathrm{d}x\mathrm{d}\tau\notag\\
    &:=O_{34}+O_{35}+O_{36}+O_{37}+O_{38}\,.
\end{align*}
By using Lemma \ref{lem:max_estimate} and taking $x=\langle v\rangle^\frac{\gamma}{2}$ one has
\begin{align*}
    O_{34}&\lesssim \int_{\mathbb{R}_{x,v}^6}e^{-\langle v\rangle^{\gamma}t}\langle v\rangle^{\frac{\gamma}{2}}|w_{\ell-\frac{\gamma}{4},0}\partial_x^\alpha g_0|^2\,\mathrm{d}v\mathrm{d}x\lesssim(1+t)^{-\frac{1}{2}}\|w_{\ell-\frac{\gamma}{4},0}\partial_x^\alpha g_0\|^2\,.\notag
\end{align*}
For $O_{36}$, similar to the estimate of $O_{31}$, we have
\begin{align*}
    O_{36}&\lesssim(1+t)^{-\frac{1}{2}}\sum_{j\in\mathbb{Z}^+}\sup_t\Big\{(1+t)^{\frac{1}{2}}\int_{V_j\times\mathbb{R}_x^3}\nu^{-1}(v)w^2_{\ell,0}\partial_x^\alpha\mathcal{K}g\partial_x^\alpha g\,{\rm d}v{\rm d}x\Big\}\\
    &\quad+(1+t)^{-\frac{1}{2}}\sup_t\Big\{(1+t)^{\frac{1}{2}}\int_{\mathbb{R}_{x,v}^6}w^2_{\ell,0}\partial_x^\alpha\mathcal{K}g\partial_x^\alpha g\,{\rm d}v{\rm d}x\Big\}\\
    &\lesssim\eta(1+t)^{-\frac{1}{2}}\sum_{j\in\mathbb{Z}^+}\sup_t\Big\{(1+t)^{\frac{1}{2}}\|w_{\ell,0}\partial_x^\alpha g\|^2_{L_v^2(V_j)L_x^2(\mathbb{R}^3)}\Big\}\\
    &\quad+(1+t)^{-\frac{1}{2}}\sup_t\Big\{(1+t)^{\frac{1}{2}}\|\partial_x^\alpha g\|^2\Big\}+\eta'(1+t)^{-\frac{1}{2}}\sup_t\Big\{(1+t)^{\frac{1}{2}}\|w_{\ell,0}\partial_x^\alpha g\|^2\Big\}\,.
\end{align*}
The estimates of $O_{37}$ and $O_{38}$ can be obtained in the same way as $O_{32}$, so we omit them.

    \noindent Collecting all the estimates and applying Lemma \ref{lemma8.5}, we obtain the desired result. This completes the proof of Lemma \ref{lemma6.5}.
\end{proof}

Noting that $\dot{B}^1_{2,\infty}\cap\dot{B}_{2,\infty}^2\subset \dot{B}^1_{2,\infty}   $ and $\dot{H}^2\hookrightarrow\dot{B}^2_{2,\infty}$, and combining all the lemmas above, we obtain the following proposition.
\begin{proposition}\label{time decay}
    Let $N\ge5$, $-3<\gamma<0$ and $\ell\ge-N-\frac{\gamma}{2}$. Assume that $f_1(t,x,v), f_2(t,x,v)$ are the solutions of \eqref{f-perturbation} with initial data $f_1^0, f_2^0$ satisfying \eqref{initial} respectively, let $g(t,x,v):=f_1-f_2$ with initial data $g_0:=f_1^0-f_2^0$, suppose that
    \begin{align}
        \|g_0\|_{\mathcal{Y}_0}:=\|g_0\|_{\widetilde{L}_v^2(\dot{B}_{2,\infty}^{\frac{1}{2}})}+\|g_0\|_{L_v^2(\dot{H}^2\cap\dot{H}^{N-2})}+\|w_{\ell-\frac{\gamma}{4},0}\,\mathbf{P}^{\bot}g_0\|_{L_v^2(\dot{H}^1\cap\dot{H}^{N-2})}\le\delta_1\,,\notag
    \end{align}
    where $\delta_1$ is suitable small,
    then we have the time decay of $g(t,x,v)$:
    \begin{align}
\sup_t\|g\|_{\mathcal{Y}}&:=\sup_t\|g\|_{\widetilde{L}_v^2(\dot{B}^1_{2,\infty})}+\sup_t\|g\|_{L_v^2(\dot{H}^2\cap\dot{H}^{N-2})}+\sup_t\|w_{\ell,0}\mathbf{P}^{\bot} g\|_{L_v^2(\dot{H}^1\cap\dot{H}^{N-2})}\nonumber\\
&\lesssim(1+t)^{-\frac{1}{4}}\|g_0\|_{\mathcal{Y}_0}\,,\notag
    \end{align}
    here $w_{\ell,\beta}=\langle v\rangle^{\ell+\gamma|\beta|}$, and $\beta$ denotes the order of differentiation in $v$.
\end{proposition}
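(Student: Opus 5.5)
The plan is a continuity (bootstrap) argument on the weighted-in-time functional
\[
\mathcal{D}(t):=\sup_{0\le\tau\le t}(1+\tau)^{\frac12}\Big\{\|g(\tau)\|^2_{\widetilde{L}_v^2(\dot{B}^1_{2,\infty}\cap\dot{B}^2_{2,\infty})}+\|g(\tau)\|^2_{L_v^2(\dot{H}^2\cap\dot{H}^{N-2})}+\|w_{\ell,0}{\bf P}^\bot g(\tau)\|^2_{L_v^2(\dot{H}^1\cap\dot{H}^{N-2})}\Big\}.
\]
We close the estimate by adding suitable multiples of Lemmas \ref{prop6.2}, \ref{lemma6.2}, \ref{ppg-de}, \ref{lemma8.5} and \ref{lemma6.5}, all applied on $[0,t]$. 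Since $\sup_t\|(f_1,f_2)\|_X\lesssim\mathcal{M}_0$ by Proposition \ref{existence}, every nonlinear contribution already carries a factor $\mathcal{M}_0^2$.

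The first step is to take Lemma \ref{lemma6.5} together with Lemma \ref{lemma8.5}, which it invokes, as the top-level estimate for the weighted microscopic part. Its right-hand side involves only three kinds of terms:
\begin{itemize}
\item initial data, controlled by $\|g_0\|^2_{\mathcal{Y}_0}$;
\item the unweighted quantities $(1+t)^{1/2}\|g\|^2_{L_v^2(\dot{H}^2\cap\dot{H}^{N-2})}$ and $(1+t)^{1/2}\|{\bf P}^\bot g\|^2_{L_v^2(\dot{H}^1)}$;
\item the macroscopic Besov quantity $(1+t)^{1/2}|(a^g,b^g,c^g)|^2_{\dot{B}^1_{2,\infty}}$.
\end{itemize}
We then feed in Lemma \ref{ppg-de} for $\|{\bf P}^\bot g\|_{L_v^2(\dot{H}^1)}$, Lemma \ref{lemma6.2} for $\|g\|_{L_v^2(\dot{H}^2\cap\dot{H}^{N-2})}$, and Lemma \ref{prop6.2} for the Besov part. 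The macroscopic Besov norm is dominated by $\|g\|_{\widetilde{L}_v^2(\dot{B}^1_{2,\infty})}$ because $|(a,b,c)|_{\dot{B}^1_{2,\infty}}\lesssim\|{\bf P}g\|_{\widetilde{L}_v^2(\dot{B}^1_{2,\infty})}\lesssim\|g\|_{\widetilde{L}_v^2(\dot{B}^1_{2,\infty})}$.

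The combination is chosen hierarchically. Lemma \ref{prop6.2} receives the largest weight, Lemmas \ref{lemma6.2} and \ref{ppg-de} intermediate weights, and Lemma \ref{lemma6.5} weight one. The cross terms each lemma produces, namely the unweighted norms and the Besov norm, are then absorbed by the more heavily weighted left-hand sides. Absorption goes through because in Lemmas \ref{prop6.2}, \ref{lemma6.2} and \ref{ppg-de} the loss terms appear with small coefficients $\eta$ or $\mathcal{M}_0^2$.

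The expected main obstacle is the velocity-weight loss term
\[
\eta\,(1+t)^{\frac12}\|{\bf P}^\bot g\,\langle v\rangle^{-\frac{\gamma}{2}}\|^2_{L_v^2(\dot{H}^1\cap\dot{H}^{N-2})},
\]
which arises in Lemmas \ref{prop6.2}, \ref{lemma6.2} and \ref{ppg-de}. Its $\dot{H}^2$ part is needed in Lemma \ref{prop6.2}, and the range up to $\dot{H}^{N-2}$ is covered by the $w_{\ell,0}$-weighted norm. Since $\ell\ge N-\gamma/2\ge-\gamma/2$, we have $\langle v\rangle^{-\gamma/2}\le w_{\ell,0}$, so this term is bounded by $\eta$ times the weighted quantity controlled in Lemma \ref{lemma6.5}. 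For the loop to close, $\eta$ must be chosen after the constants in Lemma \ref{lemma6.5}, so that $\eta\,C_{\ref{lemma6.5}}\cdot(\text{largest weight})<\tfrac12$. This is possible because $\eta$ comes from the free interpolation parameter in \eqref{000}, while the constants in Lemma \ref{lemma6.5} do not depend on $\eta$. Should that fail, I would instead interpolate the $\langle v\rangle^{-\gamma/2}$ weight between the $\langle v\rangle^{\gamma/2}$ dissipation and $w_{\ell,0}$.

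The remaining cross terms, $\mathcal{M}_0^2\|{\bf P}^\bot g\|^2_{L_v^2(\dot{H}^1\cap\dot{H}^2)}$ in Lemma \ref{prop6.2} and $\mathcal{M}_0^2$ times $\mathcal{D}$ from the nonlinear terms, are absorbed by taking $\mathcal{M}_0$ small. This yields $\mathcal{D}(t)\lesssim\|g_0\|^2_{\mathcal{Y}_0}$ uniformly in $t$, with the $w_{\ell-\gamma/4,0}$ initial weight coming from Lemmas \ref{lemma8.5} and \ref{lemma6.5}. Finiteness of $\mathcal{D}(t)$ for each finite $t$, needed for the absorption to be legitimate, follows from $\sup_t\|g\|_X\lesssim\mathcal{M}_0$ together with the embedding $\dot{B}^{1/2}_{2,\infty}\cap\dot{H}^{N}\subset\dot{B}^1_{2,\infty}\cap\dot{B}^2_{2,\infty}$. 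Taking square roots gives $\|g(t)\|_{\mathcal{Y}}\lesssim(1+t)^{-1/4}\|g_0\|_{\mathcal{Y}_0}$. The smallness $\delta_1$ is only needed so that $g$ stays in the small-data regime of Proposition \ref{existence}.
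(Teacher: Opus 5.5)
Your proposal is correct and is essentially the paper's argument. The paper proves this proposition in one line, by combining Lemmas~\ref{prop6.2}, \ref{lemma6.2}, \ref{ppg-de}, \ref{lemma8.5} and \ref{lemma6.5} with $\dot H^2\hookrightarrow\dot B^2_{2,\infty}$; your hierarchical weighting, your absorption of the $\eta\,\|\mathbf{P}^{\bot}g\,\langle v\rangle^{-\frac{\gamma}{2}}\|$ losses into the $w_{\ell,0}$-weighted norm, and your finite-horizon continuity argument make explicit what the paper leaves implicit, and that absorption needs $\ell\ge N-\frac{\gamma}{2}$ (as in Proposition~\ref{existence} and the appendix lemmas), so you are right to read the stated hypothesis $\ell\ge -N-\frac{\gamma}{2}$ as a typo.
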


\section{The existence and stability of the periodic solution}\label{Sect-time-periodic}
This section is devoted to proving the existence and stability of periodic solutions to the system \eqref{f-perturbation}.

\begin{proof}[The proof of Theorem \ref{theorem-1}]
Our strategy is to use the time decay established in Proposition \ref{time decay} to construct a Cauchy sequence. Taking the limit of this sequence as the initial data, and applying Proposition \ref{existence}, we obtain a global solution $f^{per}$
 to \eqref{f-perturbation}. Finally, we show that $f^{per}$
 is indeed the desired periodic solution.

    We divide the proof into two parts. 
    
    Firstly, we begin with the trivial initial data $0$ which clearly satisfies \eqref{initial}. Define $f_0^*=f^*(0,x,v)=0$, then according to Proposition \ref{existence}, there exists a global solution $f^*(t,x,v)$ of the system \eqref{f-perturbation} satisfying
    \[\sup_t\|f^*(t,x,v)\|_X\lesssim\delta_0\,.\]
     We deduce that $\left\{f^*(nT)\right\}_{n\in\mathbb{Z}}$ is the Cauchy sequence in $\mathcal{Y}$.
    
    In fact, let $n>m$, if we set $f_1(t)=f^*(t), f_2(t)=f^*(t+(n-m)T)$, which obviously solve the system \eqref{f-perturbation}, then we have 
    \[\|f_1(0,x,v)\|_X=\|f^*_0\|_X=0\lesssim\delta_1\,,\]
    and 
    \[\|f_2(0,x,v)\|_X=\|f^*((n-m)T)\|_X\lesssim\|f^*_0\|_X\le\delta_1.\]
    Furthermore, we have
    \[\|f_2(0,x,v)-f_1(0,x,v)\|_{\mathcal{Y}_0}\lesssim\|f_2(0,x,v)\|_X\lesssim\delta_1.\]
    Applying Proposition \ref{time decay} yields that
    \begin{align*}
       &\hspace{3mm}\|f^*(n T)-f^*(mT)\|_{\mathcal{Y}}\lesssim\|f_2(mT)-f_1(mT)\|_{\mathcal{Y}}\lesssim(1+mT)^{-\frac{1}{2}}\delta_1.
    \end{align*}
    Let $m\to\infty$, one has
    \[\lim_{n,m\to\infty}\|f^*(n T)-f^*(mT)\|_{\mathcal{Y}}=0\,,\]
    So $\left\{f^*(nT)\right\}_{n\in\mathbb{Z}}$ is the Cauchy sequence in $\mathcal{Y}$. 

    Furthermore, we have \[\sup_t\|f^*(t,x,v)\|_{\mathcal{Y}_0}\lesssim \sup_t\|f^*(t,x,v)\|_X\lesssim\delta_1\,,\]
    here we just need to take the weight as $w_{\ell-\frac{\gamma}{2},\beta}$ in $X$ norms since Proposition \ref{existence} holds for all $\ell$ big enough.
    
    Then there exists a limit of the sequence $\{f^*(nT)\}$ in $\mathcal{Y}$, we denote the limit by $f^*_{\infty}$. The Fatou lemma implies that
    \[\|f^*_{\infty}\|_{\mathcal{Y}_0}\lesssim\delta_1.\]
    Secondly, we construct the periodic solution of \eqref{f-perturbation}.

    According to Proposition \ref{existence}, the system \eqref{f-perturbation} admits a global solution $f^{per}$ with the initial data $f_\infty^*$. 
    We next prove that $f^{per}$ is exactly the periodic solution with the period $T$.
    
    If we set $f_3(t):=f^{per}(t), f_4(t):=f^*(t+(n-1)T)$, then we have
    \begin{align*}
        &\hspace{3mm}\|f^{per}(T)-f^*(nT)\|_{\mathcal{Y}}=\|f_3(T)-f_4(T)\|_{\mathcal{Y}}\\
        &\lesssim\|f_3(0)-f_4(0)\|_{\mathcal{Y}}\lesssim\|f^*_{\infty}-f_4((n-1)T)\|_{\mathcal{Y}}\to 0\qquad as\quad n\to 0\,,
    \end{align*}
then 
\begin{align*}
    \|f^{per}(T)-f^*_{\infty}\|_{\mathcal{Y}}\lesssim\|f^{per}(T)-f^*(nT)\|_{\mathcal{Y}}+\|f^*(nT)-f^*_{\infty}\|_{\mathcal{Y}}\to 0,\qquad as \quad n\to\infty\,,
\end{align*}
this implies $f^{per}(T)=f^{per}(0)$, we deduce that $f^{per}(t+T)=f^{per}(t)$ for any $t$. To see this, define $\tilde{f}(t)=f^{per}(t+T)$, then $\tilde{f}(0)=f^{per}(T)=f^{per}(0)$, so that $\tilde{f}$ and share the same initial data. By uniqueness of solutions to \eqref{f-perturbation}, it follows that $\tilde{f}(t)=f^{per}(t+T)=f^{per}(t)$. That is, $f^{per}$ is exactly the desired periodic solution of system \eqref{f-perturbation}. 
This completes the proof of Theorem  \ref{theorem-1}. 
\end{proof}

We now turn to complete the proof of Theorem \ref{theorem-2}.

\begin{proof}[The proof of Theorem \ref{theorem-2}]
The stability follows immediately from Proposition \ref{time decay}. To this end, let $\tilde{f}(t,x,v)$ be the solution to \eqref{f-perturbation} with an initial data $\tilde{f}_0$ satisfying 
    \[\|\tilde{f}_0\|_X\lesssim\delta_1.\]
    Consider the perturbation $\tilde{f}-f^{per}$, we first estimate $\|\tilde{f}_0-f^*_{\infty}\|_{\mathcal{Y}_0}$. Indeed, since
$$
\|\tilde{f}_0\|_{\mathcal{Y}_0}\lesssim\|\tilde{f}_0\|_X\lesssim\delta_1,\ \|f^*_{\infty}\|_{\mathcal{Y}_0}\lesssim\delta_1\,,$$
  it follows that
     \[\|\tilde{f}_0-f^*_{\infty}\|_{\mathcal{Y}_0}\lesssim\delta_1\,.\]
   Then, by Proposition \ref{time decay}, we directly obtain
     \[\sup_t\|\tilde{f}-f^{per}\|_{\mathcal{Y}}\lesssim(1+t)^{-\frac{1}{4}}\delta_1\,.\]
     This completes the proof of Theorem \ref{theorem-2}.
\end{proof}

\section{Appendix}\label{appendix}
In this appendix, we collect several significant estimates involving collision frequency $\nu(v)$, the operators $\mathcal{K}$ and $\Gamma$. The first concerns inner product estimates between $\nu(v)$ and $\mathcal{K}$ on velocity annuli defined by \eqref{vel-dec}.

\begin{lemma}\label{lemma5.1}
    Let $N\ge 5$ be an integer and $w_{\ell,\beta}:=\langle v\rangle^{\ell+\gamma|\beta|}$ with $-3<\gamma<0,\ell>N-\frac{\gamma}{2},|\beta|>0$. Then for all $\eta>0$, $\exists C(\eta)>0$ such that
    \begin{align}
        &\hspace{3mm}\sum_{j\in\mathbb{Z}^+}\sup_t\int_{V_j(v)\times\mathbb{R}_x^3} \nu^{-1}(v){w}_{\ell,\beta}^2\partial_v^{\beta'}[\nu(v)]\partial_v^{\beta-\beta'}f\partial_v^\beta f\,\mathrm{d}v{\rm d}x\notag\\
&\lesssim\eta\sum_{j\in\mathbb{Z}^+}\sup_t\sum_{|\beta'|\le|\beta|}\|w_{\ell,\beta'}\partial_v^{\beta'}f\|^2_{L^2(V_j)L_x^2(\mathbb{R}^3)}+\sup_t\|f\|^2\,.
\label{nu-vj}
    \end{align}
    Moreover, if $|\beta|\ge0$, then we have 
    \begin{align}
    &\hspace{3mm}\sum_{j\in \mathbb{Z}^+}\sup_t\int_{V_j\times\mathbb{R}_x^3} \nu^{-1}(v)w_{\ell,\beta}^2\partial_v^\beta[\mathcal{K} f_1]  f_2\,\mathrm{d}v{\rm d}x\notag\\
    &\lesssim\eta_1\sum_{j\in \mathbb{Z}^+}\sup_{t}\sum_{|\beta'|\le|\beta|}\|w_{\ell,\beta}\partial_v^{\beta'} f_1\|_{L^2(\widetilde{V}_j)L_x^2(\mathbb{R}^3)}^2+\sup_{t}\|f_1\|^2\notag\\
    &\quad+\eta_2\sum_{j\in \mathbb{Z}}\sup_{t}\|{w}_{\ell,\beta}f_2\|_{L^2(V_j)L^2_x(\mathbb{R}^3)}^2+\eta_3\sum_{|\beta'|\le|\beta|}\sup_t\|w_{\ell,\beta}\partial_v^{\beta'}f_1\|^2\,,\label{k-vj}
    \end{align}
    where $\widetilde{V}_j=V_{j-1}\cup V_j\cup V_{j+1}$ for $j\geq 1$ and $\widetilde{V}_0=V_0\cup V_{1}$, $\eta_i>0(i=1,2,3)$ are small constants.
\end{lemma}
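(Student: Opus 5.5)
The plan is to prove both estimates on a single annulus $V_j$ with $j\ge1$, with $j$-independent constants, and then sum over $j$. Summability comes from two sources: the frozen size of $\nu$ and of the weights on $V_j$, and the Gaussian decay of the kernel of $\mathcal{K}$ away from the diagonal. On $V_j$ we have $|v|\sim 2^j$, so $\nu^{-1}(v)\sim 2^{-j\gamma}$, $w_{\ell,\beta}\sim 2^{j(\ell+\gamma|\beta|)}$, and (from the known smooth structure of $\nu$ for cutoff soft potentials) $|\partial_v^{\beta'}\nu(v)|\lesssim \langle v\rangle^{\gamma-|\beta'|}$.

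\textbf{Step 1: estimate (\ref{nu-vj}).} Pointwise on $V_j$,
\[
\nu^{-1}w_{\ell,\beta}^2|\partial_v^{\beta'}\nu|\lesssim \langle v\rangle^{-|\beta'|}w_{\ell,\beta}^2 = \langle v\rangle^{-|\beta'|(1+\gamma)}\,w_{\ell,\beta-\beta'}\,w_{\ell,\beta},
\]
using $w_{\ell,\beta}=w_{\ell,\beta-\beta'}\langle v\rangle^{\gamma|\beta'|}$.
\begin{itemize}
\item If $-1\le\gamma<0$, the factor $\langle v\rangle^{-|\beta'|(1+\gamma)}$ is bounded. If $\gamma<-1$ it grows, and I will instead interpolate in $|v|$ as in Lemma 2 of \cite{Strain-Guo-2008-ARMA}.
\item Split $V_j$ into $\{|v|\le R\}$ and $\{|v|>R\}$. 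On $|v|>R$ the factor $\langle v\rangle^{-|\beta'|}$ combined with Young's inequality gives $\eta\big(\|w_{\ell,\beta-\beta'}\partial_v^{\beta-\beta'}f\|^2_{L^2(V_j)}+\|w_{\ell,\beta}\partial_v^\beta f\|^2_{L^2(V_j)}\big)$.
\item Only finitely many $j$ meet $\{|v|\le R\}$. There, interpolating $\partial_v^{\beta-\beta'}f$ between $f$ and $\partial_v^{\beta}f$ on the bounded region gives $C_\eta\|f\|^2$ plus $\eta$ times higher-derivative terms.
\end{itemize}
Taking $\sup_t$ on each annulus and summing over $j$ gives (\ref{nu-vj}); the finitely many $j$ with $2^j\le R$ contribute only $C_\eta\sup_t\|f\|^2$.

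\textbf{Step 2: estimate (\ref{k-vj}).} I write $\partial_v^\beta\mathcal{K}f_1$ through the Strain–Guo representation, moving derivatives onto $f_1$ plus kernel derivatives. This gives $\sum_{\beta'\le\beta}\int k_{\beta'}(v,u)\,\partial_u^{\beta'}f_1(u)\,du$, where $|k_{\beta'}(v,u)|\lesssim |v-u|^{-1}(\cdot)\,e^{-c|v-u|^2-c\frac{||v|^2-|u|^2|^2}{|v-u|^2}}$ (with the $|v-u|^\gamma$ singularity handled as in \cite{Strain-Guo-2008-ARMA}). For $v\in V_j$, split the $u$-integral into two parts.
\begin{itemize}
\item \emph{Near part, $u\in\widetilde V_j$.} Here $\langle u\rangle\sim\langle v\rangle$, so weights transfer. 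By a Schur test with the weighted kernel $\nu^{-1}(v)w_{\ell,\beta}^2(v)w_{\ell,\beta}^{-1}(u)\,k(v,u)$, this part is bounded by $\|w_{\ell,\beta}\partial^{\beta'}f_1\|_{L^2(\widetilde V_j)}\|w_{\ell,\beta}f_2\|_{L^2(V_j)}$ times a factor. That factor is small, $O(2^{-j}\cdot 2^{-j\gamma})$ up to kernel bounds, since $\int k(v,u)\,du\lesssim\langle v\rangle^{\gamma-1}$ or better. For large $j$, Young's inequality then yields $\eta_1(\cdots)+\eta_2(\cdots)$; small $j$ are handled with the $C\|f_1\|^2$ term as in Step 1.
\item \emph{Far part, $u\notin\widetilde V_j$.} Here $|u-v|\gtrsim 2^j$ or $||v|^2-|u|^2|/|v-u|\gtrsim 2^j$, so the kernel carries $e^{-c2^{2j}}$, which beats every polynomial weight. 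This gives $e^{-c'2^{2j}}\|w_{\ell,\beta}\partial^{\beta'}f_1\|\,\|w_{\ell,\beta}f_2\|_{L^2(V_j)}$, which sums in $j$ to $\eta_3\sup_t\|w\partial^{\beta'}f_1\|^2+\eta_2(\cdots)$.
\end{itemize}

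\textbf{Main obstacle.} I expect the near part of Step 2 to be hardest. It needs a weighted Schur bound on $k_{\beta'}$ that is uniform in $j$ and carries the extra factor $\nu^{-1}\sim\langle v\rangle^{-\gamma}$. I would take from \cite{Strain-Guo-2008-ARMA} the estimate $\int |k(v,u)|\,\frac{w(v)}{w(u)}\,du\lesssim\langle v\rangle^{-1}$ for polynomial weights. This requires $\langle v\rangle^{-1-\gamma}$ to be small at large $|v|$, which fails when $\gamma\le-1$. In that case I would separate $|v-u|\le\epsilon$ and use the singular-part smallness $\int_{|u-v|\le\epsilon}|v-u|^{\gamma}\,du\sim\epsilon^{3+\gamma}$, together with the Gaussian factor on the complement, to regain smallness.
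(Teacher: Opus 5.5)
Your plan has the same architecture as the paper's proof. Both localize the Strain--Guo argument to the annuli $V_j$. Both let the pieces in which $f_1$ is evaluated in $\widetilde V_j$ produce the $\widetilde V_j$-norms, and make the remaining pieces summable in $j$: the paper via convergence factors $|v|^{-1}\lesssim 2^{-j}$ or $|\xi|^{-1}\lesssim 2^{-j}$, you via $e^{-c2^{2j}}$. However, the step you flag as the main obstacle is not closed, and the remedy you propose for $\gamma\le -1$ would fail.

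Your near-part factor $O(2^{-j}2^{-j\gamma})$ pairs $\nu^{-1}\sim\langle v\rangle^{-\gamma}$ with the hard-potential (Grad-type) bound $\int|k(v,u)|\frac{w(v)}{w(u)}\,du\lesssim\langle v\rangle^{-1}$. For $\gamma\le-1$ this factor is not small. Splitting off $|u-v|\le\epsilon$ does not help: on $\{|u-v|\ge\epsilon\}\cap\widetilde V_j$ the factor $e^{-c|u-v|^2}$ is of order one and gives no decay in $|v|$, so the growth $\langle v\rangle^{-1-\gamma}$ survives.

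What is missing is the soft-potential structure of the kernel. For $-3<\gamma<0$ with $b\le C|\cos\theta|$, the kernels $k_{\beta'}$ of $\partial_v^\beta\mathcal{K}$ carry, compared with the hard-sphere kernel, an extra factor at least of order $\langle v\rangle^{\gamma}$. Hence $\int|k_{\beta'}(v,u)|(\langle v\rangle/\langle u\rangle)^m\,du\lesssim\langle v\rangle^{\gamma-1}$, which is exactly the bound you quote in your near-part bullet and then abandon. Equivalently, the Strain--Guo estimate (Lemma \ref{lemma9.3}) holds with $L^2_\nu$ norms on both sides, and the factor $\nu^{-1}$ in the lemma is there precisely to turn those into plain weighted $L^2$ norms. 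With this input the near-part factor is $\nu^{-1}(v)\langle v\rangle^{\gamma-1}\sim 2^{-j}$, small for large $j$ uniformly in $\gamma\in(-3,0)$, and your Schur-test argument goes through.

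Two smaller points.

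\textbf{Step 1 algebra.} In fact $\langle v\rangle^{-|\beta'|}w_{\ell,\beta}^2=\langle v\rangle^{-(1-\gamma)|\beta'|}w_{\ell,\beta-\beta'}w_{\ell,\beta}$, not $\langle v\rangle^{-(1+\gamma)|\beta'|}w_{\ell,\beta-\beta'}w_{\ell,\beta}$. So the factor always decays, which is what the weight $\langle v\rangle^{\gamma|\beta|}$ is designed for. No separate treatment of $\gamma<-1$ is needed, and your split $|v|>R$ versus $|v|\le R$ already proves \eqref{nu-vj}.

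\textbf{Small $j$.} In the far part, the term $C_{\eta_2}\sum_j e^{-c2^{2j}}\sup_t\|w_{\ell,\beta}\partial_v^{\beta'}f_1\|^2$ has a small coefficient only after discarding finitely many $j$. For those $j$ (bounded $v$, in both the near and the far parts) you still need the compact-part argument. That means:
\begin{itemize}
\item small operator norm of the singular piece $|u-v|\le\epsilon$;
\item Gaussian smallness for $|u|$ large;
\item for $|u|$ bounded and $|u-v|\ge\epsilon$, integrating $\partial_u^{\beta'}$ back onto the smooth kernel.
\end{itemize}
This is what puts the remainder into $\sup_t\|f_1\|^2$ as the statement requires. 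Step 1's interpolation does not do this.
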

\begin{proof}
We first prove \eqref{nu-vj}. The argument follows closely that of Lemma 2 in \cite{Strain-Guo-2008-ARMA}. The only difference is that the integration is restricted to the velocity annulus $V_j$ instead of $\mathbb{R}_v^3$. To ensure the convergence of $\sum_{j\in\mathbb{Z}^+}\sup\limits_t\|w_{\ell,\beta}\bar{\chi}(v)f\|^2_{L_v^2(V_j)L_x^2(\mathbb{R}^3)}$, we proceed as follows:
    \begin{align*}
        &\hspace{3mm}\int_{V_j\times\mathbb{R}_x^3} |\bar{\chi}(v)w_{\ell,\beta}f|^2\,{\rm d}v{\rm d}x\lesssim\int_{V_j\times\mathbb{R}_x^3} \Big|\bar{\chi}(v)w_{\ell,\beta}|v| |v|^{-1}f\Big|^2\,{\rm d}v{\rm d}x\lesssim\frac{1}{2^{2j}}\int_{\mathbb{R}_{x,v}^6}|f|^2\,{\rm d}v{\rm d}x\,.
    \end{align*}
    Then 
    \[\sum_{j\in\mathbb{Z}^+}\sup\limits_t\|w_{\ell,\beta}\bar{\chi}(v)f\|^2_{L_v^2(V_j)L_x^2(\mathbb{R}^3)}\lesssim\sum_{j\in\mathbb{Z}^+}\frac{1}{2^{2j}}\sup_t\|f\|^2\lesssim\sup_t\|f\|^2\,.\]
    In the above estimate, we have made full use of the property of the space $V_j$: we attach a $|v|^{-1}\lesssim\frac{1}{2^j}$ as the convergence factor, and the extra $|v|$ can be bounded by a constant due to the cut-off function $\bar{\chi}(v)$, then we extend the integral domain to the whole space $\mathbb{R}_v^3\times \mathbb{R}_x^3$ in order to apply the known results in section \ref{Sect-noweight}. 

We next consider \eqref{k-vj}. We integrate over $V_j$ and follow the procedure in \cite{Strain-Guo-2008-ARMA}. We only highlight the differences from the original argument.

    \noindent Firstly, for ${\bf K}_1$, the estimate of ${\bf K}_1^{1-\chi}$ is completely the same. We have
    \[\Big|{\bf K}_1^{1-\chi}\Big|\lesssim\frac{\eta}{2}|w_{\ell,\beta}f_2|_{L^2_v(V_j)}\sum_{|\beta'|\le|\beta|}|w_{\ell,\beta}\partial_v^{\beta'}f_1|_{L^2_v(V_j)}\,.\]
    As to ${\bf K}_1^\chi$, due to the existence of $\bar{\chi}(v)$ and $\mu(v)$, we can also attach a convergence factor $|v|^{-1}\lesssim\frac{1}{2^j}$ and the extra $|v|$ can be bounded or absorbed by $\mu(v)$ respectively. Then we have
    \[\Big|{\bf K}_1^\chi\Big|\le \frac{1}{2^j}|f_1|_{L_v^2(\mathbb{R}^3)}|w_{\ell,\beta}f_2|_{L_v^2(V_j)}\lesssim\frac{1}{2^{2j}}|f_1|_{L_v^2(\mathbb{R}^3)}^2+\eta|w_{\ell,\beta}f_2|_{L_v^2(V_j)}^2\,.\]
    \noindent Secondly, for ${\bf K}_2$, the estimate of ${\bf K}_2^{1-\chi}$ is the same, so we omit it.

    \noindent For ${\bf K}_2^{\Upsilon}$, in the domain $V_j$, if $|v+\xi|>2|v|\ge 2^{j+1}$, then we attach a convergence factor $|\xi|^{-1}\lesssim\frac{1}{2^j}$, one has
    \begin{align*}
        \Big|\partial_v^\beta[{\bf K}_2^{\Upsilon}f_1]\Big|&\lesssim\frac{C}{m'}|w_{\ell,\beta}f_2|_{L_v^2(V_j)}\sum_{|\beta'|\le|\beta|}\int_{V_j\times\mathbb{R}_\xi^3}w^2_{\ell,\beta}(v+\xi)|\partial_v^{\beta-\beta'}f_1(v+\xi)|^2|\xi|^{-1}e^{-|\xi|^2}\,{\rm d}\xi{\rm d}v\\
        &\lesssim\frac{1}{2^j}\frac{C}{m'}|w_{\ell,\beta}f_2|_{L_v^2(V_j)}\sum_{|\beta'|\le|\beta|}\int_{\mathbb{R}_v^3\times\mathbb{R}_\xi^3}w^2_{\ell,\beta}(v+\xi)|\partial_v^{\beta-\beta'}f_1(v+\xi)|^2e^{-|\xi|^2}\,{\rm d}\xi{\rm d}v\\
        &\lesssim\frac{1}{2^j}\frac{C}{m'}|w_{\ell,\beta}f_2|_{L_v^2(V_j)}\sum_{|\beta'|\le|\beta|}|w_{\ell,\beta}\partial_v^{\beta-\beta'}f_1|_{L_v^2(\mathbb{R}^3)}\\
        &\lesssim\frac{C}{m'}|w_{\ell,\beta}f_2|_{L_v^2(V_j)}^2+\eta\frac{1}{2^{2j}}\sum_{|\beta'|\le|\beta|}|w_{\ell,\beta}\partial_v^{\beta-\beta'}f_1|_{L_v^2(\mathbb{R}^3)}^2\,.
    \end{align*}
    If $|v+\xi|\le2|v|$, then $|v|-|\xi|\le|v+\xi|< 2^{j+2}$ in the domain $V_j$. If $|\xi|$ is bounded so that $|v|-|\xi|\ge 2^{j-1}$, then we have
    \begin{align*}
        \Big|\partial_v^\beta[{\bf K}_2^{\Upsilon}f_1]\Big|&\lesssim\frac{C}{m'}|w_{\ell,\beta}f_2|_{L_v^2(V_j)}\sum_{|\beta'|\le|\beta|}\int_{V_j\times\mathbb{R}_\xi^3}w^2_{\ell,\beta}(v+\xi)|\partial_v^{\beta-\beta'}f_1(v+\xi)|^2e^{-|\xi|^2}\,{\rm d}\xi{\rm d}v\\
        &\lesssim\frac{C}{m'}|w_{\ell,\beta}f_2|_{L_v^2(V_j)}\sum_{|\beta'|\le|\beta|}\int_{\widetilde{V}_j\times\mathbb{R}_\xi^3}w^2_{\ell,\beta}(v)|\partial_v^{\beta-\beta'}f_1(v)|^2e^{-|\xi|^2}\,{\rm d}\xi{\rm d}v\\
        &\lesssim\frac{C}{m'}|w_{\ell,\beta}f_2|_{L_v^2(V_j)}\sum_{|\beta'|\le|\beta|}|w_{\ell,\beta}\partial_v^{\beta-\beta'}f_1|_{L^2_v(\widetilde{V}_j)}\,.
    \end{align*}
    If $|\xi|$ is unbounded, the  estimate can be derived in the same way as the case $|v+\xi|\ge2|v|$, we omit it.
    For ${\bf K}_2^{1-\Upsilon}$, we just need to match the convergence factor $|v|^{-1}\lesssim\frac{1}{2^j}$ since $|v|\le m'$ is bounded. Collecting all the above estimates, and applying Young’s inequality, we obtain \eqref{k-vj}. This completes the proof of Lemma \ref{lemma5.1}.
\end{proof}
The following lemma provides trilinear estimates on annulus $V_j$.
\begin{lemma}\label{lemma5.2}
    Let $N\ge5$, ${w}_{\ell,\beta}:=\langle v\rangle^{\ell+\gamma|\beta|}$ with $-3<\gamma<0,\ell>N-\frac{\gamma}{2},|\beta|\ge0$.  Assume that $\alpha_1+\alpha_2=\alpha,\beta_1+\beta_2=\beta,|\alpha|+|\beta|\le N$, then for $j\in\mathbb{Z}^+$, we have
    \begin{align}
        &\hspace{3mm}\sup_t\int_{V_j\times\mathbb{R}_x^3}\nu^{-1}(v){w}_{\ell,\beta}^2\Gamma(\partial_x^{\alpha_1}\partial_v^{\beta_1}f,\partial_x^{\alpha_2}\partial_v^{\beta_2}g)\partial_x^{\alpha}\partial_v^{\beta}h\,\mathrm{d}v{\rm d}x\notag\\
        &\lesssim\sup_t\int_{\mathbb{R}_x^3}\sum_{m\le2}\left\{|\nabla_v^m(\mu^\delta\partial_x^{\alpha_1}\partial_v^{\beta_1}f)|_{L^2(\mathbb{R}^3_v)}+|{w}_{\ell,\beta}\partial_x^{\alpha_1}\partial_v^{\beta_1}f|_{L^2(\mathbb{R}^3_v)}\right\}\notag\\
        &\qquad\qquad\times|{w}_{\ell,\beta}\partial_x^{\alpha_2}\partial_v^{\beta_2}g|_{L^2(\widetilde{V}_j)}|{w}_{\ell,\beta}\partial_x^{\alpha}\partial_v^{\beta}h|_{L^2(V_j)}\,{\rm d}x\,,\notag
    \end{align}
    or
    \begin{align}\label{form2}
        &\hspace{3mm}\sup_t\int_{V_j\times\mathbb{R}_x^3}\nu^{-1}(v){w}_{\ell,\beta}^2\Gamma(\partial_x^{\alpha_1}\partial_v^{\beta_1}f,\partial_x^{\alpha_2}\partial_v^{\beta_2}g)\partial_x^{\alpha}\partial_v^{\beta}h\,\mathrm{d}v{\rm d}x\notag\\
        &\lesssim\sup_t\int_{\mathbb{R}_x^3}\sum_{m\le2}\left\{|\nabla_v^m(\mu^\delta\partial_x^{\alpha_2}\partial_v^{\beta_2}f)|_{L^2(\mathbb{R}^3_v)}+|{w}_{\ell,\beta}\partial_x^{\alpha_2}\partial_v^{\beta_2}f|_{L^2(\mathbb{R}^3_v)}\right\}\notag\\
        &\qquad\qquad\times|{w}_{\ell,\beta}\partial_x^{\alpha_1}\partial_v^{\beta_1}g|_{L^2(\widetilde{V}_j)}|{w}_{\ell,\beta}\partial_x^{\alpha}\partial_v^{\beta}h|_{L^2(V_j)}\,{\rm d}x\,,\notag
    \end{align}
    where $\widetilde{V}_j=V_{j-1}\cup V_j\cup V_{j+1}$ for $j\geq 1$ and $\widetilde{V}_0=V_0\cup V_{1}$.
\end{lemma}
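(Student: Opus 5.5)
The plan is to reduce Lemma \ref{lemma5.2} to the standard trilinear estimate for $\Gamma$ with soft potentials in the Strain--Guo framework \cite{Strain-Guo-2008-ARMA}, working pointwise in $x$. The only new feature is that the $v$-integral is restricted to $V_j$ and carries the extra factor $\nu^{-1}(v)$. Since $\partial_x^{\alpha}$ and $\partial_v^{\beta}$ distribute over $\Gamma$ by the Leibniz rule (with $v$-derivatives also falling on the Maxwellian factors $\mu^{1/2}$, which produce only harmless polynomial multiples of $\mu^{1/2}$), it suffices to treat a single term $\Gamma(F,G)$ paired with $\nu^{-1}w_{\ell,\beta}^2 H\chi_{V_j}$. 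Here $F=\partial_x^{\alpha_1}\partial_v^{\beta_1}f$, $G=\partial_x^{\alpha_2}\partial_v^{\beta_2}g$ and $H=\partial_x^\alpha\partial_v^\beta h$. The estimate is then integrated in $x$ and the supremum in $t$ is taken trivially.

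First, I will split $\Gamma=\Gamma_{\rm gain}-\Gamma_{\rm loss}$. For the loss term, $\Gamma_{\rm loss}(F,G)(v)=G(v)\int|v-u|^\gamma {\bf b}\,\mu^{1/2}(u)F(u)\,d\omega du$. The inner integral is bounded by $\nu(v)\,|\mu^\delta F|_{L^2_v}$. For $\gamma>-3/2$ this follows from Cauchy--Schwarz. For $-3<\gamma\le -3/2$ it follows from Hardy/Sobolev in $u$, which needs up to two $v$-derivatives of $\mu^\delta F$; this is exactly where $\sum_{m\le2}|\nabla_v^m(\mu^\delta F)|_{L^2}$ enters. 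The factor $\nu(v)$ then cancels the $\nu^{-1}(v)$ in the weight, and Cauchy--Schwarz over $V_j$ yields $|w_{\ell,\beta}G|_{L^2(V_j)}|w_{\ell,\beta}H|_{L^2(V_j)}$, which is bounded by the $\widetilde V_j$ version.

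Second, I will treat the gain term $\int_{V_j}\nu^{-1}w^2_{\ell,\beta}H\int |v-u|^\gamma{\bf b}\,\mu^{1/2}(u)F(u')G(v')\,d\omega du\,dv$. I will split according to whether $|u|\le |v|/2$ or $|u|>|v|/2$. In the region $|u|>|v|/2$, the factor $\mu^{1/2}(u)\lesssim\mu^{\delta}(v)$ absorbs all weights, so the pre/post-collision change of variables together with the Strain--Guo estimates bounds the contribution by $|\mu^\delta F|_{H^2_v}|\mu^\delta G|\,|w H|_{L^2(V_j)}$. In the region $|u|\le|v|/2$, energy conservation $|u'|^2+|v'|^2=|u|^2+|v|^2$ gives $\langle v\rangle^{2\ell}\lesssim\langle u'\rangle^{2\ell}+\langle v'\rangle^{2\ell}$, so the weight can be placed on whichever of $F(u')$ or $G(v')$ is large. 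Placing it on $F$ gives the term $|w_{\ell,\beta}F|_{L^2}$; placing it on $G$ produces the first inequality. The second inequality follows from the symmetric choice, with the roles of $F$ and $G$ swapped.

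The main obstacle is localizing the weighted factor to $\widetilde V_j$ rather than to all of $\mathbb{R}^3_v$. In the region $|u|\le|v|/2$, for the piece where $G(v')$ carries the weight, one has $|v'|\le\sqrt{|v|^2+|u|^2}\le\frac{\sqrt5}{2}|v|<2^{j+2}$. One also needs $|v'|\ge 2^{j-1}$, which holds up to a set where $|u'|$ is comparable to $|v|$. On that set the $u'$-factor is weighted and small, and we instead bound by $|w_{\ell,\beta} F|_{L^2}$ with $G$ unweighted, possibly via the $\mu^{1/2}(u)$ decay. I would handle this by a further dyadic cutoff on $|v'|$, mirroring the $\mathbf{K}_2^\Upsilon$ argument in Lemma \ref{lemma5.1}. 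The factor $\nu^{-1}(v)\sim 2^{-j\gamma}$ is compensated because $\nu(v')\sim\nu(v)$ on $\widetilde V_j$, together with the $|v-u|^\gamma$ kernel bound $\lesssim\nu(v)$ when $|u|\le|v|/2$.
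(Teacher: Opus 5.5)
Your skeleton (loss term as in Strain--Guo, gain term split by $|u|\le|v|/2$, energy conservation to move the weight onto $u'$ or $v'$) is the same as the paper's. The step you flag as the main obstacle is a genuine gap, and it cannot be closed in the form you aim for.

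Take $v\in V_j$, $|u|\lesssim1$ and $\omega$ close to $(v-u)/|v-u|$. These are head-on collisions, where for instance $\mathbf{b}(\cos\theta)=|\cos\theta|$ is largest. Then $v'\approx u$ and $u'\approx v$. In this region $\mu^{1/2}(u)$ gives neither pointwise decay nor any localization of $v'$. A dyadic cutoff in $|v'|$ only spreads $G$ over all annuli $V_k$, $k\le j+1$. The analogy with $\mathbf{K}_2^{\Upsilon}$ fails: there the Gaussian in $\xi$ ties $v+\xi$ to $v$, whereas here nothing penalizes $|v-v'|\sim|v|$.

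Your fallback, $|w_{\ell,\beta}F|_{L^2}$ times an unweighted $G$, leaves $G$ on all of $\mathbb{R}^3_v$. The right-hand side of the first inequality does not control this. The same happens in your region $|u|>|v|/2$, where you end with a non-localized $|\mu^\delta G|$.

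In fact the first inequality, read literally, is false. Take $j\ge2$, $\alpha=\beta=0$, $\mathbf{b}(\cos\theta)=|\cos\theta|$, $g\ge0$ supported in $\{|v|\le1\}$ and positive on $\{|v|<1\}$, and $f,h\ge0$ supported in $V_j$ and positive in its interior. The loss term vanishes on $V_j$ and the gain term is positive there, so the left side is positive, while $|w_{\ell,\beta}g|_{L^2(\widetilde V_j)}=0$.

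Your claim that the second inequality follows by symmetry also fails, because $\Gamma$ is not symmetric. In the loss term $F$ enters only through $\int|v-u|^\gamma\mathbf{b}\,\mu^{1/2}(u)F(u)\,d\omega\,du$, so it cannot be localized to $\widetilde V_j$; take $F$ supported near $0$ and $G$ in $V_j$.

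The missing idea is that the localized factor must be allowed to change from piece to piece. The paper does this in its case (2c). Since $|u'|^2+|v'|^2\le\frac54|v|^2$ and $|v'-u'|=|v-u|\ge|v|/2$, there are two cases:
\begin{itemize}
\item If $|u'|\le|v|/4$, then $|v'|\ge|v|/4$ and $G(v')$ lives near $V_j$; this is your first-form term.
\item If $|u'|>|v|/4$, then $u'$ lives near $V_j$, and one gets $|w_{\ell,\beta}F|_{L^2(\widetilde V_j)}|w_{\ell,\beta}G|_{L^2(\mathbb{R}^3_v)}|w_{\ell,\beta}H|_{L^2(V_j)}$ instead.
\end{itemize}
So what the argument actually establishes is the \emph{sum} of the two forms, with $\widetilde V_j$ enlarged to $V_{j-2}\cup\cdots\cup V_{j+1}$, not either form alone.

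Alternatively, your appeal to $\mu^{1/2}(u)$ can be made quantitative, but it yields smallness rather than localization. After the change of variables $(u,v)\mapsto(u',v')$, on $\{|v'|<|v|/4\}$ one has $u=v'+(u'-v')_{\perp\omega}$, the component of $u'-v'$ orthogonal to $\omega$. Hence $\int_{\mathbb{S}^2}\mu^{1/2}(u)\,d\omega\lesssim|u'-v'|^{-2}\lesssim4^{-j}$. This gives a remainder $2^{-j}|w_{\ell,\beta}F|_{L^2}|G|_{L^2}|w_{\ell,\beta}H|_{L^2(V_j)}$ that is summable in $j$. In the region $|u|>|v|/2$ you must likewise retain the factor $e^{-c|v|^2}\lesssim e^{-c4^{j}}$. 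Either way, a term beyond the stated right-hand side is unavoidable.
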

\begin{proof}
    Repeating the arguments used in Lemma 2 of \cite{Strain-Guo-2008-ARMA}, while replacing the integration domain in $v$ by $V_j$, we obtain the desired results. We briefly point out the differences.

    The treatment of the loss term is unchanged. It should be noted that the integration variable in the gain term is $u$, and the integration is still taken over $\mathbb{R}_u^3$. 

As for the gain term estimate, since a change of integration variable is involved, special care is required regarding the integration domain. Here we only consider case (2c) in \cite[pp.311]{Strain-Guo-2008-ARMA} as an illustrative example. In this case, we have $\{|u|\le\frac{|v|}{2},\,|v-u|\ge\frac{|v|}{2},\,|v|\ge 1\}$. Due to the fact that
\[|v'|^2+|u'|^2=|v|^2+|u|^2,\quad |v'-u'|=|v-u|.\]
Then one has
\[|v'|^2+|u'|^2\le\frac{5}{4}|v|^2,\quad|v'|\ge|v'-u'|-|u'|\ge\frac{|v|}{2}-|u'|.\]
If $|u'|\le\frac{|v|}{4}$, then it follows that $|v'|\ge\frac{|v|}{4}$. Consequently, for $v\in V_j$, we have $v'\in\widetilde{V}_j$. Hence, we obtain
\begin{align*}
    &\hspace{3mm}\int_{\mathbb{S}^2\times\mathbb{R}_u^3\times V_j} \nu^{-1}(v)\,\langle v\rangle^\gamma e^{-\frac{|u|^2}{4}}w^2_{\ell,\beta}(v)|\partial_x^{\alpha_1}\partial_v^{\beta_1}f(u')|^2|\partial_x^{\alpha_2}\partial_v^{\beta_2}g(v')|^2\,{\rm d}\omega{\rm d}u{\rm d}v\\
    &\lesssim\int_{\mathbb{S}^2\times\mathbb{R}_u^3\times {V}_j}w^2_{\ell,\beta}(u')|\partial_x^{\alpha_1}\partial_v^{\beta_1}f(u')|^2w^2_{\ell,\beta}(v')|\partial_x^{\alpha_2}\partial_v^{\beta_2}g(v')|^2\,{\rm d}\omega{\rm d}u{\rm d}v\\
    &=\int_{\mathbb{R}_u^3\times \widetilde{V}_j}w^2_{\ell,\beta}(u)|\partial_x^{\alpha_1}\partial_v^{\beta_1}f(u)|^2w^2_{\ell,\beta}(v)|\partial_x^{\alpha_2}\partial_v^{\beta_2}g(v)|^2\,{\rm d}u{\rm d}v\\
    &\lesssim|w_{\ell,\beta}\partial_x^{\alpha_1}\partial_v^{\beta_1}f|^2_{L_v^2(\mathbb{R}^3)}|w_{\ell,\beta}\partial_x^{\alpha_2}\partial_v^{\beta_2}|^2_{L_v^2(\widetilde{V}_j)}\,.
\end{align*}
If $|u'|>\frac{|v|}{4}$, then within the domain $V_j$, we have $u'\in\widetilde{V}_j$. It follows
\begin{align*}
    &\hspace{3mm}\int_{\mathbb{S}^2\times\mathbb{R}_u^3\times V_j} \nu^{-1}(v)\,\langle v\rangle^\gamma e^{-\frac{|u|^2}{4}}w^2_{\ell,\beta}(v)|\partial_x^{\alpha_1}\partial_v^{\beta_1}f(u')|^2|\partial_x^{\alpha_2}\partial_v^{\beta_2}g(v')|^2\,{\rm d}\omega{\rm d}u{\rm d}v\\
    &\lesssim\int_{\mathbb{S}^2\times\mathbb{R}_u^3\times {V}_j}w^2_{\ell,\beta}(u')|\partial_x^{\alpha_1}\partial_v^{\beta_1}f(u')|^2w^2_{\ell,\beta}(v')|\partial_x^{\alpha_2}\partial_v^{\beta_2}g(v')|^2\,{\rm d}\omega{\rm d}u{\rm d}v\\
    &=\int_{\widetilde{V}_j\times \mathbb{R}_v^3}w^2_{\ell,\beta}(u)|\partial_x^{\alpha_1}\partial_v^{\beta_1}f(u)|^2w^2_{\ell,\beta}(v)|\partial_x^{\alpha_2}\partial_v^{\beta_2}g(v)|^2\,{\rm d}u{\rm d}v\\
    &\lesssim|w_{\ell,\beta}\partial_x^{\alpha_2}\partial_v^{\beta_2}g|^2_{L_v^2(\mathbb{R}^3)}|w_{\ell,\beta}\partial_x^{\alpha_1}\partial_v^{\beta_1}f|^2_{L_v^2(\widetilde{V}_j)}\,.
\end{align*}
This completes the proof of Lemma \ref{lemma5.2}.
\end{proof}
 The following lemmas are used to treat the relevant integrations in $V_0$.
\begin{lemma}[\cite{Strain-Guo-2008-ARMA}]\label{lemma9.3}
     Let $N\ge 5$ be an integer and $w_{\ell,\beta}:=\langle v\rangle^{\ell+\gamma|\beta|}$ with $-3<\gamma<0,\ell>N-\frac{\gamma}{2},|\beta|\ge0$. Then for all $\eta>0$, $\exists C(\eta)>0$ such that
     \begin{align*}
         \int_{\mathbb{R}_v^3}w^2_{\ell,\beta}\partial_v^{\beta}[\mathcal{K}f_1]f_2\,{\rm d}v&\lesssim\Big\{\eta\sum_{|\beta'|\le|\beta|}|w_{\ell,\beta'}\partial_v^{\beta'}f_1|_{L_\nu^2}+C_{\eta}|w_{\ell,\beta}\bar{\chi}_{C_\eta}(v)f_1|_{L_v^2}\Big\}|w_{\ell,\beta  }f_2|_{L_\nu^2}\,,
     \end{align*}
     where $0\le\bar{\chi}_m(v)\le 1$ is a smooth cut-off function such that for $m\ge0$,
     \[\bar{\chi}_m(v)\equiv1,\, for\, |v|\le m;\qquad \bar{\chi}_m(v)\equiv0,\, for \,|v|\ge 2m.\]
\end{lemma}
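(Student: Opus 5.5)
This is Lemma 2 (the $\mathcal{K}$-part) of Strain--Guo \cite{Strain-Guo-2008-ARMA}, so the plan is to reproduce that argument in the present notation.

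\textbf{Kernel splitting.} Write $\mathcal{K}=\mathcal{K}_2-\mathcal{K}_1$. Here $\mathcal{K}_1 f=\int |v-u|^\gamma \mu^{1/2}(u)\mu^{1/2}(v)f(u)\,du$ is the loss-type part, and $\mathcal{K}_2$ is the gain-type part. After the Carleman-type change of variables, $\mathcal{K}_2$ is written as an integral in $\xi=u-v$ against a kernel behaving like $|\xi|^{\gamma-1}e^{-|\xi|^2/8-|\xi\cdot v|^2/(2|\xi|^2)}$ (up to harmless factors). To put the $v$-derivatives on $f_1$, substitute $u=v+\xi$ in both pieces. Then $\partial_v^\beta$ distributes, by Leibniz, onto the smooth Maxwellian factors and onto $f_1(v+\xi)$. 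This produces the sum over $|\beta'|\le|\beta|$ of terms involving $\partial_v^{\beta'}f_1$.

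\textbf{The $\mathcal{K}_1$ part.} Insert a smooth cutoff $\chi_\epsilon(|\xi|)$ near $|\xi|=0$.
\begin{itemize}
\item On the singular part, the integrability of $|\xi|^\gamma$ (since $\gamma>-3$) together with Young's inequality makes the term small. Using $\mu^{1/2}(v)$ to absorb any weights, it is bounded by $\eta\,|w_{\ell,\beta}f_2|_{L^2_\nu}\sum|w\,\partial^{\beta'}f_1|_{L^2_\nu}$.
\item On the regular part, the factor $\mu^{1/2}(v)\mu^{1/2}(u)$ gives Gaussian decay in both variables. So $f_1$ is only tested against a rapidly decaying function. Splitting $|u|\le C_\eta$ versus $|u|>C_\eta$, the large-$u$ piece is $\le\eta(\cdots)$ and the small-$u$ piece gives $C_\eta|w\,\bar\chi_{C_\eta}f_1|_{L^2}$.
\end{itemize}

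\textbf{The $\mathcal{K}_2$ part (main obstacle).} The difficulty is transferring the weight $w^2_{\ell,\beta}(v)$ onto $f_1(v+\xi)$, given that $w$ carries the possibly large exponent $\ell+\gamma|\beta|$. I would use the exponential factor $e^{-|\xi|^2/8-|\xi\cdot v|^2/(2|\xi|^2)}$, which yields
\[
w_{\ell,\beta}(v)\lesssim w_{\ell,\beta}(v+\xi)\,\langle\xi\rangle^{|\ell+\gamma|\beta||}
\]
plus a gain of decay. The key kernel bound (Lemma 7 of Strain--Guo) is
\[
\int |k(v,\xi)|\,\frac{w(v)}{w(v+\xi)}\,d\xi\lesssim \langle v\rangle^{\gamma-1},
\]
and the exponent $-1$ relative to $\nu\sim\langle v\rangle^\gamma$ is what makes the large-$|v|$ region small. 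Concretely:
\begin{itemize}
\item Split with a cutoff $\Upsilon$ of $|v|$ at level $m$ and a small-$|\xi|$ cutoff.
\item For $|v|>m$, the bound above gives a factor $m^{-1}$. The term is then $\lesssim m^{-1}\sum|w\,\partial^{\beta'}f_1|_{\nu}|wf_2|_\nu$, which is $\le\eta$ once $m$ is large.
\item For $|v|\le m$ with $|\xi|$ bounded, $f_1(v+\xi)$ is supported in a ball. The compactness there gives $C_\eta|\bar\chi_{C_\eta}f_1|_{L^2}$, with the weights bounded by constants.
\end{itemize}
Note that the compact-region bound only controls $\partial_v^{\beta'}f_1$ for $\beta'\ne0$. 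For those terms I would integrate by parts in $v$ (using the smooth kernel once $|\xi|$ is away from $0$), or use Sobolev interpolation on the ball,
\[
|\partial^{\beta'}f_1|_{L^2(B)}\le\eta|\partial^\beta f_1|+C_\eta|f_1|_{L^2(B')},
\]
to reduce to $f_1$ itself.

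\textbf{Summation.} Summing all pieces and applying Cauchy--Schwarz yields the stated inequality.
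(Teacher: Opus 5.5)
Your decomposition is the Strain--Guo argument that the paper quotes without proof. The paper's proof of Lemma~\ref{lemma5.1} recalls the same pieces: ${\bf K}_1^{1-\chi}$, ${\bf K}_1^{\chi}$, ${\bf K}_2^{1-\chi}$, ${\bf K}_2^{\Upsilon}$ with a factor $C/m'$ for large $|v|$, and ${\bf K}_2^{1-\Upsilon}$ on $|v|\le m'$. So the route is right, but two steps fail as you have written them.

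First, the gain kernel does not behave like $|\xi|^{\gamma-1}$ at $\xi=0$. That function is not locally integrable in $\mathbb{R}^3$ when $\gamma\le-2$. Your small-$|\xi|$ piece of $\mathcal{K}_2$ would then be infinite rather than small, and this would contradict your own bound $\int|k(v,\xi)|\,w(v)/w(v+\xi)\,d\xi\lesssim\langle v\rangle^{\gamma-1}$. Use the Carleman change of variables ($v'=v+\xi$, $u'=v+\eta$, $\eta\perp\xi$, $d\omega\,du\sim|\xi|^{-2}\,d\xi\,d\eta$) together with ${\bf b}\le C|\cos\theta|=C|\xi|/|\xi+\eta|$. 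For both gain terms (exchange the roles of $\xi$ and $\eta$ in the one containing $f_1(u')$) this gives
\[
|k_2(v,v+\xi)|\lesssim\frac{1}{|\xi|}\int_{\xi^\perp}\big(|\xi|^2+|\eta|^2\big)^{\frac{\gamma-1}{2}}\,\Phi_v(\xi,\eta)\,d\eta,
\]
where $\Phi_v$ is a product of two translated Maxwellians. The singularity at $\xi=0$ is:
\begin{itemize}
\item $|\xi|^{-1}$ for $\gamma>-1$;
\item $|\xi|^{-1}\log(1/|\xi|)$ for $\gamma=-1$;
\item $|\xi|^{\gamma}$ for $\gamma<-1$.
\end{itemize}
Each of these is integrable for every $\gamma>-3$, and this is what makes ${\bf K}_2^{1-\chi}$ small. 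The power $\gamma-1$ belongs to $\langle v\rangle$, not to $|\xi|$: it is the size of the bracket, for large $|v|$, where $\Phi_v$ is not negligible.

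Second, in the compact region your two options do not handle the top-order term. The interpolation $|\partial^{\beta'}f_1|_{L^2(B)}\le\eta|\partial^{\beta}f_1|+C_\eta|f_1|_{L^2(B')}$ is available only for $|\beta'|<|\beta|$. When all of $\partial_v^\beta$ falls on $f_1(v+\xi)$, you must move the derivatives onto the kernel. Integrating by parts in $v$ cannot do that: it puts them on $w_{\ell,\beta}^2\bar\chi f_2$, which is absent from the right-hand side.

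Instead, use $\partial_v^{\beta}[f_1(v+\xi)]=\partial_\xi^{\beta}[f_1(v+\xi)]$ and integrate by parts in $\xi$ on the support of $\chi_\epsilon(|\xi|)$. Equivalently, undo the translation and let $\partial_v^\beta$ hit the regular kernel $k^\chi(v,u)$. For $\mathcal{K}_1$ that kernel is explicit. For $\mathcal{K}_2$ you need $k_2(v,v+\xi)$ to be smooth for $\xi\ne0$, with $C_\epsilon$-bounds on its derivatives. This holds even for merely bounded ${\bf b}$: rescaling the plane variable by $|\xi|$ makes the argument of ${\bf b}$ a function of the rescaled variable's length alone, so all dependence on $\xi$ and $v$ sits in the Maxwellian factors.

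Finally, before saying that $f_1(v+\xi)$ lives in a ball, split off the region $|v|\le m$, $|\xi|\ge R$. That piece is small by the Gaussian decay in $|\xi|$.
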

\begin{lemma}[\cite{Guo-ARMA-2003,Strain-Guo-2008-ARMA,Duan-Lei-Yang-Zhao-CMP-2017}]\label{lemma9.4}
     Let $N\ge5$, ${w}_{\ell,\beta}:=\langle v\rangle^{\ell+\gamma|\beta|}$ with $-3<\gamma<0,\ell>N-\frac{\gamma}{2},|\beta|\ge0$.  Assume that $\alpha_1+\alpha_2=\alpha,\beta_1+\beta_2=\beta,|\alpha|+|\beta|\le N$, then it holds that
     \begin{align*}
         &\hspace{3mm}\int_{\mathbb{R}_v^3}w^2_{\ell,\beta}\Gamma(\partial_x^{\alpha_1}\partial_v^{\beta_1}f,\partial_x^{\alpha_2}\partial_v^{\beta_2}g)\partial_x^\alpha\partial_v^\beta h\,{\rm d}v\\
         &\lesssim\sum_{m\le2}\Big\{\Big|\nabla_v\Big(\mu^\delta\partial_x^{\alpha_1}\partial_v^{\beta_1}f\Big)\Big|_{L_v^2}+\Big|w_{\ell,\beta}\partial_x^{\alpha_1}\partial_v^{\beta_1}f\Big|_{L_v^2}\Big\}\times\Big|w_{\ell,\beta}\partial_x^{\alpha_2}\partial_v^{\beta_2}g\Big|_{L_\nu^2}\Big|w_{\ell,\beta}\partial_x^{\alpha}\partial_v^{\beta}h\Big|_{L_\nu^2}\,,
     \end{align*}
     or
     \begin{align}\label{rrr}
         &\hspace{3mm}\int_{\mathbb{R}_v^3}w^2_{\ell,\beta}\Gamma(\partial_x^{\alpha_1}\partial_v^{\beta_1}f,\partial_x^{\alpha_2}\partial_v^{\beta_2}g)\partial_x^\alpha\partial_v^\beta h\,{\rm d}v\\
         &\lesssim\sum_{m\le2}\Big\{\Big|\nabla_v\Big(\mu^\delta\partial_x^{\alpha_2}\partial_v^{\beta_2}g\Big)\Big|_{L_v^2}+\Big|w_{\ell,\beta}\partial_x^{\alpha_2}\partial_v^{\beta_2}g\Big|_{L_v^2}\Big\}\times\Big|w_{\ell,\beta}\partial_x^{\alpha_1}\partial_v^{\beta_1}f\Big|_{L_\nu^2}\Big|w_{\ell,\beta}\partial_x^{\alpha}\partial_v^{\beta}h\Big|_{L_\nu^2}\,.\notag
     \end{align}
\end{lemma}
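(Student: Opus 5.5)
The statement is Lemma \ref{lemma9.4}, the weighted trilinear estimate for $\Gamma$ on all of $\mathbb{R}^3_v$. It is quoted from \cite{Guo-ARMA-2003,Strain-Guo-2008-ARMA,Duan-Lei-Yang-Zhao-CMP-2017}, so the plan is to reproduce the Strain--Guo argument in the weight $w_{\ell,\beta}=\langle v\rangle^{\ell+\gamma|\beta|}$. It is the same argument already adapted to annuli in Lemma \ref{lemma5.2}, with the restriction to $V_j$ removed. Since $\alpha_1,\alpha_2$ enter only as labels, I would set $F=\partial_x^{\alpha_1}\partial_v^{\beta_1}f$, $G=\partial_x^{\alpha_2}\partial_v^{\beta_2}g$, $H=\partial_x^{\alpha}\partial_v^{\beta}h$.

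First, the derivative placement. The exponent in the statement is $\beta$, the total $v$-derivative index, but the statement only sees $\Gamma(F,G)$ with the derivatives already placed. So I would first expand $\partial_v^\beta\Gamma$ by the Leibniz rule for $\Gamma$ in the $\omega$-representation. Derivatives falling on $\mu^{1/2}(u)$ are absorbed into $\mu^\delta$.

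Next, split $\Gamma=\Gamma_{\rm gain}-\Gamma_{\rm loss}$.

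\textbf{Loss term.} It equals $\nu_F(v)\,G(v)$ with $\nu_F(v)=\int|v-u|^\gamma b\,\mu^{1/2}(u)F(u)\,du$. When $\gamma>-3/2$, Cauchy--Schwarz gives $|\nu_F(v)|\lesssim\langle v\rangle^\gamma|\mu^\delta F|_{L^2_v}$. For very soft $\gamma\le -3/2$ the singularity $|v-u|^{2\gamma}$ is not locally integrable. There I would use the standard Sobolev trick: bound $\sup_u|\mu^{\delta}F|$ by $\sum_{m\le2}|\nabla_v^m(\mu^\delta F)|_{L^2_v}$ via $H^2\subset L^\infty$, which is why the $\nabla_v$ norm appears. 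Then
\[
\int w^2_{\ell,\beta}\,\nu_F\,G\,H\,dv\lesssim\Bigl(\sum_{m\le2}|\nabla^m_v(\mu^\delta F)|\Bigr)\,|w_{\ell,\beta}G|_{L^2_\nu}\,|w_{\ell,\beta}H|_{L^2_\nu}.
\]

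\textbf{Gain term.} This is the main obstacle, because the weight $w_{\ell,\beta}(v)$ must be transferred to the post-collision velocities $v',u'$. I would write
\[
\int w^2_{\ell,\beta}\,\Gamma_{\rm gain}(F,G)\,H\,dv=\int|v-u|^\gamma b\,\mu^{1/2}(u)\,F(u')\,G(v')\,w^2_{\ell,\beta}(v)\,H(v),
\]
apply Cauchy--Schwarz, and keep the factor $\langle v\rangle^\gamma w^2_{\ell,\beta}(v)|H(v)|^2$ on the $H$ side, which yields $|w_{\ell,\beta}H|_{L^2_\nu}$. What remains is to bound
\[
\int|v-u|^\gamma\,\langle v\rangle^{-\gamma}\,\mu^{1/2}(u)\,w^2_{\ell,\beta}(v)\,|F(u')|^2\,|G(v')|^2 .
\]
Following Strain--Guo, I would split into the regions $|u|\ge|v|/2$ and $|u|\le|v|/2$, and for the latter into $|u'|\le|v|/4$ versus $|u'|>|v|/4$. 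The tools are energy conservation $|v'|^2+|u'|^2=|v|^2+|u|^2$ and the fact that $v'$ or $u'$ has size $\gtrsim|v|$. Together these give $w_{\ell,\beta}(v)\lesssim w_{\ell,\beta}(v')\,w_{\ell,\beta}(u')$. On $|u|\ge|v|/2$ the Maxwellian $\mu^{1/2}(u)$ controls every polynomial in $v$. The leftover $\langle v\rangle^{-\gamma}$ is matched against $|v-u|^\gamma$, and against $\langle v'\rangle^{\gamma}$ (when $|v'|\sim|v|$) or $\langle u'\rangle^{\gamma}$ (when $|u'|\sim|v|$) respectively. The pre-post change of variables $(u,v)\mapsto(u',v')$ has unit Jacobian. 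It produces $|w_{\ell,\beta}F|_{L^2_v}\,|w_{\ell,\beta}G|_{L^2_\nu}$; when the Maxwellian attaches to $u$ near $v$, the same $H^2$ Sobolev bound as in the loss term gives $|\nabla^m(\mu^\delta F)|$ in place of $|w_{\ell,\beta}F|$.

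The singular region $|v-u|\le1$ for very soft potentials needs one more step. There I would split $|v-u|^\gamma=|v-u|^\gamma\chi_\varepsilon+|v-u|^\gamma(1-\chi_\varepsilon)$. On the singular part I would use the $L^\infty_v$ bound, again via $H^2$, on the factor carrying $\mu^\delta$.

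The second form \eqref{rrr} follows by the symmetric argument with the roles of $F$ and $G$ interchanged. In the $\omega$-representation, $u'$ and $v'$ are exchanged by $\omega\mapsto\omega^\perp$-type symmetry, so the case analysis swaps which factor receives the $L^\infty$ (i.e.\ $H^2$) bound. Summing over the Leibniz terms gives the stated estimate.
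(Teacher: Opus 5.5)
Your route (loss/gain split, $H^2_v\subset L^\infty_v$ on the Maxwellian-weighted factor, the regions $|u|\gtrless|v|/2$ and $|u'|\gtrless|v|/4$, and the unit-Jacobian pre/post change of variables) is the Strain--Guo argument that the paper cites and sketches in Lemma~\ref{lemma5.2}. However, three steps fail as written.

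\textbf{The loss term in \eqref{rrr}.} This form does not follow by symmetry. The term $\Gamma_{\rm loss}(F,G)(v)=G(v)\int|v-u|^{\gamma}b\,\mu^{1/2}(u)F(u)\,d\omega\,du$ puts the Maxwellian only on $F$, and the $\omega$-swap exchanging $u',v'$ acts only on the gain term. Since $F$ must now be measured in $L^2_\nu$, your pointwise bound on $\nu_F$ is unavailable. Instead, apply Cauchy--Schwarz in $(u,v)$ with $|v-u|^{\gamma}\mu^{1/2}(u)$ in both factors.
\begin{itemize}
\item The $H$-factor is $\sim|w_{\ell,\beta}H|_{L^2_\nu}$.
\item In the other factor, on $\{|v-u|\ge1\}$ use $|v-u|^\gamma\le1$ and $\mu^{1/2}(u)\lesssim\langle u\rangle^{\gamma}w^2_{\ell,\beta}(u)$.
\item On $\{|v-u|\le1\}$, move decay to $v$ via $\mu^{1/2}(u)\lesssim\mu^{1/4}(u)\mu^{1/8}(v)$. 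Then use $\sup_v|\mu^{1/16}w_{\ell,\beta}G|\lesssim\sum_{m\le2}|\nabla_v^m(\mu^\delta G)|_{L^2_v}$ (with $\delta<1/16$) and the local integrability of $|v-u|^\gamma$.
\end{itemize}

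\textbf{The gain-term Cauchy--Schwarz spends the same decay twice.} You put $\langle v\rangle^{\gamma}$ on the $H$ side, $\langle v\rangle^{-\gamma}$ on the $(F,G)$ side, and $|v-u|^\gamma$ in both. On $\{|u|\le|v|/2,\ |v|\ge1\}$ one has $|v-u|^{\gamma}\langle v\rangle^{-\gamma}\sim1$. So after the cancellation nothing is left to produce the $\langle v'\rangle^{\gamma}$ needed for $|w_{\ell,\beta}G|_{L^2_\nu}$, and you only reach $|w_{\ell,\beta}G|_{L^2_v}$. The fix is to drop the factors $\langle v\rangle^{\pm\gamma}$. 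The term $\int\!\!\int|v-u|^\gamma\mu^{1/2}(u)w^2_{\ell,\beta}|H|^2$ is already $\sim|w_{\ell,\beta}H|^2_{L^2_\nu}$. In the other factor, use $|v-u|^\gamma\lesssim\langle v\rangle^\gamma\lesssim\langle v'\rangle^\gamma$ (since $|v'|^2\le|v|^2+|u|^2\le\tfrac54|v|^2$) exactly once.

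\textbf{The weight transfer needs a nonnegative exponent.} The bound $w_{\ell,\beta}(v)\lesssim w_{\ell,\beta}(v')w_{\ell,\beta}(u')$ comes from $\langle v\rangle\le\langle v'\rangle\langle u'\rangle$, so it requires $\ell+\gamma|\beta|\ge0$. The hypothesis $\ell>N-\gamma/2$ does not ensure this when $\gamma<-1$ and $|\beta|$ is close to $N$. This is not merely a defect of the method. Set $e:=\ell+\gamma|\beta|<-2$, and take smooth bumps $F,G,H$ near $u_0'$, $v_0'$, $u_0'+v_0'$ with $u_0'\perp v_0'$, $|u_0'|=R/8$, $|v_0'|=R$. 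A density argument handles the constraint that $G,H$ are $v$-derivatives. The loss term then vanishes, and the gain comes from $u\approx0$ over a solid angle $\sim R^{-2}$. This gives a left side $\sim R^{2e+\gamma-2}$ against a right side $\sim R^{3e+\gamma}$.

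So your argument, and the stated estimate with $|w_{\ell,\beta}F|_{L^2_v}$, should be restricted to $\ell+\gamma|\beta|\ge0$. For decaying weights $\langle v\rangle^{-k}$, one instead moves the whole weight onto a single post-collisional velocity via $\langle v\rangle^{-k}\le\langle u\rangle^{k}\langle v'\rangle^{-k}$ and leaves $F$ unweighted.
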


Finally, we give a basic estimate for the nonlinear operator $\Gamma$.
\begin{lemma}[\cite{Xiao-Xiong-Zhao-2013-JDE}]\label{nonlinear}
    Set $w=w(v)=\langle v\rangle^{-\frac{\gamma}{2}}$ with $-3<\gamma<0$. For $l\ge0$, it holds that
    \begin{align*}
        |w^l\Gamma(g_1,g_2)|^2_{L_v^2}\lesssim\sum_{|\beta|\le 2}|w^{l-|\beta|}\partial_v^\beta g_1|^2_{L_\nu^2}|w^l g_2|^2_{L_\nu^2}\,,
    \end{align*}
    or
    \begin{align*}
        |w^l\Gamma(g_1,g_2)|^2_{L_v^2}\lesssim\sum_{|\beta|\le 2}|w^{l-|\beta|}\partial_v^\beta g_2|^2_{L_\nu^2}|w
        ^l g_1|^2_{L_\nu^2}\,.
    \end{align*}
\end{lemma}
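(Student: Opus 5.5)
This is the last statement: Lemma \ref{nonlinear}, the weighted $L^2_v$ bound for $\Gamma$ with weight $w=\langle v\rangle^{-\gamma/2}$. It is quoted from \cite{Xiao-Xiong-Zhao-2013-JDE}. My plan is to verify it directly by splitting $\Gamma$ into gain and loss parts. For hard-sphere-type cutoff kernels I would write
\[
\Gamma(g_1,g_2)=\Gamma_+(g_1,g_2)-\Gamma_-(g_1,g_2),
\]
where
\[
\Gamma_-(g_1,g_2)(v)=g_2(v)\int |v-u|^\gamma {\bf b}\,\mu^{1/2}(u)g_1(u)\,d\omega\,du ,
\]
and $\Gamma_+$ carries $g_1(u')g_2(v')$.

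\textbf{Loss term.} The inner integral $A(v)=\int|v-u|^\gamma\mu^{1/2}(u)g_1(u)\,du$ contains the singular factor $|v-u|^\gamma$ with $-3<\gamma<0$. It is not controlled by $|g_1|_{L^2}$ alone when $\gamma\le-3/2$. This is exactly why up to two $v$-derivatives of $g_1$ appear. I would bound $|\mu^{\delta}g_1|_{L^\infty_v}\lesssim\sum_{|\beta|\le2}|\partial_v^\beta(\mu^{\delta}g_1)|_{L^2}$ by Sobolev embedding $H^2(\mathbb R^3)\subset L^\infty$. Then I would split $|v-u|\le1$ (local integrability, since $\gamma>-3$) from $|v-u|>1$ (Cauchy--Schwarz). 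This gives $|A(v)|\lesssim\langle v\rangle^{\gamma}\sum_{|\beta|\le2}|\partial_v^\beta g_1|_{L^2_\nu}$, where the Maxwellian absorbs any weight. Hence
\[
|w^l\Gamma_-|_{L^2}\lesssim \sup_v\big(\langle v\rangle^{\gamma/2}|A|\big)\,|w^l g_2|_{L^2_\nu}.
\]
That is the first form. The second form, with the roles swapped, is obtained by putting $g_2$ in $L^\infty$ via $H^2$ and $g_1$ in $L^2$. The weight $w^{l-|\beta|}$ leaves room to shift powers of $\langle v\rangle$.

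\textbf{Gain term.} I would use duality. Pair $w^l\Gamma_+$ with a test function $h\in L^2_v$, and apply Cauchy--Schwarz on the collision integral:
\[
\int|v-u|^\gamma{\bf b}\,\mu^{1/2}(u)\,|g_1(u')g_2(v')|\,w^l(v)|h(v)|
\le\Big(\int|v-u|^\gamma\mu^{1/2}(u)\,|g_1(u')|^2|g_2(v')|^2 w^{2l}(v)\Big)^{1/2}\Big(\int|v-u|^\gamma\mu^{1/2}(u)\,|h(v)|^2\Big)^{1/2}.
\]
The second factor is $\lesssim|h|_{L^2}$ times a bounded quantity, because $\nu(v)\lesssim1$. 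For the first factor I would use $w^{2l}(v)\lesssim w^{2l}(u')w^{2l}(v')$, which holds since $\langle v\rangle\le\langle u'\rangle\langle v'\rangle$ by energy conservation and $l\ge0$. I would also use the pre-post collisional change of variables $(u,v)\to(u',v')$, which has unit Jacobian. The singular factor $|v-u|^\gamma=|v'-u'|^\gamma$ is the delicate point.

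\textbf{Main obstacle.} The difficulty is handling this singular factor after the change of variables. The factor $\mu^{1/2}(u)$ becomes nonlocal in $(u',v')$, so it cannot be used for integrability. I would therefore again place one function in $L^\infty$ through $H^2_v$ (this is where the sum over $|\beta|\le2$ and the weight loss $w^{l-|\beta|}$ enter), and integrate $|v'-u'|^\gamma$ against the other function. The near region $|v'-u'|\le1$ is integrable, while the far region gives the $\nu$-weight $\langle v'\rangle^{\gamma}$, which produces the $L^2_\nu$ norm. The two forms of the lemma come from choosing which of $g_1,g_2$ goes in $L^\infty$. I expect bookkeeping the weights here to be routine but the most error-prone step.
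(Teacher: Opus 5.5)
The paper does not prove this lemma; it only cites \cite{Xiao-Xiong-Zhao-2013-JDE}. Your argument therefore has to stand on its own, and it breaks at the gain term. After Cauchy--Schwarz you replace $w^{2l}(v)$ by $w^{2l}(u')w^{2l}(v')$ and discard $\mu^{1/2}(u)$. What remains is $\int|v'-u'|^{\gamma}|w^lg_1(u')|^2|w^lg_2(v')|^2\,du'dv'$, and the right-hand side of the lemma does not control it. Take $g_1,g_2$ to be fixed smooth bumps translated to $u_0'$ and $u_0'+2e$, with $|e|=1$ and $|u_0'|=R$. Then this integral is of size $R^{2|\gamma|l}$, whereas $\sum_{|\beta|\le2}|w^{l-|\beta|}\partial_v^\beta g_1|^2_{L^2_\nu}|w^lg_2|^2_{L^2_\nu}$ is of size $R^{2|\gamma|(l-1)}$. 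In particular, your claim that the region $|v'-u'|>1$ ``gives $\langle v'\rangle^{\gamma}$'' is false, since here $|v'-u'|^{\gamma}\approx 1$. Putting $w^lg_1$ in $L^\infty$ through $H^2_v$ does not help either: it needs more polynomial weight on $\partial_v^2g_1$, not the reduced weight the lemma allows. For the true $\Gamma_+$ this configuration is harmless only because $|u|\approx R$ there, so $\mu^{1/2}(u)\approx e^{-R^2/4}$. The Maxwellian factor you set aside is exactly what makes the lemma true.

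Use $\mu^{1/2}(u)$ before changing variables, splitting into a near and a far region.

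\emph{Near region.} On $\{|v-u|\le1\}$ one has $|u|^2+|v|^2\le 3|u|^2+2$. By energy conservation this gives $\mu^{1/2}(u)w^{2l}(v)\lesssim\mu^{c}(u')\mu^{c}(v')$ for some $c>0$. After the change of variables, the singular piece is bounded by $|\mu^{c/2}g_1|^2_{L^\infty}|\mu^{c/2}g_2|^2_{L^2}$, or the same with $g_1,g_2$ swapped. The embedding $H^2_v\subset L^\infty_v$ with Maxwellian weights is the only place derivatives enter, and the swap accounts for the two forms.

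\emph{Far region.} On $\{|v-u|>1\}$ one has $|v-u|^{\gamma}\mu^{1/2}(u)\lesssim\langle v\rangle^{\gamma}\mu^{1/4}(u)$. Put all of $\langle v\rangle^{\gamma}$ on the $g_1g_2$ side and bound the test-function side by $\int\mu^{1/4}(u)|h(v)|^2\lesssim|h|^2_{L^2}$. Then use $\langle u'\rangle,\langle v'\rangle\le\langle u\rangle\langle v\rangle$ and $\langle v\rangle\le\langle u'\rangle\langle v'\rangle$ to get $\mu^{1/4}(u)w^{2l-4}(v)\lesssim w^{2l-2}(u')w^{2l-2}(v')$ for every $l\ge0$. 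This yields $|w^lg_1|_{L^2_\nu}|w^lg_2|_{L^2_\nu}$ with no derivatives at all.

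\emph{Loss term.} Your argument for the first form is fine. The factor should be $\sup_v\langle v\rangle^{-\gamma/2}|A(v)|$ rather than $\langle v\rangle^{\gamma/2}|A(v)|$, which is harmless since $|A|\lesssim\langle v\rangle^{\gamma}$. The second form is not obtained by simply swapping roles. A global $L^\infty$ bound on $w^{l-m}g_2$ via $H^2$ needs $m\ge3$ to be fed by $|w^{l-2}\partial_v^2g_2|_{L^2_\nu}$, while $w^mA\in L^2_v$ forces $m<1$. Use the same near/far split instead. The part of $A$ coming from $|v-u|\le1$ decays like a Gaussian in $v$, so only $\mu^{\delta}g_2$ needs to be in $L^\infty$. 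The remaining part is $\lesssim\langle v\rangle^{\gamma}|\mu^{1/4}g_1|_{L^2}$ and needs no derivatives.
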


\bigskip 
\noindent {\bf Acknowledgements:} 
The research of Yuanjie Lei was supported by the National Natural Science Foundation of China under contract 12171176. The research of Shuangqian Liu was supported by the National Natural Science Foundation of China under contracts 12325107 and 12471217. The research of Huijiang Zhao was supported by National Natural Science Foundation of China under contracts 12221001, 12371225 and 12571242.

\vspace{2mm}

\textbf{Conflict of interest.} The authors do not have any possible conflicts of interest.

\vspace{2mm}

\textbf{Data availability statement.}
 Data sharing is not applicable to this article as no data sets were generated or analyzed during the current study.

\bibliographystyle{plain}

\begin{thebibliography}{aaa}
















\bibitem{BCD-Book-2011} H. Bahouri, J. Chemin, R. Danchin,
{\it  Fourier Analysis and Nonlinear Partial Differential Equations.}
Grundlehren Math. Wiss, 343,
Springer, Heidelberg, 2011. 



\bibitem{CIP-1994}  C. Cercignani, R. Illner,  M. Pulvirenti, 
{\it The Mathematical Theory of Dilute Gases}. 
Applied  Mathematical Sciences, 106,
Springer-Verlag, New York, 1994. 

\bibitem{Dr-IM-2000} R.  Danchin,  
Global existence in critical spaces for compressible Navier-Stokes equations.
{\it Invent. Math.} {\bf 141 (3)} (2000)  579--614.

\bibitem{Deguchi-2024-MathAnn}  N. Deguchi,  
On the stability of stationary compressible Navier-Stokes flows in 3D.
{\it Math. Ann.} {\bf 390 (3)} (2024) 4361--4404.

\bibitem{Deguchi-2025} N. Deguchi, Stability of time-periodic solutions to the Navier-Stokes-Fourier system. arXiv:2601.00034.



\bibitem{D-2008-JDE} R. Duan,  On the Cauchy problem for the Boltzmann equation in the whole space: global existence and uniform stability in $L^2_{\xi}(H^N_x)$. 
{\it J. Differential Equations} {\bf 244 (12)} (2008) 3204--3234.
\bibitem{DLN-2026}R. Duan, Y. Lei, J. Ni, Global dynamics of the Boltzmann equation driven by a time-periodic source in $\mathbb{R}^3$. preprint, 2026.

\bibitem{Duan-Lei-Yang-Zhao-CMP-2017}R. Duan, Y. Lei, T. Yang, H. Zhao, The Vlasov-Maxwell-Boltzmann system near Maxwellians in the whole space with very soft potentials. Comm. Math. Phys. {\bf 351} (2017), no.~1, 95--153.




\bibitem{Duan-Liu-2015-Acta.Sci}R. Duan, S. Liu, Time-periodic solutions of the Vlasov-Poisson-Fokker-Planck system. {\it Acta Math. Sci. Ser. B (Engl. Ed.)} {\bf 35} (2015), no. 4, 876--886. 

\bibitem{dlx-16}
R. Duan,  S. Liu, J. Xu, Global well-posedness in spatially critical Besov space for the Boltzmann equation. {\it Arch. Ration. Mech. Anal.} {\bf 220} (2016), no. 2, 711--745.


\bibitem{DN-2026}R. Duan, J. Ni, 
Three-dimensional time-periodic problem on the Boltzmann equation with external force. arXiv:2604.21339.
\bibitem{DUYZ-CMP-2008}R. Duan, S. Ukai, T. Yang, H. Zhao, Optimal decay estimates on the linearized Boltzmann equation with time dependent force and their applications. {\it Comm. Math. Phys. }{\bf 277} (2008), no. 1, 189--236.














\bibitem{Guo-ARMA-2003} Y. Guo,  
Classical solutions to the Boltzmann equation for molecules with an angular cutoff.
{\it Arch. Ration. Mech. Anal.} {\bf 169 (4)} (2003) 305--353.





\bibitem{Guo-IUMJ-2004} Y. Guo, The Boltzmann equation in the whole space. \textit{Indiana Univ. Math. J.} \textbf{53}(2004), 1081--1094.

\bibitem{Grad-1958} H. Grad, Principles of the kinetic theory of gases. {\em Handbuch der Physik (herausgegeben von S. Fl$\ddot{u}$gge), Bd. 12, Thermodynamik der Gase}. Springer-Verlag, Berlin, 1958, pp,205-294.








\bibitem{Liu-Yang-Yu-PD-2004} T. Liu, T. Yang, S. Yu, Energy method for the Boltzmann equation. {\it Physica D}
{\bf 188} (2004), 178--192.



 
\bibitem{Serrin-1959-ARMA}
J. Serrin, 
A note on the existence of periodic solutions of the Navier-Stokes equations.
{\it Arch. Rational Mech. Anal.} {\bf 3} (1959) 120--122.

\bibitem{Strain-Guo-2008-ARMA}R. Strain, Y. Guo, Exponential decay for soft potentials near Maxwellian. {\it Arch. Ration. Mech. Anal.} {\bf 187 (2)} (2008) 287--339.
\bibitem{Tusda-2016-ARMA}K. Tsuda, On the existence and stability of time periodic solution to the compressible Navier-Stokes equation on the whole space. {\it Arch. Ration. Mech. Anal. } {\bf 219} (2016), no. 2, 637--678.


\bibitem{Ukai-2006-DCDS}S. Ukai, Time-periodic solutions of the Boltzmann equation. {\it Discrete Contin. Dyn. Syst.} {\bf 14} (2006), no. 3, 579--596.
\bibitem{Ukai-Yang-2006-AA}S. Ukai, T. Yang, The Boltzmann equation in the space $L^2\cap L^\infty_\beta$: global and time-periodic solutions. {\it Anal. Appl. (Singap.)} {\bf 4} (2006), no. 3, 263--310.
\bibitem{Villani 2002} C. Villani, A review of mathematical topics in collisional kinetic theory. {\it in Handbook of Mathematical Fluid Dynamics,} North-Holland, Amsterdam, {\bf I} (2002) 71--305.

\bibitem{Xiao-Xiong-Zhao-2013-JDE}Q. Xiao, L. Xiong and H. Zhao, The Vlasov-Poisson-Boltzmann system with angular cutoff for soft potentials. {\it J. Differential Equations} {\bf 255} (2013), no.~6, 1196--1232.


\end{thebibliography}

\end{document}